\pgfplotsset{compat=1.18} 
\newcommand{\add}[1]{\textcolor{black}{#1}}
\newcommand{\delete}[1]{}%{\textcolor{red}{\sout{#1}}}
\newcommand{\deleteeq}[1]{}%{\textcolor{red}{\cancel{#1}}}
\newcommand{\deletecite}[1]{}%{\textcolor{red}{\sout{\text{\cite{#1}}}}}
\newtheorem{theorem}{Theorem}[section]
\newtheorem{lemma}[theorem]{Lemma}
\newtheorem{proposition}[theorem]{Proposition}
\newtheorem{corollary}[theorem]{Corollary}
\newtheorem{remark}{Remark}[section]
\newtheorem{example}{Example}[section]
\newtheorem{assumption}{Assumption}
\newtheorem{definition}{Definition}
\newcommand{\mP}{\ensuremath{\mathbb P}}
\newcommand{\ind}[1]{{\mathbf{1}\{#1\}}}
\newcommand{\range}[1]{[#1]}%{\llbracket #1\rrbracket}
\newcommand{\R}{\mathbb{R}}
\newcommand{\E}{\ensuremath{\mathbb E}}
\renewcommand{\P}{\ensuremath{\mathbb P}}
\newcommand{\cH}{{\mathcal{H}}}
\newcommand{\wt}[1]{{\widetilde{#1}}}
\renewcommand{\l}{\ell}
\newcommand{\wh}{\widehat}
\newcommand{\FCR}{\mathrm{FCR}}%{\mathrm{FSR}}
\newcommand{\FDR}{\mathrm{FDR}}
\newcommand{\FDP}{\mathrm{FDP}}
\newcommand{\FCP}{\mathrm{FCP}}%{\mathrm{FSP}}
\newcommand{\BH}{\mathrm{BH}}
\newcommand{\Pow}{\mathrm{Pow}}
\newcommand{\pfull}[2]{\overline{p}^{(#1)}_{#2}}
\newcommand{\pfullbf}{\mathbf{\overline{p}}}
\newcommand{\Wfull}{\overline{W}}
\newcommand{\RinfoSP}{\mathcal R_{\alpha}^{{\tiny \textnormal{InfoSP}}}}
\newcommand{\RinfoSCOP}{\mathcal R_{\alpha}^{{\tiny \textnormal{InfoSCOP}}}}
\newcommand{\pcond}[2]{\tilde{p}^{(#1)}_{#2}}
\newcommand{\pcondbf}{\mathbf{\wt{p}}}
\newcommand{\Wcond}{\wt{W}}
\newcommand{\pzhao}[2]{\breve{p}^{(#1)}_{#2}}
\newcommand{\pzhaobf}{\mathbf{\breve{p}}}
\newcommand{\ncal}{n_{\tiny\mbox{cal}}}
\newcommand{\ntest}{n_{\tiny\mbox{test}}}
\newcommand{\dcal}{\mathcal{D}_{{\tiny \mbox{cal}}}}
\newcommand{\dtest}{\mathcal{D}_{{\tiny \mbox{test}}}}
\newcommand{\dtestX}{\mathcal{D}^X_{{\tiny \mbox{test}}}}
\newcommand{\dtrain}{\mathcal{D}_{{\tiny \mbox{train}}}}
\newcommand{\q}[2]{\pcond{#1}{#2,{\tiny \mbox{adapt}}}}
\newcommand{\Score}{S}
\newcommand{\ZZ}{\mathcal{Y}}
\begin{document}

%\maketitle

\begin{frontmatter}
\title{Selecting informative conformal prediction sets with false coverage rate control}
\runtitle{Selecting informative conformal prediction sets}

\begin{aug}

%%%%%%%%%%%%%%%%%%%%%%%%%%%%%%%%%%%%%%%%%%%%%%%
\author[A]{ \fnms{Ulysse}~\snm{Gazin}\ead[label=e1]{ugazin@lpsm.paris}}

\author[B]{ \fnms{Ruth}~\snm{Heller}\ead[label=e2]{ruheller@gmail.com}}

\author[C]{ \fnms{Ariane}~\snm{Marandon}\ead[label=e3]{amarandon-carlhian@turing.ac.uk}}

\author[D]{ \fnms{Etienne}~\snm{Roquain}\ead[label=e4]{etienne.roquain@upmc.fr}}

%%%%%%%%%%%%%%%%%%%%%%%%%%%%%%%%%%%%%%%%%%%%%%
%% Addresses                                %%
%%%%%%%%%%%%%%%%%%%%%%%%%%%%%%%%%%%%%%%%%%%%%%
\address[A]{Université Paris Cité and Sorbonne Université, CNRS, Laboratoire de Probabilités, Statistique et Modélisation, F-75013 Paris, France\printead[presep={,\ }]{e1}
}
\address[B]{Department of Statistics and Operations Research, Tel-Aviv University, Tel-Aviv, Israel\printead[presep={,\ }]{e2}
}
\address[C]{The Alan Turing Institute, London, United Kingdom\printead[presep={,\ }]{e3}
}
\address[D]{Sorbonne Université and Université Paris Cité, CNRS, Laboratoire de Probabilités, Statistique et Modélisation, F-75005 Paris, France\printead[presep={,\ }]{e4}
}
\runauthor{Gazin, Heller, Marandon, Roquain}
\end{aug}

\begin{abstract}
In supervised learning, including regression and classification, conformal methods provide  prediction sets for the outcome/label with finite sample coverage for any machine learning predictor. We consider here the case where such prediction sets come after a selection process. The selection process requires that the selected prediction sets be `informative' in a well defined sense. We consider both the classification and regression settings where the analyst may consider as informative only the sample with prediction  sets  small enough, excluding null values, or obeying other appropriate `monotone' constraints. We develop a unified framework for building such informative conformal prediction sets while controlling the false coverage rate (FCR) on the selected sample. While conformal prediction sets after selection have been the focus of much recent literature in the field, the new introduced procedures, called \texttt{InfoSP} and \texttt{InfoSCOP}, are to our knowledge the first ones providing 
FCR control for 
informative prediction sets.
We show the usefulness of our resulting procedures on real and simulated data.
\end{abstract}

\begin{keyword}
\kwd{Classification} 
\kwd{false discovery rate}  
\kwd{label shift}
\kwd{prediction interval}
\kwd{regression}
\kwd{selective inference}
\end{keyword}

\end{frontmatter}

\section{Introduction}\label{sec-intro}

 In modern data analysis, machine learning algorithms are often used to make predictions and a main challenge is to measure the  uncertainty of such methods.
Conformal inference offers an elegant solution to this problem, by providing prediction sets that provably cover the true value with high probability, for any sample size, any predictive algorithm, and any distribution of the data \cite{vovk2005algorithmic}. 
We consider the following classic split/inductive conformal prediction setting \cite{papadopoulos2002inductive, vovk2005algorithmic,lei2014conformal}.
Let $(X,Y)\in \mathcal X \times \mathcal{Y}$ be a random vector with unknown distribution
$P_{XY}$. Typically,  $\mathcal X$ is a subset of $\R^d$ (real valued covariates), and  $\mathcal{Y} =\range{K}$ (classification  among $K\geq 2$ classes) or $\mathcal{Y} = \R$ (regression, with a real valued outcome)\footnote{Our theory can be applied for more general observation spaces (e.g., $\mathcal{Y} = \R^{d'}$) but we consider the most common settings $\mathcal{Y} \in \{ \range{K}, \R\} $ for   simplicity.}.
In this setting, there are two independent samples of points $(X_i,Y_i)$: the calibration sample $\{(X_j,Y_j),j\in \range{n} \}$; and the test sample $\{(X_{n+i},Y_{n+i}), i \in \range{m}\}$. \add{We also have a prediction machine for $Y_j$ given $X_j$, built from training data that is independent of the calibration and test data.}

While all measurements are observed in the calibration sample, only the $X_i$'s are observed in the test sample and the aim is to provide  prediction sets for  the unobserved  $(Y_{n+i},i\in \range{m})$. We denote the prediction set for $Y_{n+i}$ by $\mathcal C_{n+i}$. It 
is a subset of $\range{K}$ for classification and a subset (often an interval) of $\R$ for regression. 
The classic conformal prediction set for $Y_{n+i}$,  denoted by $\mathcal C^{\alpha}_{n+i}$, is a function of $\{(X_j,Y_j),j\in \range{n} \}, X_{n+i}$, and $\alpha$ \citep{sadinle2019least, lei2018distribution} and has the following coverage guarantee, assuming that $(X_{n+i}, Y_{n+i})$ and  $(X_j,Y_j),j\in \range{n}$, are exchangeable pairs of observations from any distribution $P_{XY}$: 
\begin{equation}\label{eq-CCbasic}
\forall \alpha \in (0,1), \:\:\P\left(Y_{n+i}\in \mathcal C^{\alpha}_{n+i} \right)\geq 1-\alpha. 
\end{equation}

In practice, the size of the test sample $m$  is often large, encompassing hundreds or thousands of unlabeled examples. Inferring on all of them is unnecessary or inefficient in many applications \citep{Jin2023selection, bao2024selective}. For example, in classification, if  each image belongs to one of  $\range{K}$ categories, the analyst is not interested in the examples where $\mathcal C^{\alpha}_{n+i} = \range{K}$. Thus, it is natural to assume that  a subset of individuals will be selected. However, reporting their  prediction sets constructed to have at least   $1-\alpha$  confidence is problematic, since conditional on being selected, the coverage may be much smaller \citep{benjamini2005false, benjamini2014selective}, see also Figure~\ref{fig-classifintro} below.

Our focus is on the common setting where the analyst is only interested in reporting ``interesting'' or ``informative'' prediction sets, i.e., that cover only part of the $\mathcal Y$ space in a well defined sense to the analyst, which definition depends on the specific context. 
For concreteness, we start by providing typical examples of what can be the pre-specified collection $\mathcal{I}$ of informative subsets of $\ZZ$.

\begin{example}[Informative prediction sets in regression, $\mathcal{Y}=\R$]\label{ex:regression}
\begin{enumerate}  
    \item Intervals excluding a range of values: $\mathcal{I}=\{\mbox{$I$ interval of } \R\::\: I\cap \mathcal{Y}_0=\emptyset \}$ for some subset of null values $\mathcal{Y}_0\subset \mathcal{Y}$ that are considered as uninteresting for the user. The choice $\mathcal{Y}_0=(-\infty,y_0]$ is related to the selection proposed in \cite{Jin2023selection}, for which a ``normal'' value for the outcome is a value below $y_0$. 
   \item Length-restricted intervals: $\mathcal{I}=\{\mbox{$I$ interval of } \R\::\: |I|\leq 2\lambda_0\}$ for some $\lambda_0>0$, which are useful for  only reporting prediction intervals that are accurate enough. 
\end{enumerate}
\end{example}

\begin{example}[Informative prediction sets in classification, $\mathcal{Y}=\range{K}$]\label{ex:classif}
\begin{enumerate}
\item Excluding one class: $\mathcal{I}=\{C\subset \range{K}\::\: y_0\notin C \}$ for some null class $y_0\in \range{K}$. It is suitable when the user does not want to report prediction sets for individuals that are in class $y_0$. 
 This can be extended to excluding several classes: $\mathcal{I}=\{C\subset \range{K}\::\: C\cap \mathcal{Y}_0=\emptyset \}$ for some label set $\mathcal{Y}_0\subset \range{K}$.
    \item Non-trivial classification: $\mathcal{I}=\{C\subset \range{K}\::\: |C|\leq K-1\}$. It is appropriate when the analyst wants a label set that is  minimally informative. More generally, at most $k_0$-sized classification can be considered with $\mathcal{I}=\{C\subset \range{K}\::\: |C|\leq k_0\}$.
 \end{enumerate}
\end{example}

 A common target error guarantee is that the inference on  at most $\alpha$ examples among the selected is expected to be false. This is a classical error criterion in the selective inference literature, \citep{benjamini2005false, benjamini2014selective,weinstein20online}. It has been used, e.g.,  for novelty detection \citep{bates2023testing, marandon2024adaptive}, for classification \citep{rava2021burden,zhao2023controlling, Jin2023selection}, for regression \citep{bao2024selective}, and for unsupervised clustering \citep{mary2022error,marandon2022false}. 
 For selecting  prediction sets, the target error guarantee is thus that at most $\alpha$ examples among the selected 
 are expected to have prediction sets that do not cover their true outcome value. The false coverage proportion (FCP) for the procedure $\mathcal{R}=(\mathcal{C}_{n+i})_{i\in \mathcal{S}}$ is defined as the proportion of non-covered examples in the selected set $\mathcal{S}$:
\begin{align}
\FCP(\mathcal{R},Y)=\frac{\sum_{i\in \mathcal{S}} \ind{Y_{n+i} \notin \mathcal{C}_{n+i}} }{1\vee |\mathcal{S}|},\label{equFSP}
\end{align}
and the target error is simply its expectation,  which we refer to as the false coverage rate (FCR) as in \cite{bao2024selective}: 
\begin{align}
\FCR(\mathcal{R})&= \E[\FCP(\mathcal{R},Y)].
\label{equFSR}
\end{align}
In this work, we consider two popular models for generating the samples\footnote{In both models, the independence assumption can be relaxed.  In the iid model, it is enough that all $n+m$ samples are exchangeable. In the class-conditional model, it is enough that the subset of $\range{n+m}$ of samples from the same class is exchangeable, for all classes in $\range{K}$. See \S~\ref{sec:proofFCRcontrol} for more details.}. First, 
the {\it iid  model} (both for regression and classification): the variables $(X_i,Y_i)\sim P_{XY}$, $i\in \range{n+m}$, are all independent and identically distributed (iid). This is the standard assumption classically used for conformal prediction \citep{vovk2005algorithmic}. The parameter of the model is in that case $P_{XY}$.
Second, the {\it class-conditional model} (only the classification setting): all the labels $(Y_i, i\in \range{n+m})$ are deterministic and the covariates $(X_i, i\in \range{n+m})$ are mutually independent with $X_j\sim P_{X|Y=Y_j}$.  It relaxes the exchangeable model assumptions of iid model, by only requiring that the distribution within each class is the same for the test and calibration sample \citep{sadinle2019least, ding2023class}. The target FCR is then conditional on the labels.  The parameters of the model are then given by $(P_{X|Y=k})_{k\in \range{K}}$ and $(Y_i, i\in \range{n+m})$.

We now briefly summarize the contributions of our work.
We first introduce a new method, called \texttt{InfoSP} (Informative selective prediction sets), that selects only informative prediction sets with a level $\alpha$ FCR guarantee on the selected (\S~\ref{sec:Ralphagen}).  Formally, this means that we achieve both $\FCR(\mathcal{R})\leq \alpha$ and $\forall i\in \mathcal{S}$, $\mathcal{C}_{n+i}\in \mathcal{I}$, for $\mathcal{I}$ being the collection of informative subsets. 
The FCR control of \texttt{InfoSP} is established both in the iid model and class-conditional model (see Theorem~\ref{thm-gen-basic}).

We introduce a second procedure, called \texttt{InfoSCOP} (Informative selective conditional prediction sets), that has the same theoretical properties as \texttt{InfoSP} in the iid model (\S~\ref{subsec-preprocess}) and that relies on an initial selection step that is aimed at eliminating (at least some of) the examples for which informative prediction sets cannot be constructed. Further selection then takes place in order to ensure that all reported prediction sets are informative. While the pre-processing step is inspired by \cite{bao2024selective}, their theoretical framework  precludes this type of selection (see \S~\ref{subsec-prevworks} for more details).

\add{Third, our main theoretical FCR control guarantees come from a single general theorem in \S~\ref{sec:genBYresult} that includes general classes of $p$-values and accommodates any {\it concordant} selection rule \citep{benjamini2005false, benjamini2014selective}.}
Importantly, our novel theory supports  conformal $p$-values, and selecting only informative prediction sets, as specific examples.

We optimize the analysis pipeline for common informative selection rules in \S~\ref{sec:appliRegression} (regression) and \S~\ref{sec:classif}-\S~\ref{sec:appli} (classification), while providing additional theoretical results and appropriate numerical experiments. \add{The interest of the method is also demonstrated in a specific  application in the field of molecular biology in \S~\ref{subsec-yeast}, for which the practitioner aims at predicting gene expression from promoter sequences. While the analyst is interested in selecting promoters that cause low-expression, the produced prediction two-sided interval allows to establish that the expression is above zero and to evaluate the plausible extent to which it exceeds zero. }
 Finally, an application to directional FDR control is also provided in \S~\ref{sec:directionalFDR}.

To provide an intuition for our approach, \texttt{InfoSP} is illustrated  in Figure~\ref{fig-classifintro} for the classification case (for $K=3$ classes).
Left panels display a naive method reporting the marginal classical conformal prediction sets $\mathcal C^{\alpha}_{n+i}$ in \eqref{eq-CCbasic} for all those that are informative, that is, such that $\mathcal C^{\alpha}_{n+i}\in \mathcal{I}$, without further correction. 
Since no error can occur when the prediction set is trivial, the selection always inflates the FCP values. The new procedure \texttt{InfoSP} is displayed in the right panels: prediction sets are made slightly larger to maintain a correct FCP value while the selection (red boxes) guarantees that only informative (i.e.,  non-trivial) prediction sets are selected. This example is only for one data generation: it is presented here only for illustrative propose and more accurate in-expectation FCR values are given in \S~\ref{sec:appli}.
The regression case is illustrated in Figures~\ref{fig-regressexnonullab} and \ref{fig-regressLength} (\S~\ref{sec:appliRegression}), for which the second procedure \texttt{InfoSCOP} is also displayed. It is worth to note that in some situations, the latter may even results in prediction sets that are smaller (!) than those of the naive method.

\vspace{-5mm}

\subsection{Relation to previous work}

There are interesting connections between our approach and previous work of the literature: selecting confidence interval that satisfies specific notions of informativeness \cite{weinstein20,weinstein20online}; multi-class classification \cite{zhao2023controlling}; and very recent works on selective conformal inference \cite{bao2024selective,jin2024confidence}. Due to the limited space, the details are in\S~\ref{subsec-prevworks} of the SM.

We also underline that our second procedure \texttt{InfoSCOP} relies on the approach of splitting the calibration sample, which has already been used in conformal literature in different contexts. The idea is to enable an additional data-driven tuning of the method by only paying the price of splitting the calibration sample. For instance, it has been provided in \cite{marandon2024adaptive} for the task of building adaptive scores. In the present aim of controlling the FCR on a data-driven selection, we formulate a general statement in the SM, see Lemma~\ref{lemselective}. It applies to any type of data-driven selection and error rate.

\begin{figure}[h!]
\begin{center}
\vspace{-2mm}
\begin{tabular}{cc}
Classical conformal with naive selection & \texttt{InfoSP} \\
(FCP = 3/20 = 0.15) & (FCP = 1/10 = 0.10) \\
\includegraphics[width=0.5\textwidth, height = 0.3\textheight]{"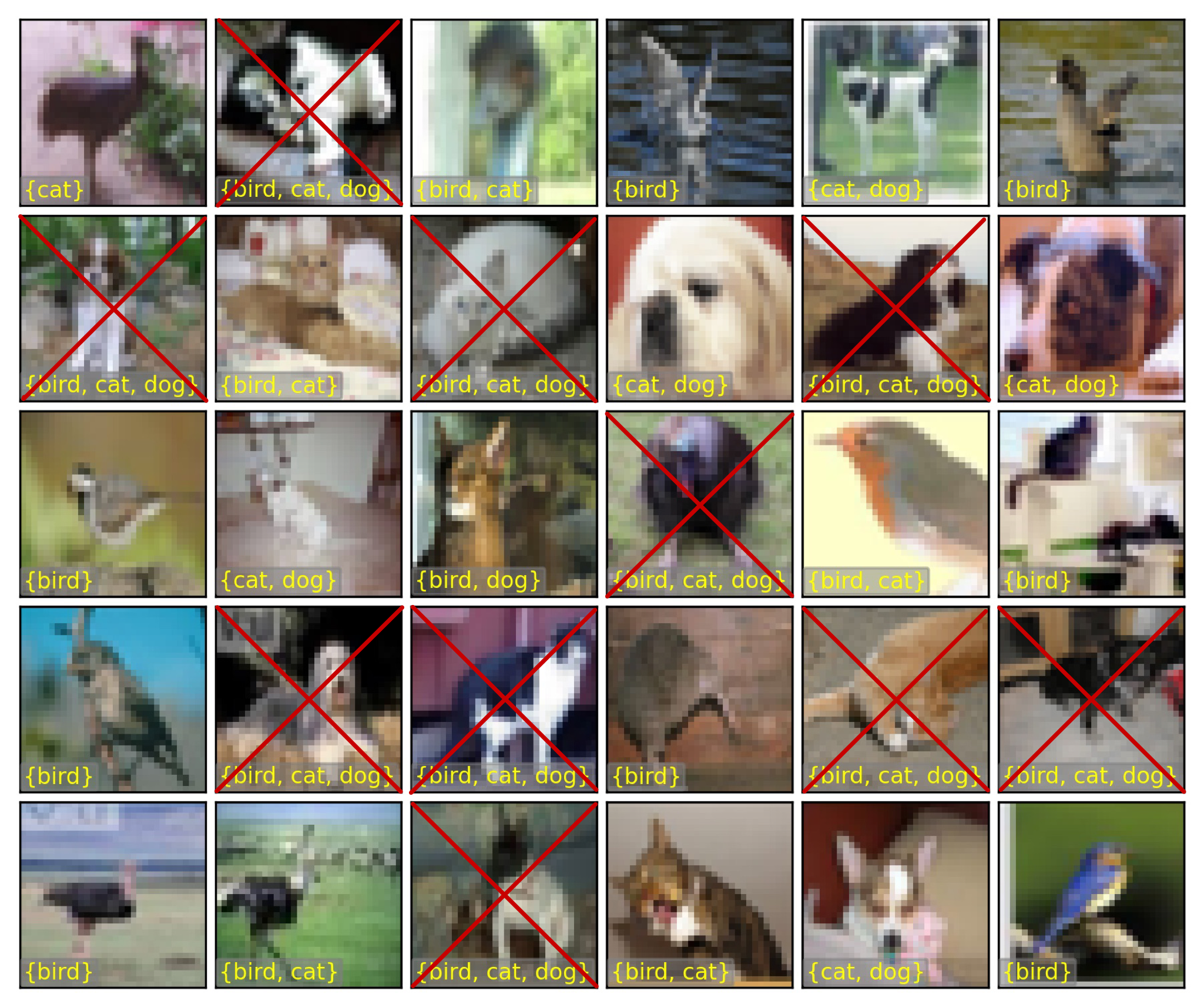"}&\hspace{-5mm}
\includegraphics[width=0.5\textwidth,  height = 0.3\textheight]{"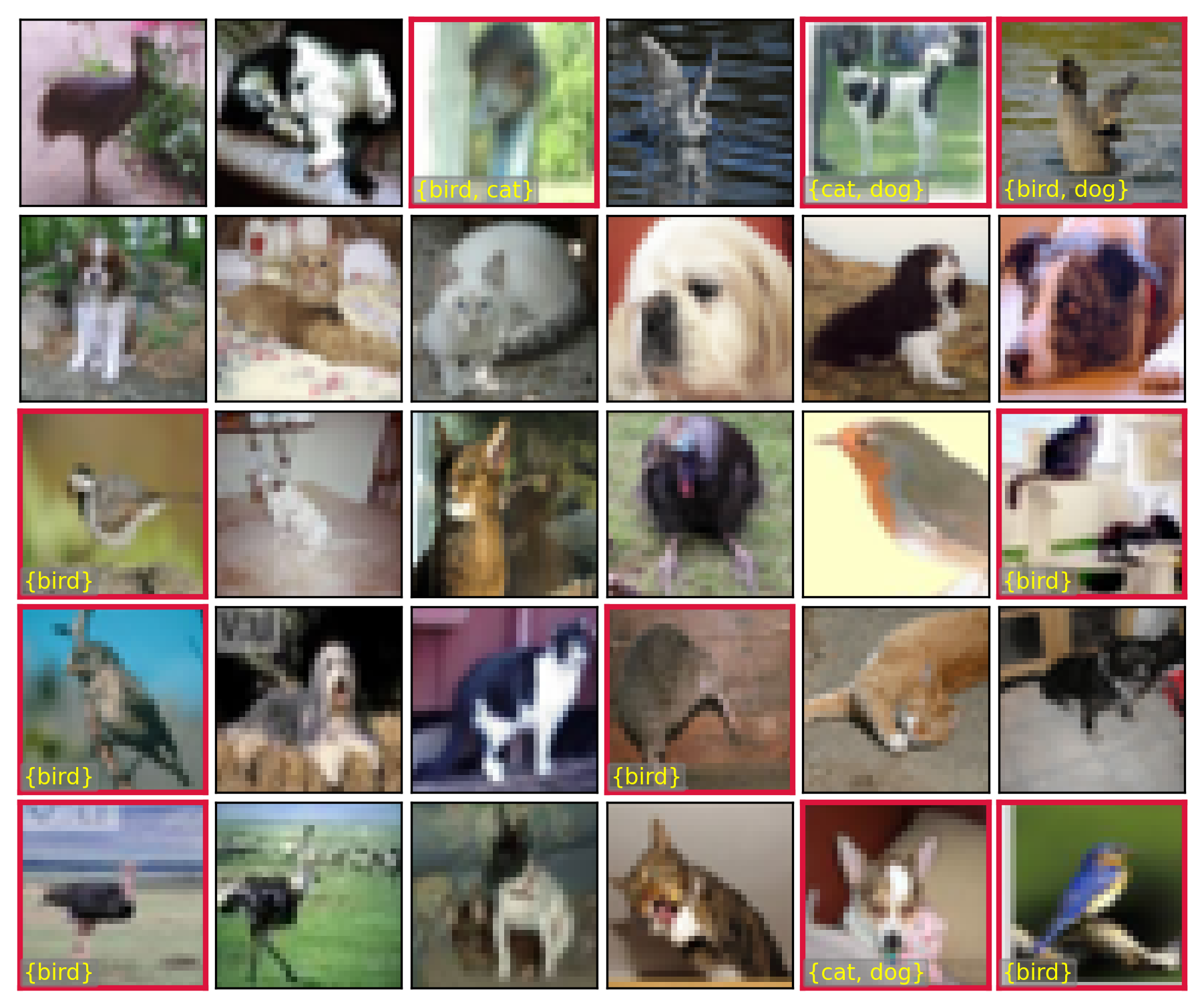"}
\end{tabular}
\vspace{-7mm}    
\end{center}
\caption{
Informative prediction sets in classification for CIFAR-10 dataset, restricted to the $K=3$ classes ``bird", ``cat", and ``dog" classes (iid setting). Informative prediction subsets are those of size at most $K-1=2$ (i.e., non-trivial, Example~\ref{ex:classif} item 1). Selection by \texttt{InfoSP} are framed in red (right panel). $\alpha=10\%$. See \S~\ref{sec:appli} for more details. 
\label{fig-classifintro}}
\vspace{-6mm}
\end{figure}

\section{Preliminaries}

\subsection{Notation}
Expectations and probabilities are denoted for the iid model  with  $\E_{(X,Y)\sim P_{XY}}(\cdot)$ and $\P_{(X,Y)\sim P_{XY}}(\cdot)$, and for the class-conditional model with $\E_{X\sim P_{X|Y}}(\cdot)$ and $\P_{X\sim P_{X|Y}}(\cdot)$, respectively. If the data generation process is clear from the context, or if the expression is relevant for both  models, then the subscript is omitted. 
In addition, $A\subset B$ means that the set $A$ is included in the set $B$ with a possible equality.
For any subset $C\subset \R$, $|C|$ denotes the cardinality of $C$ if the set $C$ is finite, and the Lebesgue measure of $C$ if $C$ is an interval. Finally, for two samples $\mathcal{D}_1$ and $\mathcal{D}_2$, $\mathcal{D}_1\cup \mathcal{D}_2$ denotes the new sample formed by concatenating the elements of $\mathcal{D}_1$ and $\mathcal{D}_2$.

\subsection{Conformal prediction sets}

The classical conformal prediction set for $Y_{n+i}$ is given by
\begin{align}\label{PSuncorrected}
\mathcal{C}^{\alpha}_{n+i}(\mathbf{p})=\{y\in \mathcal{Y}\::\: {p}^{(y)}_{i}> \alpha \},\:\:\: i \in\range{m},
\end{align}
where $\mathbf{p}=(p^{(y)}_i, y\in \mathcal{Y}, i \in\range{m})$ is a collection of conformal $p$-values satisfying the following super-uniform guarantee:
\begin{equation}\label{superunif}
    \P(p^{(Y_{n+i})}_i\leq \alpha)\leq \alpha, \:\:\:i \in\range{m},
\end{equation}
where the above probability is computed either in the iid ($Y_{n+i}$ random) or conditional ($Y_{n+i}$ fixed) model. 
Super-uniformity \eqref{superunif} implies directly that $\mathcal{C}^{\alpha}_{n+i}(\mathbf{p})$ in \eqref{PSuncorrected} provides $1-\alpha$ coverage for $Y_{n+i}$, that is,
\begin{equation}\label{equ-marginalcov}
\P(Y_{n+i}\in \mathcal{C}^{\alpha}_{n+i}(\mathbf{p}))\geq 1-\alpha, \:\:\:i\in \range{m}, \:\:\:\alpha\in (0,1), 
\end{equation}
which is generally referred  to as marginal coverage.

The $p$-value family is built from the calibration and test sample by using non-conformity score functions $S_{y}: x\in \R^d\mapsto \R$, $y\in \mathcal{Y}$,  measuring the inadequacy between $y$ and the prediction at point $x$. 
Importantly, we follow a split/inductive conformal approach, where the score functions have been computed from an independent training data sample so that they can be considered as fixed here\footnote{Note that we can relax slightly this assumption: our theory also allows this function to depend on the calibration plus test samples in an exchangeable way, see Assumption~\ref{as:AllAdaptive}.} (and all probabilities/expectations are taken conditional on that training sample).
\begin{assumption}\label{as:noties}
The score functions $\Score_{y}(\cdot)$, $y\in\ZZ$, have been computed from an independent training  sample 
 and the computed scores  $(S_{Y_i}(X_i), i\in \range{n+m} )$ have no ties almost surely. When $\mathcal{Y}=\R$, the score function is regular in the following sense: for every $x\in\R^d$, the function $y\in\R\mapsto S_y(x)\in\R$ is right continuous with left limits.
\end{assumption}

Assumption~\ref{as:noties} is a very mild assumption, which is typically satisfied. 
{For instance, for the regression case, a classical choice is the locally weighted residual function $S_y(x)=|y-\mu(x)|/\sigma(x)$ where $\mu(x)\in \mathcal{Y}$ is the predicted outcome at point $x$ and $\sigma(x)$ is the predicted standard deviation of $Y$ given $X=x$ \citep{lei2018distribution}. Another common example is the quantile-based score function $S_y(x)=\max(q_{\beta_0}(x)-y,y-q_{\beta_1}(x))$ 
where $q_\beta(x)$ is the predicted $\beta$-quantile of $Y$ given $X=x$, which corresponds to the so-called conformalized quantile regression \citep{romano2019conformalized,sesia2021conformal}  (for some prespecified $0<\beta_0<\beta_1<1$). More generally,  we refer the reader to \cite{gupta2022nested} for a general framework giving rise to a large class of score functions.} In the classification case, the typical  score is the residual function $S_y(x)=1-\pi_y(x)$ where $\pi_y(x)$ is an estimator of the probability to generate label $y$ at point $x$.  

Formally, the  $p$-value family is given as follows: 
\begin{itemize}
    \item full-calibrated $p$-values: $\pfullbf=(\pfull{y}{i}, i\in \range{m}, y\in \mathcal{Y})$, both for the regression and classification cases, with  
\begin{equation}\label{standardpvalue}
\pfull{y}{i}= \frac{1}{n+1}\Big(1+\sum_{j=1}^n \ind{S_{Y_j}(X_j)\geq S_{y}(X_{n+i})} \Big),\:\: i\in \range{m}, y\in \mathcal{Y}.
\end{equation} 
    \item class-calibrated $p$-values: $\pcondbf=(\pcond{y}{i}, i\in \range{m}, y\in \mathcal{Y})$, only for the classification case $\mathcal{Y}=\range{K}$, with
\begin{equation}\label{confpvalues0}
\pcond{y}{i}= \frac{1}{|\mathcal{D}^{(y)}_{{\tiny \mbox{cal}}}|+1}\Big(1+\sum_{j\in \mathcal{D}^{(y)}_{{\tiny \mbox{cal}}}} \ind{S_{y}(X_j)\geq S_{y}(X_{n+i})} \Big),\:\: i\in \range{m}, y\in \mathcal{Y}, 
\end{equation}
where $\mathcal{D}^{(y)}_{{\tiny \mbox{cal}}}= \{j \in \range{n}\::\: Y_j= y\}$ corresponds to the elements of the calibration sample that have label $y\in \ZZ$.    
\end{itemize}

Both $p$-values $\pfull{y}{i}$ and $\pcond{y}{i}$ are computed by examining how extreme the score $S_{y}(X_{n+i})$ is among the scores of the true labels in the calibration sample.
Full-calibrated $p$-values and class-calibrated $p$-values satisfy the super-uniformity \eqref{superunif}  in the iid model and class-conditional model, respectively, by using an exchangeability argument, see \cite{vovk2005algorithmic, RW2005, bates2023testing}. 
This means that prediction set 
$\mathcal{C}^{\alpha}_{n+i}(\mathbf{p})$ in \eqref{PSuncorrected} satisfies the marginal coverage \eqref{equ-marginalcov} in each context.
As a result, the 
false coverage rate \eqref{equFSR} for the full selection $\mathcal{S}=\range{m}$ is controlled at level $\alpha$, that is,
\begin{align}\label{globalFCRcontrol}
\E\left[\frac{\sum_{i\in \range{m}} \ind{Y_{n+i} \notin \mathcal{C}^{\alpha}_{n+i}(\mathbf{p})} }{m}\right] = m^{-1} \sum_{i\in \range{m}} \P(Y_{n+i} \notin \mathcal{C}^{\alpha}_{n+i}(\mathbf{p}))\leq \alpha.
\end{align}

\begin{remark}\label{rem:caldepth}
    The prediction set $\mathcal{C}^{\alpha}_{n+i}(\mathbf{p})$ in \eqref{PSuncorrected} can be described as  a score level set, with a threshold depending on the calibration scores. Formally, we have
    \begin{align}\label{predictionsetthreshold}
\mathcal{C}^{\alpha}_{n+i}(\mathbf{p})=\{y\in \ZZ\::\: p^{(y)}_{i} > \alpha\} = \{y\in \ZZ\::\: S_{y}(X_{n+i}) \leq  \hat{s}_\alpha\} 
\end{align}
where the score threshold $\hat{s}_\alpha$ is $S_{(\lceil (1-\alpha)(\ncal+1)\rceil)}$ with $S_{(1)}\leq \dots\leq S_{(\ncal)}$ being the ordered calibration scores (and with the convention $S_{(\ncal+1)}=+\infty$). For full-calibrated $p$-values, the $\ncal=n$ calibration scores are $S_{Y_{j}}(X_{j})$, $j\in \range{n}$. For class-conditional $p$-values, the $\ncal=|\mathcal{D}^{(y)}_{{\tiny \mbox{cal}}}|$ calibration scores are $S_{y}(X_{j})$, $j\in \mathcal{D}^{(y)}_{{\tiny \mbox{cal}}}$ ($\hat{s}_\alpha$ depends on $y$). 
\end{remark}

\subsection{$\mathcal{I}$-adjusted $p$-values}

Our theory relies on the following  assumption.
\begin{assumption}\label{assI}
The subset collection $\mathcal{I}$ is monotone in the following sense: for the considered $p$-value collection $\mathbf{p}$ (either $\pfullbf$ or $\pcondbf$), we have
    \begin{itemize}
        \item[(i)] If a prediction set is informative, then all the prediction sets it contains are also informative, that is,
for all $\mathcal{C},\mathcal{C}'$ (subsets of $\range{K}$ for classification, intervals of $\R$ for regression) with $\mathcal{C}'\subset\mathcal{C}$, $\mathcal{C}\in\mathcal{I}$ implies $\mathcal{C}'\in\mathcal{I}$.
        \item[(ii)] Almost surely, the function $\alpha\in (0,1]\mapsto \ind{\mathcal{C}^{\alpha}_{n+i}(\mathbf{p})\in \mathcal{I}} \in \{0,1\}$ is right-continuous.
   \item[(iii)] (for regression) For all $\alpha\in(0,1)$, almost surely, $\mathcal{C}^{\alpha}_{n+i}(\mathbf{p})$ is an interval of $\R$. 
          \end{itemize}
\end{assumption}
Note that Assumption~\ref{assI} implies that $\alpha\in (0,1]\mapsto \ind{\mathcal{C}^{\alpha}_{n+i}(\mathbf{p})\in \mathcal{I}} \in \{0,1\}$ is both right-continuous and nondecreasing: if $\alpha\leq \alpha'$, 
it follows that  $\mathcal{C}^{\alpha'}_{n+i}(\mathbf{p})\subset \mathcal{C}^{\alpha}_{n+i}(\mathbf{p})$ 
 from \eqref{PSuncorrected} and thus $\mathcal{C}^{\alpha}_{n+i}(\mathbf{p})\in \mathcal{I}$ implies that $\mathcal{C}^{\alpha'}_{n+i}(\mathbf{p})\in \mathcal{I}$ by Assumption~\ref{assI} (i) (iii). 

As a result, we can define the {\it $\mathcal{I}$-adjusted $p$-value vector} by $\mathbf{q}=(q_i)_{i\in \range{m}}$ where
\begin{equation}\label{qformula}
    q_i=\min\{\alpha\in (0,1]\::\: \mathcal{C}^{\alpha}_{n+i}(\mathbf{p})\in \mathcal{I}\},
\end{equation}
with by convention $q_i=1$ if the set is empty. 
Assumption~\ref{assI} can be easily checked for Examples~\ref{ex:regression} and~\ref{ex:classif}, with an explicit expression for $q_i$'s \add{(see Section~\ref{sec:qicomput} for the detailed derivations of these expressions)}.

\begin{example}[Example~\ref{ex:regression} continued]\label{ex:regression2}
For regression (see \S~\ref{sec:appliRegression} for more details):
$q_i=\sup_{y\in [a,b]} p^{(y)}_i$ for intervals  excluding $\mathcal{Y}_0=[a,b]$; $q_i= (n+1)^{-1}\big(1+\sum_{j=1}^n \ind{S_{Y_j}(X_j)>A}\big)$ for length-restricted intervals $I$ (with $|I|\leq 2\lambda_0$), where  $A=\lambda_0/\sigma(X_{n+i})$ for $S_y(x)=|y-\mu(x)|/\sigma(x)$ and $A=\lambda_0-(q_{\beta_1}(X_{n+i})-q_{\beta_0}(X_{n+i}) )/2$ for $S_y(x)=\max(q_{\beta_0}(x)-y,y-q_{\beta_1}(x))$.
\end{example}
\begin{example}[Example~\ref{ex:classif} continued]\label{ex:classif2}
For classification: $q_i=p^{(y_0)}_i$ for excluding one-class;  $q_i=\max_{y\in \mathcal{Y}_0} p^{(y)}_i$ for excluding several classes;  $q_i=\min_{y\in \range{K}} p^{(y)}_i$ for non-trivial classification;  $q_i=$  
the $(K-k_0)$-th smallest element in the set $\{p^{(y)}_i,y\in \range{K}\}$
for at most $k_0$-sized classification.
\end{example}

\begin{example}[Combining informative subset collections]\label{ex:combine}
Let $\mathcal{I}_1$ and $\mathcal{I}_2$  be two subset collections  that satisfy Assumption~\ref{assI} with adjusted $p$-values given by $(q_{1,i})_{i\in \range{m}}$ and $(q_{2,i})_{i\in \range{m}}$, respectively. Then we can easily check that the intersected collection
$
\mathcal{I} :=\{I_1\cap I_2, I_1\in \mathcal{I}_1, I_2\in \mathcal{I}_2\}
$
also satisfies Assumption~\ref{assI} with adjusted $p$-values given by $q_i=\max\{q_{1,i},q_{2,i}\}$, $i\in \range{m}$. This is useful to combine the constraints imposed by the informativeness. For instance, in the classification case (Examples~\ref{ex:classif} and \ref{ex:classif2}), we can declare  a subset as informative if it excludes a null class while it is of cardinality at most one (see \S~\ref{sec:directionalFDR} for a concrete application).
\end{example}

From its definition in  \eqref{qformula}, it follows that $q_i$ can be seen as a $p$-value to test whether  $Y_{n+i}$ lies in an informative set or not, that is, to test
\begin{equation}\label{nullhypopb}
\mbox{$H_{0,i}$: ``$Y_{n+i}\notin \cup_{C\in \mathcal{I}} C $'' versus $H_{1,i}:$ ``$Y_{n+i}\in \cup_{C\in \mathcal{I}} C $''.} 
\end{equation}
\add{More specifically, using the $p$-value $q_i$,   $H_{0,i}$ is rejected at level $\alpha$ whenever $\mathcal{C}^{\alpha}_{n+i}(\mathbf{p})\in \mathcal{I}$, and it satisfies a valid super-uniformity property, see Remark~\ref{rem:adjpvalues}.}

In cases where being informative is linked to  particular values in $\ZZ$, this testing problem takes an especially meaningful form. In classification, for excluding one class in classification:  $q_i=p^{(y_0)}_i$ tests $H_{0,i}$: ``$Y_{n+i}=y_0$'' versus $H_{1,i}:$ ``$Y_{n+i}\neq y_0$''. In regression, for excluding $\mathcal{Y}_0=[a,b]$: $q_i$ tests $H_{0,i}$: ``$Y_{n+i}\in [a,b]$'' versus $H_{1,i}:$ ``$Y_{n+i}\notin [a,b]$''.
Note that in case where informative sets are those with small size (e.g., non-trivial classification or length-restricted intervals), we have $\cup_{C\in \mathcal{I}} C = \ZZ$, so the null hypothesis is always false, and the testing problem \eqref{nullhypopb} is not interesting. \delete{However, in case the small size is just one of the criteria for being informative, as in Example~\ref{ex:combine}, then the testing problem \eqref{nullhypopb}  may still be meaningful. }

\begin{remark}
Assumption~\ref{assI} implies that the $\mathcal{I}$-adjusted $p$-value vector $\mathbf{q}$ is a nondecreasing function of the $p$-value collection, see Lemma~\ref{lem:monotonicityq}; Assumption~\ref{assI} (iii) is always satisfied up to taking as prediction sets $\mathcal{C}^{\alpha}_{n+i}(\mathbf{p})$ the convex hull of the set in the right-hand-side of \eqref{PSuncorrected}. It is also satisfied without modifying $\mathcal{C}^{\alpha}_{n+i}(\mathbf{p})$ for any score function such that all the sets $\{y\in \R\::\:S_y(x)\leq s\}$ are intervals, which is often the case, see Remark~\ref{rem:caldepth} and \S~\ref{sec:appliRegression}.
\end{remark}

\begin{remark}
\label{rem:asaf}
In a context of building online {\it confidence} intervals, \cite{weinstein20online} have proposed to report only  intervals that are ``good'' in the sense that they ``localize'' the signal. Formalizing their proposal in our regression setting and with our notation, this corresponds to consider the informative collection $\mathcal{I}=\{\mbox{$I$ interval of $\R$}\::\: I\subset \mathcal{C}_\l \mbox{ for one $\l\in \range{L}$}\},  $
where $\mathcal{C}_\l$, $\l\in \range{L}$, are pre-specified disjoint subsets of $\R$. This collection satisfies Assumption~\ref{assI} and the corresponding $\mathcal{I}$-adjusted $p$-values are given by 
$q_i=\min_{\ell\in\range{L}}\sup_{y\notin\mathcal{C}_\ell} p_i^{(y)}$. 
\end{remark}

\add{
\begin{remark}\label{rem:adjpvalues}
The following super-uniformity property holds for the $\mathcal{I}$-adjusted $p$-value $q_i$ \eqref{qformula} and the null $H_{0,i}$ \eqref{nullhypopb}:
$$
P(q_i\leq \alpha, \mbox{$H_{0,i}$ true})\leq \alpha,\:\:\mbox{ for all $\alpha\in (0,1)$.}
$$
To see this, note that when $H_{0,i}$ is true, we have $Y_{n+i}\in \cap_{C \in \mathcal{I}} C^c$. From the definition, $q_i\leq \alpha$ means  $\mathcal{C}^{\alpha}_{n+i}(\mathbf{p})\in \mathcal{I}$. Therefore, if both $q_i\leq \alpha$ and $H_{0,i}$ is true, we have $Y_{n+i}\notin \mathcal{C}^{\alpha}_{n+i}(\mathbf{p})$, i.e., that 
$p^{(Y_{n+i})}_i\leq \alpha$, which occurs with probability at most $\alpha$ by \eqref{superunif}.
\end{remark}
}

\subsection{Aim: informative selection with FCR guarantees}

 The general inferential task is to produce prediction sets for examples of interest in the test sample, that is, after selection. The selection process  is driven by the requirement that  the prediction sets be informative, and the requirement of a relevant error control. 

A {\it selective prediction set procedure} is a function of the observations $\{(X_j,Y_j),j\in \range{n} \}$, $\{X_{n+i}, i \in \range{m}\}$ of the form  $\mathcal{R}=(\mathcal{C}_{n+i})_{i\in \mathcal{S}}$ where $\mathcal{S}$ is the selected subset of $\range{m}$ for which prediction sets are constructed, and $\mathcal{C}_{n+i}\subset \mathcal{Y}$ is the prediction set for $Y_{n+i}, i\in \mathcal S$.

A selective prediction set procedure $\mathcal{R}=(\mathcal{C}_{n+i})_{i\in \mathcal{S}}$ {\it is said to be $\mathcal{I}$-informative (or informative)} if the selection $\mathcal{S}$ is a subset of $\range{m}$ that imposes that $\mathcal{C}_{n+i}$ is informative, that is, $\forall i\in \mathcal S$, $\mathcal{C}_{n+i}\in \mathcal{I}$.

The FCR in the iid model and class-conditional model, respectively, are 
\begin{align}
\FCR(\mathcal{R},P_{XY})&= \E_{(X,Y)\sim P_{XY}}[\FCP(\mathcal{R},Y)]
\label{equFSRmixturemodel};\\
\FCR(\mathcal{R},P_{X\mid Y},Y)&= \E_{X\sim P_{X\mid Y}}[\FCP(\mathcal{R},Y)],\label{equFSRcondmodel}
\end{align}
for the FCP in \eqref{equFSP}. The FCR expression in \eqref{equFSRcondmodel} for 
quantifying the  errors among the selected is classical in the selective inference literature: since $(Y_{n+i})_{i\in \range{m}}$ is fixed, this is the false coverage rate on the parameters \citep{benjamini2005false, benjamini2014selective}.  On the other hand, in  \eqref{equFSRmixturemodel}, 
the false coverage rate is on random outcomes (i.e., $(Y_{n+i})_{i\in \range{m}}$ in our setting) rather than on parameters, which is the usual setting in selective conformal inference (see references in \S~\ref{sec-intro}) and is related to the Bayes FDR criterion in the multiple testing literature, see, e.g., \cite{ETST2001}.

We will focus on finding selective prediction set procedures $\mathcal{R}=\mathcal{R}_\alpha$ with either of the two following controls:
\begin{align}
\sup_{P_{X,Y}}\{\FCR(\mathcal{R},P_{X,Y})\}\leq \alpha \:\: ;\label{iidcontrol}\\
\sup_{P_{X\mid Y},Y}\{\FCR(\mathcal{R},P_{X\mid Y},Y)\}\leq \alpha\:\:.\label{strongcontrol}
\end{align}
Obviously, the class-conditional control \eqref{strongcontrol} (considered only for classification) is stronger than the unconditional control \eqref{iidcontrol} (considered both for classification and regression). 

To balance with FCR control, we also consider  the {\it{resolution adjusted power}}:
\begin{align}
\Pow(\mathcal{R})&= \E\left[ \sum_{i\in \mathcal{S}} \frac{\ind{Y_{n+i}\in \mathcal{C}_{n+i}}}{|\mathcal{C}_{n+i}|} \right].\label{equPower}
\end{align}
Hence, for the same selection set, a decision with a smaller covering  decision set $\mathcal{C}_{n+i}$ yields  higher power. 
Our aim is to maximize the resolution adjusted power (i.e., informally, to select as many examples as possible that are informative, with as narrow as possible a prediction set for the selected examples), while controlling the FCR at a pre-specified level $\alpha$.

Finally, considering the multiple testing problem \eqref{nullhypopb}, that test if $Y_{n+i}$ lies in an informative set or not, we can also quantify the error amount of a given selection procedure $\mathcal{S}$ (by itself, without quantifying the non-coverage errors of the attached prediction sets), by letting \citep{BH1995}
\begin{align}
\FDP(\mathcal{S},Y) &= \frac{\sum_{i\in \mathcal{S}} \ind{Y_{n+i} \notin \cup_{C\in \mathcal{I}} C }}{1\vee |\mathcal{S}|}.\label{equFDP}
\end{align}
For instance, when one wants to exclude a given label set $\mathcal{Y}_0$ in classification/regression, we have $\FDP(\mathcal{S},Y) = (\sum_{i\in \mathcal{S}} \ind{Y_{n+i} \in \mathcal{Y}_0})/(1\vee |\mathcal{S}|)$.
The false discovery rates in the iid model and class-conditional model are the corresponding expectations
\begin{align}
\FDR(\mathcal{S},P_{XY})&= \E_{(X,Y)\sim P_{XY}}[\FDP(\mathcal{S},Y)]\label{equFDRmixturemodel};\\
\FDR(\mathcal{S},P_{X\mid Y},Y)&= \E_{X\sim P_{X\mid Y}}[\FDP(\mathcal{S},Y)]\label{equFDRcondmodel},
\end{align}
 respectively. The following lemma holds.

\begin{lemma} \label{lem:FDRsmallerFCR}
    For any selective prediction set procedure $\mathcal{R}=(\mathcal{C}_{n+i})_{i\in \mathcal{S}}$ that is $\mathcal{I}$-informative, we have
$ 
    \FDP(\mathcal{S},Y)\leq \FCP(\mathcal{R},Y).
       $
\end{lemma}

It comes directly from the fact that, for $i\in \mathcal{S}$,  $Y_{n+i}\in \mathcal{C}_{n+i}$ implies $Y_{n+i} \in \cup_{C\in \mathcal{I}} C $ because  $\mathcal{C}_{n+i}\in \mathcal{I}$. 
As a result, producing an informative selective prediction set procedure that controls the FCR at level $\alpha$  immediately ensures that the attached selection procedure controls the FDR at level $\alpha$ and thus comes with a relevant interpretation. 

\section{Main results}
\label{sec:BasicMethod}

\subsection{Informative selective prediction sets (\texttt{InfoSP})}\label{sec:Ralphagen}

In order to have a level $\alpha$ FCR guarantee on the selected examples, it is necessary to correct the threshold  $\alpha$ in  $\mathcal{C}^{\alpha}_{n+i}(\mathbf{p})$ \eqref{PSuncorrected}. A standard approach in the selective inference literature \citep{benjamini2005false, benjamini2014selective} is to use the reduced level $\alpha |\mathcal S|/m$ for a selection set $\mathcal S$. In order for the selective prediction set procedure to be $\mathcal I$ informative,  the selection rule needs to be carefully selected. 
We shall use the following basic observation: selection by a thresholding rule on the  family $\mathbf{q}=(q_i,i\in \range{m})$ given by \eqref{qformula} will result in selected examples that are $\mathcal I$ informative for prediction sets that are constructed at a level that is at least at the selection threshold, since $\mathcal{C}^{t}_{n+i} \in \mathcal{I}$ if and only if $q_i\leq t$ for all $t\in (0,1]$ by the definition of $q_i$. Combining the standard approach for FCR control with this basic observation, leads us to use the following selection thresholding rule which is necessarily $\mathcal I$-informative: all examples with $q_i$ at most $$\max \left \lbrace t: t\leq \alpha \frac{\left (\sum_{j=1}^m\ind{q_j\leq t} \right)}{m}\right\rbrace.$$ This is exactly the BH selection rule \citep{BH1995} on the adjusted $p$-value vector $\mathbf{q}=(q_i,i\in \range{m})$. 
In practice, let us recall that the BH procedure $\BH(\mathbf{q})$ can  be obtained as
    \begin{equation}\label{equBH}
        \BH(\mathbf{q})= \{i\in \range{m}\::\: q_i\leq \alpha \hat{\ell}/m\}, 
    \end{equation}
    where $\hat{\ell}=|\BH(\mathbf{q})|=\max\{\ell\in \range{m}\::\: q_{(\ell)}\leq \alpha \ell/m\}$ (with $\hat{\ell}=0$ if the set is empty) and where $q_{(1)}\leq \dots \leq q_{(m)}$ denote the ordered $q_i$'s.

\begin{definition}\label{def:basic}
The informative selective prediction set procedure (\texttt{InfoSP}) based on a $p$-value family $\mathbf{p}=(p^{(y)}_i, y\in \mathcal{Y}, i \in\range{m})$, 
is defined as $\RinfoSP(\mathbf{p})=(\mathcal{C}^{\alpha|\BH(\mathbf{q})|/m}_{n+i}(\mathbf{p}))_{i\in \BH(\mathbf{q})}$, that is, is given as follows:
\begin{enumerate}
    \item Apply the BH procedure on the corresponding adjusted vector $\mathbf{q}=(q_i,i\in \range{m})$ given by \eqref{qformula},\eqref{equBH} and select $\mathcal{S}(\mathbf{p})=\BH(\mathbf{q}) \subset \range{m}$;
    \item For each $i\in \mathcal{S}(\mathbf{p})$, consider the prediction set $\mathcal{C}^{\alpha|\mathcal S(\mathbf{p})|/m}_{n+i}(\mathbf{p})$ for $Y_{n+i}$, computed according to \eqref{PSuncorrected}.
\end{enumerate}
\end{definition}

The key theoretical difficulty in proving that the FCR of \texttt{InfoSP} is at most $\alpha$, is that the conformal $p$-values are dependent, and that the selection step is also $p$-value-based (by contrast with \citealp{bao2024selective}). Thus,  the error rate in \eqref{equFSRmixturemodel} for the iid model and 
\eqref{equFSRcondmodel} for the class-conditional model may not be controlled.   \cite{bates2023testing, marandon2024adaptive}  showed  that, for outlier detection, the special positive dependency between the conformal $p$-values is such that the BH procedure remains valid for FDR control. 
Their results are on a different set of conformal $p$-values, but we develop a similar result for our problem, which enables us to establish the desired error guarantee for various selection rules.

\begin{theorem}\label{thm-gen-basic}
Consider score functions satisfying Assumption~\ref{as:noties}, an informative subset collection $\mathcal{I}$ satisfying Assumption~\ref{assI}  and a $p$-value collection $\mathbf{p}$ being either $\pfullbf$ (full-calibrated) or $\pcondbf$ (class-calibrated), then the $\mathcal I$-informative selective prediction set procedure $\RinfoSP(\mathbf{p})$ (Definition~\ref{def:basic})  controls the FCR at level $\alpha$, respectively in the iid model ($\mathbf{p}=\pfullbf$) with the control \eqref{iidcontrol} or the class-conditional model ($\mathbf{p}=\pcondbf$) with the control \eqref{strongcontrol}. 
\end{theorem}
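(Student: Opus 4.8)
The plan is to reduce the FCR control of \texttt{InfoSP} to a statement about the BH procedure applied to the $\mathcal{I}$-adjusted $p$-values $\mathbf{q}$, and then to exploit the special positive dependence structure of conformal $p$-values, following the strategy of \cite{bates2023testing, marandon2024adaptive} adapted to our setting. First I would write out the FCP explicitly: on the event $\{|\BH(\mathbf{q})|=\ell\}$ the selection threshold is $t_\ell=\alpha\ell/m$, so $i\in\mathcal{S}$ exactly when $q_i\le t_\ell$, and the attached prediction set is $\mathcal{C}^{t_\ell}_{n+i}(\mathbf{p})$. Since $q_i\le t_\ell$ is equivalent to $\mathcal{C}^{t_\ell}_{n+i}(\mathbf{p})\in\mathcal{I}$ by the definition \eqref{qformula}, a non-coverage event $Y_{n+i}\notin\mathcal{C}^{t_\ell}_{n+i}(\mathbf{p})$ occurs together with $i\in\mathcal{S}$ only when $p^{(Y_{n+i})}_i\le t_\ell$, by the threshold characterization in Remark~\ref{rem:caldepth} / \eqref{PSuncorrected}. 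Hence
\[
\FCP(\RinfoSP(\mathbf{p}),Y)=\sum_{\ell=1}^m\frac{1}{\ell}\sum_{i=1}^m\ind{p^{(Y_{n+i})}_i\le \alpha\ell/m}\,\ind{|\BH(\mathbf{q})|=\ell}.
\]
So it suffices to bound $\E\big[\ind{p^{(Y_{n+i})}_i\le \alpha\ell/m}\ind{|\BH(\mathbf{q})|=\ell}\big]$ summed over $i$ and $\ell$, and the target is $\alpha=\sum_\ell (\ell/m)(\alpha/\ell)\cdot(\text{something}\le 1)$; concretely I want $\sum_i\E[\ind{p^{(Y_{n+i})}_i\le\alpha\ell/m}\ind{|\BH(\mathbf{q})|=\ell}]\le \alpha\ell$ for each $\ell$, since then summing $\sum_\ell(1/\ell)\alpha\ell/m$ over the disjoint events would exceed $\alpha$, so instead one needs the sharper "leave-one-out" argument: replace $\ind{|\BH(\mathbf{q})|=\ell}$ by an event that does not depend on coordinate $i$ in a way that breaks the super-uniformity.

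The standard device is the following. Fix $i$ and $\ell$, and define $\mathbf{q}^{-i}$ to be the adjusted $p$-value vector with $q_i$ replaced by $0$ (so $i$ is always selected). By monotonicity of BH, on the event $\{i\in\BH(\mathbf{q}),\ |\BH(\mathbf{q})|=\ell\}$ one has $|\BH(\mathbf{q}^{-i})|=\ell$ as well, and crucially $\{|\BH(\mathbf{q}^{-i})|=\ell\}$ is (by the concordance/monotonicity property emphasized in the paper, cf.\ Lemma~\ref{lem:monotonicityq} and the general Section~\ref{sec:genBYresult}) a nonincreasing event in the remaining coordinates. One is then left with bounding, for each $\ell$,
\[
\sum_{i=1}^m\E\!\left[\ind{p^{(Y_{n+i})}_i\le \alpha\ell/m}\,\ind{|\BH(\mathbf{q}^{-i})|=\ell}\right],
\]
and here the key is to condition on the multiset of calibration scores together with the test scores of all coordinates $\notin\{i\}$ — equivalently, to use the exchangeability between $(X_{n+i},Y_{n+i})$ and the calibration points (in the iid model) or within each class (in the class-conditional model). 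Under this conditioning, $p^{(Y_{n+i})}_i$ is stochastically lower bounded by a uniform on a grid while $\ind{|\BH(\mathbf{q}^{-i})|=\ell}$ becomes a deterministic nonincreasing function of the rank of $S_{Y_{n+i}}(X_{n+i})$; the classical BH/PRDS computation (Benjamini–Yekutieli) then yields $\sum_i \E[\cdots]\le \alpha\ell/m \cdot (\text{number of terms})$ in the exact form needed so that the total is $\le\alpha$. For the class-conditional case the argument is run class by class, using that $\pcond{y}{i}$ is built only from calibration points of label $y$, which is why exchangeability within each class suffices.

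I would organize the writeup by (a) establishing the FCP identity above; (b) citing or invoking the general theorem of Section~\ref{sec:genBYresult} to handle the concordant/monotone selection together with the conformal $p$-value dependence in one shot, verifying that BH selection on $\mathbf{q}$ is concordant and that $\mathbf{q}$ inherits the required dependence from $\mathbf{p}$ via Lemma~\ref{lem:monotonicityq}; and (c) checking the two model-specific exchangeability statements (iid: all $n+m$ exchangeable; class-conditional: exchangeable within each class), which is where the super-uniformity \eqref{superunif} for $\pfullbf$ and $\pcondbf$ respectively comes from. The main obstacle, and the place where care is genuinely needed, is step (b): the selection threshold for coordinate $i$ depends on $q_i$ itself (it is $p$-value-based, unlike \citealp{bao2024selective}), so one cannot simply condition away coordinate $i$; the resolution is the leave-one-out substitution $q_i\mapsto 0$ combined with the monotonicity of the BH cutoff, and one must verify that this substitution does not decrease $|\BH(\mathbf{q})|$ on the relevant event and that the resulting event is monotone in the right direction — this is exactly the content that the paper has abstracted into its general Benjamini–Yekutieli-type theorem, so the cleanest path is to check its hypotheses rather than reprove it.
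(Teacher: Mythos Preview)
Your proposal is correct and follows essentially the same route as the paper: verify that the selection $\BH(\mathbf{q})$ is concordant (Assumption~\ref{as:nonincreasingSelect}, via the BH leave-one-out property together with Lemma~\ref{lem:monotonicityq}) and that the conformal $p$-value families $\pfullbf,\pcondbf$ satisfy the structural dependence condition (Assumption~\ref{as:generalpvalues}, via the exchangeability arguments you sketch), then invoke the general Theorem~\ref{th:generalBY}. One minor slip: your displayed FCP formula is an upper bound rather than an equality, since you silently dropped the indicator $\ind{i\in\mathcal{S}}=\ind{q_i\le\alpha\ell/m}$; this is harmless for the argument.
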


A proof is provided in \S~\ref{sec:thm-gen-basic}, which relies on a more general result, see Theorem~\ref{th:generalBY}. 
\add{
The latter provides FCR guarantee to more general $p$-value collections (see Assumption~\ref{as:generalpvalues}). For instance, it is valid  
for the two $p$-value collections $\pfullbf$ and $\pcondbf$ under less restrictive conditions that those considered in Theorem~\ref{thm-gen-basic}: the independence assumption can be relaxed to an exchangeable assumption (Propositions~\ref{prop:iidpvalues}~and~\ref{prop:condpvalues}), while the score  function can take a general form that can provide an extra improvement (Assumption~\ref{as:AllAdaptive}).
In addition, our result 
applies beyond informative selection, to any {\it concordant} selection rule (see Assumption~\ref{as:nonincreasingSelect}), as defined in \cite{benjamini2005false,benjamini2014selective}. 
For instance, it applies for any monotone $p$-value-based thresholding rule (see Section~\ref{sec:as:nonincreasingSelect} for more details). This is in sharp contrast with the work of \cite{bao2024selective} that does not provide finite sample FCR control at the desired level for $p$-value-based selection rules, because the selection rule therein should be independent of the calibration scores. 
}

\add{Let us also mention that an alternative proof of Theorem~\ref{thm-gen-basic} can be obtained by using the property that a particular $p$-value based family is PRDS (positive regression dependence on each one from a subset \citealp{BY2001}), see Section~\ref{sec:alternativeproof}. This can be seen as an extension of Lemma~5 (iii) in \cite{Jin2023selection} to our general informative setting.}

We apply Lemma~\ref{lem:FDRsmallerFCR} to obtain the following FDR guarantee for \texttt{InfoSP}.

\begin{corollary}
    In the setting of Theorem~\ref{thm-gen-basic}, the  selection rule $\BH(\mathbf{q})$ of $\RinfoSP(\mathbf{p})$ provides level $\alpha$ FDR control,  given either by \eqref{equFDRmixturemodel} in the iid model ($\mathbf{p}=\pfullbf$) or \eqref{equFDRcondmodel} in the class-conditional model ($\mathbf{p}=\pcondbf$).  
\end{corollary}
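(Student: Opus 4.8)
The plan is to chain together the two results already in hand: the deterministic (pathwise) inequality of Lemma~\ref{lem:FDRsmallerFCR} and the FCR bound of Theorem~\ref{thm-gen-basic}. The only thing that requires a (trivial) verification is that the procedure $\RinfoSP(\mathbf{p})$ is $\mathcal{I}$-informative, so that Lemma~\ref{lem:FDRsmallerFCR} applies to it.

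First I would check informativeness. By Definition~\ref{def:basic}, the selection is $\mathcal{S}(\mathbf{p})=\BH(\mathbf{q})$ and, for $i\in\mathcal{S}(\mathbf{p})$, the reported set is $\mathcal{C}^{\alpha|\mathcal{S}(\mathbf{p})|/m}_{n+i}(\mathbf{p})$. From \eqref{equBH}, every $i\in\BH(\mathbf{q})$ satisfies $q_i\leq \alpha|\BH(\mathbf{q})|/m=:t$, and by the defining property of the $\mathcal{I}$-adjusted $p$-value \eqref{qformula} (together with the monotonicity noted after Assumption~\ref{assI}), $q_i\leq t$ is equivalent to $\mathcal{C}^{t}_{n+i}(\mathbf{p})\in\mathcal{I}$. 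Hence $\mathcal{C}_{n+i}=\mathcal{C}^{\alpha|\mathcal{S}(\mathbf{p})|/m}_{n+i}(\mathbf{p})\in\mathcal{I}$ for all $i\in\mathcal{S}(\mathbf{p})$, i.e.\ $\RinfoSP(\mathbf{p})$ is $\mathcal{I}$-informative.

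Next I would invoke Lemma~\ref{lem:FDRsmallerFCR} for this procedure, giving the pathwise bound $\FDP(\mathcal{S}(\mathbf{p}),Y)\leq \FCP(\RinfoSP(\mathbf{p}),Y)$ almost surely. Taking expectations in the relevant model and using Theorem~\ref{thm-gen-basic} then closes the argument: in the iid model with $\mathbf{p}=\pfullbf$,
\[
\FDR(\mathcal{S}(\mathbf{p}),P_{XY})=\E_{(X,Y)\sim P_{XY}}[\FDP(\mathcal{S}(\mathbf{p}),Y)]\leq \E_{(X,Y)\sim P_{XY}}[\FCP(\RinfoSP(\mathbf{p}),Y)]=\FCR(\RinfoSP(\mathbf{p}),P_{XY})\leq\alpha,
\]
and identically in the class-conditional model with $\mathbf{p}=\pcondbf$, replacing $\E_{(X,Y)\sim P_{XY}}$ by $\E_{X\sim P_{X\mid Y}}$ and using the control \eqref{strongcontrol}. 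Taking the supremum over the model parameters yields \eqref{equFDRmixturemodel}, resp.\ \eqref{equFDRcondmodel}, at level $\alpha$.

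\textbf{Anticipated difficulty.} There is essentially no obstacle here; the statement is a direct corollary. The only point deserving care is making the informativeness verification rigorous — specifically, justifying the equivalence ``$q_i\leq t \iff \mathcal{C}^{t}_{n+i}(\mathbf{p})\in\mathcal{I}$'' at the realized (random) threshold $t=\alpha|\BH(\mathbf{q})|/m$ rather than at a fixed level, which follows from the right-continuity and monotonicity of $\alpha\mapsto\ind{\mathcal{C}^{\alpha}_{n+i}(\mathbf{p})\in\mathcal{I}}$ granted by Assumption~\ref{assI}. Everything else is just linearity of expectation and the quoted theorem.
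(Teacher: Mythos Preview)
Your proposal is correct and follows exactly the paper's route: the paper simply states ``We apply Lemma~\ref{lem:FDRsmallerFCR} to obtain the following FDR guarantee for \texttt{InfoSP},'' i.e., combine the pathwise bound $\FDP\leq\FCP$ for $\mathcal{I}$-informative procedures with the FCR control of Theorem~\ref{thm-gen-basic}. Your explicit verification that $\RinfoSP(\mathbf{p})$ is $\mathcal{I}$-informative (via $q_i\leq \alpha|\BH(\mathbf{q})|/m\Rightarrow \mathcal{C}^{\alpha|\BH(\mathbf{q})|/m}_{n+i}(\mathbf{p})\in\mathcal{I}$) is the same observation the paper makes just before Definition~\ref{def:basic}.
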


\begin{remark}\label{rem:iterativeBH}
For  \texttt{InfoSP}, we can  avoid the computation of $\mathbf{q}$ by using that the BH procedure is the iterative limit of a recursion \citep{gao2023constructive}. Indeed, since $\mathcal{C}^{\alpha k/m}_{n+i} \in \mathcal{I}$ if and only if $q_i\leq \alpha k/m$, the selection $S=\BH(\mathbf{q})$ can be obtained as follows:
 Step 1: $\mathcal{S}_1=\{i\in \range{m}\::\: \mathcal{C}^{\alpha}_{n+i} \in \mathcal{I} \}$; Step $t\geq 1$:  
    $\mathcal{S}_t=\{i\in \mathcal{S}_{t-1}\::\: \mathcal{C}^{\alpha|\mathcal{S}_{t-1}|/m}_{n+i} \in \mathcal{I} \}$;
Consider $t_0$ the first $t$ where $\mathcal{S}_t=\mathcal{S}_{t-1}$ and let $\mathcal{S}= \mathcal{S}_{t_0}$.
\end{remark}

\subsection{Informative selective conditional prediction sets (\texttt{InfoSCOP})}
\label{subsec-preprocess}

Throughout this section, we consider the iid model.
It turns out that \texttt{InfoSP} can be too  conservative in some contexts;
 this is manifest in the true FCR being  much smaller than the nominal $\alpha$ level, see illustrations in \S~\ref{sec:appliRegression} and \S~\ref{sec:classif}. To avoid this, we can adapt the conditional approach of \cite{bao2024selective} to our framework and start by selecting test samples and calibration samples. We would like, following the initial selection, to have as few as possible test samples for which $\mathcal I$-informative prediction sets cannot be constructed. For this purpose,  we further split the calibration sample, which is a classical trick in conformal literature  in order to preserve exchangeability with calibration samples after the initial selection (see Lemma~\ref{lemselective}).
Specifically, we follow the following steps
\begin{enumerate}
    \item Split the calibration sample $((X_j,Y_j),j\in \range{n})$ into two samples $((X_j,Y_j),j\in \range{r})$ and $((X_j,Y_j),j\in \range{r+1,n})$ for some $r\in \range{n-1}$.
    \item Apply an initial conformal selection rule $\mathcal{S}^{(0)}=\mathcal{S}^{(0)}((X_j,Y_j)_{j\in \range{r}},(X_j)_{j\in \range{r+1,n+m}})\subset \range{r+1,n+m}$ that considers as calibration sample $((X_j,Y_j),j\in \range{r})$ and test sample $((X_j,Y_j),j\in \range{r+1,n+m})$.   
    \item For  $i+n\in \mathcal S^{(0)}\cap\range{n+1,n+m}$, compute the conformal $p$-values using calibration set $\{(X_j,Y_j),j\in \mathcal S^{(0)}\cap\range{r+1,n}\}$ (i.e., using the conditional empirical distribution, post initial selection):
 \begin{equation}
     \pfull{0,y}{i}=\frac{1}{|\mathcal{S}^{(0)}\cap \range{r+1,n}|+1}\left(1+\sum_{j\in \mathcal{S}^{(0)}\cap \range{r+1,n}}\ind{S_{Y_{j}}(X_j)\geq S_{y}(X_{n+i})}\right).
\label{equprepropvalues}
 \end{equation} 
\end{enumerate}

We assume that the initial selection $\mathcal{S}^{(0)}$ satisfies the following permutation preserving assumption.
\begin{assumption}\label{as:S0}
    For any permutation $\sigma$ of $\range{r+1,n+m}$, $$\mathcal{S}^{(0)}((X_j,Y_j)_{j\in \range{r}}),(X_{\sigma(j)})_{j\in \range{r+1,n+m}})=\sigma\big(\mathcal{S}^{(0)}((X_j,Y_j)_{j\in \range{r}},(X_j)_{j\in \range{r+1,n+m}})\big).$$
\end{assumption}

The initial selection $\mathcal{S}^{(0)}$ is typically the result of a  multiple testing procedure that is applied to the examples in $\range{r+1,n+m}$, that uses $p$-values computed with  $((X_j,Y_j),j\in \range{r})$ as calibration sample and $(X_i, i\in \range{r+1,n+m})$ as covariate test sample, which immediately satisfies Assumption~\ref{as:S0}. 
We can use $\BH(\mathbf{q})$ as an initial selection stage (where the $q_i$'s are computed with the aforementioned sample split) so that selected examples are likely to correspond to $Y_{n+i}$ where an informative prediction set can be built. For excluding a null range in regression or a null class in classification, another choice is to use an appropriate BH procedure for testing that the examples from $\range{r+1,n+m}$ are from that null, see examples in \S~\ref{sec:excludeab} and \S~\ref{subsec-simul-bivariatenormal}.

\begin{definition}\label{def:prepro}
The informative selective conditional prediction set procedure pre-processed with the initial selection rule $\mathcal{S}^{(0)}$, called \texttt{InfoSCOP}, is defined as the \texttt{InfoSP} procedure of Definition~\ref{def:basic} applied with the pre-processed $p$-value family $\pfullbf^0$ \eqref{equprepropvalues}, that is, 
$\RinfoSCOP(\pfullbf)=\RinfoSP(\pfullbf^0)=(\mathcal{C}^{\alpha^0}_{n+i}(\pfullbf^0))_{i\in \BH(\mathbf{q}^0)},$
where $\alpha^0=\alpha|\BH(\mathbf{q}^0)|/|\mathcal S^{(0)}\cap\range{n+1,n+m}|$ and the $\mathcal{I}$-adjusted $p$-values $\mathbf{q}^0$ are computed via \eqref{qformula} from the pre-processed $p$-values $\pfullbf^0$. 
\end{definition}

 \begin{theorem}
     \label{thm-iid-withpreprocess}
     Consider the iid model (both for regression and classification), score functions satisfying Assumption~\ref{as:noties}, an informative subset collection $\mathcal{I}$ satisfying Assumption~\ref{assI}  and any initial selection rule $\mathcal{S}^{(0)}\subset \range{r+1,n+m}$ that satisfies Assumption~\ref{as:S0}. Then the \texttt{InfoSCOP} procedure of Definition~\ref{def:prepro} is such that
$ \FCR(\RinfoSCOP(\pfullbf)) \leq \alpha$. In addition, the associated selection rule $\BH(\mathbf{q}^0)$ controls the FDR \eqref{equFDRmixturemodel} at level $\alpha$.
 \end{theorem}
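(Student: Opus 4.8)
The plan is to reduce Theorem~\ref{thm-iid-withpreprocess} to Theorem~\ref{thm-gen-basic} by the standard ``calibration-splitting'' argument, formalized in the general selective lemma (Lemma~\ref{lemselective}) announced in \S~\ref{subsec-prevworks}. The key observation is that, conditionally on the first calibration block $((X_j,Y_j),j\in\range{r})$ and on the initial selection output $\mathcal{S}^{(0)}$, the examples indexed by $\mathcal{S}^{(0)}\cap\range{r+1,n+m}$ remain exchangeable. Indeed, Assumption~\ref{as:S0} says $\mathcal{S}^{(0)}$ is permutation-equivariant in the $X$'s of $\range{r+1,n+m}$, and in the iid model the joint law of $((X_j,Y_j))_{j\in\range{r+1,n+m}}$ is exchangeable; hence the multiset of pairs $\{(X_j,Y_j):j\in\mathcal{S}^{(0)}\cap\range{r+1,n+m}\}$ is itself an exchangeable family, and in particular the pairs with $j\in\range{r+1,n}$ (the ``post-selection calibration'' sample) are exchangeable with the pairs $j=n+i\in\range{n+1,n+m}$ (the ``post-selection test'' sample).

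Second, I would check that on this conditional probability space all the hypotheses of Theorem~\ref{thm-gen-basic} are met. The pre-processed $p$-values $\pfull{0,y}{i}$ in \eqref{equprepropvalues} are exactly full-calibrated conformal $p$-values formed from the post-selection calibration sample $\{(X_j,Y_j):j\in\mathcal{S}^{(0)}\cap\range{r+1,n}\}$ and the post-selection test points; by the exchangeability just established they satisfy the super-uniformity \eqref{superunif} conditionally, and more precisely the relaxed-exchangeability version of Proposition~\ref{prop:iidpvalues} applies because the number $\ncal=|\mathcal{S}^{(0)}\cap\range{r+1,n}|$ and the identities of the test points are $\mathcal{S}^{(0)}$-measurable. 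Assumption~\ref{as:noties} (no ties) is inherited since we only discard points, and Assumption~\ref{assI} on $\mathcal{I}$ is about the collection $\mathcal{I}$ itself and is unaffected. Therefore Theorem~\ref{thm-gen-basic}, applied with test index set $\mathcal{S}^{(0)}\cap\range{n+1,n+m}$ in place of $\range{m}$ and with the $p$-value family $\pfullbf^0$, gives
$$
\E\!\left[\FCP(\RinfoSP(\pfullbf^0),Y)\,\middle|\,((X_j,Y_j))_{j\in\range{r}},\,\mathcal{S}^{(0)}\right]\le\alpha,
$$
and since $\RinfoSCOP(\pfullbf)=\RinfoSP(\pfullbf^0)$ by Definition~\ref{def:prepro}, taking expectations over the conditioning yields $\FCR(\RinfoSCOP(\pfullbf))\le\alpha$. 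The FDR statement for $\BH(\mathbf{q}^0)$ then follows from Lemma~\ref{lem:FDRsmallerFCR}, exactly as in the Corollary after Theorem~\ref{thm-gen-basic}, because $\RinfoSCOP(\pfullbf)$ is $\mathcal{I}$-informative by construction (all reported sets are $\mathcal{C}^{\alpha^0}_{n+i}$ with $q_i^0\le\alpha^0$).

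The main obstacle, and the step requiring care, is the conditional exchangeability claim together with the subtlety that the selected test index set $\mathcal{S}^{(0)}\cap\range{n+1,n+m}$ is random and data-dependent: one must verify that applying $\BH$ on $\mathbf{q}^0$ \emph{within} this random index set, with normalization $\alpha^0=\alpha|\BH(\mathbf{q}^0)|/|\mathcal{S}^{(0)}\cap\range{n+1,n+m}|$, is precisely the \texttt{InfoSP} procedure of Definition~\ref{def:basic} run on a ``universe'' of size $|\mathcal{S}^{(0)}\cap\range{n+1,n+m}|$, so that Theorem~\ref{thm-gen-basic} applies verbatim conditionally. This is where the general selective statement Lemma~\ref{lemselective} does the bookkeeping: it guarantees that any error-rate bound valid for a fixed calibration/test configuration transfers to the post-selection configuration, provided the selection preserves exchangeability of the calibration and test samples --- which is the content of Assumption~\ref{as:S0}. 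A secondary technical point is that, unlike in Theorem~\ref{thm-gen-basic}, one cannot invoke the class-conditional version here (Definition~\ref{def:prepro} and the pre-processed $p$-values are tied to the iid model), which is why the statement is restricted to \eqref{iidcontrol}; this restriction should be noted explicitly but costs nothing in the argument.
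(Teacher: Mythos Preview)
Your proposal is correct and follows essentially the same route as the paper: invoke the calibration-splitting trick (Lemma~\ref{lemselective}) to reduce to Theorem~\ref{thm-gen-basic} applied conditionally on $((X_j,Y_j))_{j\in\range{r}}$ and $\mathcal{S}^{(0)}$, checking that Assumption~\ref{as:S0} yields the required exchangeability and that Theorem~\ref{thm-gen-basic} holds under mere exchangeability via Proposition~\ref{prop:iidpvalues}, then deduce the FDR bound from Lemma~\ref{lem:FDRsmallerFCR}. The paper's own proof is more terse but uses exactly these ingredients in the same order.
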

The proof, provided in \S~\ref{proof:thm-iid-withpreprocess}, follows directly from the general calibration splitting trick  Lemma~\ref{lemselective}.

As we will see in the next sections,  while it maintains the FCR guarantee, \texttt{InfoSCOP} can greatly improve over \texttt{InfoSP}. 
The main reason is that adjusting for selection is cheaper after the initial selection step: by reducing the fraction of examples in the test sample for which it is not possible to construct $\mathcal I$-informative prediction sets, the correction term $\alpha^0$ is expected to be close to $\alpha$ (or not much smaller than $\alpha$). Another reason is that the selection-conditional $p$-values $\pfullbf^0$ will be better in settings where the initial selection step tends to remove the examples from the calibration set that have large non-conformity scores, see  \S~\ref{sec:appliRegression}, \S~\ref{sec:moreregressionillustration}, and \S~\ref{sm_simulations} for such cases. 

In general, the way \texttt{InfoSCOP} can improve over \texttt{InfoSP} depends on the context.  For instance, for excluding a null range in  classification, a null class in regression or for length restriction in regression, we show in \S~\ref{sec:appliRegression} and \S~\ref{sec:classif}, respectively, the great potential advantage of using the initial selection. On the other hand, we also show that for non-trivial classification, there may be no advantage of initial selection (in fact, there can be a slight disadvantage since the calibration sample after initial selection is smaller, as demonstrated in \S~\ref{subsec:min-iid}).

\section{Application to regression}\label{sec:appliRegression}

This section is devoted to the regression case (that is, $\ZZ=\R$), as already introduced in Example~\ref{ex:regression}. Throughout the section, we work in the iid model. \add{Illustrations for other informative selections and an application for gene expression prediction from promoter sequences can be found in \S~\ref{sec:moreregressionillustration} in the SM.}

\subsection{Excluding $[a,b]$ from the prediction interval}\label{sec:excludeab}

We focus here on the case where the user wants to build prediction intervals only for outcomes such that $Y_{n+i}<a$ or $Y_{n+i}> b$, which corresponds to excluding  $\mathcal{Y}_0=[a,b]$ from the prediction interval, where $a<b$ are two benchmark values. This corresponds to common practice where users are interested only in reporting prediction intervals for individuals with ``abnormal'' outcomes. 
Setting $a=-\infty$ recovers the case where we only want to report prediction intervals for examples such that $Y_{n+i}>b$, which is the selection considered in \cite{Jin2023selection}. We focus on two-sided prediction intervals here (the case of one-sided prediction intervals is postponed to \S~\ref{sec:moreregressionillustration}). The choice of score function defines \texttt{InfoSP} and entails all the desired inferential guarantees. We formalize this for the locally weighted score function in Corollary \ref{cor:abexcluded}. Using other score functions is also possible, e.g., the score function that corresponds to conformal quantile regression, see Remark \ref{rem-excludingintervals-scores}.

\begin{corollary}\label{cor:abexcluded}
Consider the iid model in the regression case, the locally weighted  score function $S_y(x)=|\mu(x)-y|/\sigma(x)$ and suppose that Assumption~\ref{as:noties} holds. Then the following holds for \texttt{InfoSP} with informative collection $\mathcal{I}=\{\mbox{$I$ interval of } \R\::\: I\cap [a,b]=\emptyset \}$ and full-calibrated $p$-value collection $\pfullbf$ \eqref{standardpvalue}:
\begin{itemize}
    \item[(i)] \texttt{InfoSP} selects $\mathcal{S}=\BH(\mathbf{q})$ with 
    \begin{equation}\label{equqiband}
     q_i=\pfull{a}{i}\ind{\mu(X_{n+i})<a} + \pfull{b}{i}\ind{\mu(X_{n+i})>b}+ \ind{a \leq \mu(X_{n+i})\leq b}.   
    \end{equation}
    \item[(ii)] The selection $\mathcal{S}$ of \texttt{InfoSP}  controls the FDR at level $\alpha$ in the following sense:
    $$
    \sup_{P_{XY}} \E_{(X,Y)\sim P_{XY}}\left[\frac{\sum_{i\in \mathcal{S}} \ind{Y_{n+i} \in [a,b]}}{1\vee |\mathcal{S}|}\right]\leq \alpha.
    $$
    \item[(iii)] The selected prediction intervals do not \add{intersect} $[a,b]$ and are of the form
    $
    \mathcal{C}_{n+i}= [\mu(x)-S_{(n_\alpha(\mathbf{p}))} \sigma(x), \mu(x) + S_{(n_\alpha(\mathbf{p}))} \sigma(x) ] $, 
    where $S_{(1)}\leq \dots\leq S_{(n)}$ are the ordered calibration scores $S_{Y_j}(X_j)$, $1\leq j\leq n$ (with $S_{(n+1)}=+\infty$), and $n_\alpha(\mathbf{p})=\lceil (1- \alpha|\mathcal S(\mathbf{p})|/m)(n+1)\rceil$.
    \item[(iv)] These prediction intervals control the FCR at level $\alpha$ in the sense of \eqref{iidcontrol}.
\end{itemize}
\end{corollary}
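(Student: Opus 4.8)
The plan is to derive Corollary~\ref{cor:abexcluded} as a direct specialization of Theorem~\ref{thm-gen-basic} (for parts (ii) and (iv)) together with an explicit computation of the $\mathcal{I}$-adjusted $p$-values $q_i$ (for part (i)) and of the prediction intervals $\mathcal{C}^{\alpha|\mathcal S|/m}_{n+i}(\pfullbf)$ (for part (iii)). First I would verify that $\mathcal{I}=\{I\text{ interval}:\,I\cap[a,b]=\emptyset\}$ satisfies Assumption~\ref{assI}: item (i) is clear since a subinterval of an interval disjoint from $[a,b]$ is still disjoint from $[a,b]$; item (iii) holds because for the locally weighted score $S_y(x)=|\mu(x)-y|/\sigma(x)$ the set $\{y:S_y(x)\le s\}=[\mu(x)-s\sigma(x),\mu(x)+s\sigma(x)]$ is an interval, so $\mathcal{C}^\alpha_{n+i}(\pfullbf)$ is always an interval by Remark~\ref{rem:caldepth}; and item (ii) (right-continuity of $\alpha\mapsto\ind{\mathcal{C}^\alpha_{n+i}\in\mathcal{I}}$) follows from the left-continuity (in $s$) of the endpoints of that level set together with right-continuity of $\hat s_\alpha$ as a function of $\alpha$ (it is a left-continuous-in-$\alpha$ order statistic read off at a ceiling). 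Assumption~\ref{as:noties} is assumed. Hence Theorem~\ref{thm-gen-basic} applies and gives the FCR bound \eqref{iidcontrol}, which is part (iv); part (ii) then follows from the Corollary to Theorem~\ref{thm-gen-basic} (or directly from Lemma~\ref{lem:FDRsmallerFCR}), noting that here $\cup_{C\in\mathcal{I}}C=\R\setminus[a,b]$ so $\FDP(\mathcal{S},Y)=(\sum_{i\in\mathcal S}\ind{Y_{n+i}\in[a,b]})/(1\vee|\mathcal S|)$.

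Next I would compute $q_i$ for part (i). By definition \eqref{qformula}, $q_i=\min\{\alpha\in(0,1]:\mathcal{C}^\alpha_{n+i}(\pfullbf)\in\mathcal{I}\}$, i.e. the smallest $\alpha$ making the interval $[\mu(X_{n+i})-\hat s_\alpha\sigma(X_{n+i}),\mu(X_{n+i})+\hat s_\alpha\sigma(X_{n+i})]$ disjoint from $[a,b]$. If $\mu(X_{n+i})\in[a,b]$, the center of the interval already lies in $[a,b]$, so no choice of $\alpha$ (short of the empty interval) makes it disjoint; by the convention $q_i=1$ this gives the term $\ind{a\le\mu(X_{n+i})\le b}$. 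If $\mu(X_{n+i})<a$, the interval is disjoint from $[a,b]$ exactly when its right endpoint is below $a$, i.e. $S_a(X_{n+i})=|\mu(X_{n+i})-a|/\sigma(X_{n+i})>\hat s_\alpha$, which by \eqref{predictionsetthreshold} is exactly $a\in\mathcal{C}^\alpha_{n+i}(\pfullbf)$, i.e. $\pfull{a}{i}>\alpha$; minimizing $\alpha$ over this constraint gives $q_i=\pfull{a}{i}$. The case $\mu(X_{n+i})>b$ is symmetric and gives $q_i=\pfull{b}{i}$. Summing the three mutually exclusive cases yields \eqref{equqiband}. This also matches the general formula $q_i=\sup_{y\in[a,b]}p^{(y)}_i$ from Example~\ref{ex:regression2}, since $p^{(y)}_i$ is monotone in $S_y(X_{n+i})=|\mu(X_{n+i})-y|/\sigma(X_{n+i})$, whose supremum over $y\in[a,b]$ is attained at $y=a$ when $\mu(X_{n+i})<a$, at $y=b$ when $\mu(X_{n+i})>b$, and equals $+\infty$-equivalent (value $1$) when $\mu(X_{n+i})\in[a,b]$.

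For part (iii), by Definition~\ref{def:basic} the prediction set for $i\in\mathcal S=\BH(\mathbf{q})$ is $\mathcal{C}^{\alpha|\mathcal S|/m}_{n+i}(\pfullbf)$, which by Remark~\ref{rem:caldepth} is $\{y:S_y(X_{n+i})\le\hat s\}$ with $\hat s=S_{(\lceil(1-\alpha|\mathcal S|/m)(n+1)\rceil)}=S_{(n_\alpha(\mathbf{p}))}$; plugging in the locally weighted score gives the stated interval $[\mu(x)-S_{(n_\alpha(\mathbf{p}))}\sigma(x),\mu(x)+S_{(n_\alpha(\mathbf{p}))}\sigma(x)]$. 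The claim that this interval does not intersect $[a,b]$ is immediate from $\mathcal{I}$-informativeness of \texttt{InfoSP} (the selection rule is a $\mathbf{q}$-thresholding rule and by the basic observation preceding Definition~\ref{def:basic}, $q_i\le\alpha|\mathcal S|/m$ forces $\mathcal{C}^{\alpha|\mathcal S|/m}_{n+i}(\pfullbf)\in\mathcal{I}$). The only mild subtlety — the step I expect to require the most care — is the verification of Assumption~\ref{assI}(ii), the right-continuity in $\alpha$ of the informativeness indicator, since $\hat s_\alpha$ jumps at the finitely many values of $\alpha$ where $\lceil(1-\alpha)(n+1)\rceil$ changes; but because the ceiling makes $\alpha\mapsto\hat s_\alpha$ piecewise constant and right-continuous, and the level-set endpoints are continuous functions of $\hat s_\alpha$, the composite indicator inherits right-continuity, so this is routine once set up carefully. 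Everything else is bookkeeping.
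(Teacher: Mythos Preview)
Your approach is essentially the paper's: (iv) from Theorem~\ref{thm-gen-basic}, (ii) from Lemma~\ref{lem:FDRsmallerFCR}, (iii) from Remark~\ref{rem:caldepth} together with the equivalence $q_i\le\alpha|\mathcal S|/m\iff\mathcal{C}_{n+i}\cap[a,b]=\emptyset$, and (i) by direct computation (the paper does this in \S~\ref{sec:qicomput} via $q_i=\sup_{y\in[a,b]}p_i^{(y)}$ and $\inf_{y\in[a,b]}S_y(x)$, which you also mention).

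There is however a sign slip in your derivation of (i) for the case $\mu(X_{n+i})<a$. You correctly obtain that disjointness is equivalent to $S_a(X_{n+i})>\hat s_\alpha$, but by \eqref{predictionsetthreshold} this means $a\notin\mathcal{C}^\alpha_{n+i}(\pfullbf)$, i.e.\ $\pfull{a}{i}\le\alpha$ (not $a\in\mathcal{C}^\alpha_{n+i}$ and $\pfull{a}{i}>\alpha$ as you wrote). With the corrected inequality, $q_i=\min\{\alpha:\pfull{a}{i}\le\alpha\}=\pfull{a}{i}$ follows; with the inequality as you wrote it, the minimization would give $0$. The conclusion is right, but the two displayed equivalences are reversed.
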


\begin{proof}
\add{Point (i) follows from straightforward computations, see Section~\ref{sec:qicomput}. }
Point (ii) follows from Lemma~\ref{lem:FDRsmallerFCR}, (iii) from Remark~\ref{rem:caldepth} and the fact that $q_i\leq \alpha|\mathcal S(\mathbf{p})|/m$ iff $\mathcal{C}_{n+i}$ does not \add{intersect} $[a,b]$. Point (iv) follows from Theorem~\ref{thm-gen-basic}.
\end{proof}

Hence, our method, in addition to providing an FCR control on the selected \delete{(iv)} ensures that the obtained prediction intervals are informative in the sense that they do not include benchmark values (i.e., values in $[a,b]$). This ensures that the selection method is meaningful for the considered aim, which formally entails the FDR control (ii).  
Obviously, since \texttt{InfoSCOP} is an \texttt{InfoSP} method for preprocessed $p$-values \eqref{equprepropvalues}, a similar result holds for \texttt{InfoSCOP}, for any initial selection step $\mathcal{S}^{(0)}\subset \range{r+1,n+m}$ that satisfies the permutation preserving Assumption~\ref{as:S0}. 

Corollary~\ref{cor:abexcluded} is illustrated on Figure~\ref{fig-regressexnonullab} when $\mathcal{S}^{(0)}$ is taken here as $\BH(\mathbf{q})$ at level $2\alpha$ (with the score $S_y(x)=|\mu(x)-y|/\sigma(x)$). In the first row, errors are larger further away from $[a,b]$. Hence, while the marginal prediction intervals control the FCR at level $\alpha$ when selecting all the covariates (as granted by \eqref{globalFCRcontrol}), the FCR is inflated for a naive selection that selects each example with a prediction interval at level $\alpha$ not intersecting $[a,b]$ (that is, naive selection is given by $\mathcal{S}_1$ in the recursion of Remark~\ref{rem:iterativeBH}). To maintain the FCR control at level $\alpha=0.1$, \texttt{InfoSP} adjusts the width of the prediction interval 
in an accurate way to accommodate the informative constraint. \texttt{InfoSCOP} is roughly the same as \texttt{InfoSP} in this case, because the largest scores are kept in the calibration sample after initial selection. This is the most unfavorable setting for \texttt{InfoSCOP}, but it  nevertheless performs similarly to \texttt{InfoSP}. In the second row, errors are smaller further away from $[a,b]$,  which makes the FCR of the naive selection and \texttt{InfoSP} far too conservative. By contrast, the initial selection of \texttt{InfoSCOP} removes the largest scores of the calibration sample, resulting in  much narrower prediction intervals, and thus in a much larger resolution-adjusted power (even better than the naive procedure).

\begin{figure}[h!]
\begin{tabular}{ccc}
Classical conformal & \texttt{InfoSP}  & \texttt{InfoSCOP}
\\
\hspace{-9mm}\includegraphics[width=0.38\textwidth, height = 0.24\textheight]{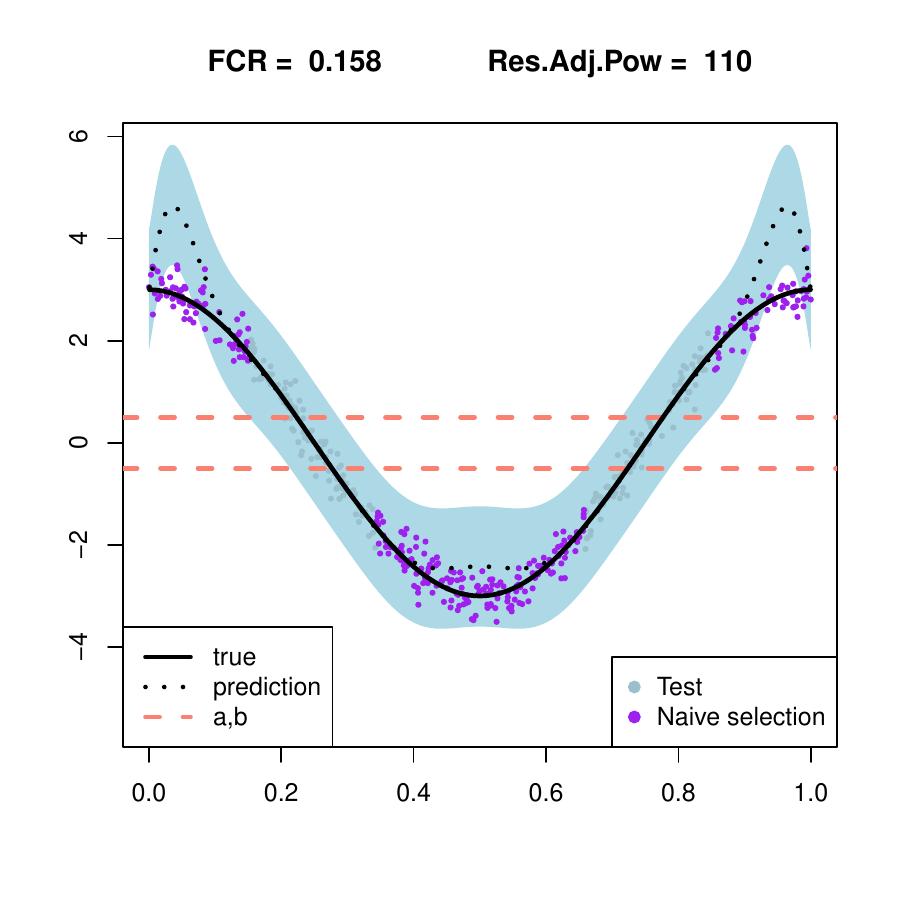}&\hspace{-9mm}
\includegraphics[width=0.38\textwidth,  height = 0.24\textheight]{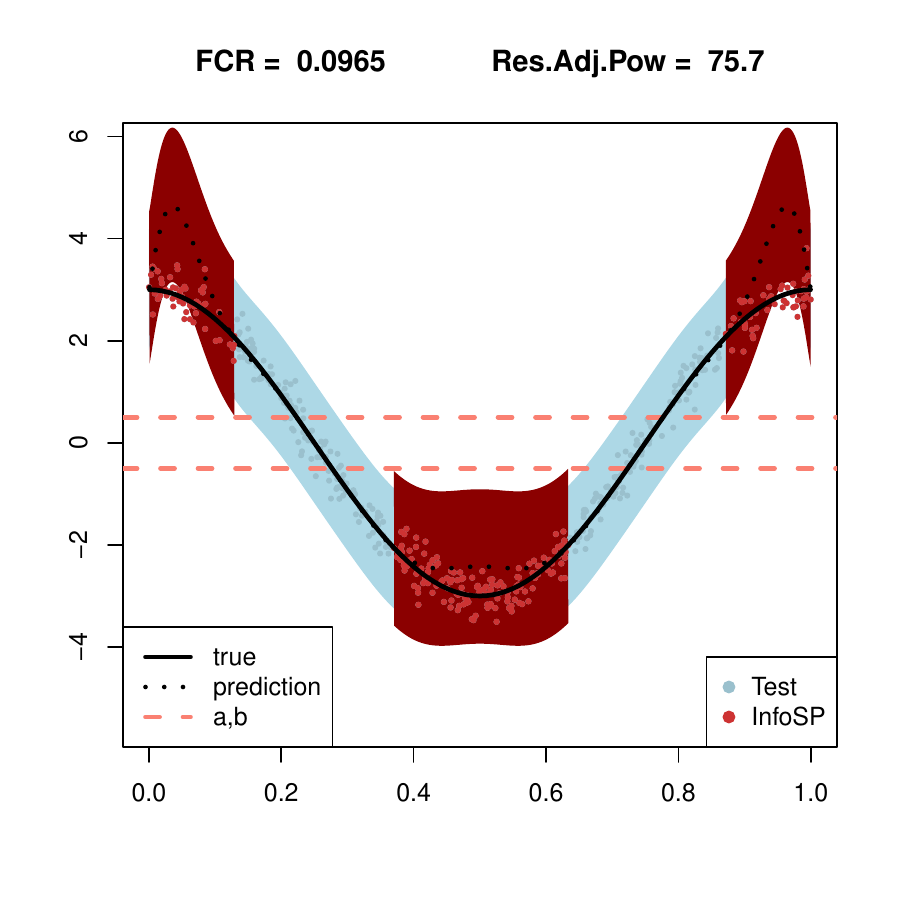}&\hspace{-9mm}
\includegraphics[width=0.38\textwidth,  height = 0.24\textheight]{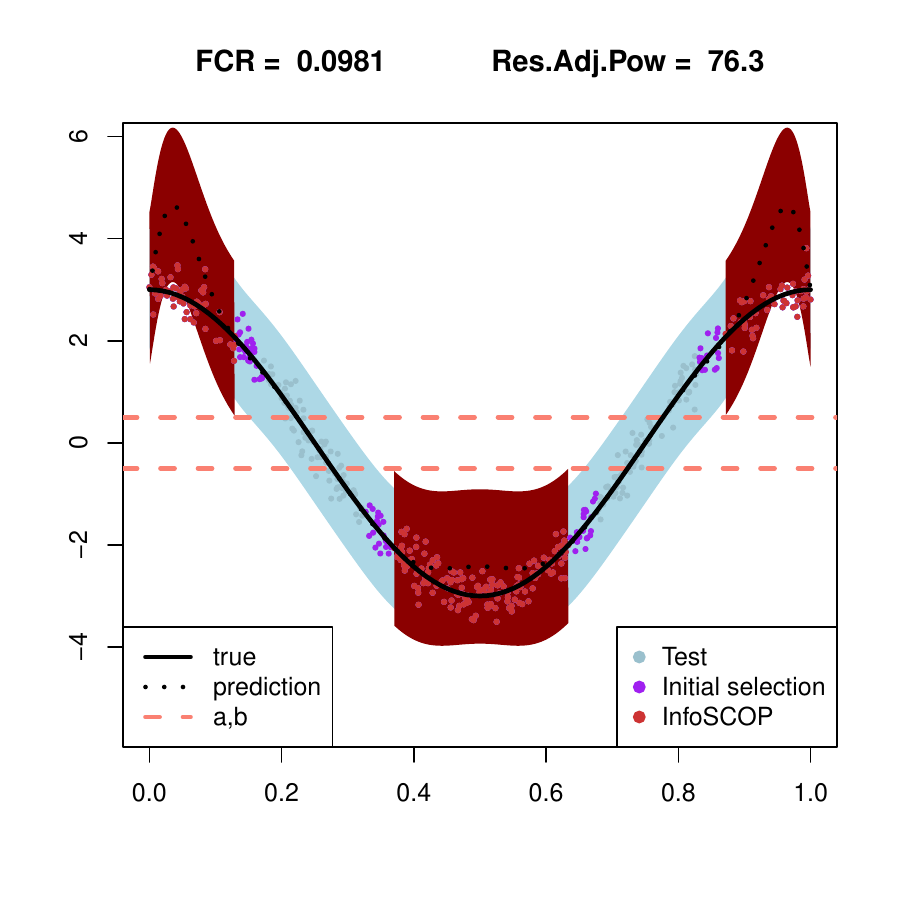}\vspace{-5mm}\\
\hspace{-9mm}\includegraphics[width=0.38\textwidth, height = 0.24\textheight]{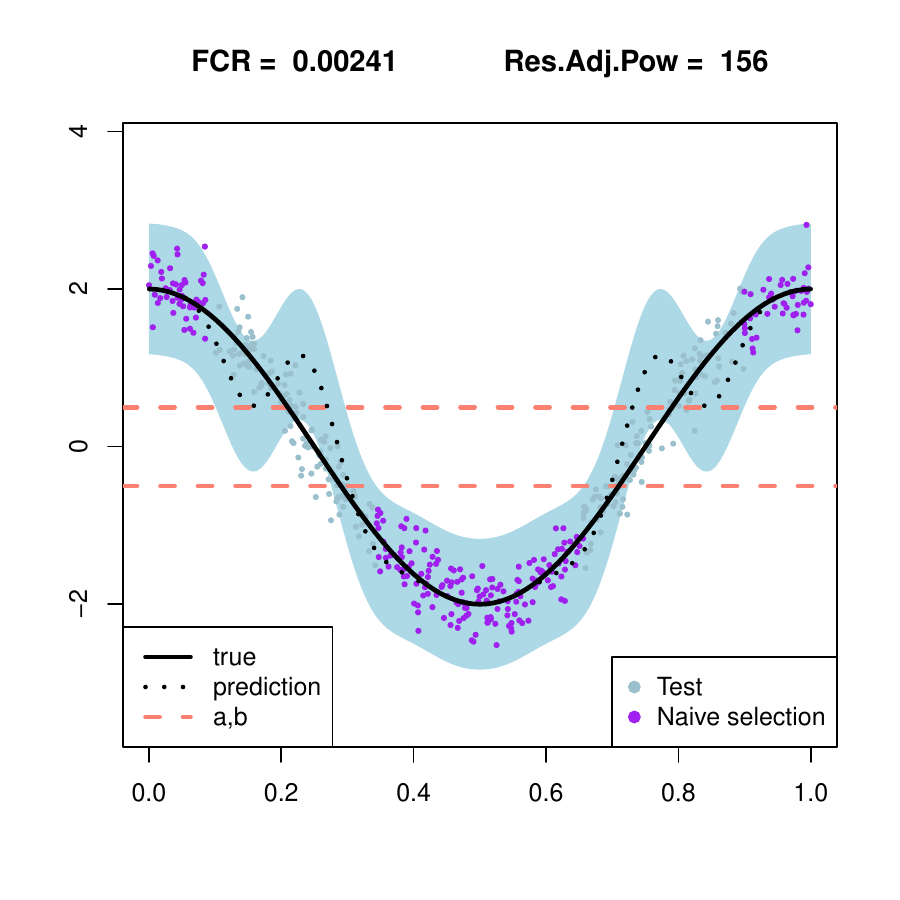}&\hspace{-9mm}
\includegraphics[width=0.38\textwidth,  height = 0.24\textheight]{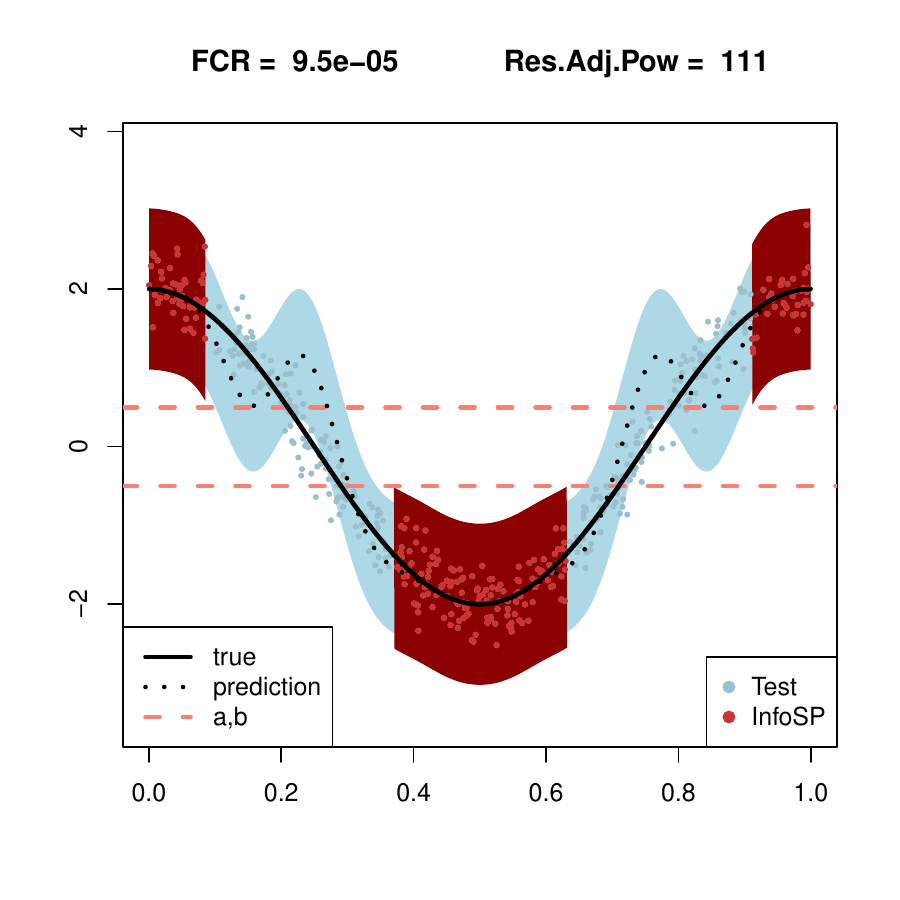}&\hspace{-9mm}
\includegraphics[width=0.38\textwidth,  height = 0.24\textheight]{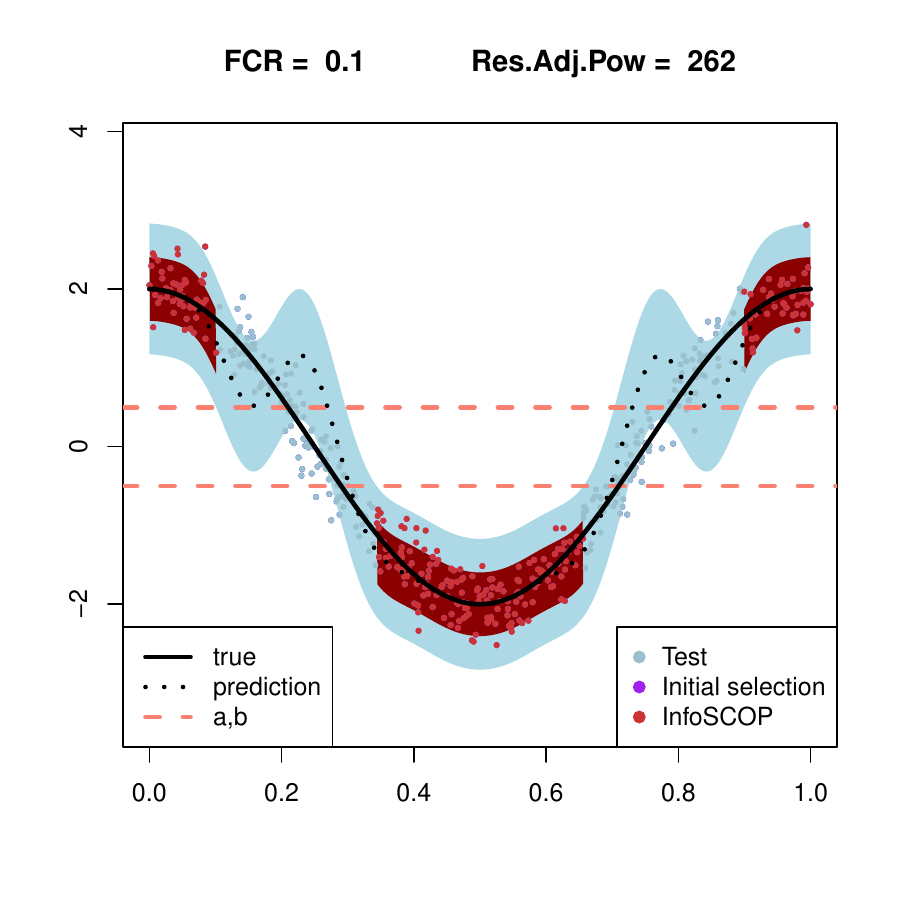}
\end{tabular}
\vspace{-5mm}
\caption{Informative prediction intervals when excluding $[a,b]$ (homoscedastic Gaussian regression model with perfect variance prediction), see text. The predictor $\mu$ (dotted line) does not approximate well the true $\mu^*(x)=\E[Y|X=x]$ (solid line) in the selection area (top row) and  out of the selection area (bottom row). The marginal  and informative prediction intervals (\texttt{InfoSP} and \texttt{InfoSCOP}) are depicted in light blue and red, respectively. While the plot corresponds to one data generation, the FCR and adjusted power computed in the title of each panel are computed with $100$ Monte-Carlo simulations. $n=1000$, $m=500$, $\alpha=0.1$.
\label{fig-regressexnonullab}} 
\end{figure}

\begin{remark}\label{rem-excludingintervals-scores}
   In Corollary~\ref{cor:abexcluded}, we consider the locally weighted score function $S_y(x)=|\mu(x)-y|/\sigma(x)$ for simplicity of exposition, but we can use any  score function satisfying Assumption~\ref{as:noties}. For instance, for the quantile-based score function $S_y(x)=\max(q_{\beta_0}(x)-y,y-q_{\beta_1}(x))$, the corresponding $q_i$ have the expression 
$        q_i=\pfull{a}{i}\ind{\mu(X_{n+i})<a} + \pfull{b}{i}\ind{\mu(X_{n+i})>b}+ \pfull{\mu(X_{n+i})}{i}\ind{a \leq \mu(X_{n+i})\leq b},    $    where $\mu(x)=(q_{\beta_0}(x)+q_{\beta_1}(x))/2$ and where the $\pfull{y}{i}$'s are computed by using this score function \add{(see the computations in  Section~\ref{sec:qicomput})}.    
    This leads to the prediction intervals 
    $
    \mathcal{C}_{n+i}= [q_{\beta_0}(X_{n+i})-S_{(n_\alpha(\mathbf{p}))}, q_{\beta_1}(X_{n+i}) + S_{(n_\alpha(\mathbf{p}))}]$ (which do not \add{intersect} $[a,b]$), by using the notation of Corollary~\ref{cor:abexcluded}. 
\end{remark}

  \subsection{Length-restricted prediction  intervals} 

   In this section, we consider the situation where the user only wants to report prediction intervals that are accurate enough, which corresponds to consider  $\mathcal{I}=\{[a,b]\subset \R\::\: 0<b-a\leq 2\lambda_0\}$ as the informative subset collection, for some size $\lambda_0>0$. 
   
   \begin{corollary}\label{cor:length}
Consider the iid model in the regression case, consider the locally weighted score function $S_y(x)=|\mu(x)-y|/\sigma(x)$ and suppose that Assumption~\ref{as:noties} holds. 
Then the following holds for \texttt{InfoSP} with informative collection $\mathcal{I}=\{[a,b]\subset \R\::\: 0<b-a\leq 2\lambda_0\}$ and full-calibrated $p$-value collection $\pfullbf$ \eqref{standardpvalue}:
\begin{itemize}
    \item[(i)] \texttt{InfoSP} selects $\mathcal{S}=\BH(\mathbf{q})$ with $q_i$ given by the formula of Example~\ref{ex:regression2}. 
    \item[(ii)] The selected prediction intervals are of length at most $2\lambda_0$ and are of the form
    $
    \mathcal{C}_{n+i}= [\mu(x)-S_{(n_\alpha(\mathbf{p}))} \sigma(x), \mu(x) + S_{(n_\alpha(\mathbf{p}))} \sigma(x) ]$, 
    where $S_{(1)}\leq \dots\leq S_{(n)}$ are the ordered calibration scores $S_{Y_j}(X_j)$, $1\leq j\leq n$ (with $S_{(n+1)}=+\infty$), and $n_\alpha(\mathbf{p})=\lceil (1- \alpha|\mathcal S(\mathbf{p})|/m)(n+1)\rceil$.
    \item[(iii)] These prediction intervals control the FCR at level $\alpha$ in the sense of \eqref{iidcontrol}.
\end{itemize}
\end{corollary}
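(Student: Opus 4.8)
The plan is to follow the same three-step pattern as in the proof of Corollary~\ref{cor:abexcluded}, since here \texttt{InfoSP} is completely determined by the family $\pfullbf$, the collection $\mathcal I$, and the resulting $\mathcal I$-adjusted vector $\mathbf q$; points (ii) and (iii) will then be read off from (i) together with Remark~\ref{rem:caldepth} and Theorem~\ref{thm-gen-basic}.

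\emph{Step 1: checking Assumption~\ref{assI}.} For the locally weighted score, every lower level set $\{y\in\R:S_y(x)\le s\}$ equals the interval $[\mu(x)-s\sigma(x),\mu(x)+s\sigma(x)]$, so Assumption~\ref{assI}(iii) holds and, by Remark~\ref{rem:caldepth}, $\mathcal C^{\alpha}_{n+i}(\pfullbf)=[\mu(X_{n+i})-\hat s_\alpha\sigma(X_{n+i}),\,\mu(X_{n+i})+\hat s_\alpha\sigma(X_{n+i})]$ with $\hat s_\alpha=S_{(\lceil(1-\alpha)(n+1)\rceil)}$ and the convention $S_{(n+1)}=+\infty$. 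Part (i) of Assumption~\ref{assI} is immediate, as any sub-interval of an interval of length at most $2\lambda_0$ again has length at most $2\lambda_0$; for part (ii) note that $\alpha\mapsto\lceil(1-\alpha)(n+1)\rceil$ is nonincreasing and right-continuous, hence so is $\alpha\mapsto\hat s_\alpha$, and therefore $\alpha\mapsto\ind{\mathcal C^\alpha_{n+i}(\pfullbf)\in\mathcal I}=\ind{2\hat s_\alpha\sigma(X_{n+i})\le 2\lambda_0}$ is right-continuous. Combined with Assumption~\ref{as:noties}, this places us in the setting of Theorem~\ref{thm-gen-basic}.

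\emph{Step 2: computing $q_i$ and proving (i).} Writing $A=\lambda_0/\sigma(X_{n+i})$, we have $\mathcal C^{\alpha}_{n+i}(\pfullbf)\in\mathcal I$ iff $2\hat s_\alpha\sigma(X_{n+i})\le 2\lambda_0$ iff $\hat s_\alpha\le A$. Since $S_{(k)}\le A$ holds iff at least $k$ of the calibration scores are $\le A$, i.e.\ iff $\sum_{j=1}^n\ind{S_{Y_j}(X_j)>A}\le n-k$, the membership condition becomes $\lceil(1-\alpha)(n+1)\rceil\le n-\sum_{j=1}^n\ind{S_{Y_j}(X_j)>A}$, which rearranges to $\alpha\ge(n+1)^{-1}\big(1+\sum_{j=1}^n\ind{S_{Y_j}(X_j)>A}\big)$. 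Taking the minimum over $\alpha\in(0,1]$ in \eqref{qformula} (well-defined by the monotonicity and right-continuity from Step~1) gives exactly the expression of Example~\ref{ex:regression2}; the remaining bookkeeping, including the degenerate case where every calibration score exceeds $A$ (for which $q_i=1$ and $i$ is never selected since $\alpha|\mathcal S|/m<1$), is the computation deferred to Section~\ref{sec:qicomput}. Hence \texttt{InfoSP} selects $\mathcal S=\BH(\mathbf q)$ with this $q_i$, which is (i).

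\emph{Step 3: proving (ii) and (iii).} If $i\in\mathcal S(\pfullbf)=\BH(\mathbf q)$ then $q_i\le \alpha|\mathcal S(\pfullbf)|/m=:\alpha'$ by \eqref{equBH}; since $\alpha\mapsto\ind{\mathcal C^\alpha_{n+i}(\pfullbf)\in\mathcal I}$ is nondecreasing and the minimum in \eqref{qformula} is attained at $q_i$, it follows that $\mathcal C^{\alpha'}_{n+i}(\pfullbf)\in\mathcal I$, i.e.\ the reported interval has length at most $2\lambda_0$; its closed form $[\mu(X_{n+i})-S_{(n_\alpha(\mathbf p))}\sigma(X_{n+i}),\,\mu(X_{n+i})+S_{(n_\alpha(\mathbf p))}\sigma(X_{n+i})]$ with $n_\alpha(\mathbf p)=\lceil(1-\alpha')(n+1)\rceil$ is precisely the level-$\alpha'$ set of Remark~\ref{rem:caldepth}. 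This is (ii). Finally (iii) is immediate from Theorem~\ref{thm-gen-basic} applied to $\mathbf p=\pfullbf$ in the iid model, whose hypotheses were verified in Step~1. I do not expect a genuine obstacle here, as each step merely specializes a result already proved; the only mildly delicate point is the ceiling/indicator arithmetic pinning down $q_i$ in Step~2, in particular the handling of the conventions $S_{(n+1)}=+\infty$ and $q_i=1$, but this is routine and is exactly what Section~\ref{sec:qicomput} records.
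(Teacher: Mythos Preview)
Your proposal is correct and follows essentially the same route as the paper: compute $q_i$ by inverting the length condition $\hat s_\alpha\,\sigma(X_{n+i})\le\lambda_0$ (which is exactly the computation the paper defers to Section~\ref{sec:qicomput}), read off the interval shape from Remark~\ref{rem:caldepth}, and invoke Theorem~\ref{thm-gen-basic} for the FCR bound. You are simply more explicit than the paper in verifying Assumption~\ref{assI} and in handling the ceiling/indicator arithmetic and the degenerate case $q_i=1$, but none of this constitutes a different approach.
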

   
A similar result holds for \texttt{InfoSCOP}. 
In Corollary~\ref{cor:length} (ii), the length of the prediction interval on the selection is always granted to be (at most) of the correct size $2\lambda_0$, even if adjusting the level is necessary to account for selection (which de facto enlarge the prediction interval). Thanks to the $\BH(\mathbf{q})$ selection the size-adjustment is automatic, while maintaining the FCR control.

\begin{proof}
\add{The expression of $q_i$ follows from straightforward computations, see Section~\ref{sec:qicomput}. This implies  that $|\mathcal{C}_{n+i}|=2\sigma(X_{n+i})S_{(\lceil (1-\alpha)(n_\alpha(\mathbf{p})+1)\rceil)}\leq 2\lambda_0$ since $q_i\leq \alpha|\mathcal S(\mathbf{p})|/m$. }
\end{proof}

Figure~\ref{fig-regressLength} displays length-restricted informative prediction intervals in particular settings. In the first row, errors are more likely to occur on the selection (due to under-estimation of the variance), while in the second row, errors are less likely to occur on the selection  (due to over-estimation of the variance). Hence, the comment is similar to the previous section: \texttt{InfoSP} and \texttt{InfoSCOP} are similar in the first situation but \texttt{InfoSCOP} improves \texttt{InfoSP} in the second.

\begin{remark}
Corollary~\ref{cor:length} easily extends to the case of conformalized quantile regression, by considering the quantile-based score function $S_y(x)=\max(q_{\beta_0}(x)-y,y-q_{\beta_1}(x))$. In that case, 
\add{the formula of the $q_i$'s are given in  Example~\ref{ex:regression2} (see Section~\ref{sec:qicomput} for a proof) and the prediction intervals are } 
    $
    \mathcal{C}_{n+i}= [q_{\beta_0}(X_{n+i})-S_{(n_\alpha(\mathbf{p}))}, q_{\beta_1}(X_{n+i}) + S_{(n_\alpha(\mathbf{p}))}]$ (of length at most $2\lambda_0$), with the notation of Corollary~\ref{cor:length}.
\end{remark}
   
\begin{figure}[h!]
\begin{tabular}{ccc}
Classical conformal & \texttt{InfoSP}  & \texttt{InfoSCOP}
\\\hspace{-9mm}\includegraphics[width=0.38\textwidth, height = 0.24\textheight]{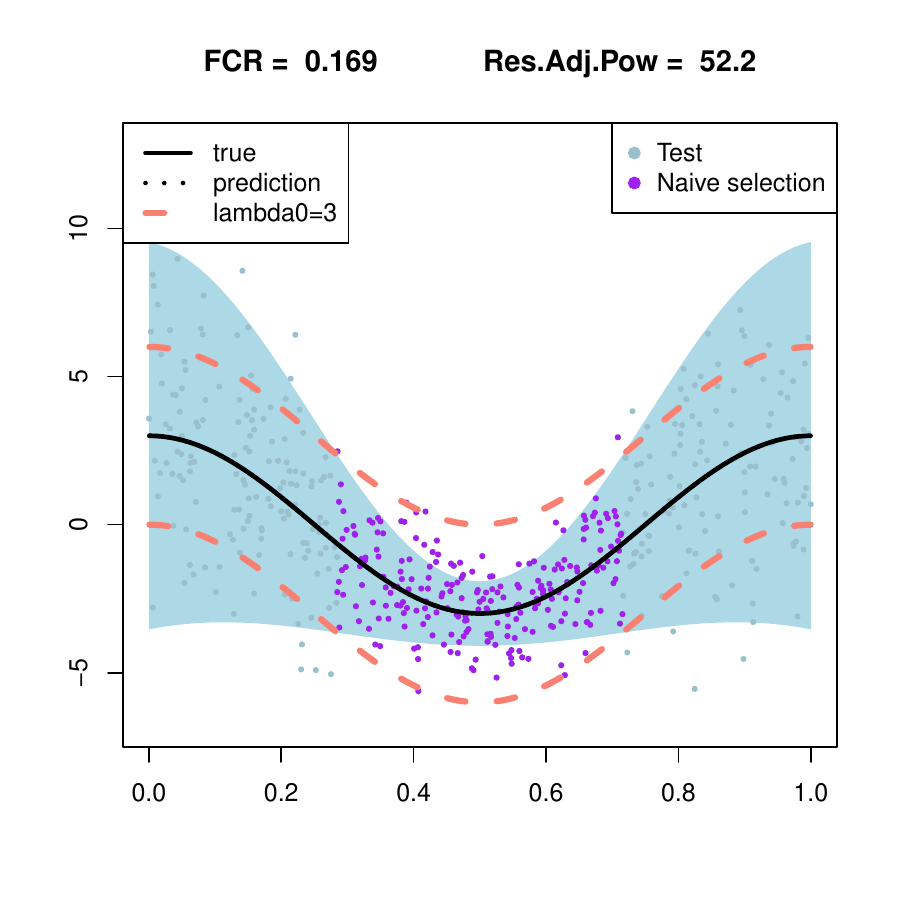}&\hspace{-9mm}
\includegraphics[width=0.38\textwidth,  height = 0.24\textheight]{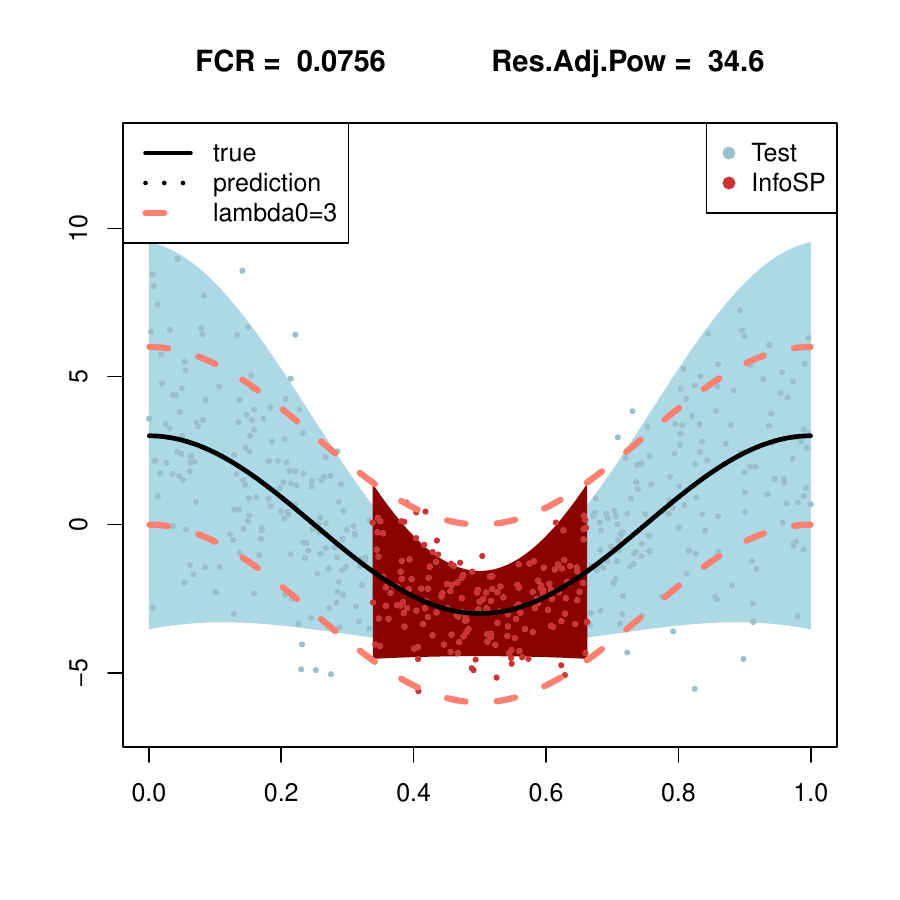}&\hspace{-9mm}
\includegraphics[width=0.38\textwidth,  height = 0.24\textheight]{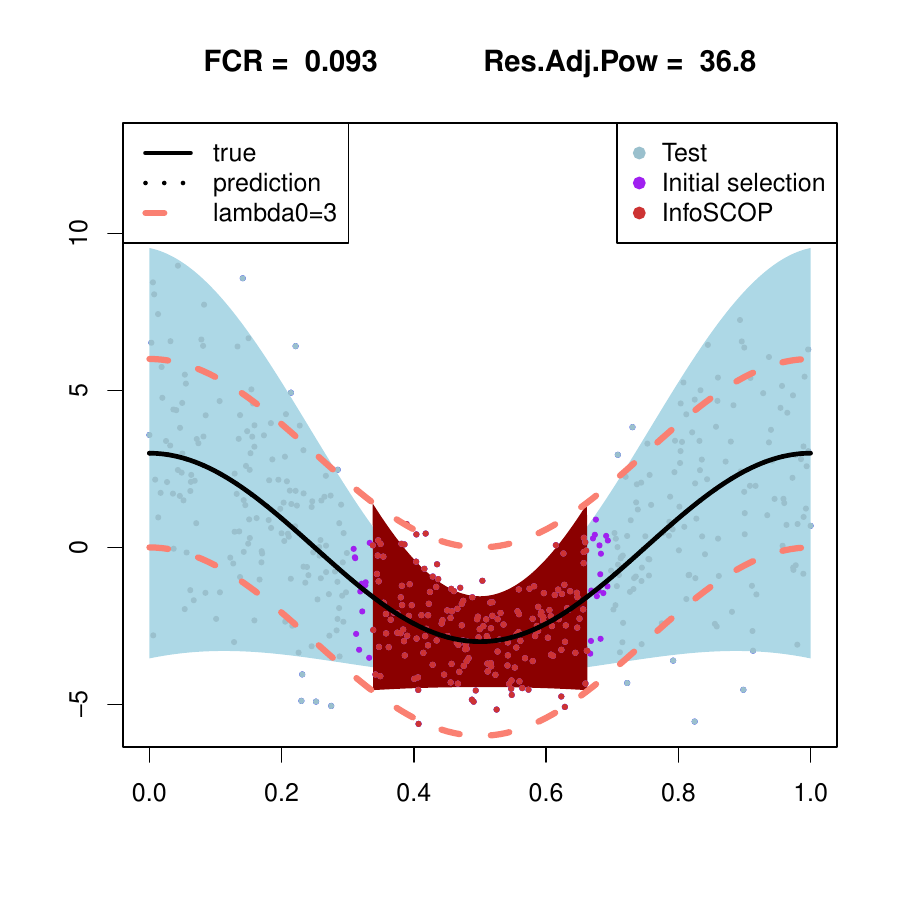}\vspace{-5mm}\\
\hspace{-9mm}\includegraphics[width=0.38\textwidth, height = 0.24\textheight]{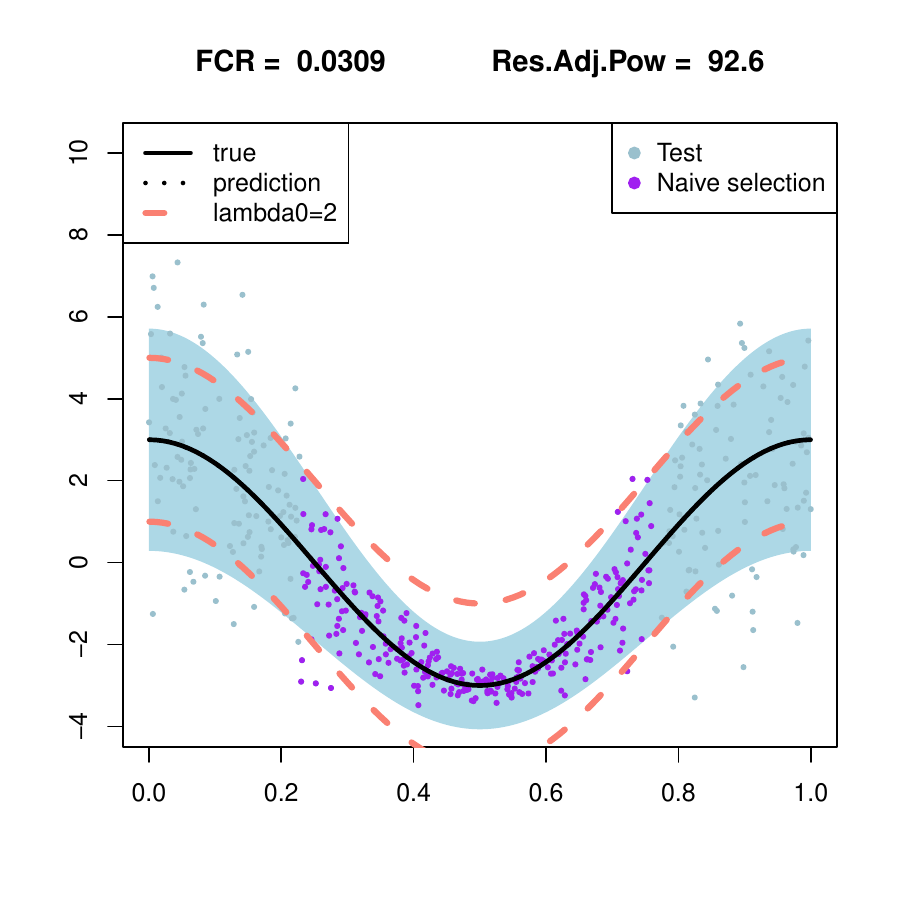}&\hspace{-9mm}
\includegraphics[width=0.38\textwidth,  height = 0.24\textheight]{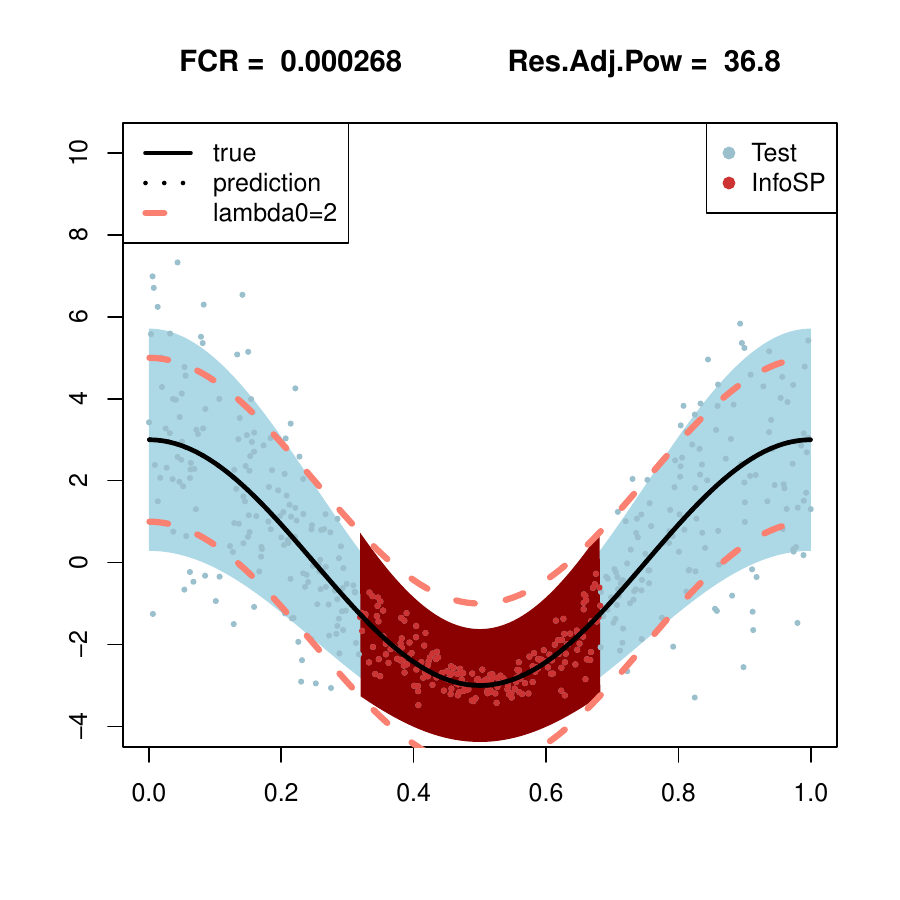}&\hspace{-9mm}
\includegraphics[width=0.38\textwidth,  height = 0.24\textheight]{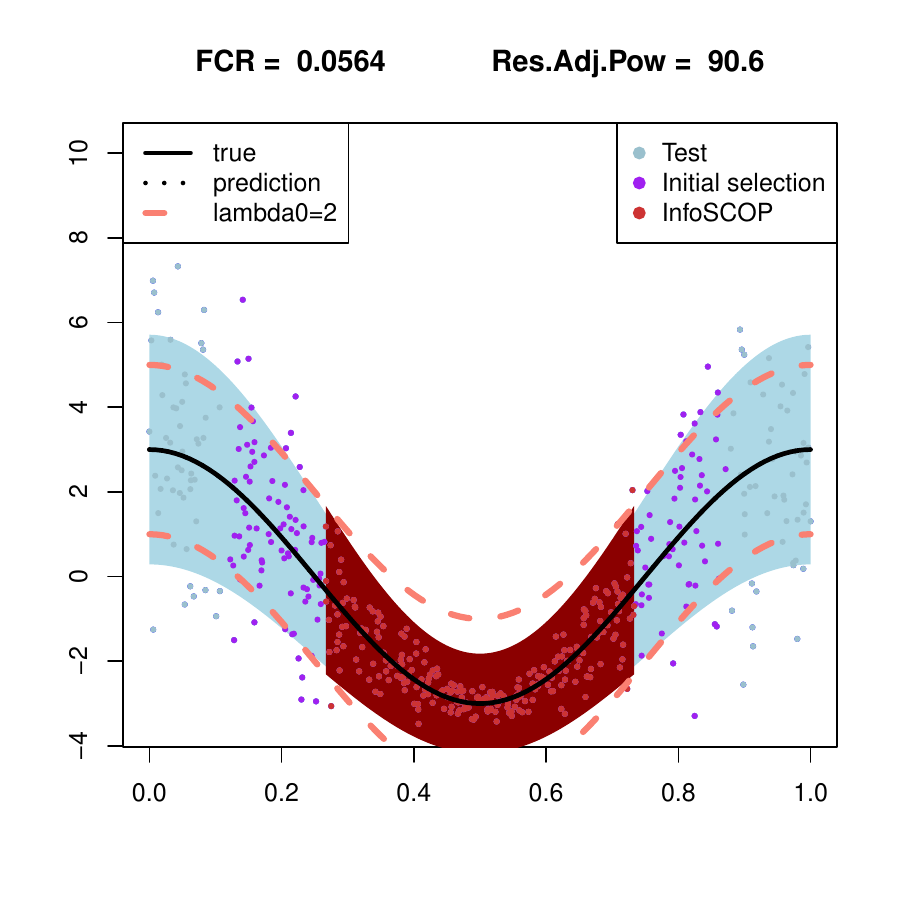}
\end{tabular}
\vspace{-5mm}
\caption{Informative prediction intervals when length-restricted (heteroscedastic Gaussian regression model with perfect mean prediction), see text. The predictor $\sigma$ under-estimates (top row) and over-estimates (bottom row) the true $\sigma^*(x)=\mathbb{V}^{1/2}[Y|X=x]$ in the selection area.  The marginal and  informative prediction intervals (\texttt{InfoSP} and \texttt{InfoSCOP}) are depicted in light-blue and red, respectively. While the plot corresponds to one data generation, the FCR and adjusted power computed in the title of each panel are computed with $100$ Monte-Carlo simulations. $n=1000$, $m=500$, $\alpha=0.1$.
\label{fig-regressLength}} 
\end{figure}

\section{Application to classification}\label{sec:classif}

We consider the classification case $\ZZ=\range{K}$,  for both  the iid model and the class-conditional model. %We tailor our novel procedures to specific applications,  and provide some new results.
Importantly, in classification, any selective prediction set procedure $\mathcal{R}=(\mathcal{C}_{n+i})_{i\in \mathcal{S}}$ is post-processed by setting, for $i\in \mathcal{S}$, $\mathcal{C}_{n+i}=\arg\min_{k\in \range{K}}\{S_k(X_{n+i})\}$ 
whenever $\mathcal{C}_{n+i}=\emptyset$ (that is, if empty take the smallest non-conformity score).
Clearly, this operation can only decrease the FCP while it can only increase the adjusted power, so it should always be preferred in the classification case.
In this paper, \texttt{InfoSP} and \texttt{InfoSCOP} always refer to the post-processed procedures in the classification case.

\subsection{Choosing the appropriate $p$-value collection in classification} 

While the family of full-calibrated $p$-values are only valid for the iid model, the family of class-calibrated $p$-values are valid  both in the iid and in the class-conditional model.

Thus, for  the iid model,  we can in principle use either  full-calibrated $p$-values or class-calibrated $p$-values.  In the applications we consider next,  it appears that using  full-calibrated $p$-values in \texttt{InfoSP} is best. We support this claim by theory  for non-trivial classification in \S~\ref{subsec:min-iid}, and by numerical experiments in \S~\ref{subsec-simul-bivariatenormal}, \S~\ref{subsec:min-iid}, and \S~\ref{sm_simulations}. We note that for the initial selection step in \texttt{InfoSCOP}, class-calibrated $p$-values can be useful, as demonstrated in \S~\ref{subsec-simul-bivariatenormal}.

For the  class-conditional model, there can be a label shift from the calibration to the test sample. So the full-calibrated $p$-values are not valid, and  class-calibrated $p$-values must be used.  In  \S~\ref{SM-sec:adaptproc} we  consider more generally  weighted class-calibrated $p$-values, where the weights are functions of estimators of the proportion of labels in each class in the test sample.

\subsection{An illustrative example: prediction sets excluding a null class}\label{subsec-simul-bivariatenormal}

Suppose the analyst is interested in reporting prediction sets that exclude a null class, say class $y_0=1$ (see first items of Examples~\ref{ex:classif}~and~\ref{ex:classif2}). We consider the following novel procedures, in addition to the naive procedure using the classic conformal procedure, that reports $\mathcal C^{\alpha}_{n+i}$ only if $\mathcal C^{\alpha}_{n+i}$ does not \add{intersect} the null class: 
    first, \texttt{InfoSP}  on full-calibrated $p$-values, denoted by $\RinfoSP(\pfullbf)$. 
  Second, \texttt{InfoSCOP} on full-calibrated $p$-values, denoted  by  $\RinfoSCOP(\pfullbf)$, with initial selection step $\mathcal{S}^{(0)}\subset \range{r+1,n+m}$ being the BH procedure applied to the class-calibrated adaptive $p$-value family $(\pcond{1}{i,{\tiny \mbox{adapt}}}, i\in \range{r+1,n+m})$, given by
$\pcond{1}{i,{\tiny \mbox{adapt}}}=\hat{\pi}_1 \pcond{1}{i},\: i \in\range{r+1,n+m},$ 
  using $\{(X_j,Y_j),j\in \range{r}\}$ and $\{(X_j,Y_j),j\in \range{r+1,n+m}\}$ as calibration and test samples, respectively, and with calibrated-based estimator $\hat \pi_1 = (r+1)^{-1}\big(\sum_{i=1}^r\ind{Y_i=1}+1\big)$. 
    Third, \texttt{InfoSP}  on class-calibrated $p$-values, denoted by $\RinfoSP(\pcondbf)$.
Note that  \texttt{InfoSCOP} above uses class-calibrated $p$-values for initial selection (because these are better to detect examples from the non-null class), and  full-calibrated $p$-values on the selected examples from $\range{r+1,n+m}$ in the second step (because these are better $p$-values for building prediction sets in the iid model).

We consider a Gaussian mixture model with $K=3$ components, where each component is bivariate normal. The centers for the three components are (0,0), (SNR,0), and (SNR,SNR). So the overlap between components is larger as SNR decreases. 
We consider the case of balanced classes in the calibration sample, as well as the case of unbalanced classes where  the null class is much larger than the others. Specifically, the balanced case has class probabilities 0.33, 0.33, and 0.34, and the unbalanced case  (depicted in Figure \ref{fig-exchang-snapshot} for an SNR value of 3) has  class probabilities 0.15, 0.10, and 0.75 (the null class). 
In the balanced case,  we consider the iid setting where the test sample has the same class probabilities as the calibration sample, as well as class-conditional setting where the test sample has class probabilities  0.2,0.2, and 0.6 (the null class), so the label shift is large. 
We estimate the probability of being in each class with a support vector classifier implemented by the {\it e1071} R package \cite{svmpackage}. 

Figure \ref{fig-nullselection-classification} shows the FCR and resolution-adjusted power of all  procedures considered. As expected,  
the classic conformal procedure does not control the FCR for any data generating model (it uses the full-calibrated and class-calibrated $p$-values in the iid and class-conditional setting, respectively).  All other procedures control the FCR.

For the iid model (Figure \ref{fig-nullselection-classification} left and middle columns), \texttt{InfoSCOP}  on full-calibrated $p$-values has better power than the  alternatives for prediction sets excluding a null class. Its advantage over \texttt{InfoSP} is primarily due to the fact that after pre-processing, almost all test examples are non-null, as illustrated in Figure \ref{fig-exchang-snapshot} in the SM for a single data generation. The differences between the procedures are qualitatively the same, but even greater, when the overlap between components is larger, see Figure \ref{SM-fig-exchang-hard} in the SM. This is because when the overlap with the null class is large, after pre-processing only examples with better scores are considered, as illustrated in Figure \ref{SM-fig-exchang-snapshot} in the SM.  For completeness, we also provide \texttt{InfoSP} on class-calibrated $p$-values in the iid setting, to demonstrate numerically the potentially large power advantage from using the full-calibrated $p$-values over the class-calibrated $p$-values.  

For the class-conditional model (Figure \ref{fig-nullselection-classification} right column),  \texttt{InfoSP} has lower power than classic conformal, but the power is reasonable. In Figure \ref{fig-nullselection-classification-adaptive} in the SM we compare it to two procedures that weigh the classes according to their estimated relative frequencies.

\begin{figure}[h!]
\begin{center}
 {\includegraphics[width=0.36\textwidth, height = 0.27\textheight, page=2]{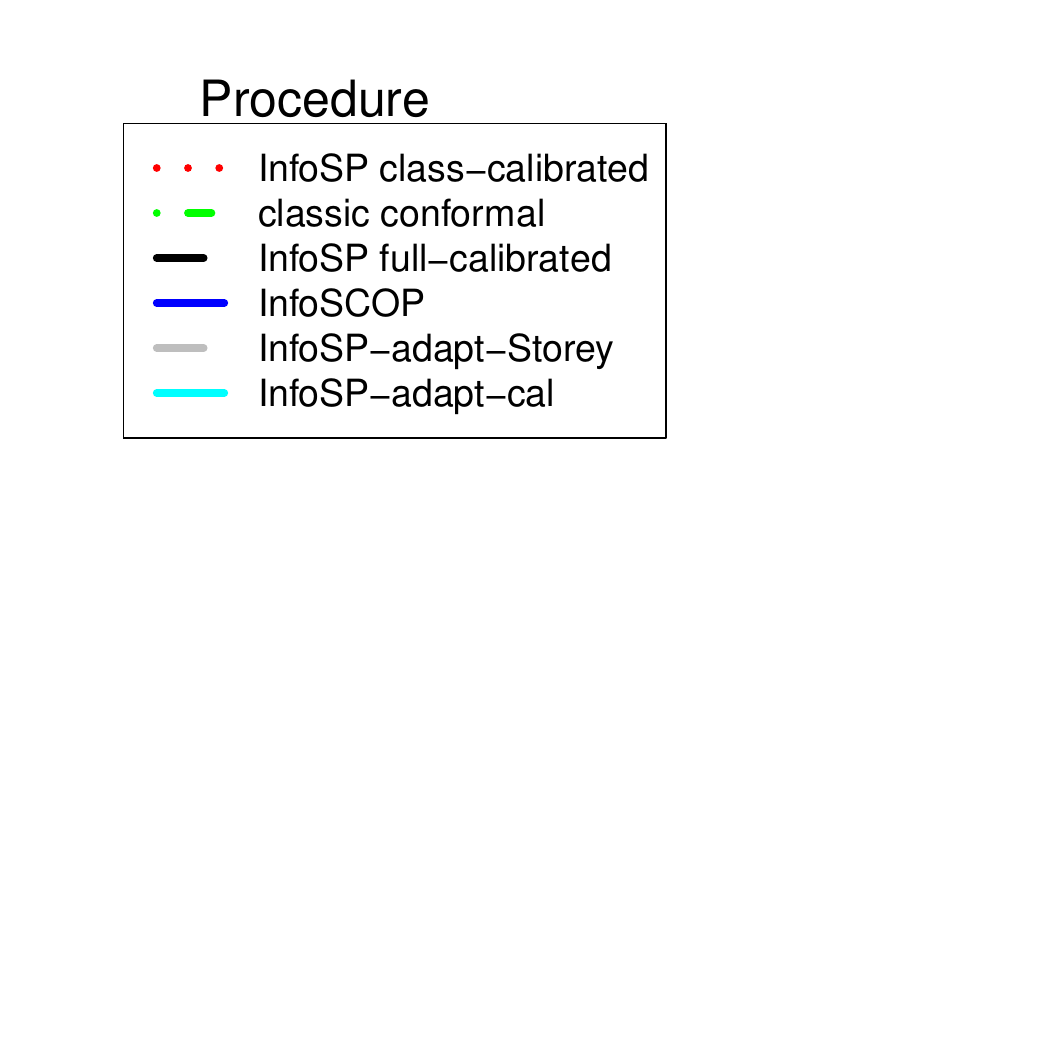}}

\vspace{-3.5cm}
  \begin{tabular}{ccc}
\hspace{-5mm}    \includegraphics[width=0.34\textwidth, height = 0.2\textheight, page=3]{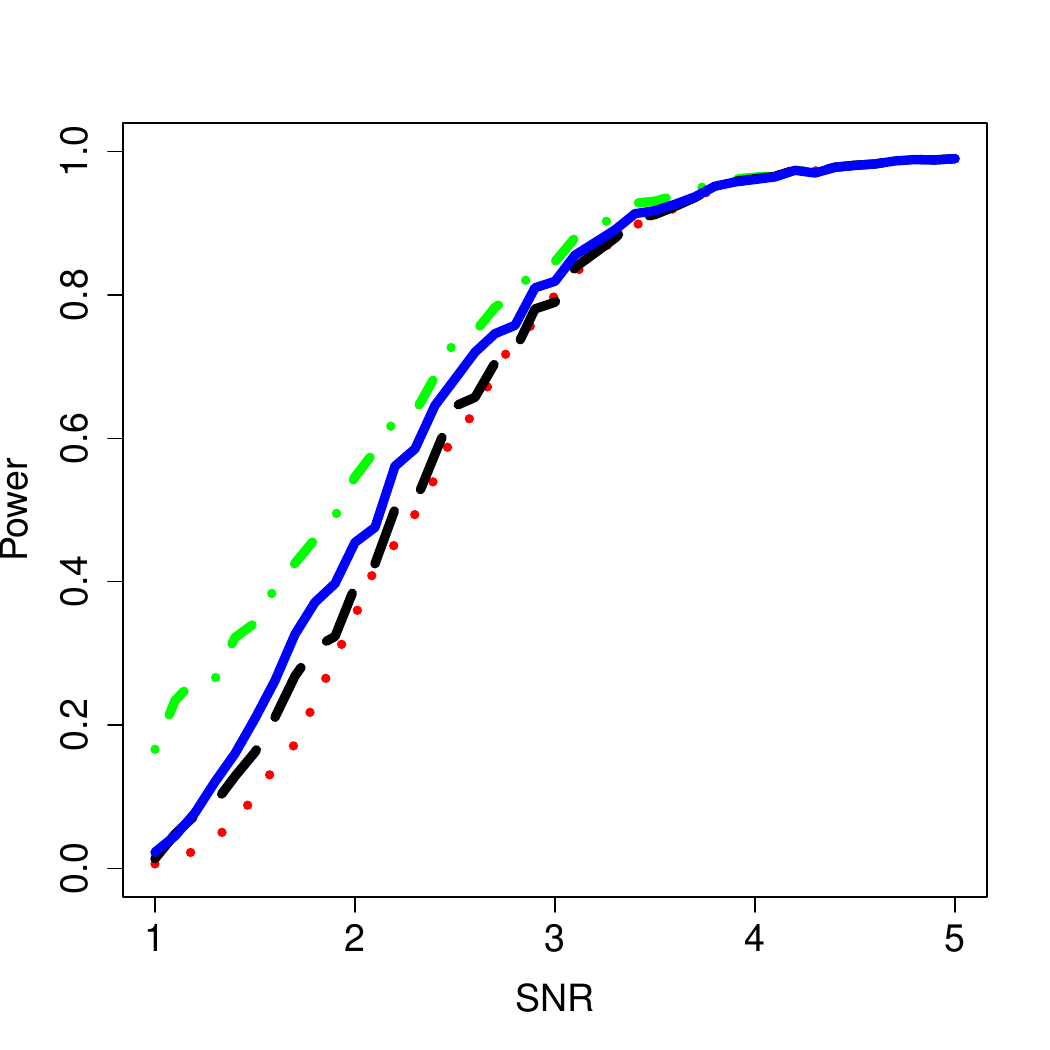}   &
\hspace{-5mm}
      \includegraphics[width=0.34\textwidth, height = 0.2\textheight, page=3]{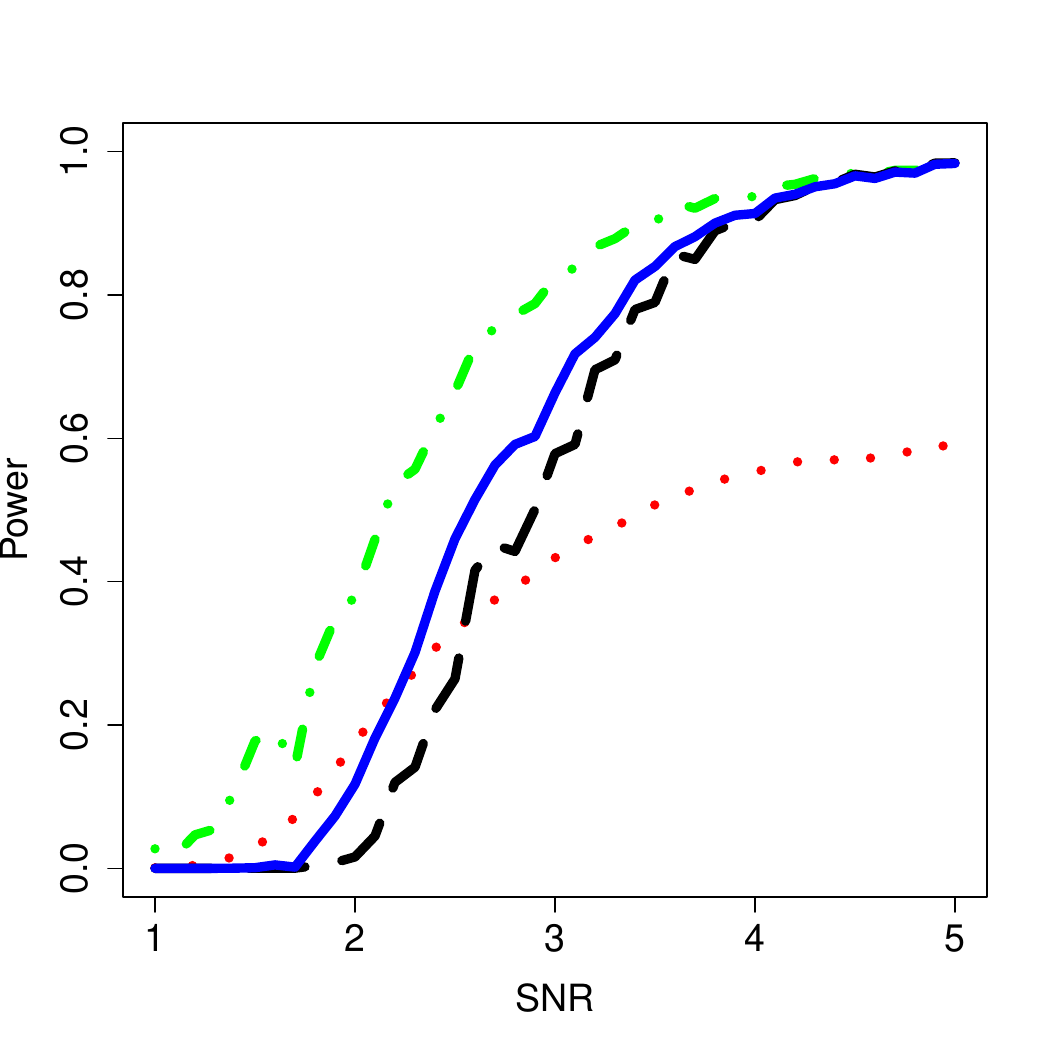}   & 
  \hspace{-5mm}  \includegraphics[width=0.34\textwidth, height = 0.2\textheight, page=4]{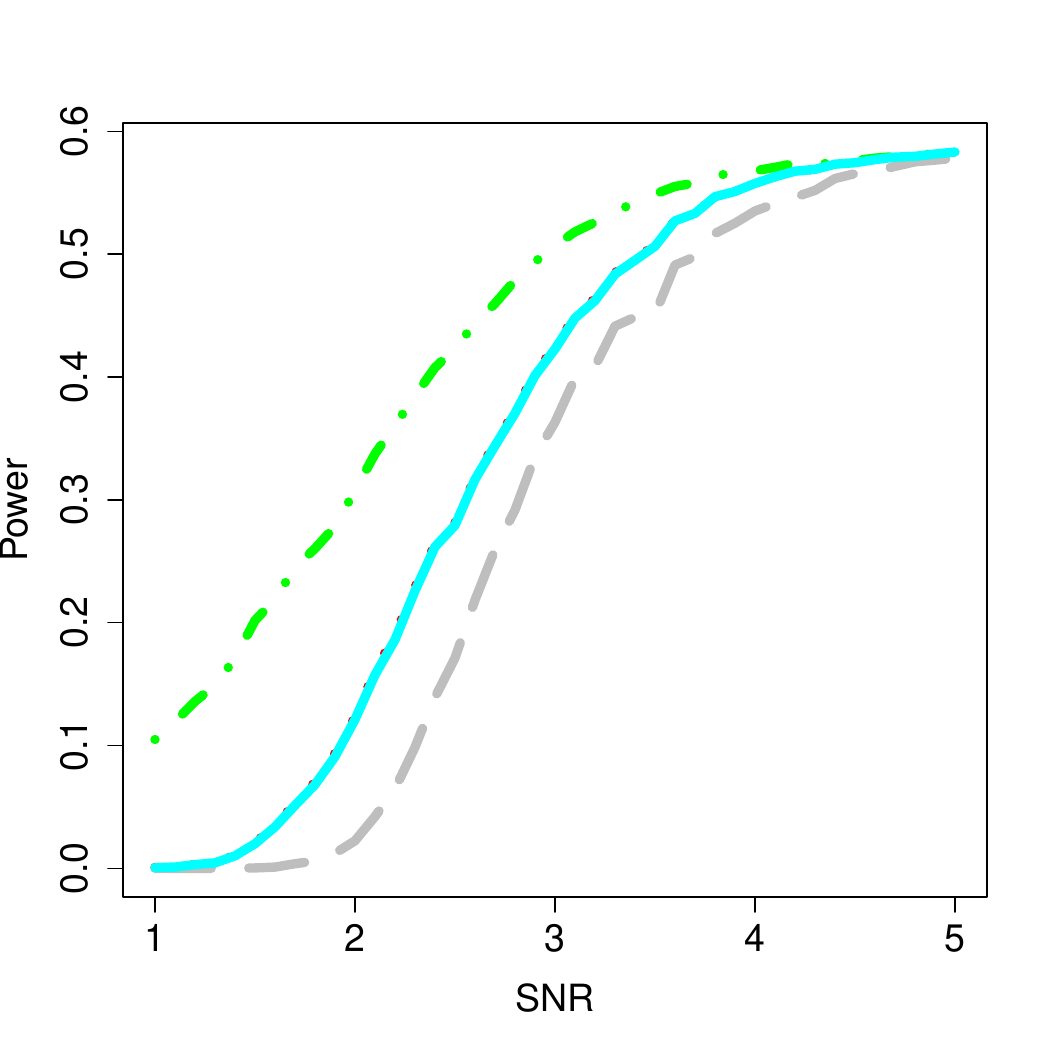}\vspace{-5mm}\\
 \hspace{-5mm}   \includegraphics[width=0.34\textwidth,  height = 0.2\textheight,page=1]{subscriptnonnullSelection1.pdf} &
  \hspace{-5mm}  \includegraphics[width=0.34\textwidth,  height = 0.2\textheight,page=1]{subscriptnonnullSelection2.pdf}\vspace{-5mm}
&
\hspace{-5mm}\includegraphics[width=0.34\textwidth,  height = 0.2\textheight,page=3]{MainClassCondNONNULL1.pdf}\vspace{-5mm}\\
  \end{tabular}   
\end{center}

\caption{\label{fig-nullselection-classification} 
Selecting prediction sets excluding  a null class in a classification setting.
FCR (top row), and resolution-adjusted power  (bottom row) versus SNR. 
The iid setting in columns 1 and 2, with  balanced classes and unbalanced classes, respectively. The class-conditional setting in column 3, with  a  large label shift: 
 the class probabilities are equal in the calibration sample and 0.2,0.2, and 0.6 (the null class) in the test sample. The number of data generations was 2000, 1000 data points were used for training,  and $n =m = 500$. See details of the data generation in \S~\ref{subsec-simul-bivariatenormal}. } 
\end{figure}

\subsection{Selecting non-trivial prediction sets}\label{subsec:min-iid}

Suppose the analyst is interested in reporting prediction sets that are not equal to $\range{K}$ (see first item of Examples~\ref{ex:classif}~and~\ref{ex:classif2}).
In that case, we argue that \texttt{InfoSP} has an FCR close to $\alpha$ in the iid model.
Intuitively, this comes from the selection rule $\mathcal S=\BH(\pfullbf)$ which is such that $\ind{Y_{n+i}\notin \mathcal{C}^{\alpha|\mathcal S|/m}_{n+i}(\pfullbf), i\in \mathcal S} = \ind{Y_{n+i}\notin \mathcal{C}^{\alpha|\mathcal S|/m}_{n+i}(\pfullbf)}.$ It means that it is not possible to fail to cover at the adjusted level without being selected (because otherwise the prediction set is trivial).
This is not the case for other selection rules, e.g.,  excluding a null class, where it is possible that the true class label is not covered at the adjusted level even if the example is not selected (thus implying that the adjusted level is conservative, since for FCR control we guard against non-coverage at the adjusted level for all examples). 
We formalize fully the argument for $K=2$ in the following result.

\begin{proposition}\label{prop:InfoSPClassif}
In the iid classification model with $K=2$, consider the non-trivial informative subset collection $\mathcal{I}=\{C\subset \range{K}\::\: |C|\leq 1\}$ and assume that the score functions satisfy Assumption~\ref{as:noties} with $\sum_{k\in \range{K}}S_k(x)=1$ and $S_k(x)\geq 0$. 
Let $p_0$ be the probability that $S_{Y_i}(X_i)$ is the maximum score $\max \{ S_{1}(X_i),S_2(X_i)\}$. 
Then if $(n+1)\alpha /m$ is an integer,  we have 
$\FCR(\RinfoSP(\pfullbf),P_{X,Y})=\alpha  (1-(1-p_0)^{n+1})$.
  \end{proposition}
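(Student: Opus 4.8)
The plan is to reduce the false coverage proportion to an exact binomial computation by exploiting the two‑class structure, and then to run the usual Benjamini--Yekutieli bookkeeping for the BH selection. First I would identify the reported prediction sets. Since $S_1(x)+S_2(x)=1$ and the scores have no ties, exactly one label, say $k'_i$, satisfies $S_{k'_i}(X_{n+i})>1/2$, the other being the predicted label $\hat k_i=\arg\min_k S_k(X_{n+i})$. Because $\pfull{y}{i}$ is nonincreasing in $S_y(X_{n+i})$, the $\mathcal I$-adjusted $p$-value is $q_i=\min(\pfull{1}{i},\pfull{2}{i})=\pfull{k'_i}{i}$; hence every selected index $i\in\mathcal S=\BH(\mathbf q)$ has $k'_i\notin\mathcal C^{\alpha|\mathcal S|/m}_{n+i}(\pfullbf)$, so the prediction set is $\{\hat k_i\}$ or $\emptyset$, and after the post-processing step it equals $\{\hat k_i\}$ in all cases. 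Therefore, for $i\in\mathcal S$, non-coverage of $Y_{n+i}$ is equivalent to $E_i:=\{S_{Y_{n+i}}(X_{n+i})=\max_k S_k(X_{n+i})\}=\{Y_{n+i}=k'_i\}$, an event of probability $p_0$; moreover on $E_i$ one has $q_i=\pfull{Y_{n+i}}{i}$. Since in addition $i\in\mathcal S$ iff $q_i\le\alpha|\mathcal S|/m$, this gives $\FCP=\bigl(\sum_{i=1}^m\ind{\pfull{Y_{n+i}}{i}\le\alpha\hat\ell/m}\,\ind{E_i}\bigr)/(1\vee\hat\ell)$ with $\hat\ell=|\mathcal S|$.

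Next I would apply the standard leave-one-out identity for BH: letting $\hat\ell^0_i$ denote the number of rejections of BH run on $\mathbf q$ with $q_i$ replaced by $0$ (a function of $(q_j)_{j\neq i}$ only), one has $\ind{i\in\mathcal S}/(1\vee\hat\ell)=\ind{q_i\le\alpha\hat\ell^0_i/m}/\hat\ell^0_i$; combined with $q_i=\pfull{Y_{n+i}}{i}$ on $E_i$ this yields $\FCR=\sum_{i=1}^m\mE[\ind{\pfull{Y_{n+i}}{i}\le\alpha\hat\ell^0_i/m}\,\ind{E_i}/\hat\ell^0_i]$. Fixing $i$, I would condition on the calibration sample and on the test scores at the indices $\neq i$; then $\hat\ell^0_i$ is a constant $\kappa\in\range m$ while $S_{Y_{n+i}}(X_{n+i})$ is an independent fresh draw. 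The integrality hypothesis enters here: $(n+1)\alpha\kappa/m$ is then an integer, so by Remark~\ref{rem:caldepth} the event $\{\pfull{Y_{n+i}}{i}\le\alpha\kappa/m\}$ is exactly $\{S_{Y_{n+i}}(X_{n+i})>S_{(n+1-(n+1)\alpha\kappa/m)}\}$ in terms of the ordered calibration scores, with no rounding slack. Re-inserting $S_{Y_{n+i}}(X_{n+i})$ as an $(n+1)$-st ``true-label score'' and using exchangeability of the $n+1$ scores $S_{Y_1}(X_1),\dots,S_{Y_n}(X_n),S_{Y_{n+i}}(X_{n+i})$, the event $\{\pfull{Y_{n+i}}{i}\le\alpha\kappa/m\}\cap E_i$ becomes a purely combinatorial rank event whose probability is read off from the number $B\sim\mathrm{Bin}(n+1,p_0)$ of those $n+1$ scores attaining their class maximum. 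Summing over $i$, the BH normalization should contribute the factor $\alpha$ and the binomial event the factor $\mP(B\ge1)=1-(1-p_0)^{n+1}$, giving the claim.

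The step I expect to be the main obstacle is this exact evaluation: controlling the coupling between the data-driven cutoff $\hat\ell^0_i$ and the calibration scores so that, after the conditioning and the rank identity, the dependence on the full score distribution cancels and only $\alpha$ and $p_0$ remain. This cancellation is precisely what the assumption $(n+1)\alpha/m\in\mathbb N$ buys --- it places every attainable BH threshold $\alpha\kappa/m$ on the grid $\{1/(n+1),\dots,1\}$ on which $\pfull{Y_{n+i}}{i}$ is exactly uniform, so the quantities involved are exact combinatorial counts rather than distribution-dependent integrals. A secondary technicality is the null-probability tie $S_1(X_{n+i})=S_2(X_{n+i})$ (equivalently $S_{Y_{n+i}}(X_{n+i})=1/2$), ruled out almost surely by Assumption~\ref{as:noties} and the regularity of the scores; once this is set aside, the computation goes through and pins down the exact FCR, consistent with (and refining) the bound of Theorem~\ref{thm-gen-basic}.
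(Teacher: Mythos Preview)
Your reduction up to the leave-one-out identity is sound: on $E_i$ one has $q_i=\pfull{Y_{n+i}}{i}$, the post-processed prediction set is $\{\hat k_i\}$, and the FCP formula you derive is exact. The gap is in the next step. After conditioning on the calibration sample and on $(X_{n+j})_{j\neq i}$, the quantity $\hat\ell^0_i$ is indeed a constant $\kappa$, but it is a function of the $n$ calibration scores $S_{Y_1}(X_1),\dots,S_{Y_n}(X_n)$ only. It is \emph{not} symmetric in the $n+1$ scores $S_{Y_1}(X_1),\dots,S_{Y_n}(X_n),S_{Y_{n+i}}(X_{n+i})$: swapping $S_{Y_{n+i}}(X_{n+i})$ with any calibration score changes each $q_j$ (for $j\neq i$) and hence may change $\hat\ell^0_i$. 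You therefore cannot simultaneously hold $\kappa$ fixed and invoke exchangeability of the $n+1$ scores; and if you un-condition to recover exchangeability, $\kappa$ becomes random and correlated with $\pfull{Y_{n+i}}{i}$ through the shared calibration. The ``purely combinatorial rank event'' you describe does not materialize, and the dependence on the score distribution does not cancel from this conditioning alone.

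The paper closes this gap by conditioning differently. It first conditions on $\xi=(\ind{S_{Y_j}(X_j)>1/2})_{j\in\range{n+m}}$ (fixing the number $n'$ of ``large'' calibration scores), and then on the \emph{unordered set} $A_i$ formed by those large calibration scores together with $S_{Y_{n+i}}(X_{n+i})$, plus $(S_{\max}(X_{n+j}))_{j\neq i}$. The crucial computation (using that small calibration scores are irrelevant since $S_{\max}(X_{n+j})>1/2$) shows that on $E_i$ each $q_j$, $j\neq i$, equals $q'_j+\ind{S_{\max}(X_{n+i})<S_{\max}(X_{n+j})}/(n+1)$, where $q'_j$ depends on $A_i$ only as a set. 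Lemma~\ref{BHsmallerp} then gives $\{q_i\le\alpha\hat\ell/m\}=\{q_i\le\alpha K_i/m\}$ with $K_i=|\BH(\mathbf q')|$, which \emph{is} measurable with respect to this conditioning. Now the only residual randomness is which element of $A_i$ equals $S_{Y_{n+i}}(X_{n+i})$, and that is uniform on $\range{n'+1}$ by exchangeability; this yields the exact conditional probability $\lfloor(n+1)\alpha K_i/m\rfloor/(n'+1)$, and integrating over $\xi$ via Lemma~\ref{lem:BY} produces $\alpha(1-(1-p_0)^{n+1})$. The missing ingredient in your argument is precisely this replacement of $\hat\ell^0_i$ by a quantity that is symmetric in $A_i$.
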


The proof is given in \S~\ref{proof:prop:InfoSPClassif}, which also shows that $\RinfoSP(\pfullbf)$ coincides with the procedure of \cite{zhao2023controlling} for $K=2$. 
In typical applications (where classes are not very well separated) the value of $(1-(1-p_0)^{n+1})$ is close to one, which means that the FCR of our procedure should be close to $\alpha$, at least for $K=2$.

To complement our theoretical result, we provide numerical results in Figure \ref{fig-nontrivial-classification}.
\texttt{InfoSP} has the best power, with  \texttt{InfoSCOP} a close second, on full-calibrated $p$-values.  \texttt{InfoSP} on class-calibrated $p$-values has much lower power in the unbalanced setting. As expected,  
the classic conformal procedure does not control the FCR and the level for \texttt{InfoSP} is about $0.05$ for a range of SNR values.  All other procedures control the FCR.

\section{Informative prediction sets for 3 classes of animals} \label{sec:appli}

In this section, we illustrate the performance of our methods on real data. 
We use the image dataset CIFAR-10 (\url{https://www.cs.toronto.edu/~kriz/cifar.html}), which consists of 60000 32x32 colour images in 10 classes, with 6000 images per class. We restrict the analysis to 3 classes: birds, cats and dogs. \add{The code used for these experiments can be found at \url{https://github.com/arianemarandon/infoconf}.}

We consider 4 scenarios: in the iid setting,  non-trivial classification (scenario a) and non-null classification (scenario b); in the class-conditional setting, non-trivial classification (scenario c) and non-null classification (scenario d). 
 The null class is the bird class in scenarios b and d. 
By default, the classes are in equal proportions.  We introduce a label shift between the calibration sample and test sample for the class-conditional settings,  by modifying the classes proportions: of the calibration sample to 20\% for the bird class versus 40\% for the cat/dog class in scenario c;   of the test sample to 50\% for the bird class versus 25\% for the cat/dog class  in scenario d. In each experiment, the test size is $m=1000$ and the calibration size is $n=5000$. 

In the iid settings (scenarios a and b), we evaluate the procedures \texttt{InfoSP} and \texttt{InfoSCOP} with full-calibrated $p$-values and in the class-conditional settings (scenarios c) and d)) we evaluate \texttt{InfoSP} with class-calibrated $p$-values, each being also compared with classical conformal prediction (denoted by \texttt{CC}). For all procedures the non-conformity score is $S_y(x) = 1-\pi_y(x)$ where $\pi_y(x)$ is an estimator of the probability that the class of $x$ is $y$ and is learned using a convolutional neural network (CNN) with 2 convolutional layers, one pooling layer, and 3 fully-connected layers, trained for 20 epochs with a learning rate of 0.01 on a sample size of 5000. 
To assess the power of the methods, in addition to the resolution-adjusted power, we plot the selection rate (SR) of the procedures, defined as the average proportion of informative prediction sets returned, and the average size of the informative prediction sets. 

In each setting, the FDR and each power metric for the methods are evaluated by using 100 runs and the results are reported in Figure \ref{fig:app} scenarios a and b and in Figure \ref{fig:app2} scenarios c and d for $\alpha =  0.1$. 
The conclusions are qualitatively similar to the experiments of \S~\ref{sec:classif}: in all settings, classical conformal prediction yields an FCR that severely exceeds the marginal level with an inflation of about 50\%. By contrast, our procedures \texttt{InfoSP} and \texttt{InfoSCOP} control the FCR at the target nominal level, both without and with label shift. 
In terms of power, concerning the iid settings, pre-processing is not useful for non-trivial classification as expected and the performances of \texttt{InfoSP} and \texttt{InfoSCOP} are similar with an FCR close to the nominal level for both in that case. For the non-null classification task, \texttt{InfoSP} is conservative while \texttt{InfoSCOP} is more powerful and displays an FCR close to $\alpha$. 
When there is label shift, in the case of non-trivial classification \texttt{InfoSP} displays an FCR close to $\alpha$. In the non-null classification case, however, the label shift increases the difficulty of the task in the sense that for a fixed selection, under-covering the null class in the test sample results in more false informative sets. Hence, the power is low in that case. Finally, in all scenarios,  our procedures \texttt{InfoSP} and \texttt{InfoSCOP} output a lower number of informative sets compared to \texttt{CC}, but this is necessary in order to control the FCR. The average size of the prediction sets that are informative is comparable.

\begin{figure}[h!]
    a) Non-trivial classification
    \begin{center}
    \vspace{-2mm}
    \includegraphics[width=0.9\linewidth]{"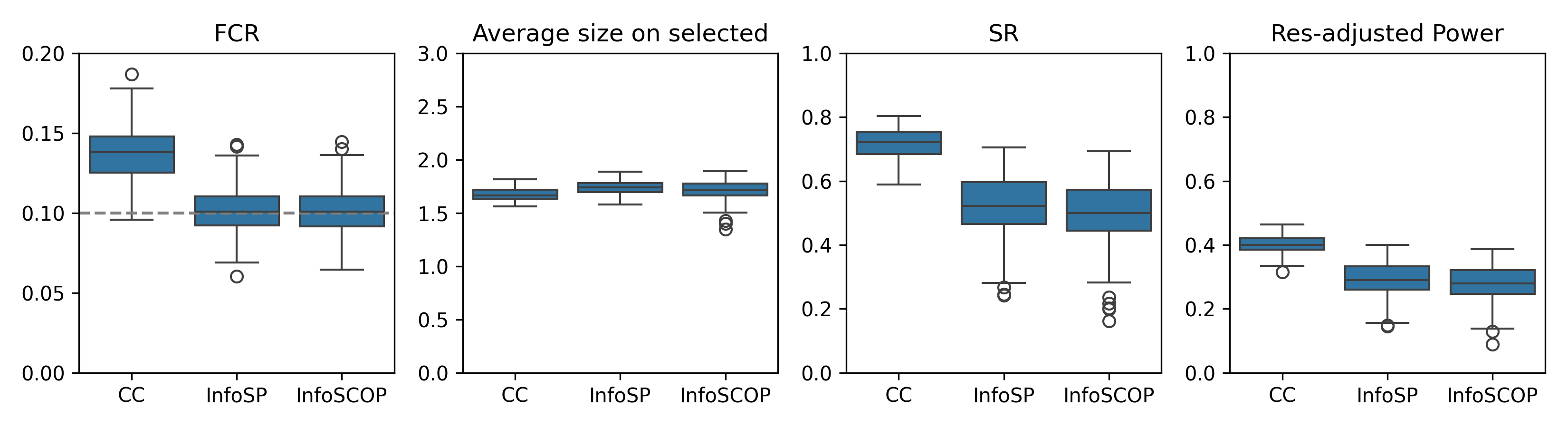"}\\
    \vspace{-2mm}
    \end{center}
    b) Non-null classification
    \begin{center}
    \vspace{-2mm}
    \includegraphics[width=0.9\linewidth]{"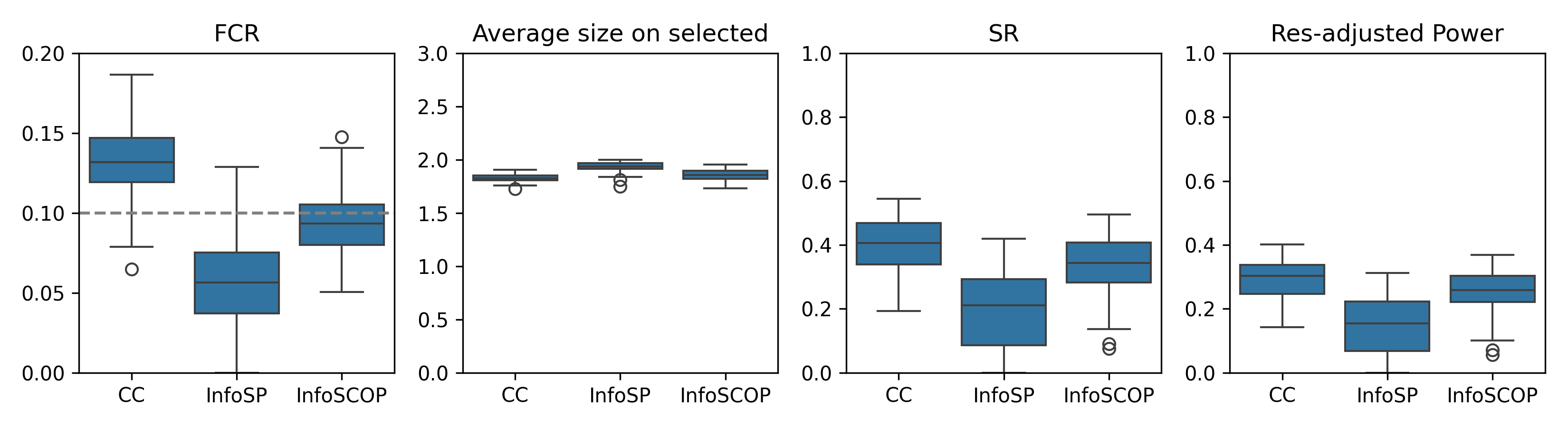"}\\
    \end{center}
    \vspace{-2mm}
    \caption{FCR, average size of the selected, SR, and resolution-adjusted power for the methods, for $\alpha=0.1$. }
    \label{fig:app}
\end{figure}

\section{Conclusion and discussion}

In this paper, we have introduced new methods for providing conformal prediction sets after selection with controlled FCR, that impose a user-specific constraint on the produced prediction sets, corresponding to a collection of so-called informative subsets $\mathcal{I}$. In contrast with previous literature in the field, the selection and prediction stages are intertwined, which results in a BH-type selection procedure on adjusted $p$-values (the $q_i$'s) that can further be explicitly derived in specific settings and that by definition produces prediction sets satisfying the desired constraint of belonging to $\mathcal{I}$. 

Our methods are very general, and they are relevant to applications in classification and regression. We showed examples  in \S~\ref{sec:appliRegression}-\S~\ref{sec:appli} for informative subsets of interest. We used common scores for the examples, but many other scores from the literature can be used with our suggested procedures \texttt{InfoSP}  and \texttt{InfoSCOP} as long as Assumption~\ref{as:noties} is satisfied. 
In addition, transfer learning scores can also be handled by our theory (see Assumption~\ref{as:AllAdaptive} in SM). This is known to greatly improve the conformal inference when there is a domain shift between the learning sample and the calibration+test samples \citep{courty2017joint,gazin2023transductive} and developing specific application cases for the latter is of interest for future investigations.

For the iid model,  \texttt{InfoSCOP} improves over \texttt{InfoSP} in all considered examples, except when selecting non-trivial prediction subsets in classification (Example~\ref{ex:classif} item 2), for which we establish that \texttt{InfoSP} almost exhausts the FCR level (see Proposition~\ref{prop:InfoSPClassif}for $K=2$). The procedure \texttt{InfoSCOP} splits the calibration sample in order to apply an efficient initial selection step on part of the calibration sample and the test sample.
There are many ways to perform the initial selection. The choice  is important  because it defines the pre-processed $p$-values \eqref{equprepropvalues} that can be seen as $p$-values ``conditionally on being selected''. Different ways of choosing $S^{(0)}$   have been investigated: trying to rule out all the examples in null class (\S~\ref{subsec-simul-bivariatenormal}) or trying to mimic the $\BH(\mathbf{q})$ selection that will be applied at the second stage to reduce the selection effect (\S~\ref{sec:appliRegression}). Finding an optimal way of calibrating $S^{(0)}$ is an interesting avenue for future research.   

For non-trivial prediction subsets in classification, \texttt{InfoSP} is optimal when $K=2$ for oracle scores, i.e.,  $S_k(X_j) = \mP(Y_j \neq k\mid X_j), k\in \range{K}, j\in \range{n+m}$. Specifically,   \cite{zhao2023controlling} showed that their classification procedure, which coincides with \texttt{InfoSP} for $K=2$, is 
 optimal for controlling the expected number of non-covering prediction sets divided by the expected number of selected examples, denoted by mFSR in their paper. An open question is whether \texttt{InfoSP} for $K>2$ is optimal when the scores are oracle scores for the resolution adjusted power objective or a variant thereof.  More generally, developing an optimality theory for selective informative prediction sets (for non-trivial prediction sets as well as for other notions informativeness) is of great interest. 
 
We provided a class-conditional variant of  \texttt{InfoSP}, with  class-conditional guarantees. We proved that our strategy can be followed in the case where the classes of the calibration and test samples are arbitrary fixed, even when the class proportions in the calibration are very different than in the test.  The main point is that working with the class-calibrated $p$-value collection allows to maintain the FCR control in this strong sense. In \S~\ref{SM-sec:adaptproc} we suggest additionally weighted procedures, that incorporate the estimated class proportions. These procedures are not necessarily more powerful than  \texttt{InfoSP}, and further research is needed in order to make recommendations about when to use the weighted procedures, and about weight adaptation to the specifics of the data.

\section*{Acknowledgments} 
\add{The authors would like to thank two anonymous referees and an associate editor for helpful comments.}
The authors acknowledge  grants ANR-21-CE23-0035 (ASCAI) and ANR-23-CE40-0018-01 (BACKUP) of the French National Research Agency ANR,  the Emergence project MARS of Sorbonne Universit\'e, and Israeli Science Foundation grant no. 2180/20.

\bibliographystyle{apalike} 
\bibliography{biblio}

\appendix
\section{Connections to existing works}\label{subsec-prevworks}

For the class-conditional model, we can view $(Y_{n+i})_{i\in \range{m}}$ as fixed. Thus, informative prediction sets can be viewed as informative confidence sets for parameters. This has been considered in a particular setting by \cite{weinstein20}. They considered building confidence intervals only for the selected parameters that will be sign determining. They showed that if the test statistics are independent and the confidence intervals satisfy some monotonicity properties, then the FCR can be controlled. Their theoretical framework is different than the one we consider, but their approach of selecting only sign-determining confidence intervals is very similar to ours, of selecting only informative prediction sets when informativeness is defined by sign determination. Moreover, this approach has been considered in \cite{weinstein20online} with the broader scope of only reporting confidence intervals if they are ``localizing'' appropriately the true parameter in  the sense that the confidence interval is entirely contained in one element of a pre-specified partition of the $\ZZ$ space. They investigate this task in the online setting where the sequence of unknown parameters is fixed, and at each time step an  independent observation is observed for the corresponding parameter, which is substantially different than our batch setting where the conformal $p$-values are dependent and the outcome may be random. We show in Remark~\ref{rem:asaf} that our informativeness theory covers their localizing notion in our setting.
Next, we discuss inspiring works connected to ours that assume the iid model.

 \cite{zhao2023controlling} suggested procedures for average error control in multi-class classification, so $\mathcal C_{n+i}$ are singletons. Since their procedure  only reports a single class for each selected  example, ambiguous examples will not be selected.  However,  in most classification tasks, there are examples whose true class is difficult to determine, yet it is possible to narrow down the possible set of classes \citep{sadinle2019least}. We  suggest procedures that  produce $\mathcal C_{n+i}$ that are not necessarily singletons for $K>2$. For $K=2$,  their procedure coincides with an instance in our framework, see details in \S~\ref{subsec:min-iid}. However, for $K>2$, while our suggestion as well as their suggestion provides level $\alpha$ FCR control, we select more examples, and although the prediction sets may be at a coarser resolution than singletons, they are still informative since they narrow down the possible set of classes. We note that for $K>2$, we can recover their procedure if we define as informative only prediction sets of size one, since then their procedure coincides with \texttt{InfoSP} for the iid model.

\cite{Jin2023selection} addressed the problem of discovering outcomes with values above a threshold. So $\mathcal C_{n+i}$ is of the form $(c_{i}, \infty)$ for predefined $(c_i)_{i\in \range{m}}$. They cast the problem as that of testing the family of null hypotheses $\{Y_{n+i}\leq c_i, i\in \range{m} \}$. In \S~\ref{sec:JCselect}, it is demonstrated that by defining $Y_{n+i}\leq c_i$ as uninformative,  we can complement the discoveries of \cite{Jin2023selection} with one-sided prediction intervals, while providing the same false discovery rate guarantee on the selected. Moreover, we show how to obtain two-sided prediction intervals for the informative examples in \S~\ref{sec:excludeab}.

\cite{bao2024selective}  considered the regression framework. Their first result (Proposition~1 therein) is to prove that for  selection rules that do not depend on the calibration sample, classic conformal prediction intervals at level $\alpha|\mathcal S|/m$ for the $\mathcal S$ selected examples (i.e., the correction factor $|\mathcal S|/m$ for selection suggested in \citealp{benjamini2005false}), provide level $\alpha $ FCR control.  For our purpose of informative selection, this result is not useful because informative selection involves all conformal $p$-values and therefore involves the calibration sample in a specific way. We need a different set of assumptions that are detailed in our novel Theorem \ref{th:generalBY}.

\cite{bao2024selective} further argue that the resulting prediction intervals, $\big(\mathcal C^{\alpha|\mathcal S|/m}_{n+i}\big)_{i\in \mathcal S}$ are too wide. 
They suggest a novel approach that performs selection on both the calibration set and test set, and then constructs $\alpha$ level conformal prediction intervals for the selected test candidates using the conditional empirical distribution obtained by the post-calibration set. For exchangeable selection rules, they show that the FCR is controlled at level $\alpha$. 
Their selection process cannot guarantee that all the constructed prediction intervals are of interest to the analyst.  
For example, for predicting the affinity of drug-target pairs, the analyst may not be interested in pairs with affinity below, say, $y_0$ (the case in item 1 of Example \ref{ex:regression}).
Using the novel approach of \cite{bao2024selective}, prediction intervals will be constructed following the selection of  calibration and test examples for which the predicted affinity from the machine learning algorithm is above a selection threshold. They require that the selection procedure be a thresholding procedure of the scores $S_{Y_{j}}(X_{j})$
with a threshold $\tau$ that is either independent of the calibration sample (their Proposition 1), or exchangeable with respect to both calibration and test samples (their Theorem 1).  Some of these prediction intervals may include $y_0$, and thus be useless for the analyst (an illustration is given in \S~\ref{sec:comparison} in SM, see Figure~\ref{fig:comparison} therein). However, it is not possible to additionally select only the examples with prediction intervals above $y_0$, since after performing this additional selection, the prediction intervals of \cite{bao2024selective} on the selected no longer have an $\alpha$ level FCR guarantee. Our procedures for regression thus complement the work of \cite{bao2024selective} when the focus is that all the  prediction intervals eventually constructed are informative.  

In a very recent work\footnote{This work appeared when we were in the final stage of writing, our work has been done independently.}, \cite{jin2024confidence} generalize the work of \cite{bao2024selective}, by considering more general selection rules for finite-sample exact coverage conditional on the unit being selected. Their conditional guarantee is achieved by a careful swapping argument which identifies for each selection rule, the appropriate subset of the calibration examples for each example from the test sample. Their conditional error guarantee implies FCR control under some conditions. 
\add{The control is valid for any selection rule that is exchangeable within the calibration sample, which is a  weaker assumption than concordance. Hence, their selection may be based on some notion of informativeness. }
\add{However, since the prediction sets should be inflated after applying the selection, the latter are not necessary informative according to our definition.}
For example, after selecting by a multiple testing procedure on the family of null hypotheses  $\{Y_{n+i}\leq c_i, i\in \range{m} \}$ in the setting of \cite{Jin2023selection}, their prediction sets  may include the $c_i$'s for some of the discoveries.  We  suggest procedures where selection and construction of prediction sets are inseparable, since we require that each selected prediction set be informative (along with the requirement that FCR $\leq \alpha$).

 \section{Application to directional FDR control}
\label{sec:directionalFDR}

Consider for this section that we have $(X_i,Z_i)_{i\in \range{n+m}}$ with real-valued outcomes $Z_i\in \R$, and we aim at excluding $Z_{n+i} \in [a,b]$, as well as at deciding whether  $Z_{n+i}<a$ or $Z_{n+i}>b$ (without producing prediction intervals for the $Z_{n+i}$), for two benchmark values $a\leq b$. More formally, we want to build a selection $\mathcal{S}\subset \range{m}$ and a (point-wise) decision $\widehat{Y}\in \{1,3\}$ from the observed samples such that 
\begin{equation}\label{dirFDR}
\FDR_{{\tiny \mbox{dir}}}(\mathcal{S},\widehat{Y}) :=   \sup_{P_{X|Y}, Y} \E_{X\sim P_{X|Y}}\left[\frac{\sum_{i\in \mathcal{S}} \ind{Y_{n+i}\neq \widehat{Y}_{n+i}}}{1\vee |\mathcal{S}|}\right]\leq \alpha,
\end{equation}
where $
Y_j=\ind{Z_{j}<a}+2\ind{Z_{j} \in [a,b]}+ 3\ind{Z_{j}>b},
$
$j\in \range{n+m}$.
Our theory yields the following result.

\begin{corollary}\label{cor-dFDR}
   Consider the class-conditional (classification) model on $(X_i,Y_i)_{i\in \range{n+m}}$  
with $K=3$ classes.
   Consider any score function satisfying Assumption~\ref{as:noties} (in this classification model). Consider the  procedure that selects $\mathcal{S}=\BH(\mathbf{q})$ with $$q_i= \max(\pcond{2}{i}, \min(\pcond{1}{i},\pcond{3}{i})), \:i\in \range{m},$$ where $\pcond{y}{i}$ are the class-conditional $p$-values computed as in \eqref{confpvalues0}, and with the decision
    \begin{align*}
    \widehat{Y}_{n+i}&= \ind{S_1(X_{n+i})\geq S_3(X_{n+i})}+3 \ind{S_3(X_{n+i})> S_1(X_{n+i})}, i\in \mathcal{S}.
\end{align*}
 Then this procedure controls the directional FDR at level $\alpha$ in the sense of \eqref{dirFDR}.
\end{corollary}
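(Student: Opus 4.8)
The plan is to recognize Corollary~\ref{cor-dFDR} as an instance of the general machinery: the selection $\mathcal{S}=\BH(\mathbf{q})$ with this particular $\mathbf{q}$ should arise from an informative collection $\mathcal{I}$ in the classification model with $K=3$, and the paired decision $\widehat{Y}$ should be exactly the post-processing rule that picks the smallest non-conformity score among $\{1,3\}$ on the reduced space. Concretely, I would take $\mathcal{I}$ to be the combination (Example~\ref{ex:combine}) of ``excluding the null class $2$'' (so $q_{1,i}=\pcond{2}{i}$) and ``at most $1$-sized classification'' restricted to the relevant structure; actually the cleanest route is to observe that $q_i=\max(\pcond{2}{i},\min(\pcond{1}{i},\pcond{3}{i}))$ is precisely the $\mathcal{I}$-adjusted $p$-value for the collection $\mathcal{I}=\{C\subset\range{3}: 2\notin C,\ |C|\le 1\}$, i.e., $\mathcal{I}=\{\emptyset,\{1\},\{3\}\}$: a prediction set $\mathcal{C}^{\alpha}_{n+i}(\pcondbf)$ lies in $\mathcal{I}$ iff it excludes class $2$ (which costs $\pcond{2}{i}\le\alpha$) and has cardinality $\le 1$ among $\{1,3\}$ (which, by Example~\ref{ex:classif2}, costs $\min(\pcond{1}{i},\pcond{3}{i})\le\alpha$ once class $2$ is already excluded). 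This $\mathcal{I}$ satisfies Assumption~\ref{assI} by Example~\ref{ex:combine}, and the explicit $q_i$ matches the stated formula by taking the max of the two component adjusted $p$-values.

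Next I would invoke Theorem~\ref{thm-gen-basic} in the class-conditional model with $\mathbf{p}=\pcondbf$: this gives the $\mathcal{I}$-informative \texttt{InfoSP} procedure $\RinfoSP(\pcondbf)=(\mathcal{C}^{\alpha|\mathcal{S}|/m}_{n+i}(\pcondbf))_{i\in\mathcal{S}}$ with $\FCR\le\alpha$ in the strong sense \eqref{strongcontrol}, where the post-processing convention sets $\mathcal{C}_{n+i}=\arg\min_{k}S_k(X_{n+i})$ when the raw set is empty. Since every set in $\mathcal{I}$ has cardinality at most one, the reported prediction set $\mathcal{C}_{n+i}$ for each $i\in\mathcal{S}$ is a single label, and by the post-processing rule combined with the fact that class $2$ has been excluded, that label is exactly $\widehat{Y}_{n+i}=\ind{S_1(X_{n+i})\ge S_3(X_{n+i})}+3\ind{S_3(X_{n+i})>S_1(X_{n+i})}$. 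Hence the event $\{Y_{n+i}\notin\mathcal{C}_{n+i}\}$ coincides with $\{Y_{n+i}\neq\widehat{Y}_{n+i}\}$ for $i\in\mathcal{S}$, so $\FCP(\mathcal{R},Y)$ equals the directional false-discovery proportion appearing in \eqref{dirFDR}. Taking expectations and the supremum over $(P_{X|Y},Y)$ yields $\FDR_{{\tiny\mbox{dir}}}(\mathcal{S},\widehat{Y})\le\alpha$.

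Two small points need care. First, one must check Assumption~\ref{assI}(ii) (right-continuity of $\alpha\mapsto\ind{\mathcal{C}^{\alpha}_{n+i}(\pcondbf)\in\mathcal{I}}$) directly for this finite collection; it follows because the map $\alpha\mapsto\mathcal{C}^{\alpha}_{n+i}$ is piecewise constant and right-continuous in the classification setting, and $\mathcal{I}$ is a fixed finite family, so membership is a finite union of conditions of the form $p^{(y)}_i>\alpha$ or $\le\alpha$. Second, one should confirm that when $Y_{n+i}=2$ the contribution to the directional FDP is correctly counted as an error: indeed $\widehat{Y}_{n+i}\in\{1,3\}$ always, so $Y_{n+i}=2$ on a selected $i$ is automatically a false discovery, consistent with $\{2\}\notin\mathcal{I}$ forcing such coverage to fail. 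I expect the main (and only real) obstacle to be the bookkeeping that identifies the stated $q_i$ with the $\mathcal{I}$-adjusted $p$-value of the combined collection and that identifies the post-processed singleton prediction set with $\widehat{Y}_{n+i}$ — once that dictionary is in place, everything reduces to a direct citation of Theorem~\ref{thm-gen-basic} and Lemma~\ref{lem:FDRsmallerFCR}.
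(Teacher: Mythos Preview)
Your proposal is correct and follows essentially the same approach as the paper's proof: identify the informative collection $\mathcal{I}=\{C\subset\range{3}:2\notin C,\ |C|\le 1\}$ via Example~\ref{ex:combine}, recognize that the stated $q_i$ is its $\mathcal{I}$-adjusted $p$-value, invoke Theorem~\ref{thm-gen-basic} in the class-conditional model with $\pcondbf$, and observe that the post-processed \texttt{InfoSP} output is a singleton so that the FCR coincides with the directional FDR. The reference to Lemma~\ref{lem:FDRsmallerFCR} at the end is unnecessary (you already argue $\FCP=\FDP_{\mathrm{dir}}$ directly), but harmless.
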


\begin{proof}
We consider  $\mathcal{I}=\{C\subset \range{K}\::\: y_0\notin C, |C|\leq 1\}$ for $y_0=2$ 
(see Example~\ref{ex:combine}) in the classification setting based on the sample $(X_j,Y_j)$'s ($K=3$), and we note that the FCR coincides with the directional FDR in that case. Hence, the result comes directly from Theorem~\ref{thm-gen-basic} (note that \texttt{InfoSP} is post-processed here, see \S~\ref{sec:classif}).
\end{proof}

\begin{remark}
For $b=a$ and continuous outcomes (with, say, $\pcond{2}{i}=0$),  there are almost surely only two classes, $Z_i=1$ corresponding to $Y_{i}<a$ and $Z_i=3$ corresponding to $Y_{i}>a$. The procedure is thus \texttt{InfoSP} for non-trivial classification, with class-calibrated $p$-values for $K=2$. This procedure coincides with the directional FDR procedure in \cite{guo2015stepwise} applied to conformal $p$-values for the parameters $Y_{n+1}, \ldots, Y_{n+m}$. The proof of validity in  \cite{guo2015stepwise} assumes that the test statistics for the $m$ hypotheses are independent. Interestingly, with the dependence induced by the conformal $p$-values, the same procedure is still valid, as formalized in Corollary \ref{cor-dFDR}. 
\end{remark}

\begin{remark}
\label{rem:predictioninterval}
If one wants to obtain prediction intervals in addition to the directional FDR control, it turns out that in the setting of Corollary~\ref{cor:abexcluded}, not only the FDR control (ii) holds but also the directional FDR control
\begin{equation*}
\FDR_{{\tiny \mbox{dir}}}(\mathcal{R}) :=   \sup_{P_{X,Y}} \E_{(X,Y)\sim P_{X,Y}}\left[\frac{\sum_{i\in \mathcal{S}} \ind{D_i=1,Y_{n+i}>a} + \ind{D_i=3, Y_{n+i}<b}}{1\vee |\mathcal{S}|}\right]\leq \alpha,
\end{equation*}
for the procedure $\mathcal{R}=(\mathcal{C}_{n+i})_{i\in \mathcal{S}}$ defined therein with the directional rule $D_i=\ind{ \mathcal{C}_{n+i} \subset (-\infty,a) }+3\ind{ \mathcal{C}_{n+i} \subset (b,+\infty) } $ for $i\in \mathcal{S}$.
It can be slightly less powerful than the directional FDR controlling procedure of Corollary~\ref{cor-dFDR} (because the latter uses classification scores), but provides additional information.

\end{remark}

\section{FCR control for \add{concordant} selection rules}\label{sec:proofFCRcontrol}

In this section, we present a general approach for FCR control (\S~\ref{sec:genBYresult}), which relies on the following:
\begin{itemize}
\item a general class of $p$-values, including both full-calibrated and class-calibrated conformal $p$-values (\S~\ref{sec:as:generalpvalues});
\item \add{the class of concordant selection rules  \citep{benjamini2005false,benjamini2014selective}, including informative selection rules (\S~\ref{sec:as:nonincreasingSelect}). }
\end{itemize}

\subsection{General statement}\label{sec:genBYresult}

Let $m\geq 1$, $\ZZ\subset \R$ and consider a family $\mathbf{p}=(p^{(y)}_i, i\in \range{m}, y\in \ZZ)$ of random variables taking values in $[0,1]$ and a vector $Y=(Y_{n+i}, i\in \range{m})$ taking values in $\ZZ$. We make use of the notation $\mathbf{p}_{-i}:=(p^{(y)}_{j})_{j\neq i,y\in \ZZ}$ for all $i\in \range{m}$. 

First, we introduce the following assumption on $\mathbf{p}$ and $Y$:
\begin{assumption}\label{as:generalpvalues}
There exists a vector $W=(W_{i}, i\in \range{m})$ of multivariate random variables with
\begin{itemize}
\item[(i)] for all $i\in \range{m}$, the random vector $\mathbf{p}_{-i}=(p^{(y)}_{j})_{j\neq i,y\in \ZZ}$ can be almost surely written as $\Psi_i(p^{(Y_{n+i})}_{i},W_i)$ where $u\in [0,1] \mapsto \Psi_i(u,W_i)\in \R^{\range{m-1}\times \ZZ}$ is a nondecreasing function (in a coordinate-wise sense for the image space). 
\item[(ii)] for all $i\in \range{m}$,  the following super-uniformity property holds
\begin{equation}\label{equ:superunifgen}
\P(p_i^{(Y_{n+i})}\leq t \:|\: W_i)\leq t, \:\: t\in [0,1].
\end{equation}
\end{itemize}
\end{assumption}

Second, for any selection rule $\mathcal{S}\subset \range{m}$, that is, any measurable function of $\mathbf{p}$ valued in the subsets of $\range{m}$, we introduce the following quantity  \citep{benjamini2005false,bao2024selective}:
\begin{equation}\label{equsmin}
s_i^{\min}(\mathbf{p}_{-i})=\min_{z\in \mathcal{A}(\mathbf{p}_{-i})} \left |\mathcal{S}(z,\mathbf{p}_{-i})\right | ,\:\:\: i\in\range{m},
\end{equation}
where $\mathcal{A}(\mathbf{p}_{-i})=\{ z\in[0,1]^\ZZ : i\in\mathcal{S}(z,\mathbf{p}_{-i})\}$ (and by convention $s_i^{\min}(\mathbf{p}_{-i})=0$ if $\mathcal{A}(\mathbf{p}_{-i})$ is empty). \add{The following assumption corresponds to the concordance assumption of \cite{benjamini2014selective} (itself extending the former concordance assumption of \citealp{benjamini2005false} to a collection of $p$-value families).}

\begin{assumption}\label{as:nonincreasingSelect}
\delete{For $i\in\range{m}$, $\mathcal{A}(\mathbf{p}_{-i})$ is almost surely not empty and $s_i^{\min}(\mathbf{p}_{-i})\geq 1$ is a coordinate-wise nonincreasing function of $\mathbf{p}_{-i}$.}
\add{For all $i\in\range{m}$, $s_i^{\min}(\mathbf{p}_{-i})$ is a coordinate-wise nonincreasing function of $\mathbf{p}_{-i}$.} 
\end{assumption}

\delete{While Assumption~\ref{as:nonincreasingSelect} is not satisfied for a selection rule that selects the $k$-smallest p-values, for some $k\geq 1$ (because $\mathcal{A}(\mathbf{p}_{-i})$ can be empty), it is satisfied for $p$-value thresholding rules which are of interest in our setting (see Section~\ref{sec:as:nonincreasingSelect}).
}

\add{As we show in Section~\ref{sec:as:nonincreasingSelect}, $\mathcal{I}$-informative selection rules are concordant. Other examples include monotone $p$-value-based thresholding rules. However, top-$k$ type selections are  concordant only if we use a tie-breaker in the conformal $p$-value definitions (that is, if we use randomized conformal $p$-values \citealp{vovk2005algorithmic,bates2023testing,Jin2023selection}). More formal details are provided in Section~\ref{sec:as:nonincreasingSelect}.}

\begin{theorem}\label{th:generalBY}
Let us consider a $p$-value family $\mathbf{p}$ and a label/outcome vector $Y$ satisfying Assumption~\ref{as:generalpvalues}, for a selection rule $\mathcal{S}\subset \range{m}$ with $s_i^{\min}=s_i^{\min}(\mathbf{p}_{-i})$ \eqref{equsmin} satisfying Assumption~\ref{as:nonincreasingSelect}. Then, the procedure
$\mathcal{R}_{\alpha}=\big(\mathcal{C}^{\alpha \add{(1\vee s_i^{\min})}/m}_{n+i}(\mathbf{p})\big)_{i\in\mathcal{S}}$, for which  the prediction set is defined as in \eqref{PSuncorrected} with level \add{$\alpha (1\vee s_i^{\min})/m$}, satisfies
$
\E (\FCP (\mathcal{R}_\alpha,  Y  )) \leq \alpha,
$
for which the expectation $\E$ is taken w.r.t. the same probability as the one of \eqref{equ:superunifgen} in Assumption~\ref{as:generalpvalues} (ii).
\end{theorem}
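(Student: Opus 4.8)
The plan is to bound the FCR by writing it as a sum over the selected examples and to handle each term by conditioning on the auxiliary variable $W_i$ from Assumption~\ref{as:generalpvalues}. First I would observe that for $i \in \mathcal{S}$, non-coverage $Y_{n+i} \notin \mathcal{C}^{\alpha(1\vee s_i^{\min})/m}_{n+i}(\mathbf{p})$ is by \eqref{PSuncorrected} exactly the event $p_i^{(Y_{n+i})} \leq \alpha(1 \vee s_i^{\min})/m$. Hence
\begin{equation*}
\E(\FCP(\mathcal{R}_\alpha, Y)) = \E\left[\sum_{i\in\range{m}} \frac{\ind{i \in \mathcal{S}}\,\ind{p_i^{(Y_{n+i})} \leq \alpha(1\vee s_i^{\min})/m}}{1 \vee |\mathcal{S}|}\right].
\end{equation*}
The key structural step, following the argument in \cite{benjamini2005false, benjamini2014selective}, is that on the event $\{i \in \mathcal{S}\}$ one has $1 \vee |\mathcal{S}| \geq 1 \vee s_i^{\min}(\mathbf{p}_{-i})$ by definition of $s_i^{\min}$ in \eqref{equsmin}, so the $i$-th summand is at most $\ind{i\in\mathcal{S}}\,\ind{p_i^{(Y_{n+i})} \leq \alpha(1\vee s_i^{\min})/m}/(1\vee s_i^{\min})$.

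Next I would fix $i$ and condition on $W_i$. By Assumption~\ref{as:generalpvalues}(i), $\mathbf{p}_{-i}$ is a nondecreasing function of $p_i^{(Y_{n+i})}$ given $W_i$; by Assumption~\ref{as:nonincreasingSelect}, $s_i^{\min}(\mathbf{p}_{-i})$ is a nonincreasing function of $\mathbf{p}_{-i}$, hence — as a composition — $s_i^{\min}$ is a nonincreasing function of $p_i^{(Y_{n+i})}$ given $W_i$. Likewise $\ind{i\in\mathcal{S}}$, which equals $\ind{p_i^{(Y_{n+i})} \in \mathcal{A}(\mathbf{p}_{-i})}$ — here I would need the structural fact (from Section~\ref{sec:as:nonincreasingSelect}, which I may invoke) that for concordant rules $\mathcal{A}(\mathbf{p}_{-i})$ is a down-set in $p_i^{(Y_{n+i})}$, so $\ind{i\in\mathcal{S}}$ is also nonincreasing in $p_i^{(Y_{n+i})}$. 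Therefore, writing $u = p_i^{(Y_{n+i})}$, the whole integrand $g(u) := \ind{u \in \mathcal{A}}\cdot\ind{u \leq \alpha(1\vee s_i^{\min}(u))/m}/(1\vee s_i^{\min}(u))$ is supported on an interval $[0, u^*]$ for some $W_i$-measurable $u^*$, and on that interval it is bounded above by $\ind{u \leq \alpha(1\vee s_i^{\min}(u))/m}/(1\vee s_i^{\min}(u))$. The standard leave-one-out trick then gives, for each integer value $k$ that $1\vee s_i^{\min}$ can take, the contribution bounded by $\P(u \leq \alpha k/m \mid W_i)/k \leq \alpha/m$ using \eqref{equ:superunifgen}; and because $s_i^{\min}(u)$ is monotone in $u$, the level sets $\{1\vee s_i^{\min}(u) = k\}$ are nested intervals and a telescoping/Abel-summation argument over $k$ shows $\E[g(u)\mid W_i] \leq \alpha/m$. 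Taking expectation over $W_i$ and summing over $i \in \range{m}$ yields $\E(\FCP) \leq \alpha$.

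The main obstacle I anticipate is the monotonicity bookkeeping in that last step: one must be careful that $s_i^{\min}(u)$, as $u$ increases, can only decrease, and simultaneously the threshold $\alpha s_i^{\min}(u)/m$ decreases, so that the event $\{u \leq \alpha(1\vee s_i^{\min}(u))/m\}$ is genuinely an initial segment of $[0,1]$ and the per-level bounds can be combined without double counting. Concretely I would enumerate the distinct values $k_1 > k_2 > \cdots$ taken by $1 \vee s_i^{\min}(u)$ on $[0,1]$ as $u$ ranges upward, note that the set where it equals $k_j$ and $u$ lies below threshold is $(t_{j-1}, t_j]$ for a nondecreasing sequence $t_j \leq \alpha k_j/m$, and then
\begin{equation*}
\E[g(u)\mid W_i] = \sum_j \frac{1}{k_j}\,\P(t_{j-1} < u \leq t_j \mid W_i) \leq \sum_j \frac{1}{k_j}\bigl(\P(u \leq t_j\mid W_i) - \P(u\leq t_{j-1}\mid W_i)\bigr),
\end{equation*}
and I would bound this by $\alpha/m$ via summation by parts together with $\P(u \leq t_j \mid W_i) \leq t_j \leq \alpha k_j/m$ (from \eqref{equ:superunifgen}) and $1/k_j - 1/k_{j+1} \leq 0$. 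A secondary subtlety is verifying that $\ind{i \in \mathcal{S}}$ really is nonincreasing in $u$ for concordant rules, which I would cite from Section~\ref{sec:as:nonincreasingSelect} rather than reprove here. The rest — interchanging sum and expectation, the coordinate-wise monotonicity of $\Psi_i$ — is routine.
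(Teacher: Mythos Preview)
Your approach is essentially the same as the paper's: bound $1\vee|\mathcal{S}|$ below by $1\vee s_i^{\min}$, condition on $W_i$, and use that $1\vee s_i^{\min}$ is nonincreasing in $p_i^{(Y_{n+i})}$ together with super-uniformity. Two simplifications: the paper simply drops $\ind{i\in\mathcal{S}}$ (bounds it by $1$) rather than arguing it is monotone---your identification $\ind{i\in\mathcal{S}}=\ind{p_i^{(Y_{n+i})}\in\mathcal{A}(\mathbf{p}_{-i})}$ is incorrect anyway since $\mathcal{A}(\mathbf{p}_{-i})\subset[0,1]^{\ZZ}$, and concordance (Assumption~\ref{as:nonincreasingSelect}) does not assert this monotonicity---and the Abel-summation step you sketch is exactly Lemma~\ref{lem:BR} (Blanchard--Roquain), which the paper invokes directly.
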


The inequality $\E (\FCP (\mathcal{R}_\alpha,  Y  )) \leq \alpha$ in Theorem~\ref{th:generalBY} will be used both in the cases where $Y_{n+i}$ is fixed (conditional model) or not (iid model).
The proof is provided in \S~\ref{proof:th:generalBY}. Note that the considered assumptions make the proof particularly simple. Assumptions~\ref{as:generalpvalues} and~\ref{as:nonincreasingSelect} are studied in \S~\ref{sec:as:generalpvalues} and \S~\ref{sec:as:nonincreasingSelect}, respectively.

\subsection{Proof of Theorem~\ref{th:generalBY}}\label{proof:th:generalBY}

By definition \eqref{equFSP}, we have
\begin{align*}
\FCP(\mathcal{R}_\alpha,Y)&=\frac{\sum_{i\in \mathcal{S}} \ind{Y_{n+i} \notin \mathcal{C}_{n+i}^{\alpha (\add{1\vee s_i^{\min}})/m}(\mathbf{p})} }{1\vee |\mathcal{S}|}= \sum_{i\in \range{m}}
\frac{ \ind{ i \in \mathcal{S}, p_i^{(Y_{n+i})}\leq  \alpha (\add{1\vee s^{\min}_i})/m}}{1\vee |\mathcal{S}|}\\
&\leq \sum_{i\in \range{m}}
\frac{ \ind{  i \in \mathcal{S},p_i^{(Y_{n+i})}\leq  \alpha (\add{1\vee s^{\min}_i})/m}}{\add{1\vee s^{\min}_i}} \leq \sum_{i\in \range{m}}
\frac{ \ind{ p_i^{(Y_{n+i})}\leq  \alpha (\add{1\vee s^{\min}_i})/m}}{\add{1\vee s^{\min}_i}},
\end{align*}
where the first inequality follows from definition \eqref{equsmin} and the second inequality follows  from ignoring the fact that $i \in \mathcal{S}$. This entails
\begin{align*}
\E\left (\FCP(\mathcal{R}_\alpha,Y)\right )
&\leq \sum_{i\in \range{m}} 
\E\left(\frac{ \ind{  p_i^{(Y_{n+i})}\leq  \alpha (\add{1\vee s^{\min}_i})/m}}{\add{1\vee s^{\min}_i}}\right)\\
&= \sum_{i\in \range{m}} 
\E\left(\E\left[\frac{ \ind{  p_i^{(Y_{n+i})}\leq  \alpha (\add{1\vee s^{\min}_i}(\mathbf{p}_{-i}))/m}}{ \add{1\vee s^{\min}_i}(\mathbf{p}_{-i})}\:\middle|\: W_i\right]\right),
\end{align*}
by using the random vector ${W}=(W_{i}, i\in \range{m})$ defined in Assumption~\ref{as:generalpvalues}.
Now, combining Assumption~\ref{as:generalpvalues} (i) with Assumption~\ref{as:nonincreasingSelect}, we have 
 that 
 $$
 \add{1\vee s^{\min}_i}(\mathbf{p}_{-i}) = \add{1\vee s^{\min}_i}(\Psi_i(p^{(Y_{n+i})}_{i},W_i))
 $$
is a nonincreasing function of $p^{(Y_{n+i})}_{i}$.
By Assumption~\ref{as:generalpvalues} (ii) and applying Lemma~\ref{lem:BR} (conditionally on $W_i$), we obtain
$$
\E\left[\frac{ \ind{  p_i^{(Y_{n+i})}\leq  \alpha (\add{1\vee s^{\min}_i}(\mathbf{p}_{-i}))/m}}{ \add{1\vee s^{\min}_i}(\mathbf{p}_{-i})}\:\middle|\: W_i\right]\leq \frac{\alpha}{m},
$$
Putting this back into the FCR bound implies the result.

\subsection{Examining Assumption~\ref{as:generalpvalues}}\label{sec:as:generalpvalues}

We show here that the full-calibrated and class-calibrated $p$-value families satisfy Assumption~\ref{as:generalpvalues} in the iid and conditional models, respectively. For this, it is interesting to relax Assumption~\ref{as:noties} by assuming that the score functions can use the covariates of the calibration+test samples in an exchangeable way, as suggested in  \cite{marandon2024adaptive,gazin2023transductive}. 
This is useful for instance  when the learning sample and the calibration+test samples are not based on the same distribution, so that the scores may be improved by using transfer learning; we refer to \cite{gazin2023transductive} for more details on this.

\begin{assumption}\label{as:AllAdaptive}
{For any $y\in\ZZ$; $\Score_{y}(\cdot)$ is of the form $\Score_{y}\big(\cdot\:;\:\dtrain, (X_i)_{i\in \range{n+m}} \big)$ for an independent training data sample $\dtrain$ and is invariant by permutation of the elements of $(X_i)_{i\in \range{n+m}} $. In addition, the scores have no ties and the score function is regular in the sense of Assumption~\ref{as:noties}.}
\end{assumption}

\paragraph{$p$-value family $\pfullbf$}

The $p$-value family $\pfullbf$ given by \eqref{standardpvalue} satisfies Assumption~\ref{as:generalpvalues} in the iid model.

\begin{proposition}\label{prop:iidpvalues}
Let us consider a model where the variables $(X_i,Y_i)$, $i\in \range{n+m}$, are exchangeable  conditionally on $\dtrain$ and score functions satisfying Assumptions~\ref{as:AllAdaptive}.
 Then the $p$-value family $\pfullbf$ given by \eqref{standardpvalue} satisfies Assumption~\ref{as:generalpvalues}.\end{proposition}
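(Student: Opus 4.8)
The plan is to exhibit, for each $i\in\range{m}$, a conditioning variable $W_i$ that records everything about the $p$-value family except the single true-label test score at position $i$, and then verify the two items of Assumption~\ref{as:generalpvalues} via the usual conformal exchangeability argument together with an explicit monotonicity bookkeeping. Concretely, I would take $W_i$ to encode the training sample $\dtrain$, the \emph{unordered} collection $\mathcal{E}_i=\{(X_l,Y_l):l\in\range{n}\cup\{n+i\}\}$, and the remaining test covariates $(X_{n+j})_{j\neq i}$. The first observation is that, since by Assumption~\ref{as:AllAdaptive} each score function $S_y(\cdot)=S_y(\cdot\,;\dtrain,(X_l)_{l\in\range{n+m}})$ is invariant under permutations of $(X_l)_{l\in\range{n+m}}$, and since $W_i$ determines the multiset $\{X_1,\dots,X_{n+m}\}$, the whole family $(S_y)_{y\in\ZZ}$ is $W_i$-measurable; hence so are the test scores $S_y(X_{n+j})$ for $j\neq i$ and the multiset $M:=\{S_{Y_l}(X_l):l\in\range{n}\cup\{n+i\}\}$ of true-label scores of the $n+1$ points in $\mathcal{E}_i$. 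Note also that $\mathbf{p}_{-i}$ depends on the test data only through $(X_{n+j})_{j\neq i}$, not the labels, so $W_i$ suffices to express it.

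For item (ii): because $(X_l,Y_l)_{l\in\range{n+m}}$ are exchangeable conditionally on $\dtrain$, applying a permutation that fixes every index in $\{n+j:j\neq i\}$ and permutes $\range{n}\cup\{n+i\}$ shows that, conditionally on $W_i$, the tuple $((X_l,Y_l))_{l\in\range{n}\cup\{n+i\}}$ is uniformly distributed over the $(n+1)!$ orderings of $\mathcal{E}_i$. Since $(n+1)\pfull{Y_{n+i}}{i}=1+\#\{l\in\range{n}:S_{Y_l}(X_l)\geq S_{Y_{n+i}}(X_{n+i})\}$ is exactly the rank from the top of $S_{Y_{n+i}}(X_{n+i})$ inside $M$, and $M$ consists of $n+1$ distinct reals almost surely (Assumption~\ref{as:AllAdaptive}), this rank is uniform on $\range{n+1}$ given $W_i$, which yields $\P(\pfull{Y_{n+i}}{i}\leq t\mid W_i)\leq t$ for all $t\in[0,1]$.

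For item (i): once $W_i$ and the value of $\pfull{Y_{n+i}}{i}$ (equivalently, the rank $k$ of $S_{Y_{n+i}}(X_{n+i})$ in $M$) are given, one recovers $S_{Y_{n+i}}(X_{n+i})$ as the $k$-th largest element of the $W_i$-measurable multiset $M$, and hence the multiset $\{S_{Y_l}(X_l):l\in\range{n}\}=M\setminus\{S_{Y_{n+i}}(X_{n+i})\}$ of calibration scores. Writing, for $j\neq i$ and $y\in\ZZ$,
\[
(n+1)\pfull{y}{j}=1+\#\{l\in\range{n}\cup\{n+i\}:S_{Y_l}(X_l)\geq S_y(X_{n+j})\}-\ind{S_{Y_{n+i}}(X_{n+i})\geq S_y(X_{n+j})},
\]
the first count and $S_y(X_{n+j})$ are $W_i$-measurable, while a larger $\pfull{Y_{n+i}}{i}$ corresponds to $S_{Y_{n+i}}(X_{n+i})$ being \emph{smaller} (further down the sorted list of $M$), so $-\ind{S_{Y_{n+i}}(X_{n+i})\geq S_y(X_{n+j})}$ is nondecreasing in $\pfull{Y_{n+i}}{i}$. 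This displays $\mathbf{p}_{-i}=\Psi_i(\pfull{Y_{n+i}}{i},W_i)$ with $\Psi_i(\cdot,W_i)$ coordinate-wise nondecreasing, completing the verification. The main obstacle is bookkeeping rather than conceptual: one must be careful that the (data-dependent) score functions themselves are absorbed into $W_i$ — this is precisely where the permutation-invariance of Assumption~\ref{as:AllAdaptive} is used — and, for the regression case $\ZZ=\R$, that the coordinate-wise monotonicity and measurability claims are legitimate for the uncountable index $y$, which follows from the right-continuity regularity imposed in Assumption~\ref{as:noties}.
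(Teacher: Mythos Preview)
Your proof is correct and follows essentially the same route as the paper's. The only cosmetic difference is in the choice of $W_i$: you condition on the raw data $(\dtrain,\mathcal{E}_i,(X_{n+j})_{j\neq i})$, whereas the paper conditions on the derived score information $\Wfull_i=(A_i,(S_y(X_{n+j}))_{j\neq i,y\in\ZZ})$ with $A_i=\{S_{Y_l}(X_l):l\in\range{n}\cup\{n+i\}\}$; since your $W_i$ determines the paper's and your permutation argument for conditional exchangeability goes through, both choices yield the same rank-uniformity and monotonicity conclusions.
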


\begin{proof}
Let us first establish Assumption~\ref{as:generalpvalues} (i) by following an argument similar to the one of \cite{bates2023testing,marandon2024adaptive}. By Assumption~\ref{as:AllAdaptive}, we can work on an event where the elements of the sets $A_i=\{S_{Y_k}(X_k),k\in \range{n}\}\cup \{S_{Y_{n+i}}(X_{n+i})\}$ are all distinct.  
For any $i\in \range{m}$, we have by \eqref{standardpvalue} that for all $j\in \range{m} \backslash\{i\}$ and $y\in \ZZ$,
\begin{align*}
\pfull{y}{j}&= \frac{1}{n+1}\Big(1+\sum_{k=1}^n \ind{S_{Y_k}(X_k)\geq S_{y}(X_{n+j})} \Big)\\
&=\frac{1}{n+1}\Big(\ind{S_{Y_{n+i}}(X_{n+i})< S_{y}(X_{n+j})}+\sum_{s\in A_i} \ind{s\geq S_{y}(X_{n+j})} \Big).
\end{align*} 
Denoting $A_i=\{a_{i,(1)},\dots,a_{i,(n+1)}\}$ with $a_{i,(1)}>\dots>a_{i,(n+1)}$, and noting that $S_{Y_{n+i}}(X_{n+i})=a_{i,(\ell)}$ with $\pfull{y}{i}=\ell/(n+1)$, we may write 
$$\pfullbf_{-i}:=(\pfull{y}{j})_{j\in\range{m}\backslash\{i\}, y\in\ZZ}=\Psi(\pfull{Y_{n+i}}{i},\Wfull_i),$$
by letting for $u\in (0,1]$, 
\begin{align*}
\Wfull_i&:=(A_i,(S_{y}(X_{n+j}))_{ j\in\range{m}\backslash\{i\}, y\in\ZZ})\\
\Psi(u,\Wfull_i) &:= \left(\frac{1}{n+1}\Big(\ind{a_{i,(\lceil u(n+1)\rceil )}< S_{y}(X_{n+j})}+\sum_{s\in A_i} \ind{s\geq S_{y}(X_{n+j})} \Big)\right)_{j\in\range{m}\backslash\{i\}, y\in\ZZ}.
\end{align*} 
 Clearly, each of the elements inside $\Psi(u,\Wfull_i)$ is nondecreasing in $u$,  which gives Assumption~\ref{as:generalpvalues} (i) (note that $\Psi$ does not depend on $i$ in this context).

Next, we establish Assumption~\ref{as:generalpvalues} (ii). Since the variables $(X_i,Y_i)$, $i\in \range{n+m}$, are exchangeable and since the scores functions, the set $A_i$, and $(S_{y}(X_{n+j}))_{ j\in\range{m}\backslash\{i\}, y\in\ZZ}$ are invariant by permutations of  $(X_1,Y_1),\dots, (X_n,Y_n),(X_{n+i},Y_{n+i})$ (by using Assumptions~\ref{as:AllAdaptive}), we have that the random vector $(S_{Y_1}(X_1),\dots, S_{Y_n}(X_n),S_{Y_{n+i}}(X_{n+i}))$ is exchangeable conditionally on $\Wfull_i$. Since there are no ties in the vector, it follows that $(n+1)\pfull{Y_{n+i}}{i}$ (i.e., the rank of $S_{Y_{n+i}}(X_{n+i})$ in $A_i$) is uniformly distributed in $\range{n+1}$ conditionally on $\Wfull_i$. Thus Assumption~\ref{as:generalpvalues} (ii) is satisfied (note that the conditional probability is well defined thanks to the regularity condition in Assumption~\ref{as:AllAdaptive}).
\end{proof}

\paragraph{$p$-value family $\pcondbf$}

The $p$-value family $\pcondbf$ given by \eqref{confpvalues0} satisfies Assumption~\ref{as:generalpvalues} in the conditional model.

\begin{proposition}\label{prop:condpvalues}
In the case $\ZZ=\range{K}$, let us consider score functions satisfying Assumptions~\ref{as:AllAdaptive} and a model for the variables $(X_i,Y_i)$, $i\in \range{n+m}$, for which $(Y_i, i\in \range{n+m})$ is a deterministic vector and, for each $y\in \ZZ$, the variables $(X_i)_{i\in \range{n+m}: Y_i=y}$, are exchangeable.
 Then the $p$-value family $\pcondbf$ given by \eqref{confpvalues0} satisfies Assumption~\ref{as:generalpvalues}.\end{proposition}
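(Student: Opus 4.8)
The plan is to follow the proof of Proposition~\ref{prop:iidpvalues} but to replace the full calibration sample by the sub-sample carrying the (deterministic) label $y^\star := Y_{n+i}$, since in the class-conditional model exchangeability is available only within each class. Fix $i\in\range{m}$, write $\mathcal{D}^\star := \{k\in\range{n}: Y_k=y^\star\}$ and $n_\star:=|\mathcal{D}^\star|$, and work on the almost-sure event of Assumption~\ref{as:AllAdaptive} where the relevant scores are pairwise distinct. Let $J:=\mathcal{D}^\star\cup\{n+i\}$ --- the indices with label $y^\star$ among the calibration sample together with the test index $n+i$, which truly has label $y^\star$ --- and let $M_i:=\{S_{y^\star}(X_\ell):\ell\in J\}$, a multiset of $n_\star+1$ distinct reals. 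I would then set
\[
W_i:=\Bigl(M_i,\ \bigl(S_y(X_{n+j})\bigr)_{j\in\range{m}\setminus\{i\},\,y\in\ZZ},\ \bigl(S_y(X_k)\bigr)_{y\in\ZZ\setminus\{y^\star\},\,k\in\mathcal{D}^{(y)}_{{\tiny \mbox{cal}}}}\Bigr),
\]
that is, the \emph{unordered} class-$y^\star$ scores over $J$ together with every other score that enters $\pcondbf_{-i}=(\pcond{y}{j})_{j\neq i,\,y\in\ZZ}$.

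The first step is Assumption~\ref{as:generalpvalues}~(i). Since $\pcond{y^\star}{i}=r/(n_\star+1)$ precisely when $S_{y^\star}(X_{n+i})$ is the $r$-th largest element of $M_i$, the pair $(\pcond{y^\star}{i},W_i)$ determines the multiset $\{S_{y^\star}(X_k):k\in\mathcal{D}^\star\}$ as $M_i$ with that element removed. Plugging this into \eqref{confpvalues0} together with the values pinned by $W_i$ recovers each $\pcond{y}{j}$, $j\neq i$: for $y\neq y^\star$ it depends only on quantities already in $W_i$, hence is constant in $\pcond{y^\star}{i}$; for $y=y^\star$, increasing $\pcond{y^\star}{i}$ removes a \emph{smaller} element of $M_i$, which can only increase $\#\{a\in M_i\setminus\{\text{removed}\}:a\ge S_{y^\star}(X_{n+j})\}$, so $\pcond{y^\star}{j}$ is nondecreasing in $\pcond{y^\star}{i}$. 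Hence $\pcondbf_{-i}=\Psi_i(\pcond{y^\star}{i},W_i)$ with $\Psi_i(\cdot,W_i)$ coordinate-wise nondecreasing.

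The second step is the conditional super-uniformity \eqref{equ:superunifgen}. For any permutation $\pi$ of the index set $J$, extended by the identity on $\range{n+m}\setminus J$, the full covariate sample is unchanged as a multiset, so by Assumption~\ref{as:AllAdaptive} the score functions are unchanged; moreover $J\cap\range{n+1,n+m}=\{n+i\}$ and $J$ contains no calibration index of a class other than $y^\star$, so $W_i$ is invariant under $\pi$. Since $(X_\ell)_{\ell\in J}$ is exchangeable by the within-class-$y^\star$ assumption, it follows that conditionally on $W_i$ the vector $(S_{y^\star}(X_\ell))_{\ell\in J}$ is exchangeable; its entries being a.s.\ distinct, the rank of $S_{y^\star}(X_{n+i})$ is uniform on $\{1,\dots,n_\star+1\}$ given $W_i$, and as $\pcond{y^\star}{i}$ equals this rank divided by $n_\star+1$ we obtain $\P(\pcond{y^\star}{i}\le t\mid W_i)\le t$ for all $t\in[0,1]$. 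This verifies Assumption~\ref{as:generalpvalues}.

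I expect the bookkeeping around $W_i$ to be the delicate point: it must be chosen rich enough to let $\Psi_i$ reconstruct all of $\pcondbf_{-i}$ from $\pcond{y^\star}{i}$, yet coarse (and ``class-$y^\star$-symmetric'') enough to be invariant under the relabelings of $J$ that drive the uniformity argument --- and all of this while the score functions may depend on the entire covariate sample. It is exactly the permutation-invariance built into Assumption~\ref{as:AllAdaptive} that keeps both $W_i$ and the score functions stable under those relabelings, so the argument should carry over verbatim to transfer-learning-type scores.
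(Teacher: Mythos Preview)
Your argument is correct and follows the same strategy as the paper's proof: both identify the unordered class-$y^\star$ score set on $\mathcal{D}^\star\cup\{n+i\}$, use the rank interpretation of $\pcond{y^\star}{i}$ to recover the calibration scores, and then invoke within-class exchangeability together with the permutation invariance of the score functions (Assumption~\ref{as:AllAdaptive}) to get conditional uniformity. The only difference is in the bookkeeping of $W_i$: the paper stores $(S_{y^\star}(X_{n+j}))_{j\neq i}$ together with the raw covariates $(X_j,Y_j)_{Y_j\neq y^\star}$, whereas you store all the scores $(S_y(X_{n+j}))_{j\neq i,\,y\in\ZZ}$ and $(S_y(X_k))_{y\neq y^\star,\,k\in\mathcal{D}^{(y)}_{\text{cal}}}$ directly. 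Your choice is arguably tidier, since it transparently covers the case of a test example $n+j$ with $Y_{n+j}=y^\star$ and $j\neq i$, for which $S_y(X_{n+j})$ with $y\neq y^\star$ is needed to reconstruct $\pcond{y}{j}$ but is not obviously recoverable from the paper's $\Wcond_i$.
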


The proof is similar to the proof of Proposition~\ref{prop:iidpvalues}. We provide it below for completeness. 

\begin{proof}
For any $i\in \range{m}$, we have by \eqref{confpvalues0} that for all $j\in \range{m} \backslash\{i\}$ and $y\in \ZZ$,
\begin{align*}
\pcond{y}{j}&= \frac{1}{|\mathcal{D}^{(y)}_{{\tiny \mbox{cal}}}|+1}\Big(1+\sum_{k\in \mathcal{D}^{(y)}_{{\tiny \mbox{cal}}}} \ind{S_{y}(X_k)\geq S_{y}(X_{n+j})} \Big)\\
&=\frac{1}{n^{(y)}_i}\Big(\ind{S_{y}(X_{n+i})< S_{y}(X_{n+j})}+\sum_{s\in A^{(y)}_i} \ind{s\geq S_{y}(X_{n+j})} \Big),
\end{align*} 
by letting $n^{(y)}_i=|A^{(y)}_i|$  and $A^{(y)}_i=\{S_{y}(X_k),k\in \mathcal{D}^{(y)}_{{\tiny \mbox{cal}}}\}\cup \{S_{y}(X_{n+i})\}$ whose elements can be assumed to be all distinct by Assumptions~\ref{as:AllAdaptive} (strictly, this is only true for labels $y\in \ZZ$ that appears at least once in the fixed sample $(Y_i, i\in \range{n+m})$, but the labels $y$ not appearing in $(Y_i, i\in \range{n+m})$ can be trivially handled because they correspond to $p$-values all equal to $1$). 
Denoting $A^{(y)}_i=\{a^{(y)}_{i,(1)},\dots,a^{(y)}_{i,(n^{(y)}_i)}\}$ with  $a^{(y)}_{i,(1)}>\dots>a^{(y)}_{i,(n^{(y)}_i)}$, and noting that $S_{y}(X_{n+i})=a^{(y)}_{i,(n^{(y)}_i \pcond{y}{i})}$, 
we may write 
$$\pcondbf_{-i}:=(\pcond{y}{j})_{j\in\range{m}\backslash\{i\}, y\in\ZZ}=\Psi_i(\pcond{Y_{n+i}}{i},\Wcond_i),$$
by letting 
\begin{align}\label{equWcond}
&\Wcond_i:=\big( (A^{(Y_{n+i})}_i,(S_{Y_{n+i}}(X_{n+j}))_{ j\in\range{m}\backslash\{i\}}); (X_j,Y_j)_{ j\in\range{n+m}: Y_j\neq Y_{n+i}}\big)
\end{align} 
and for $u\in (0,1]$,  $\Psi_i(u,\Wcond_i) := (\Psi^{(y)}_i(u,\Wcond_i))_{y \in \ZZ}$ where 
\begin{align*}
&\Psi^{(y)}_i(u,\Wcond_i) \\
&:= \left\{\begin{array}{cc}
  \Big( \frac{1}{n^{(y)}_i}\Big(\ind{a_{i,(\lceil u n^{(y)}_i\rceil )}< S_{y}(X_{n+j})}+\sum_{s\in A^{(y)}_i} \ind{s\geq S_{y}(X_{n+j})} \Big)\Big)_{j\in \range{m} \backslash\{i\}}  & \mbox{ if $y=Y_{n+i}$;}  \\
  (\pcond{y}{j})_{j\in \range{m} \backslash\{i\}}   &  \mbox{ if $y\neq Y_{n+i}$.}
\end{array}\right.
\end{align*} 
 Clearly, the elements inside $\Psi_i(u,\Wcond_i)$ are nondecreasing in $u$ which gives Assumption~\ref{as:generalpvalues} (i).

Next, to establish Assumption~\ref{as:generalpvalues} (ii), we use that by assumption the vector $$(S_{Y_{n+i}}(X_{n+i}),S_{Y_{n+i}}(X_k),k\in \mathcal{D}^{(Y_{n+i})}_{{\tiny \mbox{cal}}})$$ is exchangeable conditionally on $\Wcond_i$ (by permutation invariance of $\Wcond_i$, which also comes from Assumptions~\ref{as:AllAdaptive}).
Since there are no ties in the vector, it follows that $n^{(Y_{n+i})}_i\pcond{Y_{n+i}}{i}$ (i.e., the rank of $S_{Y_{n+i}}(X_{n+i})$ in $A^{(Y_{n+i})}_i$) is uniformly distributed in $\range{n^{(Y_{n+i})}_i}$ conditionally on $\Wcond_i$. Thus  Assumption~\ref{as:generalpvalues} (ii) is satisfied. 
\end{proof}

\subsection{Concordant selection rules}\label{sec:as:nonincreasingSelect}

\begin{proposition}\label{prop:inforuleok}
 Assumption~\ref{as:nonincreasingSelect} holds for the informative selection rule $\mathcal{S}(\mathbf{p})=\BH(\mathbf{q})$ with $s_i^{\min}(\mathbf{p}_{-i})=|\mathcal{S}(\mathbf{p})|$ whenever $i\in \mathcal{S}(\mathbf{p})$.
\end{proposition}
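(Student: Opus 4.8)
The plan is to reduce the statement to elementary monotonicity properties of the Benjamini--Hochberg map. First I would record the structural reduction. The selection rule $\mathcal{S}(\mathbf{p})=\BH(\mathbf{q})$ is a function of the adjusted vector $\mathbf{q}=(q_j)_{j\in\range{m}}$ alone; by \eqref{qformula}--\eqref{PSuncorrected} each $q_j$ depends on $\mathbf{p}$ only through the block $(p^{(y)}_j)_{y\in\ZZ}$ and is a nondecreasing function of that block (Lemma~\ref{lem:monotonicityq}). Consequently, writing $z\in[0,1]^{\ZZ}$ for the candidate $i$-th block and $q_i(z)=\min\{\alpha\in(0,1]:\{y:z^{(y)}>\alpha\}\in\mathcal{I}\}$, the set $\mathcal{S}(z,\mathbf{p}_{-i})$ is exactly $\BH$ applied to the vector obtained from $\mathbf{q}$ by replacing $q_i$ with $q_i(z)$, and this depends on $z$ only through the scalar $q_i(z)$, which by Lemma~\ref{lem:monotonicityq} is monotone in $z$.

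Next I would analyze $\BH$ as a function of that one coordinate. Put $F_{-i}(t)=|\{j\neq i:q_j\le t\}|$ and $\hat\ell_0=\hat\ell_0(\mathbf{p}_{-i})=\max\{\ell\in\range{m}:F_{-i}(\alpha\ell/m)+1\ge\ell\}$, the maximum being over a nonempty set since $\ell=1$ always qualifies. When $q_i$ is replaced by $q_i(z)$, the counting function of the modified vector is $F(t)=F_{-i}(t)+\ind{q_i(z)\le t}$, and using the standard facts $q_{(\ell)}\le\alpha\ell/m\iff F(\alpha\ell/m)\ge\ell$ and $|\BH(\cdot)|=\hat\ell$, one checks: (a) if $q_i(z)\le\alpha\hat\ell_0/m$ then $F(\alpha\hat\ell_0/m)=F_{-i}(\alpha\hat\ell_0/m)+1\ge\hat\ell_0$ while $F(\alpha\ell/m)\le F_{-i}(\alpha\ell/m)+1$ always, so the $\BH$ index of the modified vector equals $\hat\ell_0$, hence $i$ is selected and $|\mathcal{S}(z,\mathbf{p}_{-i})|=\hat\ell_0$; (b) if $q_i(z)>\alpha\hat\ell_0/m$ then that $\BH$ index is at most $\hat\ell_0$, so the $\BH$ cutoff is $\le\alpha\hat\ell_0/m<q_i(z)$ and $i$ is not selected. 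Thus $\mathcal{A}(\mathbf{p}_{-i})=\{z:q_i(z)\le\alpha\hat\ell_0/m\}$ and $|\mathcal{S}(z,\mathbf{p}_{-i})|$ is constant and equal to $\hat\ell_0$ on $\mathcal{A}(\mathbf{p}_{-i})$; therefore $s_i^{\min}(\mathbf{p}_{-i})=\hat\ell_0(\mathbf{p}_{-i})$.

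The two assertions then follow immediately. If $i\in\mathcal{S}(\mathbf{p})$, the true block $z_0=(p^{(y)}_i)_{y\in\ZZ}$ lies in $\mathcal{A}(\mathbf{p}_{-i})$ by definition of $\mathcal{A}$, so $s_i^{\min}(\mathbf{p}_{-i})=|\mathcal{S}(z_0,\mathbf{p}_{-i})|=|\mathcal{S}(\mathbf{p})|$, which is the claimed formula. For the coordinate-wise monotonicity, increasing any coordinate of $\mathbf{p}_{-i}$ can only increase some $q_j$, $j\neq i$ (Lemma~\ref{lem:monotonicityq}), hence can only decrease $F_{-i}(t)$ for every $t$, hence can only shrink the feasible set $\{\ell:F_{-i}(\alpha\ell/m)+1\ge\ell\}$ and so cannot increase its maximum $\hat\ell_0=s_i^{\min}(\mathbf{p}_{-i})$. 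This verifies Assumption~\ref{as:nonincreasingSelect}.

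The one genuinely delicate point is step two: one must argue carefully that lowering $q_i$ below $\alpha\hat\ell_0/m$ does not create a larger feasible index $\ell$ for the modified $\BH$ --- the reason being that the coordinate $i$ is already counted in $F(\alpha\ell/m)$ for every $\ell\ge\hat\ell_0$, so the feasibility condition at those $\ell$ is unchanged --- and symmetrically that raising $q_i$ above $\alpha\hat\ell_0/m$ cannot keep $i$ in the selection. Pinning down $s_i^{\min}$ through the explicit quantity $\hat\ell_0$ defined from $F_{-i}$ isolates exactly this comparison; the remainder is bookkeeping.
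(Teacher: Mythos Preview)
Your proof is correct and follows essentially the same approach as the paper. The paper invokes the classical leave-one-out property of BH (citing the literature) to write $s_i^{\min}(\mathbf{p}_{-i})=|\BH(\mathbf{q}^{0,i})|$ directly, whereas you re-derive that property explicitly via the quantity $\hat\ell_0$ (which is exactly $|\BH(\mathbf{q}^{0,i})|$); both then conclude monotonicity from Lemma~\ref{lem:monotonicityq}.
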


\delete{Note that the above result is also true for selection rule of the type $\mathcal{S}(\mathbf{p})=\{i\in \range{m}\::\: q_i\leq \tau\}$ for some fixed threshold $\tau$. }

\begin{proof}
First, a classical property of BH procedure is the following leave-one-out property: for all $i\in \range{m}$, 
$i\in \BH(\mathbf{q})$ if and only if $\BH(\mathbf{q})= \BH(\mathbf{q}^{0,i})$ where $\mathbf{q}^{0,i}$ is the vector $\mathbf{q}$ where the $i$-th coordinate has been replaced by $0$, see for instance \cite{FZ2006,Sar2008,RV2011,ramdas2019unified}. This implies
\begin{align*}
s_i^{\min}(\mathbf{p}_{-i})&=\min_{z\in[0,1]^\ZZ : i\in\mathcal{S}(z,\mathbf{p}_{-i})} \left |\mathcal{S}(z,\mathbf{p}_{-i})\right |=|\BH(\mathbf{q}^{0,i})|.
\end{align*}
Hence, the result is proved as soon as $\mathbf{q}$ is proved to be coordinate-wise nondecreasing in each $p$-value trajectory.
This holds by Lemma~\ref{lem:monotonicityq}.
\end{proof}

\begin{lemma}\label{lem:monotonicityq}
Suppose Assumption~\ref{assI}, then \add{for each $i\in \range{m}$, almost surely, $q_i$ defined by \eqref{qformula} is a nondecreasing function of each  ${p}^{(y)}_i$, $y\in \mathcal{Y}$.}
\delete{$\mathbf{q}$ defined by \eqref{qformula} is a nondecreasing function of each $p$-value ${p}^{(y)}_i$, $i\in \range{m}$ and $y\in \mathcal{Y}$.}
\end{lemma}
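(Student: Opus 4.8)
The plan is to reduce the statement to a one-coordinate monotonicity comparison and then exploit the downward-closedness of $\mathcal{I}$. First I would observe that $\mathcal{C}^\alpha_{n+i}(\mathbf{p})=\{y\in\ZZ:p^{(y)}_i>\alpha\}$, and hence $q_i$, depends on $\mathbf{p}$ only through the slice $(p^{(y)}_i)_{y\in\ZZ}$; so it is enough to fix $i\in\range{m}$ and $y_0\in\ZZ$, take two $p$-value families $\mathbf{p}$ and $\mathbf{p}'$ that agree in all coordinates except with $p'^{(y_0)}_i\ge p^{(y_0)}_i$, and show $q_i(\mathbf{p}')\ge q_i(\mathbf{p})$. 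Throughout I would work on the almost-sure event on which Assumption~\ref{assI}(ii),(iii) hold for the index $n+i$; as noted right after Assumption~\ref{assI}, this makes $\alpha\mapsto\ind{\mathcal{C}^\alpha_{n+i}(\mathbf{p})\in\mathcal{I}}$ nondecreasing and right-continuous, so $\{\alpha\in(0,1]:\mathcal{C}^\alpha_{n+i}(\mathbf{p})\in\mathcal{I}\}$ is either empty or a closed interval $[q_i,1]$, and the minimum in \eqref{qformula} is genuinely attained.

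The key step is the inclusion $\mathcal{C}^\alpha_{n+i}(\mathbf{p})\subset\mathcal{C}^\alpha_{n+i}(\mathbf{p}')$ for every $\alpha\in(0,1]$. For $y\ne y_0$ the membership conditions $p^{(y)}_i>\alpha$ and $p'^{(y)}_i>\alpha$ coincide, while for $y=y_0$, $p^{(y_0)}_i>\alpha$ forces $p'^{(y_0)}_i\ge p^{(y_0)}_i>\alpha$; in the classification case this is exactly the claimed inclusion, and in the regression case one passes to convex hulls (using the version of $\mathcal{C}^\alpha_{n+i}$ from the remark following Assumption~\ref{assI}), noting that $A\subset A'$ implies $\mathrm{conv}(A)\subset\mathrm{conv}(A')$.

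Finally I would apply Assumption~\ref{assI}(i): since $\mathcal{C}^\alpha_{n+i}(\mathbf{p})\subset\mathcal{C}^\alpha_{n+i}(\mathbf{p}')$, whenever the larger set $\mathcal{C}^\alpha_{n+i}(\mathbf{p}')$ is informative so is the smaller set $\mathcal{C}^\alpha_{n+i}(\mathbf{p})$. Hence $\{\alpha:\mathcal{C}^\alpha_{n+i}(\mathbf{p}')\in\mathcal{I}\}\subset\{\alpha:\mathcal{C}^\alpha_{n+i}(\mathbf{p})\in\mathcal{I}\}$, and taking minima (with the convention $\min\emptyset=1$) gives $q_i(\mathbf{p}')\ge q_i(\mathbf{p})$, which is the asserted monotonicity in the coordinate $p^{(y_0)}_i$; applying this for each $y_0\in\ZZ$ finishes the proof. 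The only mild subtlety — the ``main obstacle'', such as it is — is the bookkeeping in the regression case (making sure the convex-hull form of $\mathcal{C}^\alpha_{n+i}$ is the one used and that inclusion is preserved under it) together with checking that the infimum defining $q_i$ is attained, so that ``a smaller set of admissible $\alpha$'' really does translate into ``a larger $q_i$''; both points are dealt with by the right-continuity/monotonicity observation already recorded after Assumption~\ref{assI}.
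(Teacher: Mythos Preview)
Your proposal is correct and follows essentially the same approach as the paper's proof: both arguments reduce to the inclusion $\mathcal{C}^{\alpha}_{n+i}(\mathbf{p})\subset\mathcal{C}^{\alpha}_{n+i}(\mathbf{p}')$ when $\mathbf{p}\le\mathbf{p}'$ and then invoke the downward-closedness of $\mathcal{I}$ (Assumption~\ref{assI}(i),(iii)). The only cosmetic differences are that the paper increases all coordinates $(p^{(y)}_i)_{y\in\ZZ}$ at once and evaluates the inclusion at the single level $\alpha=q'_i$, whereas you increase one coordinate $y_0$ at a time and establish the inclusion for every $\alpha$ before passing to minima; you are also a bit more explicit about the right-continuity guaranteeing attainment of the minimum and about the convex-hull convention in the regression case, points the paper leaves implicit.
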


\begin{proof}
\add{First note that each $\mathcal{C}^{\alpha}_{n+i}(\mathbf{p})$ only depends on the collection $\mathbf{p}=({p}^{(y)}_i, y\in \mathcal{Y}, j\in \range{m})$ through $({p}^{(y)}_i, y\in \mathcal{Y})$, see \eqref{predictionsetthreshold}, hence $q_i$ is only a function of $({p}^{(y)}_i, y\in \mathcal{Y})$. Also, this allows to denote $\mathcal{C}^{\alpha}_{n+i}(\mathbf{p})$ simply by $\mathcal{C}^{\alpha}_{n+i}({p}^{(y)}_i, y\in \mathcal{Y})$.
Let $({p}^{(y)}_i, y\in \mathcal{Y})$ and $({p}'^{(y)}_i, y\in \mathcal{Y})$  be two $p$-value collections with ${p}^{(y)}_i\leq {p}'^{(y)}_i$, for all $y\in \mathcal{Y}$, with corresponding values $q_i$ and $q'_i$. By definition, $\mathcal{C}^{q'_i}_{n+i}({p}'^{(y)}_i, y\in \mathcal{Y})\in \mathcal{I}$ and $\mathcal{C}^{q'_i}_{n+i}({p}^{(y)}_i, y\in \mathcal{Y})\subset \mathcal{C}^{q'_i}_{n+i}({p}'^{(y)}_i, y\in \mathcal{Y})$. By Assumption~\ref{assI} (i) (iii), 
we have $\mathcal{C}^{q'_i}_{n+i}({p}^{(y)}_i, y\in \mathcal{Y})\in \mathcal{I}$, which in turn implies $q_i\leq q'_i$ by definition of $q_i$.}
\end{proof}

\add{
Let us now discuss the concordance assumption for other, non-informative, selection rules. 
First note that Assumption~\ref{as:nonincreasingSelect} is true for any selection rule of the type $\mathcal{S}(\mathbf{p})=\{i\in \range{m}\::\: q_i\leq t\}$ for some deterministic threshold $t$. More generally, let us consider the monotone $p$-value-based thresholding selection rule
$$
\mathcal{S}(\mathbf{p})=\{i\in \range{m}\::\: f(p^{(y)}_i,y\in \ZZ)\leq t\},
$$
where $f:[0,1]^\ZZ\mapsto \R$ is some measurable coordinate-wise nondecreasing function. This selection rule satisfies Assumption~\ref{as:nonincreasingSelect}. Indeed, we easily check in this case $\mathcal{A}(\mathbf{p}_{-i})=\{ z\in[0,1]^\ZZ : f(z)\leq t\}$ (which does not depend on $\mathbf{p}_{-i}$) and thus 
$$s_i^{\min}(\mathbf{p}_{-i})=\Big(1+ \sum_{j\neq i} \ind{f(p^{(y)}_j,y\in \ZZ)\leq t}\Big)\ind{\exists z\in[0,1]^\ZZ : f(z)\leq t}
,$$
which is clearly nonincreasing in each $p$-value. For instance, examples include $f(p^{(y)}_i,y\in \ZZ)=\sum_{y\in \ZZ}\log(p^{(y)}_i)$ (Fisher's combination) in the classification case and $f(p^{(y)}_i,y\in \ZZ)=\int_{y\in \R} p^{(y)}_i e^{-y^2} dy$ in the regression case.
}

\add{
Finally, we mention that Assumption~\ref{as:nonincreasingSelect} is not  satisfied in our context for top-$k$ type selection rules because of ties. Indeed, let us consider the simple case where 
$
\mathcal{S}(\mathbf{p})
$
selects the $2$-smallest $p$-values among the $p$-values $p_i:=p_i^{(y_0)}$, $i\in \range{m}$, 
for some pre-specified $y_0\in \ZZ$ and $m=3$. 
When the $p$-values are at the minimum value $1/(n+1)$, the selection rule should make a unique choice and let us say that it selects $\{1,2\}$ when $(p_1,p_2,p_3)=(1/(n+1),1/(n+1),1/(n+1))$. Then, for  $i=3$, we easily obtain 
$s_i^{\min}(1/(n+1),1/(n+1),1/(n+1))=0$ (because $i=3$ is not selected), while $s_i^{\min}(1/(n+1),2/(n+1),1/(n+1))=2$ (because $i=3$ is selected). This contradicts the concordance assumption. 
}

\add{Nevertheless, by using a classical randomization trick for the conformal $p$-values \cite{vovk2005algorithmic,bates2023testing,Jin2023selection} 
the $p$-value family has almost surely distinct values and top-$k$ type selection rules are concordant (under light additional assumptions), which concurs with the observation made in \cite{benjamini2014selective}. More specifically, for $k\in \range{m}$, assume $
\mathcal{S}(\mathbf{p})
$ selects the $k$-smallest $f_i=f(p^{(y)}_i,y\in \ZZ)$ where $f:(0,1)^\ZZ\mapsto (0,1)$ is some measurable coordinate-wise nondecreasing surjective function and that the $f_i$'s have almost surely no ties. Then $\mathcal{A}(\mathbf{p}_{-i})=\{ z\in[0,1]^\ZZ : i\in\mathcal{S}(z,\mathbf{p}_{-i})\}$ is not empty, because $z$ such that $f(z)<\min_{j\in \range{m}}f_{j}$  always belong to it. Hence, by definition $s_i^{\min}(\mathbf{p}_{-i})=k$ for all values of $\mathbf{p}_{-i}$, which establishes concordance. For instance, the concordance holds when using randomized conformal $p$-values and selecting the $k$-smallest $\mathcal{I}$-adjusted $p$-values, with $f(p^{(y)}_i,y\in \ZZ)=q_i$ being equal to $p_i^{(y_0)}$ (excluding $y_0$), $\max_{y\in [a,b]} p_i^{(y)}$ (excluding $[a,b]$) or $\min_{k\in \range{K}} p_i^{(k)}$ (non-trivial classification).
}

\section{Procedures using weighted class-calibrated $p$-values}\label{SM-sec:adaptproc}

Procedure \texttt{InfoSP} does not take into account the proportion of labels in each class in the test sample. However, in the classification case, these proportions are estimable from the data, and the estimates can aid inference. 

Let $\pi_k= \sum_{i=1}^m \ind{Y_{n+i}=k}/m$ the true proportion of examples with label $k$ in the test sample, $k\in \range{K}$, and consider the following possible estimates:
\begin{itemize}
    \item Calibration-based estimator: 
$\hat{\pi}^{{\tiny \mbox{cal}}}_k = (|\mathcal{D}^{(k)}_{{\tiny \mbox{cal}}}|+1)/(n+1) =(1+\sum_{j\in \range{n}}\ind{Y_j=k})/(n+1),$ $k\in \range{K}$;
\item Storey-$\lambda$ estimator: 
$\hat{\pi}^{{\tiny \mbox{Storey}}}_k = \Big(1+\sum_{i=1}^m \ind{p^{(k)}_{i}>\lambda}\Big)/(m(1-\lambda))$, $k\in \range{K}$. It is similar to the classical estimator of true null hypotheses proportion in multiple testing \citep{Storey2002}. 
\end{itemize}
Given the class-calibrated $p$-value family $\pcondbf$ and one of the estimators $\hat{\pi}_k\in \{\hat{\pi}^{{\tiny \mbox{cal}}}_k,\hat{\pi}^{{\tiny \mbox{Storey}}}_k\}$, we define the corresponding adaptive (weighted) $p$-value collection $\pcondbf_{{\tiny \mbox{adapt}}}=(\pcond{k}{i,{\tiny \mbox{adapt}}}, k\in \range{K}, i \in\range{m}\}$ by
\begin{equation}\label{adaptpvalues}
\pcond{k}{i,{\tiny \mbox{adapt}}}=\frac{\hat{\pi}_k}{w_k} \pcond{k}{i},\:\:k\in \range{K}, i \in\range{m},
\end{equation}  
where $(w_k, k\in \range{K})$ are deterministic nonnegative weights such that $\sum_{k\in \range{K}} w_k=1$.
The rationale behind \eqref{adaptpvalues} is that the term $\hat{\pi}_k$ balances the false coverage errors between classes by trying to decrease $p$-values related to labels which do not appear much in the test sample. The weights $w_k$ are additional parameter that add flexibility, but they have to sum to one. If we use equal weights than the class-calibrated $p$-value is multiplied by $K\times \hat{\pi}_k$ which will be less than one only if $\hat{\pi}_k<1/K$.

Applying \texttt{InfoSP} with these adaptive $p$-values gives rise to a new procedure $\RinfoSP(\pcondbf_{{\tiny \mbox{adapt}}})$ that we denote by \texttt{Adapt-InfoSP}.

\begin{proposition}\label{mainthm0caladaptive}
Consider an informative subset collection $\mathcal{I}$ satisfying Assumption~\ref{assI}, score functions satisfying Assumption~\ref{as:noties} and 
the $p$-value collection $\pcondbf_{{\tiny \mbox{adapt-cal}}}=(\pcond{k}{i,{\tiny \mbox{adapt-cal}}}, k\in \range{K}, i \in\range{m}\}$ defined as in \eqref{adaptpvalues} with the calibration-based estimator $\hat{\pi}^{{\tiny \mbox{cal}}}_k = (|\mathcal{D}^{(k)}_{{\tiny \mbox{cal}}}|+1)/(n+1),$ $k\in \range{K}$. Then the corresponding \texttt{Adapt-InfoSP} procedure $\RinfoSP(\pcondbf_{{\tiny \mbox{adapt-cal}}})$ satisfies the following: 
\begin{itemize}
    \item[(i)] in the class-conditional model,
\begin{equation}
    \label{equ-boundadapt}
    \sup_{P_{X\mid Y},Y} \FCR(\RinfoSP(\pcondbf_{{\tiny \mbox{adapt-cal}}}),P_{X\mid Y},Y)\leq \alpha \: \sum_{y\in \range{K}}    w_y  \frac{n+1}{m} \frac{\sum_{i\in \range{m}}\ind{Y_{n+i} = y} } {\sum_{j \in \range{n}} \ind{Y_j= y}+1} .
\end{equation}
\item[(ii)] in the iid model, $\sup_{P_{X,Y}} \FCR(\RinfoSP(\pcondbf_{{\tiny \mbox{adapt-cal}}}),P_{X,Y})\leq \alpha$, that is, \texttt{Adapt-InfoSP} satisfies the FCR control \eqref{iidcontrol}.
\end{itemize}
\end{proposition}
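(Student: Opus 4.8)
The plan is to establish (i) by re-running the argument behind Theorem~\ref{th:generalBY}, carrying along the extra \emph{deterministic} multiplicative factor that the weighting introduces, and then to deduce (ii) from (i) by conditioning on the label vector. Throughout, note that since $\ZZ=\range{K}$, \texttt{Adapt-InfoSP}$=\RinfoSP(\pcondbf_{\tiny\mbox{adapt-cal}})$ is post-processed, and post-processing only decreases $\FCP$; hence it suffices to bound the $\FCP$ of the non-post-processed version, which is exactly the procedure $\mathcal{R}_\alpha$ of Theorem~\ref{th:generalBY} with $\mathbf p=\pcondbf_{\tiny\mbox{adapt-cal}}$, selection $\mathcal S=\BH(\mathbf q_{\tiny\mbox{adapt-cal}})$, and $s_i^{\min}=|\mathcal S|$ for $i\in\mathcal S$ (Proposition~\ref{prop:inforuleok} applied to the adjusted $p$-values of $\pcondbf_{\tiny\mbox{adapt-cal}}$).

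For (i): in the class-conditional model the labels $(Y_j,j\in\range{n+m})$ are fixed, so $\hat\pi^{\tiny\mbox{cal}}_k=(|\mathcal{D}^{(k)}_{\tiny\mbox{cal}}|+1)/(n+1)$ is a constant and $\pcond{k}{i,\tiny\mbox{adapt-cal}}=d_k\,\pcond{k}{i}$ with $d_k:=(|\mathcal{D}^{(k)}_{\tiny\mbox{cal}}|+1)/(w_k(n+1))>0$ deterministic (the degenerate case $w_k=0$ corresponds, by convention, to setting $\pcond{k}{i,\tiny\mbox{adapt-cal}}=+\infty$, i.e. removing class $k$, and is harmless). First I would check that Assumptions~\ref{as:generalpvalues}(i) and~\ref{as:nonincreasingSelect} still hold for $\pcondbf_{\tiny\mbox{adapt-cal}}$: the structural map $\Psi_i$ from Proposition~\ref{prop:condpvalues} composes with the coordinate-wise strictly increasing rescaling $z\mapsto(d_kz_k)_k$, and concordance of $\BH(\mathbf q_{\tiny\mbox{adapt-cal}})$ follows from Proposition~\ref{prop:inforuleok} together with Lemma~\ref{lem:monotonicityq} (which only uses Assumption~\ref{assI} on $\mathcal I$). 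The one substantive change is the conditional super-uniformity: from the proof of Proposition~\ref{prop:condpvalues}, conditionally on $\Wcond_i$ the variable $\pcond{Y_{n+i}}{i}$ is uniform on a grid of mesh $1/n^{(Y_{n+i})}_i$, hence $\P(\pcond{Y_{n+i}}{i,\tiny\mbox{adapt-cal}}\le t\mid\Wcond_i)=\P(\pcond{Y_{n+i}}{i}\le t/d_{Y_{n+i}}\mid\Wcond_i)\le c_{Y_{n+i}}t$, where $c_y:=1/d_y=w_y(n+1)/(|\mathcal{D}^{(y)}_{\tiny\mbox{cal}}|+1)$ is deterministic. I would then repeat verbatim the chain of inequalities in the proof of Theorem~\ref{th:generalBY}, the only difference being that in the final step, applying Lemma~\ref{lem:BR} conditionally on $\Wcond_i$, the constant $\alpha/m$ is replaced by $c_{Y_{n+i}}\alpha/m$ (legitimate because $c_{Y_{n+i}}$ is constant under the conditioning), giving $\E[\FCP]\le\frac{\alpha}{m}\sum_{i\in\range{m}}c_{Y_{n+i}}$. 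Regrouping the sum by class, $\sum_{i\in\range{m}}c_{Y_{n+i}}=\sum_{y\in\range{K}}c_y\sum_{i\in\range{m}}\ind{Y_{n+i}=y}$, and substituting $|\mathcal{D}^{(y)}_{\tiny\mbox{cal}}|=\sum_{j\in\range{n}}\ind{Y_j=y}$ yields exactly \eqref{equ-boundadapt}.

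For (ii): I would condition on the whole label vector $(Y_j,j\in\range{n+m})$. In the iid model, conditionally on these labels the covariates are independent with $X_j\sim P_{X\mid Y=Y_j}$ and hence exchangeable within each class, which are precisely the hypotheses of part (i), while $\hat\pi^{\tiny\mbox{cal}}_k$ becomes a deterministic function of the fixed calibration labels; so \texttt{Adapt-InfoSP} in the iid model coincides, conditionally, with the class-conditional instance of (i). Applying \eqref{equ-boundadapt} conditionally and then taking expectations gives $\FCR\le\alpha\sum_{y\in\range{K}}w_y\frac{n+1}{m}\,\E\!\left[b_y/(1+a_y)\right]$, with $a_y=\sum_{j\in\range{n}}\ind{Y_j=y}$ and $b_y=\sum_{i\in\range{m}}\ind{Y_{n+i}=y}$. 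Since the calibration and test samples are disjoint and iid, $a_y$ and $b_y$ are independent, $\E[b_y]=m\rho_y$ with $\rho_y=\P(Y=y)$, and $a_y\sim\mathrm{Bin}(n,\rho_y)$; the elementary identity $\E[1/(1+\mathrm{Bin}(n,\rho_y))]=(1-(1-\rho_y)^{n+1})/((n+1)\rho_y)$ then gives $\E[b_y/(1+a_y)]=m(1-(1-\rho_y)^{n+1})/(n+1)\le m/(n+1)$ (the bound is trivially $0$ when $\rho_y=0$). Hence $\FCR\le\alpha\sum_{y\in\range{K}}w_y=\alpha$.

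The main obstacle is purely the bookkeeping point that the factor $c_{Y_{n+i}}$ may be pulled through the Lemma~\ref{lem:BR} step \emph{as a constant}: this hinges on the fact that, in the class-conditional model — and therefore, after conditioning on labels, in the iid model as well — $c_{Y_{n+i}}$ depends only on the fixed labels, not on the randomness carried by $\Wcond_i$. Everything else is routine: composing the monotone rescaling into the Assumption~\ref{as:generalpvalues} machinery, reorganizing a finite sum, and the standard binomial moment identity for step (ii); the degenerate cases $w_y=0$ and $\rho_y=0$ are absorbed by the usual conventions and do not affect the stated bounds.
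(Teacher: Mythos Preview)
Your proposal is correct and follows essentially the same route as the paper's proof: re-run the Theorem~\ref{th:generalBY} argument while carrying the deterministic weight through Lemma~\ref{lem:BR}, then integrate over labels using the binomial identity (the paper's Lemma~\ref{lem:BY}). The only cosmetic difference is where the weight is parked: the paper keeps the super-uniform $\pcond{Y_{n+i}}{i}$ and absorbs $w_{Y_{n+i}}/\hat\pi^{\tiny\mbox{cal}}_{Y_{n+i}}$ directly into the constant $c$ of Lemma~\ref{lem:BR}, whereas you pass to the rescaled $p$-value with a $c_{Y_{n+i}}$-weakened super-uniformity; since $\pcond{Y_{n+i}}{i,\tiny\mbox{adapt-cal}}=d_{Y_{n+i}}\pcond{Y_{n+i}}{i}$ with $d_{Y_{n+i}}$ deterministic in the class-conditional model, the two bookkeepings are trivially equivalent.
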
 

Proposition~\ref{mainthm0caladaptive} is proved in \S~\ref{proofmainthm0caladaptive}.
The bound \eqref{equ-boundadapt} is only sharp when the labels are generated in the same way in the calibration and test sample (which implies the correct control in (ii)), so \texttt{Adapt-InfoSP} should not be used if the label proportions are expected to be (very) different between calibration and test samples.

We illustrate in Figure \ref{fig-nullselection-classification-adaptive}  the performance of the adaptive procedures for nonnull selection and non-trivial selection, respectively, in the set-up of  unbalanced classes described in  \S~\ref{subsec-simul-bivariatenormal}.  
For the adaptive versions, $w_k=1/K$ for all $k\in\range{K}$, and $\lambda = 1/2$. We  consider two settings for the class-conditional model:  the test sample has class probabilities 0.1,0.1, 0.8 (i.e., a small label shift), and class probabilities 0.4,0.4,0.2 (i.e., a large label shift).

The only procedure with a theoretical class-conditional FCR guarantee is \texttt{InfoSP} on class calibrated $p$-values. The adaptive procedure with $\hat \pi_{k}^{cal}, k\in \range{K}$ violates FCR control only when the label-shift is large for non-trivial selection. Interestingly, this procedure has excellent power when the label shift is small. The adaptive procedure with $\hat \pi_{k}^{Storey}, k\in \range{K}$ is a close second in this case, but when the label shift is large it is no better than \texttt{InfoSP} in the settings considered. 
The fact that the adaptive procedure with $\hat \pi_{k}^{cal}, k\in \range{K}$  tends to control the FCR (or inflate it only by a little), suggests (arguably) that for power purposes it may be reasonable  to use it  if the label shift is small. 

In the simulations we carried out for the iid settings, $\texttt{Adapt-InfoSP}$ on class-conditional $p$-values had worse power than $\texttt{InfoSCOP}$ (omitted for brevity).  

\begin{figure}[h!]
\begin{center}
\vspace{-2cm}
 {\includegraphics[width=0.36\textwidth, height = 0.27\textheight, page=3]{legends.pdf}}

\vspace{-3cm}
  \begin{tabular}{ccc}
\hspace{-5mm}\includegraphics[width=0.34\textwidth, height = 0.2\textheight, page=2]{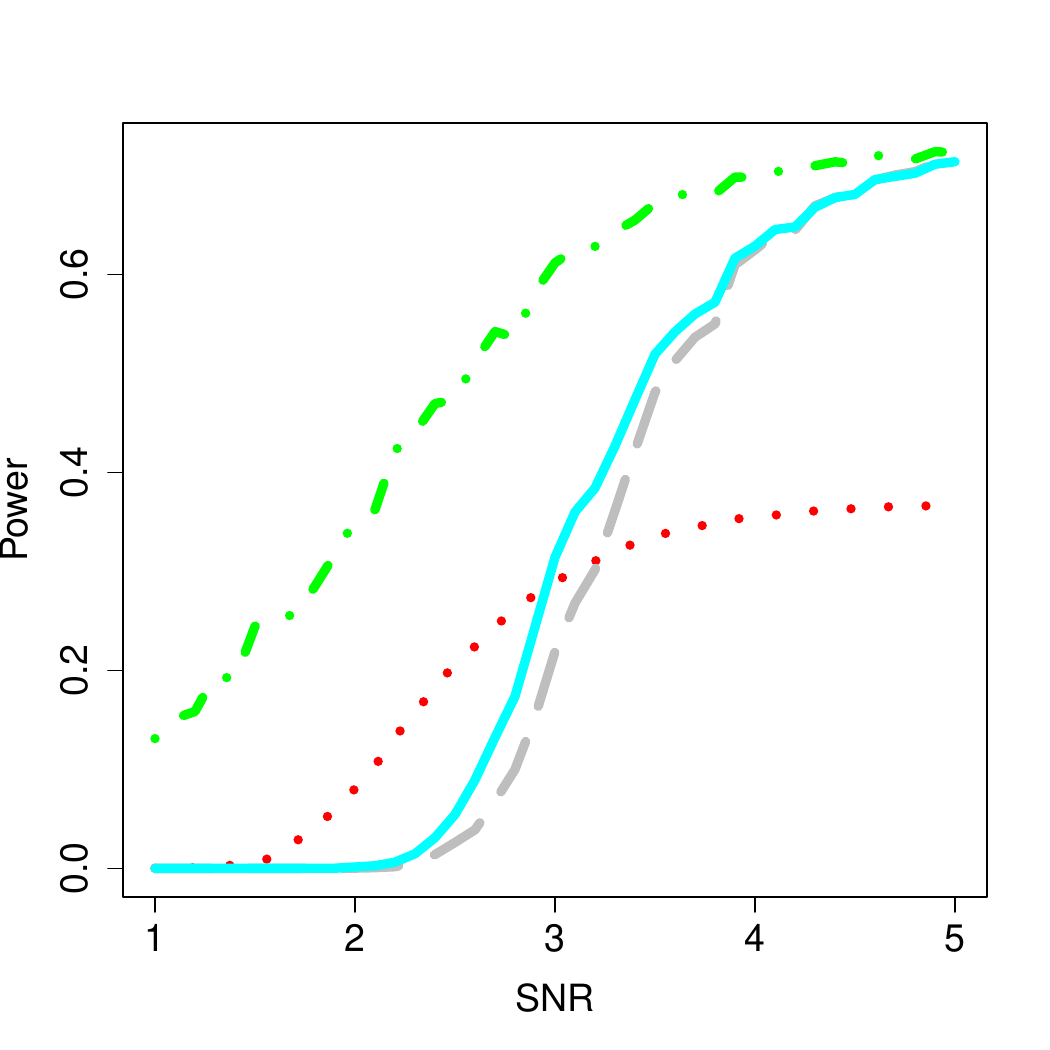} &
\hspace{-5mm}\includegraphics[width=0.34\textwidth,  height = 0.2\textheight,page=1]{ClassCondNonnull1.pdf} & \hspace{-5mm}\includegraphics[width=0.34\textwidth,  height = 0.2\textheight,page=5]{ClassCondNonnull1.pdf}\vspace{-5mm}\\
\hspace{-5mm}\includegraphics[width=0.34\textwidth, height = 0.2\textheight, page=2]{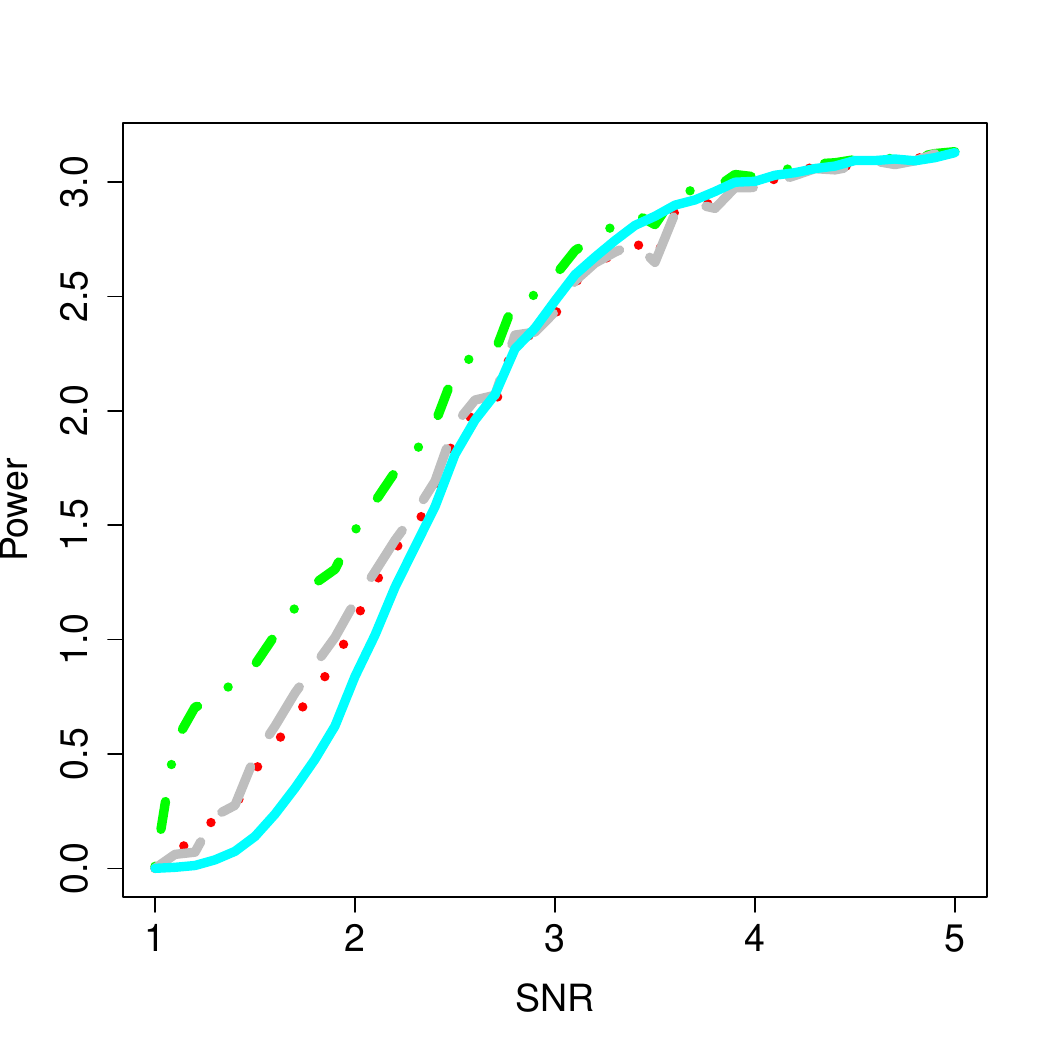}&
\hspace{-5mm}\includegraphics[width=0.34\textwidth,  height = 0.2\textheight,page=1]{ClassCondNonnull2.pdf} &
\hspace{-5mm}\includegraphics[width=0.34\textwidth,  height = 0.2\textheight,page=5]{ClassCondNonnull2.pdf}\vspace{-5mm}\\
\hspace{-5mm}    \includegraphics[width=0.34\textwidth, height = 0.2\textheight, page=2]{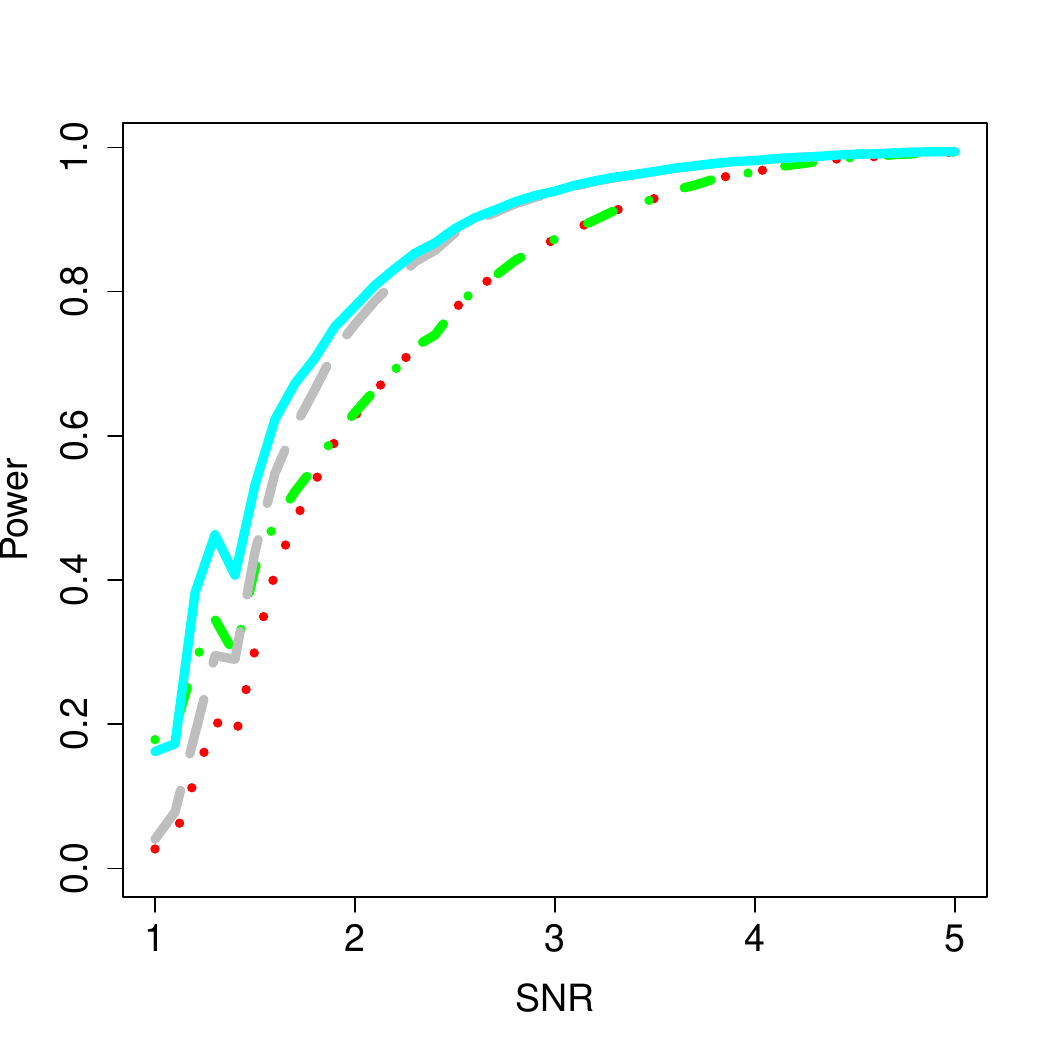} &
\hspace{-5mm}\includegraphics[width=0.34\textwidth,  height = 0.2\textheight,page=1]{ClassCondMinP1.pdf} &
\hspace{-5mm}\includegraphics[width=0.34\textwidth,  height = 0.2\textheight,page=5]{ClassCondMinP1.pdf}\vspace{-5mm}\\
\hspace{-5mm}\includegraphics[width=0.34\textwidth, height = 0.2\textheight, page=2]{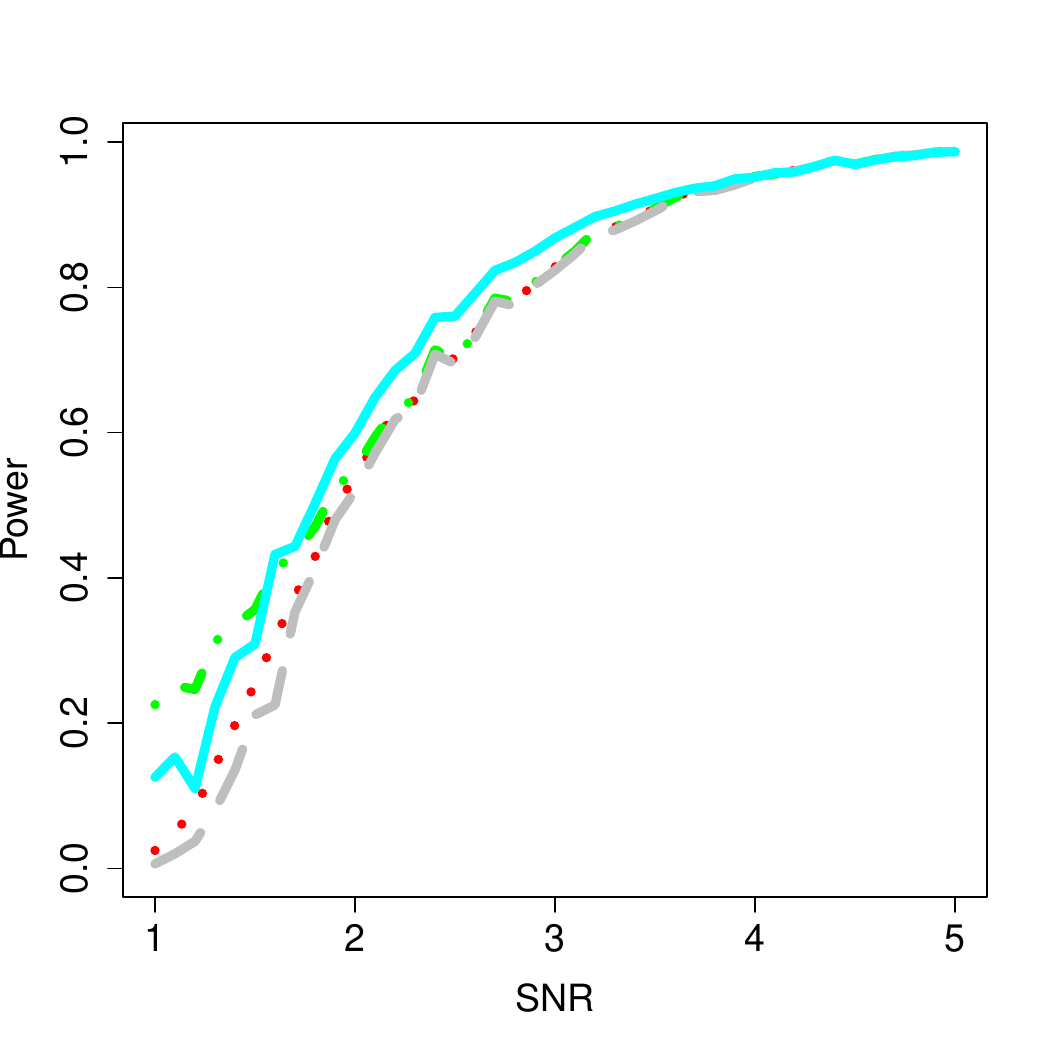}&
\hspace{-5mm}\includegraphics[width=0.34\textwidth,  height = 0.2\textheight,page=1]{ClassCondMinP2.pdf} &
\hspace{-5mm}\includegraphics[width=0.34\textwidth,  height = 0.2\textheight,page=5]{ClassCondMinP2.pdf}\vspace{-5mm}\\
  \end{tabular}   
\end{center}

\caption{\label{fig-nullselection-classification-adaptive}  Selecting informative prediction sets in the class-conditional setting.  FCR (left column), resolution-adjusted power  (middle column), and the expected fraction of covering prediction sets  (right column) versus SNR in a classification setting where prediction sets excluding  a null class are of interest (top two rows) and when prediction sets excluding a trivial class are of interest (bottom two rows).  The class probabilities in the calibration sample are 0.15, 0.1, and 0.75; in the test sample, we have a small label shift (rows 1 and 3) and a large label shift (rows 2 and 4). The number of data generations was 2000, and $n=m = 500$. } 
\end{figure}

\section{Proofs}

\subsection{Proof of Theorem~\ref{thm-gen-basic}} \label{sec:thm-gen-basic}

The proof is straightforward from the theory developed in \S~\ref{sec:proofFCRcontrol}. Namely, we apply Theorem~\ref{th:generalBY} by checking the two required assumptions: Assumption~\ref{as:generalpvalues} holds for the two considered $p$-value collections (\S~\ref{sec:as:generalpvalues}); Assumption~\ref{as:nonincreasingSelect} holds for the considered $\BH(\mathbf{q})$ selection (\S~\ref{sec:as:nonincreasingSelect}). \add{The result thus follows from the fact that by Proposition~\ref{prop:inforuleok} we have 
$$\mathcal{R}_{\alpha}=\big(\mathcal{C}^{\alpha (1\vee s_i^{\min})/m}_{n+i}(\mathbf{p})\big)_{i\in\mathcal{S}}=\big(\mathcal{C}^{\alpha |\BH(\mathbf{q})|/m}_{n+i}(\mathbf{p})\big)_{i\in\BH(\mathbf{q})}=\RinfoSP(\mathbf{p}).
$$}

 \subsection{Proof of Theorem~\ref{thm-iid-withpreprocess}}\label{proof:thm-iid-withpreprocess}

The proof is a consequence of Lemma~\ref{lemselective} applied with the FCR criterion: condition (i) in Lemma~\ref{lemselective} is satisfied from Theorem~\ref{thm-gen-basic} (which is true more broadly in the case of exchangeable samples, see Theorem~\ref{th:generalBY} and Proposition~\ref{prop:iidpvalues}); condition (ii) in Lemma~\ref{lemselective} follows from the assumed permutation preserving property of $\mathcal S^{(0)}$. Hence, the conclusion of Lemma~\ref{lemselective} applies which gives the FCR control of \texttt{InfoSCOP}.

Finally, the FDR control is a consequence of the FCR control by applying Lemma~\ref{lem:FDRsmallerFCR}.

\subsection{Proof of Proposition \ref{mainthm0caladaptive}}\label{proofmainthm0caladaptive}

Let us first prove (i) by considering the class-conditional model. 
We follow the proof of Theorem~\ref{th:generalBY} (see \S~\ref{proof:th:generalBY}) and we use that the $p$-value collection $\pcondbf$ satisfies Assumption~\ref{as:generalpvalues} (see Proposition~\ref{prop:condpvalues}), where the probability in the super-uniform property \eqref{equ:superunifgen} holds in the class-conditional model with $\Wcond_i$, $i\in \range{m}$ given by \eqref{equWcond}. We also use that the informative selection rule $\mathcal{S}(\cdot)$ satisfies Assumption~\ref{as:nonincreasingSelect} (see Proposition~\ref{prop:inforuleok}). Hence, following the same approach as in  \S~\ref{proof:th:generalBY}, and denoting $\pcondbf_{{\tiny \mbox{cal}}}:=\pcondbf_{{\tiny \mbox{adapt-cal}}}$ and $\pcondbf_{-i,{\tiny \mbox{cal}}}:=(\pcond{y}{j,{\tiny \mbox{cal}}})_{j\neq i,y\in \ZZ}$, we obtain

\begin{align*}
\FCR(\RinfoSP(\pcondbf_{{\tiny \mbox{cal}}}),P_{X\mid Y},Y)
&\leq \sum_{i\in \range{m}}  \E_{P_{X\mid Y}}\left(\E\left[\frac{ \ind{  \pcond{Y_{n+i}}{i,{\tiny\mbox{cal}}}\leq  \alpha s^{\min}_i(\pcondbf_{-i,{\tiny\mbox{cal}}})/m}}{ s^{\min}_i(\pcondbf_{-i,{\tiny\mbox{cal}}})}\:\middle|\: \Wcond_i\right]\right)\\
&=\sum_{i\in \range{m}}  \E_{P_{X\mid Y}}\left(\E\left[\frac{ \ind{  \pcond{Y_{n+i}}{i}\leq  \alpha s^{\min}_i(\pcondbf_{-i,{\tiny\mbox{cal}}})w_{Y_{n+i}}/(m\hat{\pi}^{{\tiny \mbox{cal}}}_{Y_{n+i}})}}{ s^{\min}_i(\pcondbf_{-i,{\tiny\mbox{cal}}})}\:\middle|\: \Wcond_i\right]\right).
\end{align*}
Using now the super-uniform property \eqref{equ:superunifgen}, the fact that $\pcondbf_{-i,{\tiny \mbox{cal}}}=(\frac{\hat{\pi}_k}{w_k} \pcond{k}{j})_{k\neq i,y\in \ZZ}=\Phi(Y,\pcondbf_{-i})$, with $\Phi(Y,\cdot)$ coordinate-wise nondecreasing, and Assumption~\ref{as:generalpvalues} (i) entail
$$
s^{\min}_i(\pcondbf_{-i,{\tiny\mbox{cal}}})=s^{\min}_i(\pcondbf_{-i} )= s^{\min}_i(\Phi(Y,\Psi_i(p^{(Y_{n+i})}_{i},W_i))).
$$
Hence, $s^{\min}_i(\pcondbf_{-i,{\tiny\mbox{cal}}})$ can be written as a function $g(p^{(Y_{n+i})}_{i})$, with  $g:u\mapsto s^{\min}_i(\Phi(Y,\Psi_i(u,W_i)))$ nonincreasing and only depending on $Y$ and $W_i$. 
Applying Lemma~\ref{lem:BR} for $c=w_{Y_{n+i}}/(m\hat{\pi}^{{\tiny \mbox{cal}}}_{Y_{n+i}})$, we obtain
\begin{equation*}
\E_{P_{X\mid Y}}\left[\frac{ \ind{  \pcond{Y_{n+i}}{i}\leq  \alpha s^{\min}_i(\pcondbf_{-i,{\tiny\mbox{cal}}})w_{Y_{n+i}}/(m\hat{\pi}^{{\tiny \mbox{cal}}}_{Y_{n+i}})}}{ s^{\min}_i(\pcondbf_{-i,{\tiny\mbox{cal}}})}\:\middle|\: \Wcond_i\right]\leq \alpha\frac{w_{Y_{n+i}}}{m\hat{\pi}^{{\tiny \mbox{cal}}}_{Y_{n+i}}}.
\end{equation*}

As a consequence, we derive 
\begin{align*}
\FCR(\RinfoSP(\pcondbf_{{\tiny \mbox{cal}}}),P_{X\mid Y},Y)
&\leq \alpha\sum_{k\in\range{K}}w_k\left (\frac{\sum_{i\in\range{m}}\ind{Y_{n+i}=k}}{m}\frac{1}{\hat{\pi}^{{\tiny \mbox{cal}}}_k}\right ),
\end{align*}
which proves (i).
We deduce (ii) by a simple integration:
\begin{align*}
\FCR(\RinfoSP(\pcondbf_{{\tiny \mbox{cal}}}),P_{X,Y})&\leq \alpha\sum_{k\in\range{K}}w_k\E\left (\frac{\sum_{i\in\range{m}}\ind{Y_{n+i}=k}}{1+\sum_{j\in\range{m}}\ind{Y_j=k}}\frac{n+1}{m}\right )\\
&=\alpha \sum_{k\in\range{K}}w_k(n+1)\P(Y_1=k)\E\left (\frac{1}{1+\sum_{j\in\range{m}}\ind{Y_j=k}}\right )\\
&\leq \alpha \sum_{k\in\range{K}}w_k(n+1)\P(Y_1=k)\frac{1}{(n+1)\P(Y_1=k)}
=\alpha\sum_{k\in\range{K}}w_k=\alpha,
\end{align*}
by using Lemma~\ref{lem:BY} for the last inequality. 

\subsection{Proof of Proposition~\ref{prop:InfoSPClassif}}\label{proof:prop:InfoSPClassif}

First define $R_j$ the rank of $S_{Y_j}(X_j)$ in $\{ S_{k}(X_j),k\in \range{K}\}$ (ordered in increasing order) for  $j\in \range{n+m}$, and consider the slightly smaller conformal $p$-values
\begin{equation}\label{standardpvalue2}
\pzhao{k}{i}= \frac{1}{n+1}\left(1+\sum_{j=1}^n \ind{R_j>1}\ind{S_{Y_j}(X_j)\geq S_{k}(X_{n+i})} \right)\leq \pfull{k}{i},\:\: i\in \range{m}, k\in \range{K},
\end{equation}
which means that the calibration is only made with examples having a label not minimizing the score function. The rationale behind using this $p$-value rather than $\pfull{k}{i}$ is that, due to the post-processing, the elements $X_{n+i}$ of the test sample cannot produce an error provided that $R_{n+i}=1$ so that we can restrict the test sample to those with $R_{n+i}>1$ when computing the FCR.

 Assume $K=2$. We first prove that $\RinfoSP(\pfullbf)=\RinfoSP(\pzhaobf)$. 
It is enough to prove that the adjusted $p$-values $q_i$ obtained from $\pfullbf$ and $\pzhaobf$ are the same for this non-trivial selection (since for the selected $i$, the procedure always chooses $\arg\min_{k\in \range{K}}S_k(X_{n+i})$ due to the post-processing). Letting $S_{\min}(x)=\min_{k\in \range{K}}S_k(x)$, $S_{\max}(x)=\max_{k\in \range{K}}S_k(x)$, this comes from
\begin{align*}
\min_{k\in \range{K}}\{ \pzhao{k}{i}\}&= \frac{1}{n+1}\left(1+\sum_{j=1}^n  \ind{R_j>1}  \ind{S_{Y_j}(X_j)\geq S_{\max}(X_{n+i})} \right)\\
&= \frac{1}{n+1}\left(1+\sum_{j=1}^n   \ind{S_{Y_j}(X_j)\geq S_{\max}(X_{n+i})} \right)=\min_{k\in \range{K}}\{ \pfull{k}{i}\},
\end{align*}
where the second equality holds because $S_{Y_j}(X_j)\geq S_{\max}(X_{n+i})$ is impossible for $R_j=1$, that is, when $S_{Y_j}(X_j)$ is a minimum ($S_{\min}(X_j)< 1/2 < S_{\max}(X_j)$ almost surely by the assumptions on the score function).  

Now prove Proposition~\ref{prop:InfoSPClassif}, by showing the equality for $\RinfoSP(\pzhaobf)$ and by carefully modifying the proof of Theorem~\ref{th:generalBY} (\S~\ref{proof:th:generalBY}) in order to get only equalities, by using that we consider the case of the non-trivial selection, that is, $\mathcal{S}=\BH(\mathbf{q})$, with $q_i=\min_{k\in \range{K}} \pzhao{k}{i}$. 
By definition of the FCP \eqref{equFSP}, we have (remember also that the prediction set always includes $\arg\min_{k\in \range{K}}S_k(X_{n+i})$ due to the post-processing)
\begin{align*}
\FCP(\RinfoSP(\pzhaobf),Y)&=\frac{\sum_{i\in \mathcal{S}} \ind{Y_{n+i} \notin \mathcal{C}_{n+i}^{\alpha |\mathcal{S}|/m}(\pzhaobf)} }{1\vee |\mathcal{S}|}\\
&= \sum_{i\in \range{m}} \ind{R_{n+i}>1} \frac{ \ind{ i \in \mathcal{S}, \pzhao{Y_{n+i}}{i}\leq  \alpha |\mathcal{S}|/m}}{1\vee |\mathcal{S}|},
\end{align*}
because no error can occur when $R_{n+i}=1$. 
Now, since $\mathcal{S}=\BH(\mathbf{q})$ and $q_i\leq \pzhao{Y_{n+i}}{i}$, $\ind{ i \in \mathcal{S}, \pzhao{Y_{n+i}}{i}\leq  \alpha |\mathcal{S}|/m}=\ind{\pzhao{Y_{n+i}}{i}\leq  \alpha |\mathcal{S}|/m}$. This entails
\begin{align*}
\FCP(\RinfoSP(\pzhaobf),Y)= \sum_{i\in \range{m}} \ind{R_{n+i}>1} \frac{ \ind{ \pzhao{Y_{n+i}}{i}\leq  \alpha |\mathcal{S}|/m}}{1\vee |\mathcal{S}|}.\end{align*}
Let $\xi=(\ind{R_j>1})_{j\in \range{n+m}}$ and now prove
\begin{equation}
    \label{tobeproved}
    \E[\FCP(\RinfoSP(\pzhaobf),Y)\:|\: \xi] = \sum_{i\in \range{m}} \ind{R_{n+i}>1} \E \Big[ \frac{ \lfloor (n+1)\alpha K_i /m \rfloor/(n'+1) }{K_i}\:\Big|\: \xi\Big],
\end{equation}
with $n'=\sum_{j\in \range{n}} \ind{R_{j}>1}$ for some random variables $K_i\geq 1$, $i\in \range{m}$. 
This implies the result because the last display is at most 
$$
\alpha   \frac{(n+1)\sum_{i\in \range{m}} \ind{R_{n+i}>1}}{m(\sum_{j\in \range{n}} \ind{R_{j}>1}+1)}
$$
(with equality if $(n+1)\alpha /m$ is an integer). By Lemma~\ref{lem:BY}, the expectation of the latter is equal to $\alpha (1-(1-\P(R_1>1))^{n+1})$.

Let us now prove \eqref{tobeproved}. For this, fix $i\in \range{m}$ with $R_{n+i}>1$ and note that for all $j\in \range{m}$, $j\neq i$, 
\begin{align*}
    q_j &= \frac{1}{n+1}\left(1+\sum_{k=1}^n  \ind{R_k>1}  \ind{S_{Y_k}(X_k)\geq S_{\max}(X_{n+j})} \right)\\
    &= \frac{1}{n+1}\Big(\ind{S_{Y_{n+i}}(X_{n+i})< S_{\max}(X_{n+j})}+\sum_{s\in A_i} \ind{s\geq S_{\max}(X_{n+j}) } \Big)\\
    &= \frac{1}{n+1}\Big(\ind{S_{\max}(X_{n+i})< S_{\max}(X_{n+j})}+\sum_{s\in A_i} \ind{s\geq S_{\max}(X_{n+j}) } \Big)\\
    &\geq \frac{1}{n+1}\sum_{s\in A_i} \ind{s\geq S_{\max}(X_{n+j}) } =:q'_j,
\end{align*}
by letting $A_i=\{S_{Y_k}(X_j),j\in \range{n}: R_k>1\}\cup \{S_{Y_{n+i}}(X_{n+i})\}$ (all distinct almost surely). The third equality above is true because $K=2$ and $R_{n+i}>1$ and thus $S_{Y_{n+i}}(X_{n+i})=S_{\max}(X_{n+i})$. 
We apply now Lemma~\ref{BHsmallerp} because $\mathbf{q}=(q_j,1\leq j\leq m)$ and $\mathbf{q}'=(q'_j,1\leq j\leq m)$ (defined as above and with $q_i'=1/(n+1)$) satisfy \eqref{propppprime}. Indeed, if $q_j> q_i$ for $j\neq i,$ then $S_{\max}(X_{n+j})\leq S_{\max}(X_{n+i})$ and $q_j=q'_j$. Hence, we obtain, by letting $K_i=|\BH(\mathbf{q'})|$ (note that $\mathbf{q'}$ depends on $i$) 
$$
\{q_i\leq \alpha |\BH(\mathbf{q})|/m \}=\{ q_i\leq \alpha K_i/m \}\subset \{ |\BH(\mathbf{q})|=K_i\}.
$$
 In addition, $\mathbf{q}'=(q'_j,1\leq j\leq m)$ is a vector measurable w.r.t. the variable
$
W_i=(A_i,(S_{\max}(X_{n+j}))_{ j\in\range{m}\backslash\{i\}}).
$ 
Now, we use that by exchangeability of the elements of $A_i$ conditionally on $\xi$, $W_i$, and $R_{n+i}>1)$ (with no ties), 
$$
\P(\pzhao{Y_{n+i}}{i}\leq t\:|\: W_i, \xi, R_{n+i}>1) = \frac{\lfloor (n+1)t \rfloor}{n'+1}.
$$
Applying this with $t=\alpha K_i/m$ entails \eqref{tobeproved}.

\section{{Computing $\mathcal{I}$-adjusted $p$-values}}\label{sec:qicomput}

\add{We gather here details for computing the $q_i$'s \eqref{qformula} for different informative set collection $\mathcal{I}$. In the regression case, recall that $p_i^{(y)}$ necessarily corresponds to the full-calibrated $p$-values \eqref{standardpvalue}.}

\paragraph{{Regression and excluding $\mathcal{Y}_0\subset \mathcal{Y}$.}}

\add{
The informative set collection is given by
$\mathcal{I}=\{\mbox{$I$ interval of } \R\::\: I\cap \mathcal{Y}_0=\emptyset \}$.
\begin{itemize}
    \item The general expression is $q_i=\sup_{y\in \mathcal{Y}_0}p_i^{(y)}$. Indeed, by definition, $\mathcal{C}^{\alpha}_{n+i}(\mathbf{p})\in \mathcal{I}$ iff  $\mathcal{C}^{\alpha}_{n+i}(\mathbf{p})\cap \mathcal{Y}_0=\emptyset$ iff $\forall y\in \mathcal{Y}_0, p_i^{(y)}\leq \alpha$, which means $\sup_{y\in \mathcal{Y}_0}p_i^{(y)}\leq \alpha$. \\
   \item  Another expression for the latter is given as follows:
    \begin{align*}
        q_i&= \frac{1}{n+1}\sup_{y\in \mathcal{Y}_0}\Big(1+\sum_{j=1}^n \ind{S_{Y_j}(X_j)\geq S_{y}(X_{n+i})} \Big)\\
        &= \frac{1}{n+1}\Big(1+\sum_{j=1}^n \ind{S_{Y_j}(X_j)\geq \inf_{y\in \mathcal{Y}_0} S_{y}(X_{n+i})} \Big).
   \end{align*}
    \item If $\mathcal{Y}_0=[a,b]$        and $S_y(x)=|y-\mu(x)|/\sigma(x)$ the above expression gives 
    $$
     q_i=p_i^{(a)}\ind{\mu(X_{n+i})<a} + p_i^{(b)}\ind{\mu(X_{n+i})>b}+ \ind{a \leq \mu(X_{n+i})\leq b}. 
    $$
    This is because $\inf_{y\in [a,b]} S_y(x)$ is $0$ if $\mu(x)\in [a,b]$, $S_a(x)$ if $\mu(x)<a$, and $S_b(x)$ if $\mu(x)>b$.
    \item If $\mathcal{Y}_0=[a,b]$        and $S_y(x)=\max(q_{\beta_0}(x)-y,y-q_{\beta_1}(x))$, we have 
    $$
q_i=p_i^{(a)}\ind{\mu(X_{n+i})<a} + p_i^{(b)}\ind{\mu(X_{n+i})>b}+ p_i^{(\mu(X_{n+i}))} \ind{a \leq \mu(X_{n+i})\leq b},    $$    where $\mu(x)=(q_{\beta_0}(x)+q_{\beta_1}(x))/2$.
This is because $\inf_{y\in [a,b]} S_y(x)$ is $S_{\mu(x)}(x)$ if $\mu(x)\in [a,b]$, $S_a(x)$ if $\mu(x)<a$, and $S_b(x)$ if $\mu(x)>b$.
\end{itemize}
}
\paragraph{\add{Regression and length-restriction.}}

\add{
The informative set collection is given by
$\mathcal{I}=\{\mbox{$I$ interval of } \R\::\: |I|\leq 2\lambda_0\}$, $\lambda_0>0$.
\begin{itemize}
    \item By definition, $\mathcal{C}^{\alpha}_{n+i}(\mathbf{p})\in \mathcal{I}$ iff  $|\{y\in \R\::\: p_i^{(y)}> \alpha\}|\leq 2\lambda_0$, that is, $|\{y\in \R\::\: S_y(X_{n+i})\leq S_{(\lceil (1-\alpha)(n+1)\rceil)} \}|\leq 2\lambda_0$. We then obtain the expression of $q_i$ by inverting the condition  $S_y(X_{n+i})\leq S_{(\lceil (1-\alpha)(n+1)\rceil)}$ with respect to $y$. This expression depends on the considered score function.
    \item For $S_y(x)=|y-\mu(x)|/\sigma(x)$, we have $S_y(X_{n+i})\leq S_{(\lceil (1-\alpha)(n+1)\rceil)}$ iff $|y-\mu(X_{n+i})|\leq \sigma(X_{n+i}) S_{(\lceil (1-\alpha)(n+1)\rceil)}$ which gives $q_i\leq \alpha$ iff $\sigma(X_{n+i}) S_{(\lceil (1-\alpha)(n+1)\rceil)}\leq \lambda_0$ (see Remark~\ref{rem:caldepth}). Inverting the latter with respect to $\alpha$ gives
    $$
    q_i= (n+1)^{-1}\big(1+\sum_{j=1}^n \ind{S_{Y_j}(X_j)> \lambda_0/\sigma(X_{n+i})}\big).
    $$
    \item For $S_y(x)=\max(q_{\beta_0}(x)-y,y-q_{\beta_1}(x))$, we have $S_y(X_{n+i})\leq S_{(\lceil (1-\alpha)(n+1)\rceil)}$ iff $q_{\beta_0}(x)- S_{(\lceil (1-\alpha)(n+1)\rceil)}\leq y\leq S_{(\lceil (1-\alpha)(n+1)\rceil)} + q_{\beta_1}(x)$. Hence, $q_i\leq \alpha$ iff $S_{(\lceil (1-\alpha)(n+1)\rceil)}+(q_{\beta_1}(X_{n+i})-q_{\beta_0}(X_{n+i}) )/2\leq \lambda_0$, which gives
    $$
    q_i= (n+1)^{-1}\big(1+\sum_{j=1}^n \ind{S_{Y_j}(X_j)> \lambda_0-(q_{\beta_1}(X_{n+i})-q_{\beta_0}(X_{n+i}) )/2}\big).
    $$
\end{itemize}
}

\paragraph{\add{Classification.}}

\add{The following informative collections are considered:
\begin{itemize}
    \item For excluding $\mathcal{Y}_0\subset \range{K}$,  
$\mathcal{I}=\{C\subset \range{K}\::\: C\cap \mathcal{Y}_0=\emptyset \}$. The general expression is $q_i=\max_{y\in \mathcal{Y}_0}p_i^{(y)}$, with the same reasoning as in the regression case. 
    \item For non-trivial classification,  $\mathcal{I}=\{C\subset \range{K}\::\: |C|\leq K-1\}$ and $q_i=\min_{y\in \range{K}} p^{(y)}_i$. Indeed, $\mathcal{C}^{\alpha}_{n+i}(\mathbf{p})\in \mathcal{I}$ iff  
    $|\{y\in \range{K}\::\: p_i^{(y)}>\alpha\}|\leq K-1$ iff there exists $y\in \range{K}$ such that $p_i^{(y)}\leq \alpha$, that is, $\min_{y\in \range{K}}p_i^{(y)}\leq \alpha$.
    \item For at most $k_0$-sized classification, $\mathcal{I}=\{C\subset \range{K}\::\: |C|\leq k_0\}$. A similar reasoning leads to $q_i$ being the $(K-k_0)$-th smallest element in the set $\{p^{(y)}_i,y\in \range{K}\}$. 
\end{itemize}
}

\section{\add{Alternative proof of Theorem~\ref{thm-gen-basic} using PRDS}}\label{sec:alternativeproof}

\add{Proving Theorem~\ref{thm-gen-basic} does not go through the PRDS property \citep{BY2001} but rather uses the monotonicity introduced in Assumption~\ref{as:generalpvalues},  see Section~\ref{sec:thm-gen-basic}. We propose in this section an alternative proof that explicitly relies on the PRDS property, and which shares similarities with the techniques developed in \cite{Jin2023selection}.}

\subsection{\add{Identifying a PRDS property}}

\add{Recall that a random variable family $(T_i)_{i\in \range{m}}$ (valued in $[0,1]$) is said to be PRDS on $\cH\subset \range{m}$ if for any measurable nondecreasing\footnote{A subset $D\subset [0,1]^m$ is said to be nondecreasing if for all $(x_i)_{i\in \range{m}}\in D$ and all $(y_i)_{i\in \range{m}}\in [0,1]^m$, $(\forall i\in \range{m}, x_i\leq y_i)$ implies $(y_i)_{i\in \range{m}}\in D$. } set $D\subset [0,1]^m$, for all $i\in \cH$, the function $u\in [0,1]\mapsto \P((T_j)_{j\in \range{m}}\in D\:|\: T_i=u)$ is nondecreasing.}

\add{
The following results can be seen as an extension of Lemma~5 in  \cite{Jin2023selection} to our $\mathcal{I}$-informative context.
\begin{theorem}\label{th:PRDS}
Consider the setting and assumptions of Theorem~\ref{thm-gen-basic} and $(q_i)_{i\in \range{m}}$ the $\mathcal{I}$-adjusted $p$-value vector defined by \eqref{qformula}. Then the family $(q_1,\dots, q_{i-1},p_i^{(Y_{n+i})}, q_{i+1},\dots,q_m)$ is PRDS on $\{i\}$  for any $i\in \range{m}$.
\end{theorem}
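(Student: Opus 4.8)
The plan is to exhibit $U:=p_i^{(Y_{n+i})}$ as an independent ``driving'' coordinate on top of which every remaining coordinate of the family is a nondecreasing function, and then to invoke the elementary characterization of PRDS for such a configuration (this is exactly the mechanism behind Lemma~5 of \cite{Jin2023selection}, adapted to the $\mathcal I$-adjusted $p$-values $q_j$). Fix $i\in\range{m}$. First I would recall the conformal factorization established inside the proofs of Proposition~\ref{prop:iidpvalues} (iid model, $\mathbf p=\pfullbf$) and Proposition~\ref{prop:condpvalues} (class-conditional model, $\mathbf p=\pcondbf$): there is a random object $W_i$ --- namely $\Wfull_i$ or $\Wcond_i$ according to the model --- such that $\mathbf p_{-i}=\Psi_i(U,W_i)$ almost surely with $u\mapsto\Psi_i(u,W_i)$ coordinate-wise nondecreasing, and such that, conditionally on $W_i$, the rescaled rank of $U$ is \emph{uniform} on a fixed finite grid not depending on the value of $W_i$. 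The key consequence I would extract is that $U$ is \emph{independent} of $W_i$; note this is stronger than the conditional super-uniformity of Assumption~\ref{as:generalpvalues}(ii), but it is precisely what those two proofs deliver, and it is what the PRDS computation needs.

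Next I would use Lemma~\ref{lem:monotonicityq}: under Assumption~\ref{assI}, for each $j\neq i$ the adjusted $p$-value $q_j$ (defined by \eqref{qformula}) is a coordinate-wise nondecreasing function of $(p_j^{(y)})_{y\in\ZZ}$. Composing with $\Psi_i$ from the first step, I may write $q_j=g_j(U,W_i)$ with $u\mapsto g_j(u,W_i)$ nondecreasing, for every $j\neq i$. Consequently the whole family of interest is
\[
T:=(q_1,\dots,q_{i-1},\,U,\,q_{i+1},\dots,q_m)=h(U,W_i),
\]
where $u\mapsto h(u,W_i)\in[0,1]^m$ is coordinate-wise nondecreasing: its $i$-th coordinate is the identity map $u\mapsto u$, and its other coordinates are the $g_j(\cdot,W_i)$. (This particular family, with $p_i^{(Y_{n+i})}$ rather than $q_i$ in slot $i$, is the one needed downstream, since the false-coverage event at unit $i$ is driven by $p_i^{(Y_{n+i})}$ crossing a threshold that depends on the $q_j$'s only.)

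Finally I would carry out the standard PRDS computation. Let $D\subset[0,1]^m$ be any measurable nondecreasing set. For $u$ in the (finite) support of $U$, using that $T=h(U,W_i)$ is $\sigma(U,W_i)$-measurable and then the independence $U\perp W_i$,
\[
\P(T\in D\mid U=u)=\E\big[\mathbf 1\{h(u,W_i)\in D\}\,\big|\,U=u\big]=\E\big[\mathbf 1\{h(u,W_i)\in D\}\big]=\P\big(h(u,W_i)\in D\big).
\]
For $u\le u'$ in the support of $U$ one has $h(u,W_i)\le h(u',W_i)$ coordinate-wise, so $h(u,W_i)\in D$ forces $h(u',W_i)\in D$ because $D$ is nondecreasing; taking expectations gives $\P(h(u,W_i)\in D)\le\P(h(u',W_i)\in D)$. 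Hence $u\mapsto\P(T\in D\mid U=u)$ is nondecreasing on the support of $U$, which is the PRDS property of $T$ on $\{i\}$; since $i$ was arbitrary, the theorem follows. I do not expect a genuine obstacle here: the only points that require care are (a) invoking the sharper conditional-uniformity (hence independence $U\perp W_i$) embedded in the proofs of Propositions~\ref{prop:iidpvalues}--\ref{prop:condpvalues}, rather than Assumption~\ref{as:generalpvalues}(ii) alone, and (b) reading the PRDS condition on the finite support of the discrete conformal $p$-value $U$, exactly as in \cite{Jin2023selection}.
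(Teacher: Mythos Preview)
Your proposal is correct and follows essentially the same route as the paper's proof: both extract from Propositions~\ref{prop:iidpvalues}--\ref{prop:condpvalues} the representation $\mathbf p_{-i}=\Psi_i(U,W_i)$ with $U=p_i^{(Y_{n+i})}$ independent of $W_i$ and $u\mapsto\Psi_i(u,W_i)$ nondecreasing, then compose with the monotonicity of $q_j$ in $\mathbf p_{-i}$ (Lemma~\ref{lem:monotonicityq}) to write the whole vector as $h(U,W_i)$ with $h$ nondecreasing in $u$, and conclude PRDS via the elementary computation $\P(T\in D\mid U=u)=\P(h(u,W_i)\in D)$. Your explicit remarks about needing independence (not merely conditional super-uniformity) and about the discrete support of $U$ are accurate and match what the paper uses implicitly.
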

}

\add{
\begin{proof}   
Let us denote $\mathbf{p}_{-i}=(p^{(y)}_{j})_{j\neq i,y\in \ZZ}$ and $\mathbf{q}_{-i}:=(q_j)_{j\neq i}$. By Section~\ref{sec:as:generalpvalues}, Assumption~\ref{as:generalpvalues} is satisfied with a random variable $W_i$ which is independent of $p_i^{(Y_{n+i})}$. 
In particular, $\mathbf{p}_{-i}=\Psi_i(p^{(Y_{n+i})}_{i},W_i)$ where $u\in [0,1] \mapsto \Psi_i(u,W_i)\in \R^{\range{m-1}\times \ZZ}$ is a nondecreasing function.
In addition, by Section~\ref{sec:as:nonincreasingSelect}, $\mathbf{q}_{-i}$ is a coordinate-wise nondecreasing function of $\mathbf{p}_{-i}$. Combining these facts, we obtain that 
$$
\mathbf{q}_{-i}=\phi_i(p^{(Y_{n+i})}_{i},W_i),
$$
where $u\in [0,1] \mapsto \phi_i(u,W_i)\in \R^{\range{m-1}}$ is a nondecreasing function. As a result, for any measurable nondecreasing set $D\subset [0,1]^m$,
  \begin{align*}
  \P((p_i^{(Y_{n+i})},\mathbf{q}_{-i})\in D\:|\: p_i^{(Y_{n+i})}=u) &= \P((p_i^{(Y_{n+i})},\phi_i(p^{(Y_{n+i})}_{i},W_i))\in D\:|\: p_i^{(Y_{n+i})}=u)    \\
  &=\P((u,\phi_i(u,W_i))\in D) ,
  \end{align*}
where we denoted  $(p_i^{(Y_{n+i})},\mathbf{q}_{-i})=(q_1,\dots, q_{i-1},p_i^{(Y_{n+i})}, q_{i+1},\dots,q_m)$ with some notation abuse and where we  used independence between $W_i$ and $p_i^{(Y_{n+i})}$. Since for a fixed $W_i$, $u\mapsto \ind{(u,\phi_i(u,W_i))\in D}$ is nondecreasing (since $D$ is nondecreasing), the last display is nondecreasing in $u$.
\end{proof}
}

\subsection{\add{Alternative proof of Theorem~\ref{thm-gen-basic}}}
\add{
By Definition~\ref{def:basic}, we have
\begin{align*}
\FCP(\RinfoSP(\mathbf{p}),Y)
&=\frac{\sum_{i\in \BH(\mathbf{q})} \ind{Y_{n+i} \notin \mathcal{C}_{n+i}^{\alpha |\BH(\mathbf{q})|/m}(\mathbf{p})} }{1\vee |\BH(\mathbf{q})|}\\
&=\frac{\sum_{i\in \BH(\mathbf{q})} \ind{Y_{n+i} \notin \mathcal{C}_{n+i}^{\alpha |\BH(\mathbf{q}^{0,i})|/m}(\mathbf{p})} }{1\vee |\BH(\mathbf{q}^{0,i})|}\\
&\leq \sum_{i\in \range{m}}  \frac{ \ind{  p_i^{(Y_{n+i})}\leq  \alpha |\BH(\mathbf{q}^{0,i})|/m}}{1\vee |\BH(\mathbf{q}^{0,i})|},
\end{align*}
where the second equality follows  from $\BH(\mathbf{q}^{0,i})=\BH(\mathbf{q})$ when $i \in \BH(\mathbf{q})$, by denoting $\mathbf{q}^{0,i}$ the vector $\mathbf{q}$ where the $i$-th coordinate has been replaced by $0$, see for instance \cite{ramdas2019unified}. This entails
\begin{align*}
\E\left (\FCP(\RinfoSP(\mathbf{p}),Y)\right )
&\leq \sum_{i\in \range{m}}  \E\left( \frac{ \ind{  p_i^{(Y_{n+i})}\leq  \alpha |\BH(\mathbf{q}^{0,i})|/m}}{1\vee |\BH(\mathbf{q}^{0,i})|}\right).
\end{align*}
Now, by Theorem~\ref{th:PRDS}, for any $r>0$, the function
$
u\mapsto \P(|\BH(\mathbf{q}^{0,i})|< r\:|\: p_i^{(Y_{n+i})}= u)$
is nondecreasing and thus so is $
u\mapsto \P(|\BH(\mathbf{q}^{0,i})|< r\:|\: p_i^{(Y_{n+i})}\leq  u)
$. Applying Lemma~\ref{lem:BR2} gives the result.
}

\section{Auxiliary results}

\begin{lemma}[Lemma~3.2 (i) in \cite{BR2008}]\label{lem:BR}
    Let $g:[0,1]\rightarrow (0,\infty)$ be a nonincreasing function and $U$ be a random variable which is super-uniform, that is, $\forall u\in [0,1],$ $\P(U\leq u)\leq u$. Then, for any $c>0$, we have
    $$
\E\left[\frac{\ind{U\leq c g(U)}}{g(U)}\right]\leq c.
    $$
\end{lemma}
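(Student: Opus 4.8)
The plan is to exploit the monotonicity of $g$ through the observation that the ``acceptance set'' $A:=\{u\in[0,1]:u\le cg(u)\}$ is a down-interval of $[0,1]$: if $u\in A$ and $0\le v<u$, then $v<u\le cg(u)\le cg(v)$ since $g$ is nonincreasing, so $v\in A$. Consequently $A$ equals $\emptyset$, $[0,a)$, or $[0,a]$ for some $a\in[0,1]$. Since $\{U\le cg(U)\}=\{U\in A\}$ and super-uniformity gives $\P(U\le 0)\le 0$, hence $U\in(0,1]$ almost surely, it suffices to bound $\E[\ind{U\in A}/g(U)]$. I note in passing that the naive pointwise bound $1/g(u)\le c/u$ on $A$ is a dead end, because $\E[\ind{U\in A}/U]$ may be infinite; the right move is to bound $1/g$ on $A$ by its \emph{largest} value, which (as $1/g$ is nondecreasing) is attained at the right endpoint of $A$.

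First I would treat the closed case $A=[0,a]$ with $a\in(0,1]$. On $\{U\in A\}=\{0<U\le a\}$ monotonicity of $g$ gives $g(U)\ge g(a)>0$, so
$$\E\!\left[\frac{\ind{U\le cg(U)}}{g(U)}\right]=\E\!\left[\frac{\ind{U\in A}}{g(U)}\right]\le\frac{\P(U\le a)}{g(a)}\le\frac{a}{g(a)}\le\frac{cg(a)}{g(a)}=c,$$
where the middle inequality is super-uniformity and the last one uses $a\in A$, i.e. $a\le cg(a)$.

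The open case $A=[0,a)$, $a\in(0,1]$, is identical up to left limits: on $\{0<U<a\}$ one has $g(U)\ge g(a^-):=\lim_{u\uparrow a}g(u)>0$ and $\P(U<a)=\lim_{u\uparrow a}\P(U\le u)\le a$, giving $\E[\ind{U\in A}/g(U)]\le a/g(a^-)$; letting $u\uparrow a$ in $u\le cg(u)$ (valid for every $u<a$) yields $a\le cg(a^-)$, so again the bound is $\le c$. If $A=\emptyset$ the left-hand side is $0$. The only genuine care needed is this endpoint bookkeeping (right endpoint attained or not, handled by left limits) together with the initial remark that $U\in(0,1]$ a.s.; everything else is just ``$1/g$ nondecreasing'' combined with ``$\P(U\le u)\le u$''. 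This recovers Lemma~3.2(i) of \cite{BR2008}.
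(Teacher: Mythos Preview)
The paper does not supply its own proof of this lemma; it is stated as an auxiliary result with a citation to \cite{BR2008}. Your argument is correct and is precisely the standard proof: the key observation that $A=\{u\in[0,1]:u\le cg(u)\}$ is a down-interval (since $g$ is nonincreasing), followed by bounding $1/g(U)$ on $A$ by its value at the right endpoint and invoking super-uniformity, is exactly how Lemma~3.2(i) in \cite{BR2008} is established. Your endpoint bookkeeping (closed versus open right endpoint, handled via left limits) is careful and correct; one minor remark is that $A$ is never empty since $0\le cg(0)$ always holds, but this is harmless.
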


\add{
\begin{lemma}[Lemma~3.2 (ii) in \cite{BR2008}]\label{lem:BR2}
    Let $U,V$ be nonnegative random variables such that $U$ is super-uniform, that is, $\forall u\in [0,1],$ $\P(U\leq u)\leq u$ and $V$ is such that for any $r>0$, the function $u\mapsto \P(V<r\:|\: U\leq u)$ is nondecreasing. Then, for any $c>0$, we have
    $$
\E\left[\frac{\ind{U\leq c V}}{V}\right]\leq c.
    $$
\end{lemma}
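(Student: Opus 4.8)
The plan is to read this as a stochastic version of part (i): there the deterministic nonincreasing function $g$ governed everything, and here the random variable $V$ plays that role, with the monotonicity hypothesis standing in for ``$g$ nonincreasing''.

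First I would make two harmless reductions. On $\{V=0\}$ the quantity $\ind{U\le cV}/V$ is $\ind{U=0}/0$, which vanishes almost surely because super-uniformity forces $\P(U=0)\le\inf_{\epsilon>0}\P(U\le\epsilon)=0$; so we may assume $V>0$ a.s., hence also $\P(V\le 0)=0$. Then, using the layer-cake identity $1/V=\int_0^\infty\ind{V\le 1/t}\,dt$, Tonelli, and the substitution $r=1/t$,
\[
\E\!\left[\frac{\ind{U\le cV}}{V}\right]=\int_0^\infty\frac{\P(U\le cV,\ V\le r)}{r^2}\,dr ,
\]
while the same computation gives $c=c\,\E\!\left[V\int_V^\infty r^{-2}\,dr\right]=\int_0^\infty\frac{c\,\E[V\ind{V\le r}]}{r^2}\,dr$. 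Hence it suffices to prove the pointwise-in-$r$ inequality
\[
\P(U\le cV,\ V\le r)\ \le\ c\,\E\!\left[V\,\ind{V\le r}\right],\qquad r>0 .
\]

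To prove this I would use two ingredients pulled from the hypotheses. Super-uniformity gives $\P(U\le s)\le s$ for every $s>0$ (and trivially $\le1\le s$ once $s\ge 1$). The monotonicity assumption, on letting the conditioning event $\{U\le u\}$ increase to the whole space, yields the negative-quadrant-dependence bound $\P(U\le s,\ V\le v)\le\P(U\le s)\,\P(V\le v)$; more importantly, it lets one compare the conditional law of $V$ given $\{U\le u\}$ as $u$ ranges over a partition of the support of $U$ — this is precisely the mechanism encoding ``small $U$ goes with large $V$'', the favourable pairing, since small $U$ makes $\ind{U\le cV}$ likely whereas large $V$ makes $1/V$ small. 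Concretely, one decomposes $\ind{U\le cV}\ind{V\le r}$ along such a partition of the range of $U$, bounds each piece with the conditional-on-downset monotonicity together with super-uniformity, and passes to the limit; note that in the independent case the target inequality is exactly $\E[F_U(cV)\ind{V\le r}]\le c\,\E[V\ind{V\le r}]$, which follows at once from $F_U(cv)\le cv$, and the monotonicity is used to reduce to (dominate by) that case.

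The step I expect to be the main obstacle is this last one: making the comparison with the independent case rigorous while keeping the constant $c$. The difficulty is that the two ``cheap'' bounds $\P(U\le cV,\ V\le r)\le\P(U\le cr)\le cr$ and $\P(U\le cV,\ V\le r)\le cr\,\P(V\le r)$, though valid, are not sharp enough — integrated against $r^{-2}\,dr$ they diverge (at $r\to\infty$, and at $r\to 0$ for slowly decaying laws of $V$). One therefore cannot discard the coupling between $U$ and $V$ at moderate and large values of $V$, and it is there — not merely in the negative-quadrant-dependence consequence — that the full strength of the monotonicity must enter, for instance through the elementary identity $\P(U\le cV,V\le r)=\P(U\le cr,V\le r)-\P(U\le cr,\,cV\le U)$ combined with a matching lower bound on the last term obtained by conditioning on downsets of $U$.
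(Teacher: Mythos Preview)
Your reduction to the pointwise-in-$r$ inequality
\[
\P(U\le cV,\ V\le r)\ \le\ c\,\E\!\left[V\,\ind{V\le r}\right],\qquad r>0,
\]
is a dead end: this inequality is \emph{false} under the lemma's hypotheses. Take $U$ uniform on $[0,1]$ and $V=1-U$. The monotonicity hypothesis holds (for $r\le 1$, $\P(V<r\mid U\le u)=\max(0,1-(1-r)/u)$ is nondecreasing in $u$), and the lemma's conclusion holds ($\E[\ind{U\le cV}/V]=\ln(1+c)\le c$). But at $r=1$ and $c=1/2$ the left side is $\P(U\le c/(1+c))=c/(1+c)=1/3$ while the right side is $c\,\E[V]=c/2=1/4$. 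So the pointwise target cannot be established no matter how carefully you ``compare with the independent case''; the inequality only holds after integration in $r$, which is exactly the statement to be proved.

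The missing idea (this is the Blanchard--Roquain argument the paper cites) is to work in the \emph{other} variable: discretise $V$ to finitely many values $v_1<\cdots<v_N$ and telescope over $k$ rather than slice over $r$. Writing $G_k:=\P(V\le v_k\mid U\le cv_k)$, the monotonicity hypothesis gives
\[
\P(V=v_k\mid U\le cv_k)\ =\ \P(V\le v_k\mid U\le cv_k)-\P(V\le v_{k-1}\mid U\le cv_k)\ \le\ G_k-G_{k-1},
\]
because $\P(V\le v_{k-1}\mid U\le cv_k)\ge \P(V\le v_{k-1}\mid U\le cv_{k-1})=G_{k-1}$. Combining with super-uniformity $\P(U\le cv_k)\le cv_k$,
\[
\E\!\left[\frac{\ind{U\le cV}}{V}\right]=\sum_{k}\frac{\P(U\le cv_k)\,\P(V=v_k\mid U\le cv_k)}{v_k}\ \le\ c\sum_k(G_k-G_{k-1})\ \le\ c,
\]
and one passes to general $V$ by approximation. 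Your layer-cake/Fubini manipulation and the reduction to $V>0$ are correct and harmless, but the pointwise-in-$r$ route should be abandoned.
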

}

\begin{lemma}[Lemma 1 of \cite{BKY2006}]\label{lem:BY}
    If $T$ is a Binomial variable with parameter $N-1\geq 0$ and $t\in(0,1]$, we have
  $$
  \E[1/(T+1)]=(1-(1-t)^{N})/(Nt)\leq 1/(Nt). 
  $$  
\end{lemma}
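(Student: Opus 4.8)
The plan is to evaluate $\E[1/(T+1)]$ directly and recognize the result as a truncated binomial expansion. Writing $T \sim \mathrm{Binomial}(N-1,t)$,
\[
\E\!\left[\frac{1}{T+1}\right] = \sum_{k=0}^{N-1} \frac{1}{k+1}\binom{N-1}{k} t^k (1-t)^{N-1-k},
\]
and the main step is the elementary factorial identity $\frac{1}{k+1}\binom{N-1}{k} = \frac{1}{N}\binom{N}{k+1}$. Substituting it and reindexing by $j=k+1$ turns the sum into $\frac{1}{N}\sum_{j=1}^{N}\binom{N}{j} t^{j-1}(1-t)^{N-j} = \frac{1}{Nt}\sum_{j=1}^{N}\binom{N}{j} t^{j}(1-t)^{N-j}$.

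By the binomial theorem $\sum_{j=0}^{N}\binom{N}{j}t^j(1-t)^{N-j} = (t+(1-t))^N = 1$, so the sum restricted to $j\ge 1$ equals $1-(1-t)^N$ (the $j=0$ term being $(1-t)^N$). This yields the exact formula $\E[1/(T+1)] = (1-(1-t)^N)/(Nt)$; the division is legitimate since $t>0$, and the degenerate case $N=1$ (so $T\equiv 0$) is consistent. The asserted inequality then follows from $1-(1-t)^N \le 1$.

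There is no genuine obstacle here; the only ``trick'' is the factorial identity, everything else is the binomial theorem. If one prefers to avoid that identity, an equivalent and perhaps more transparent route is to use $\int_0^1 s^T\,ds = 1/(T+1)$ together with the probability generating function $\E[s^T] = (1-t+ts)^{N-1}$: by Tonelli, $\E[1/(T+1)] = \int_0^1 (1-t+ts)^{N-1}\,ds$, and the substitution $u = 1-t+ts$ gives $\tfrac{1}{t}\int_{1-t}^{1} u^{N-1}\,du = (1-(1-t)^N)/(Nt)$. I would present whichever is shorter in context; both take only a few lines.
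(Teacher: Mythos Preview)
Your proof is correct and entirely standard; the paper itself does not prove this lemma but simply cites it as Lemma~1 of \cite{BKY2006}. Both routes you give (the factorial identity $\frac{1}{k+1}\binom{N-1}{k}=\frac{1}{N}\binom{N}{k+1}$ and the integral $\int_0^1 s^T\,ds$) are the usual textbook derivations, so there is nothing to compare.
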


\begin{lemma}[Lemma D.6 of \cite{marandon2024adaptive}]\label{BHsmallerp} 
Write $\wh{\ell}=\wh{\ell}(\mathbf{q})=|\BH(\mathbf{q})|$ for the number of rejections of $\BH(\mathbf{q})$ \eqref{equBH}. Fix any $i\in \{1,\dots,m\}$ and consider two collections $\mathbf{q}=(q_j,1\leq j\leq m)$ and $\mathbf{q}'=(q'_j,1\leq j\leq m)$ which satisfy almost surely that
\begin{align}\label{propppprime}
\forall j\in \{1,\dots,m\}, \left\{\begin{array}{cc} q'_j\leq q_j& \mbox{ if } q_j\leq q_i;\\q'_j = q_j& \mbox{ if } q_j> q_i.\end{array}\right.  
\end{align}
Then 
$
\{q_i\leq \alpha \wh{\ell}(\mathbf{q})/m \}=\{ q_i\leq \alpha \wh{\ell}(\mathbf{q}')/m \}\subset \{ \wh{\ell}(\mathbf{q})=\wh{\ell}(\mathbf{q}')\}.
$
\end{lemma}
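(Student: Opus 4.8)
The plan is to derive everything from a handful of elementary properties of the BH threshold $\widehat\ell(\mathbf q)=\max\{\ell\in\range m:q_{(\ell)}\le\alpha\ell/m\}$ (with $\widehat\ell(\mathbf q)=0$ when the set is empty), taking advantage of the fact that $\mathbf q'$ differs from $\mathbf q$ only by lowering coordinates that are already $\le q_i$ while leaving the coordinates $>q_i$ unchanged. I would first record the facts I need: (a) $q_{(\ell)}\le\alpha\ell/m$ is equivalent to $|\{j:q_j\le\alpha\ell/m\}|\ge\ell$, so in particular $q_{(\ell)}\le\alpha\ell/m$ forces $\widehat\ell(\mathbf q)\ge\ell$; (b) by maximality of $\widehat\ell(\mathbf q)$ the rejection set is exact, $|\{j:q_j\le\alpha\widehat\ell(\mathbf q)/m\}|=\widehat\ell(\mathbf q)$; and (c) BH monotonicity: $\mathbf q'\le\mathbf q$ coordinatewise implies $q'_{(\ell)}\le q_{(\ell)}$ for all $\ell$, hence $\widehat\ell(\mathbf q')\ge\widehat\ell(\mathbf q)$. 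Since $\mathbf q'\le\mathbf q$ by hypothesis, (c) gives $\widehat\ell(\mathbf q')\ge\widehat\ell(\mathbf q)$ unconditionally.

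The core step is to analyse the super-level sets of $\mathbf q$ and $\mathbf q'$ on the relevant events. On $\{q_i\le\alpha\widehat\ell(\mathbf q)/m\}$, writing $\ell^\star=\widehat\ell(\mathbf q)$, I would show that for every $\ell\ge\ell^\star$ one has $\{j:q'_j\le\alpha\ell/m\}=\{j:q_j\le\alpha\ell/m\}$: for indices with $q_j>q_i$ this is immediate since $q'_j=q_j$, and for indices with $q_j\le q_i$ both membership conditions hold because $q_j\le q_i\le\alpha\ell^\star/m\le\alpha\ell/m$ and $q'_j\le q_j$. Applying this at $\ell=\ell^\star$, where by (b) the common set has size exactly $\ell^\star$, gives $q'_{(\ell^\star)}\le\alpha\ell^\star/m$ and hence $\widehat\ell(\mathbf q')\ge\ell^\star$; applying it at any $\ell>\ell^\star$, where the common set has size $<\ell$ by maximality of $\ell^\star$, shows $q'_{(\ell)}>\alpha\ell/m$ for all such $\ell$, hence $\widehat\ell(\mathbf q')\le\ell^\star$, so in fact $\widehat\ell(\mathbf q')=\ell^\star$. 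This yields $\{q_i\le\alpha\widehat\ell(\mathbf q)/m\}\subset\{\widehat\ell(\mathbf q)=\widehat\ell(\mathbf q')\}$, and in particular $q_i\le\alpha\widehat\ell(\mathbf q')/m$ on that event.

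For the reverse inclusion between the two events I would run the symmetric argument on $\{q_i\le\alpha\widehat\ell(\mathbf q')/m\}$: setting $\ell'=\widehat\ell(\mathbf q')$, property (b) gives $|\{j:q'_j\le\alpha\ell'/m\}|=\ell'$, and every such $j$ also satisfies $q_j\le\alpha\ell'/m$ (splitting on $q_j\lessgtr q_i$ and using $q_i\le\alpha\ell'/m$ on this event), so $|\{j:q_j\le\alpha\ell'/m\}|\ge\ell'$ and (a) gives $\widehat\ell(\mathbf q)\ge\ell'$; combined with (c) this forces $\widehat\ell(\mathbf q)=\ell'$, hence $q_i\le\alpha\widehat\ell(\mathbf q)/m$. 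Together the two inclusions give $\{q_i\le\alpha\widehat\ell(\mathbf q)/m\}=\{q_i\le\alpha\widehat\ell(\mathbf q')/m\}$, and the previous paragraph gives the containment in $\{\widehat\ell(\mathbf q)=\widehat\ell(\mathbf q')\}$.

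The step I expect to be most delicate is controlling the ``chain reaction'' inherent in the BH recursion: lowering several of the sub-$q_i$ coordinates could in principle push $\widehat\ell$ strictly upward and thereby flip the rejection status of $i$ itself. What makes it work — and this is the point to get right — is the case split $q_j\lessgtr q_i$ together with the hypothesis $q_i\le\alpha\widehat\ell/m$ on the event in play: on that event $q_i$ already lies below the threshold $\alpha\ell/m$ for every $\ell\ge\widehat\ell$, so the coordinates that were lowered were already below all the relevant thresholds and their decrease adds no new rejections at those levels. Ties among the $q_j$ cause no trouble, since every statement is phrased through the super-level-set cardinalities in (a)--(b), which are insensitive to ties.
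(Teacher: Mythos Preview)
The paper does not supply its own proof of this lemma: it is quoted verbatim as Lemma~D.6 of \cite{marandon2024adaptive} and used as an auxiliary tool, so there is no in-paper argument to compare against. Your proof is correct. The key ingredients --- the equivalence $q_{(\ell)}\le\alpha\ell/m \iff |\{j:q_j\le\alpha\ell/m\}|\ge\ell$, the exactness $|\{j:q_j\le\alpha\widehat\ell(\mathbf q)/m\}|=\widehat\ell(\mathbf q)$, and the coordinatewise monotonicity of $\widehat\ell$ --- are stated and used correctly, and the case split on $q_j\lessgtr q_i$ together with the hypothesis $q_i\le\alpha\widehat\ell/m$ on the relevant event is precisely what freezes the super-level sets at all $\ell\ge\widehat\ell$, which is the heart of the matter. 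Your closing remark identifies the right delicate point (that lowering coordinates could in principle trigger extra rejections) and explains cleanly why it cannot happen here.
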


The calibration splitting trick can be seen as a way to obtain statistical guarantees in conformal inference when  making a data-driven choice regarding the inference
. In \cite{marandon2024adaptive,gazin2023transductive}, the choices are about  adaptive score functions. 
The next lemma presents this trick in the case that the choice is about which examples to select for (potentially more efficient and powerful) further inference
\footnote{This idea was sketched in version 3 of the arXiv version \cite{bao2024selective} of the work \cite{bao2024selective}}. 
\begin{lemma}[Calibration splitting trick for selective conformal prediction sets]\label{lemselective}
Assume that for any training sample $\dtrain$, calibration sample $\dcal=((X_j,Y_j))_{j\in \range{\ncal}}$ and test sample $\dtest=(X_j,Y_j)_{j\in \range{\ncal+1,\ncal+\ntest}}$, such that $(\dtrain,\dcal,\dtest)$ has some distribution $Q$, the  procedure (that is, a prediction set collection on a selection) and  initial selection rule are as follows:
\begin{itemize}
    \item[(i)] the procedure $\mathcal{R}=(\mathcal{C}_{\ncal+i})_{i\in \mathcal{S}}$, $\mathcal{C}_{\ncal+i}\subset \ZZ$, $\mathcal{S}\subset \range{\ntest}$, with $\mathcal{R}=\mathcal{R}(\dcal,\dtestX; \dtrain)$ built upon $\dtrain,\dcal$ and $\dtestX:=(X_j)_{j\in \range{\ncal+1,\ncal+\ntest}}$ such that it controls a criterion $\E_Q( \mathcal{E}(\mathcal{R},Y))\leq \alpha$ 
    if the entries of $\dcal\cup \dtest$ are exchangeable conditionally on $\dtrain$;
    \item[(ii)] an initial selection rule $\mathcal{S}^{(0)}=\mathcal{S}^{(0)}(\dcal,\dtestX;\dtrain)\subset \range{\ncal+1,\ncal+\ntest}$ built upon $\dtrain,\dcal$ and $\dtestX$ which is permutation preserving in the latter, that is, for any permutation $\sigma$ of $\range{\ncal+1,\ncal+\ntest}$, we have $\sigma(\mathcal{S}^{(0)}(\dcal,\dtestX;\dtrain))=\mathcal{S}^{(0)}(\dcal,\sigma(\dtestX);\dtrain)$.
\end{itemize}
Let us consider samples $(\dtrain,\dcal,\dtest)$ as above such that the entries of $\dcal\cup\dtest$ are exchangeable conditionally on $\dtrain$ and split the calibration set $\dcal$ into two samples $\dcal^{(1)}$ and $\dcal^{(2)}$ of respective sizes $\ncal^{(1)}$ and $\ncal^{(2)}$. Let $\dcal^{(2),X}=(X_j)_{j\in \range{\ncal^{(1)}+1,\ncal}}$.
Then for $\mathcal{S}^{(0)}=\mathcal{S}^{(0)}(\dcal^{(1)},\dcal^{(2),X}\cup \dtestX ; \dtrain)\subset \range{\ncal^{(1)}+1,\ncal+\ntest}$, the procedure $\mathcal{R}^{split}=\mathcal{R}((\dcal^{(2)})_{\mathcal{S}^{(0)}},(\dtestX)_{\mathcal{S}^{(0)}} ; \dtrain)$ is a procedure achieving $\E_Q( \mathcal{E}(\mathcal{R}^{split},Y) \:|\: \mathcal{S}^{(0)})\leq \alpha$ and thus also $\E_Q( \mathcal{E}(\mathcal{R}^{split},Y))\leq \alpha$.
\end{lemma}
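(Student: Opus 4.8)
The plan is to verify that after the calibration split the sub-sample $\dcal^{(2)}$ restricted to the initially selected indices plays exactly the role of a valid calibration sample, so that hypothesis (i) applies verbatim conditionally on $\mathcal{S}^{(0)}$. First I would fix a realization of $\dtrain$ and of $\mathcal{S}^{(0)}$ and argue conditionally on these. The key observation is that $\mathcal{S}^{(0)}$ is measurable with respect to $\dcal^{(1)}$, $\dcal^{(2),X}$ and $\dtestX$ (plus $\dtrain$), and by the permutation-preserving property (ii) the event $\{\mathcal{S}^{(0)}=A\}$ for a fixed subset $A\subset\range{\ncal^{(1)}+1,\ncal+\ntest}$ is invariant under permutations of $(X_j)_{j\in A}$ that also permute the corresponding labels — more precisely, relabelling the indices in $A$ does not change which indices are selected once we also relabel the covariates accordingly.

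Second I would make the exchangeability argument precise. Since the entries of $\dcal\cup\dtest$ are exchangeable conditionally on $\dtrain$, and $\dcal^{(1)}$ is a fixed-size sub-block, the entries of $\dcal^{(2)}\cup\dtest = \big((X_j,Y_j)\big)_{j\in\range{\ncal^{(1)}+1,\ncal+\ntest}}$ are exchangeable conditionally on $(\dtrain,\dcal^{(1)})$. Now condition further on $\mathcal{S}^{(0)}=A$: because $\mathcal{S}^{(0)}$ depends on the block $\range{\ncal^{(1)}+1,\ncal+\ntest}$ only through the covariates $\dcal^{(2),X}\cup\dtestX$ and is equivariant under permutations that act jointly on covariates and labels, conditioning on $\{\mathcal{S}^{(0)}=A\}$ does not break exchangeability \emph{within} the index set $A$: the conditional law of $\big((X_j,Y_j)\big)_{j\in A}$ given $(\dtrain,\dcal^{(1)},\mathcal{S}^{(0)}=A)$ is still exchangeable. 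This is the standard conformal ``selection preserves exchangeability after calibration splitting'' step, and it is where the permutation-preserving Assumption~(ii) and the split are both used; I expect this to be the main obstacle to state cleanly, because one must carefully track which randomness $\mathcal{S}^{(0)}$ is a function of and check that the conditioning event is a symmetric event in the coordinates indexed by $A$.

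Third, having established that, I would partition $A$ into its calibration part $A_{\mathrm{cal}}=A\cap\range{\ncal^{(1)}+1,\ncal}$ and its test part $A_{\mathrm{test}}=A\cap\range{\ncal+1,\ncal+\ntest}$, and note that $\mathcal{R}^{split}$ is, by definition, the procedure $\mathcal{R}$ fed with calibration sample $(\dcal^{(2)})_{A_{\mathrm{cal}}}$, test covariates $(\dtestX)_{A_{\mathrm{test}}}$, and training sample $\dtrain$. By the previous paragraph the entries of $(\dcal^{(2)})_{A_{\mathrm{cal}}}\cup (\dtest)_{A_{\mathrm{test}}}$ are exchangeable conditionally on $(\dtrain,\dcal^{(1)},\mathcal{S}^{(0)}=A)$, which is precisely the hypothesis under which (i) grants $\E_Q\big(\mathcal{E}(\mathcal{R}^{split},Y)\mid \dtrain,\dcal^{(1)},\mathcal{S}^{(0)}=A\big)\leq\alpha$. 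Since this holds for every realization of $(\dtrain,\dcal^{(1)},A)$, taking expectations (tower property over $\mathcal{S}^{(0)}$, then over $\dcal^{(1)}$ and $\dtrain$) yields $\E_Q\big(\mathcal{E}(\mathcal{R}^{split},Y)\mid\mathcal{S}^{(0)}\big)\leq\alpha$ and hence $\E_Q\big(\mathcal{E}(\mathcal{R}^{split},Y)\big)\leq\alpha$, which is the claim. The only subtlety worth a sentence in the write-up is that (i) is invoked with $\ncal$ replaced by $|A_{\mathrm{cal}}|$ and $\ntest$ by $|A_{\mathrm{test}}|$, both of which are determined by $\mathcal{S}^{(0)}$ and hence constant under the conditioning.
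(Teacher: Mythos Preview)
Your proposal is correct and follows essentially the same route as the paper's proof: both reduce the claim to showing that the pairs $\big((X_j,Y_j)\big)_{j\in A}$ remain exchangeable conditionally on $(\dtrain,\dcal^{(1)},\mathcal{S}^{(0)}=A)$, and then invoke hypothesis (i) with the reduced calibration/test sizes. The paper makes the exchangeability step slightly more explicit by writing out the chain of distributional equalities (using that $\sigma(A)=A$ turns the permutation-preserving property into invariance of the conditioning event, and then applying exchangeability of the full block $\range{\ncal^{(1)}+1,\ncal+\ntest}$ given $(\dtrain,\dcal^{(1)})$), whereas you describe this step verbally; the content is the same.
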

For instance, Lemma~\ref{lemselective} can be used for the criterion $\mathcal{E}(\mathcal{R},Y)=\FCP(\mathcal{R},Y)$ in which case it provides FCR control.
{For instance, since classical marginal prediction intervals always satisfy (i) with $S=\range{m}$ and the FCR criterion, Lemma~\ref{lemselective} shows that sample splitting can offer FCR control for any (permutation preserving) selection rule. This is an alternative method to the swapping approach of \cite{jin2024confidence}.}
\begin{proof}
For convenience, let us write $\dcal\cup \dtest= ((X_j,Y_j))_{j\in \range{\ncal+\ntest}}= (Z_j)_{j\in \range{\ncal+\ntest}}$. 
By (i), it is enough to prove that the entries of $(Z_j)_{j\in S}$ are exchangeable conditionally on $\mathcal{S}^{(0)}=S$, $\dcal^{(1)}$ and $\dtrain$, for any possible realisation $S$ of $\mathcal{S}^{(0)}$. 
For any $\sigma$ permutation of $\range{\ncal^{(1)}+1,\ncal+\ntest}$ which only affects the indexes of $S$, we have
\begin{align*}
    &\mathcal{D}( (Z_{\sigma(j)})_{j \in S, j\geq \ncal^{(1)}+1} \:|\: \mathcal{S}^{(0)}=S, \dcal^{(1)},\dtrain)\\
    &=\mathcal{D}( (Z_{\sigma(j)})_{j \in S, j\geq \ncal^{(1)}+1}\:|\: \mathcal{S}(\dcal^{(1)},\dcal^{(2),X}\cup \dtestX ; \dtrain)=S, \dcal^{(1)},\dtrain)\\
    &=\mathcal{D}( (Z_{\sigma(j)})_{j \in S, j\geq \ncal^{(1)}+1} \:|\: \mathcal{S}(\dcal^{(1)},(Z_{\sigma(j)})_{ j\in \range{\ncal^{(1)}+1, \ncal+\ntest}} ; \dtrain)=S, \dcal^{(1)},\dtrain),
\end{align*}
by using the permutation preserving property (ii) and because $\sigma(S)=S$ by definition of $\sigma$. Now using that the distribution of $(Z_{\sigma(j)})_{ j\in \range{\ncal^{(1)}+1, \ncal+\ntest}} $ is the same as $(Z_{j})_{ j\in \range{\ncal^{(1)}+1, \ncal+\ntest}} $ conditionally on $\dtrain$ and $\dcal^{(1)}=(Z_j)_{ j\in \range{\ncal^{(1)}}}$,  the last display is equal to 
\begin{align*}
\mathcal{D}( (Z_{j})_{j \in S, j\geq \ncal^{(1)}+1} \:|\: \mathcal{S}^{(0)}=S, \dcal^{(1)},\dtrain),
\end{align*}
which provides the desired exchangeability for valid inference  on $(\dtestX)_{\mathcal{S}^{(0)}}$ using $(\dcal^{(2)})_{\mathcal{S}^{(0)}}$.
\end{proof}

\section{Additional illustrations in the regression case}\label{sec:moreregressionillustration}

  \subsection{Excluding a null value $y_0\in \R$} \label{sec:y0regression}

    We consider here $\mathcal{I}=\{ \mbox{$I$ interval of } \R\::\:  y_0\notin I \}$, which is useful in situation where the value $y_0$ corresponds to some ``normal'' value and the user wants to report only prediction intervals for ``abnormal'' individuals, that is, when the outcome value deviates from this reference value. The score considered here is $S_y(x)=|y-\mu(x)|/\sigma(x)$, where $\mu$, $\sigma$ are predictors of the conditional mean and variance, respectively.  
    
    An illustration is provided in Figure~\ref{fig-regressexnonull} in the non-parametric regression Gaussian model for different situations: 
    Rows 1,2 correspond to an homoscedastic model with perfect prediction of the variance  and a mean predictor with various accuracy (less accurate in the middle for row 1, more accuracy in the middle for row 2). Rows 3,4 correspond to an heteroscedastic model with perfect prediction of the mean and a the variance predictor under-estimating the variance in the middle for row 3, and over-estimating the variance in the middle for row 4). The marginal prediction interval (no selection) is displayed in light blue while the prediction interval after selection is displayed in dark red.  The quantities reported at the top of each panel are the FCR and adjusted power averaged over $1000$ repetitions (while each panel only displays the last experiment as a typical realisation of the sample).

\begin{figure}
\begin{tabular}{ccc}
\hspace{-11mm}Classical conformal &\hspace{-9mm} \texttt{InfoSP}  &\hspace{-9mm} \texttt{InfoSCOP}
\\
\hspace{-11mm}\includegraphics[width=0.38\textwidth, height = 0.24\textheight]{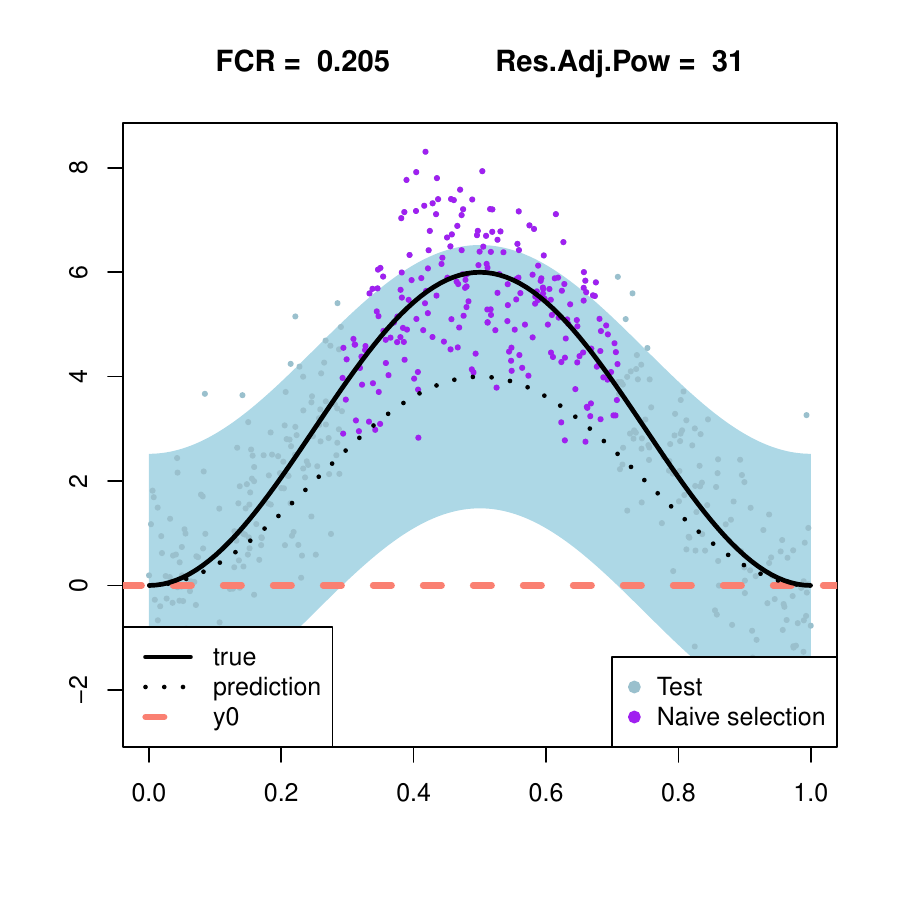}&\hspace{-11mm}
\includegraphics[width=0.38\textwidth,  height = 0.24\textheight]{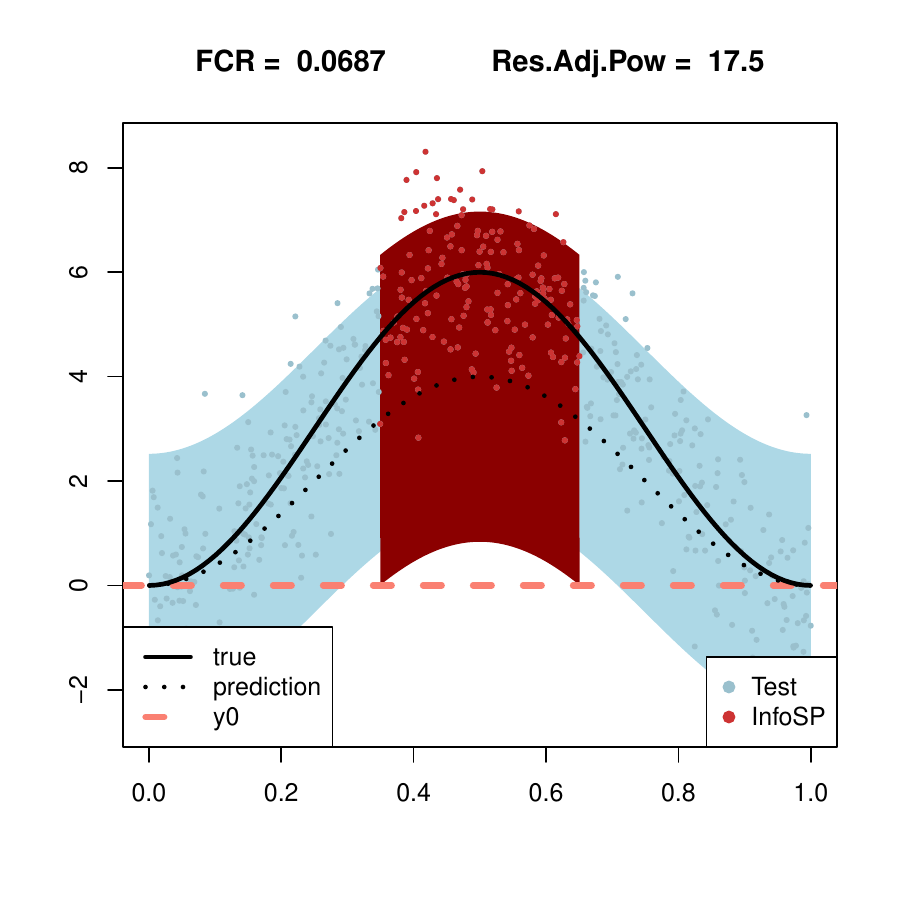}&\hspace{-11mm}
\includegraphics[width=0.38\textwidth,  height = 0.24\textheight]{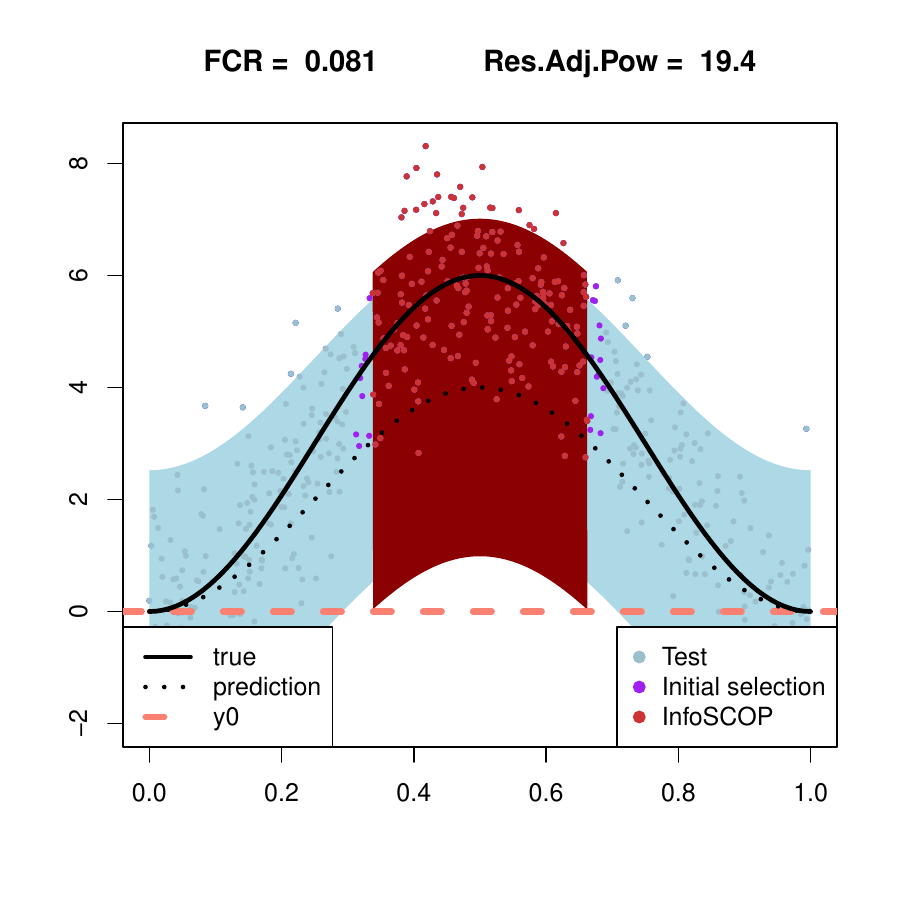}
\vspace{-6mm}\\
\hspace{-11mm}\includegraphics[width=0.38\textwidth, height = 0.24\textheight]{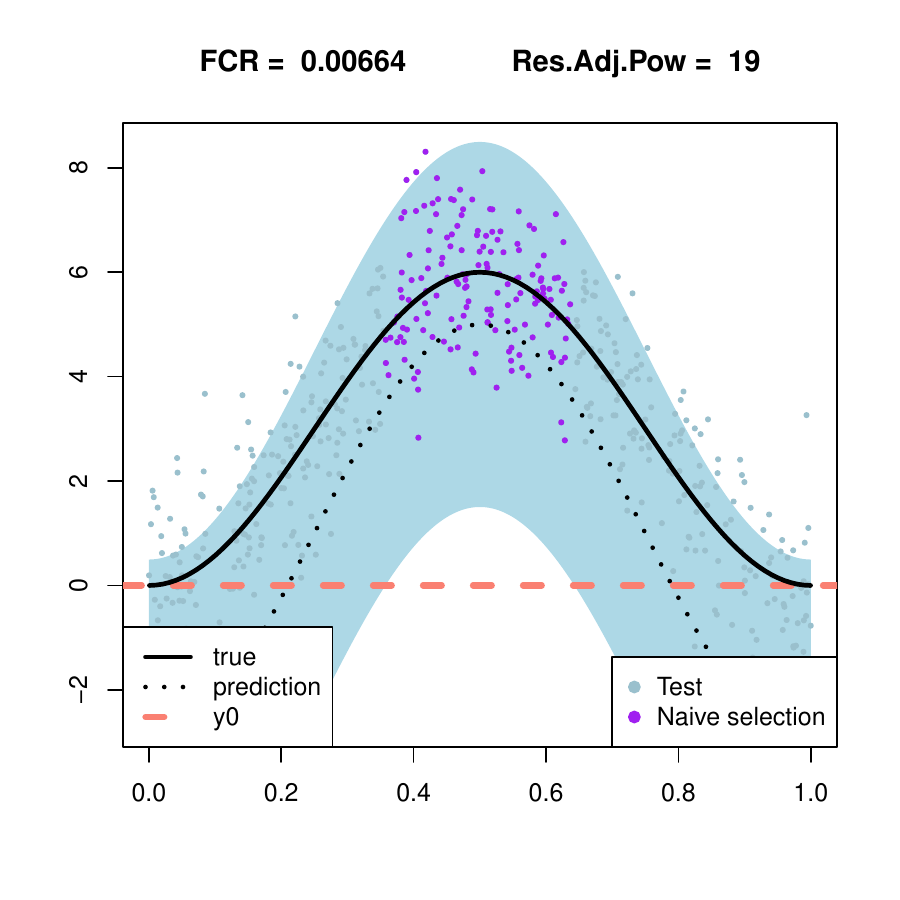}&\hspace{-11mm}
\includegraphics[width=0.38\textwidth,  height = 0.24\textheight]{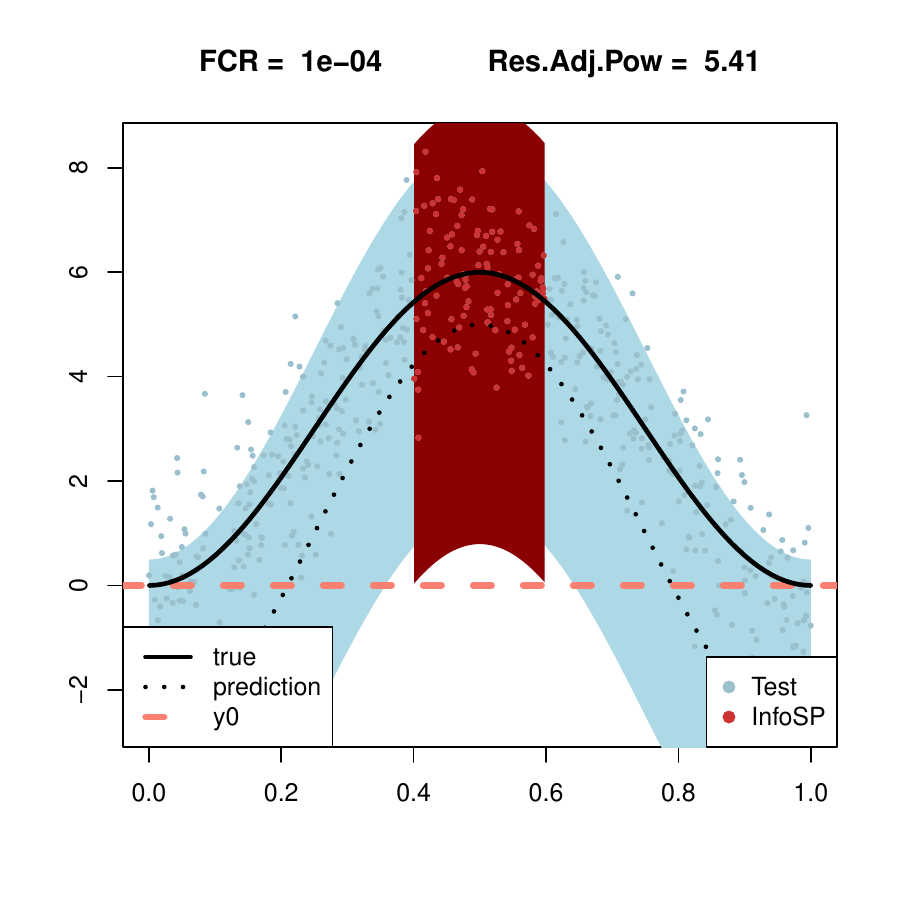}&\hspace{-11mm}
\includegraphics[width=0.38\textwidth,  height = 0.24\textheight]{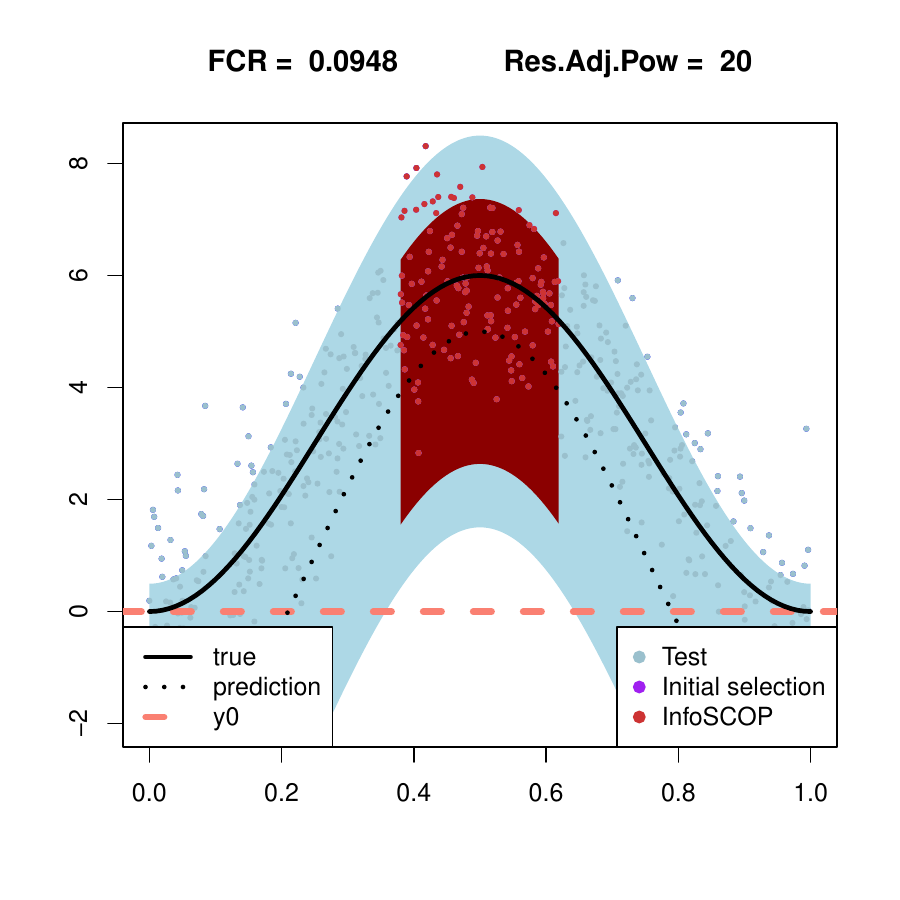}
\vspace{-6mm}\\
\hspace{-11mm}\includegraphics[width=0.38\textwidth, height = 0.24\textheight]{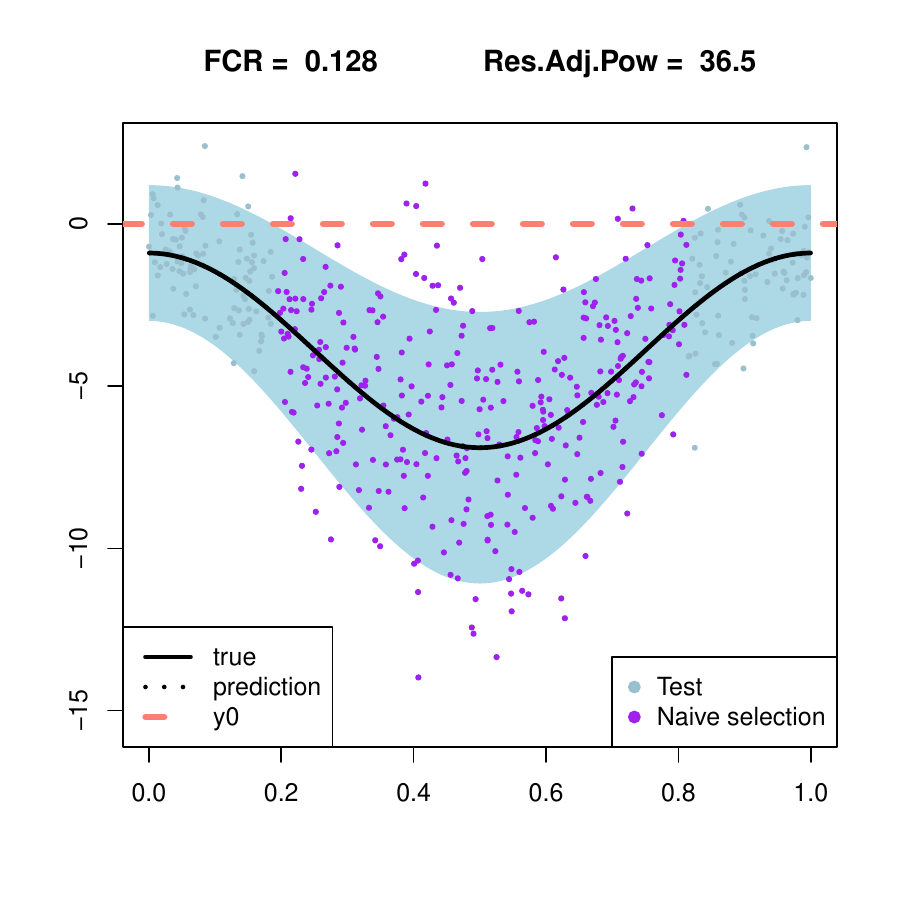}&\hspace{-11mm}
\includegraphics[width=0.38\textwidth,  height = 0.24\textheight]{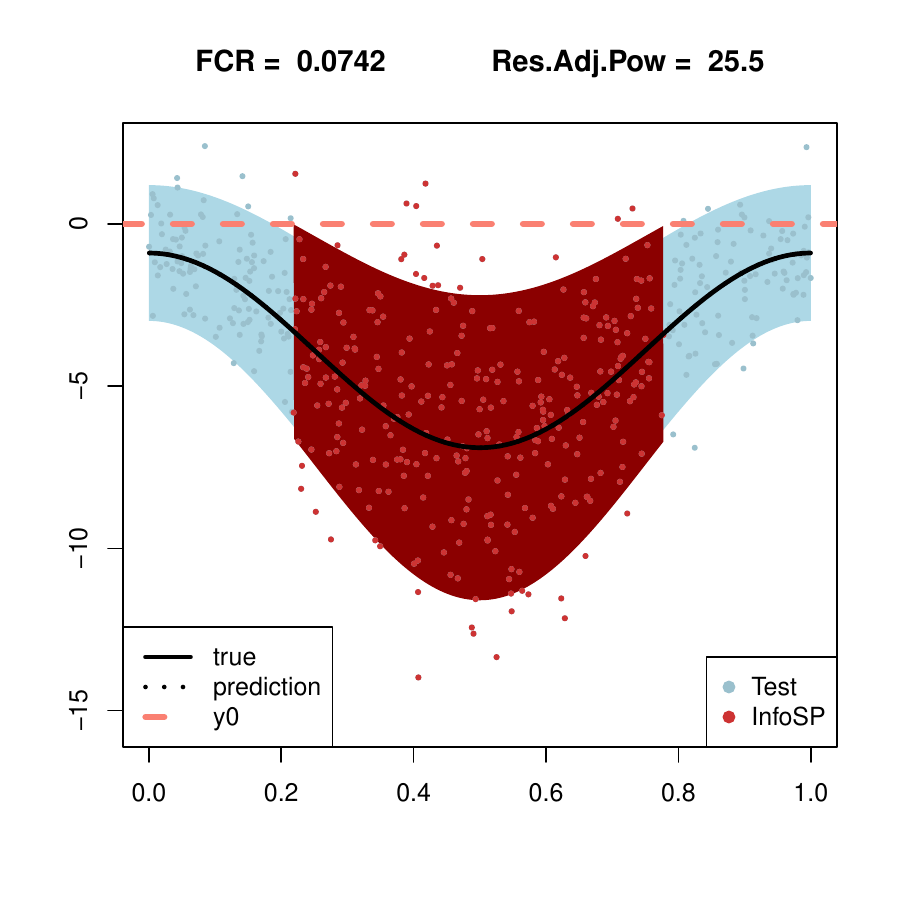}&\hspace{-11mm}
\includegraphics[width=0.38\textwidth,  height = 0.24\textheight]{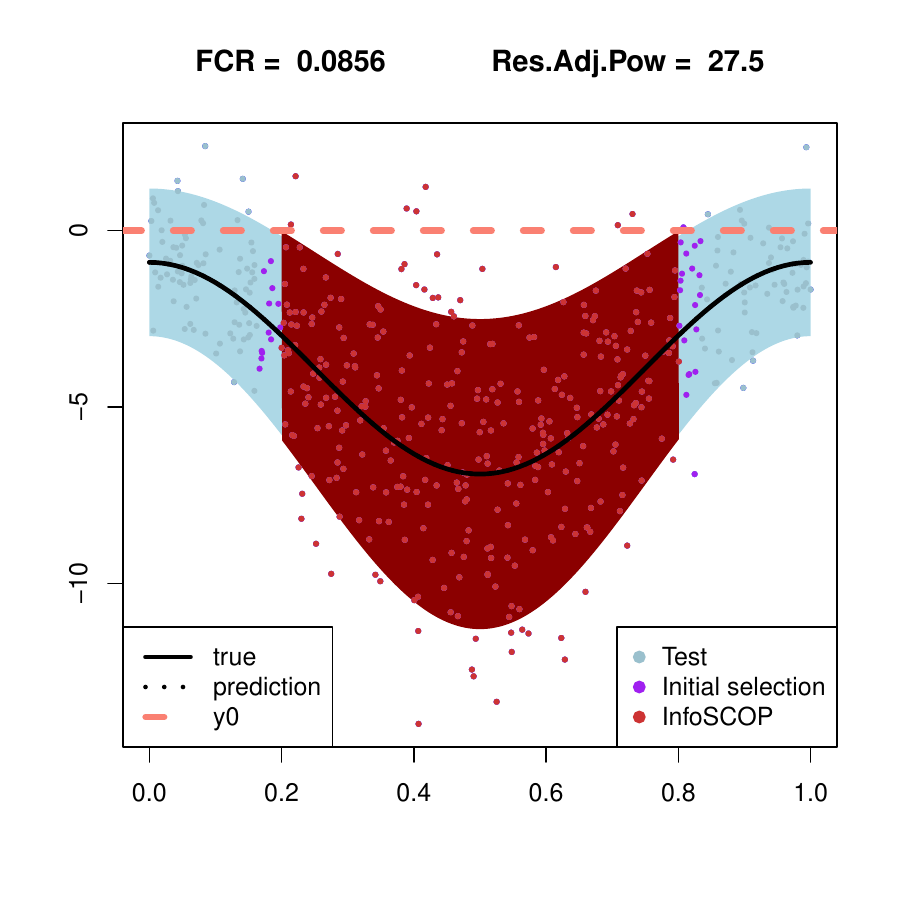}
\vspace{-6mm}\\
\hspace{-11mm}\includegraphics[width=0.38\textwidth, height = 0.24\textheight]{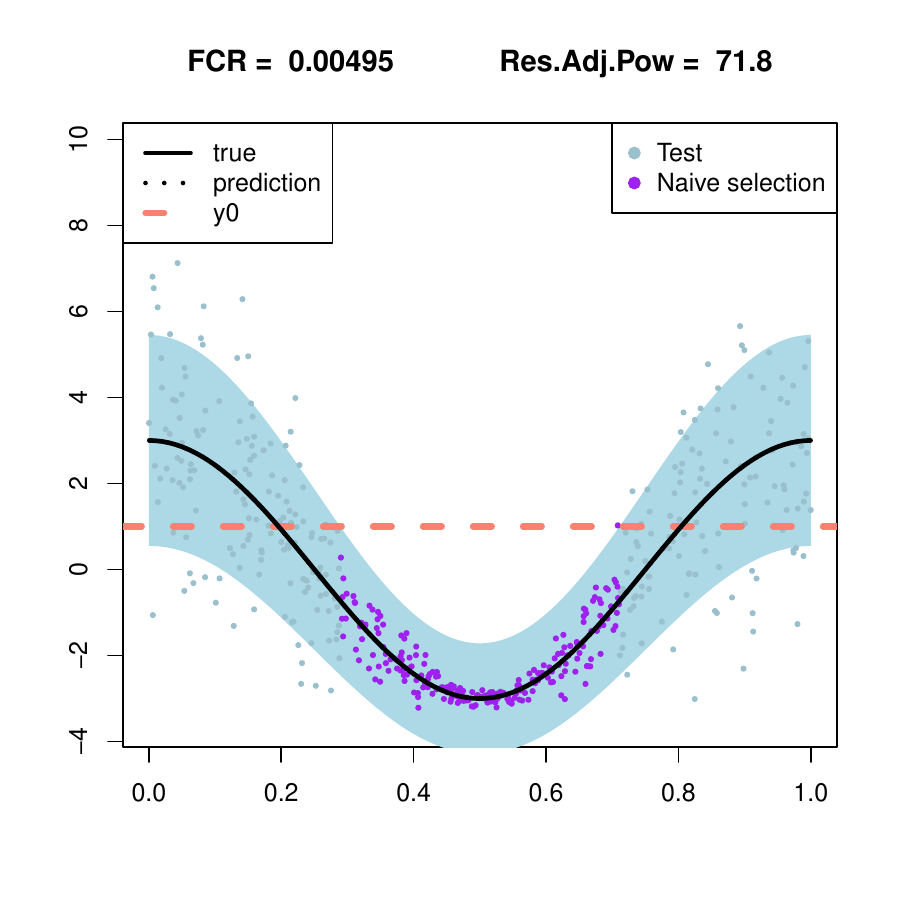}&\hspace{-11mm}
\includegraphics[width=0.38\textwidth,  height = 0.24\textheight]{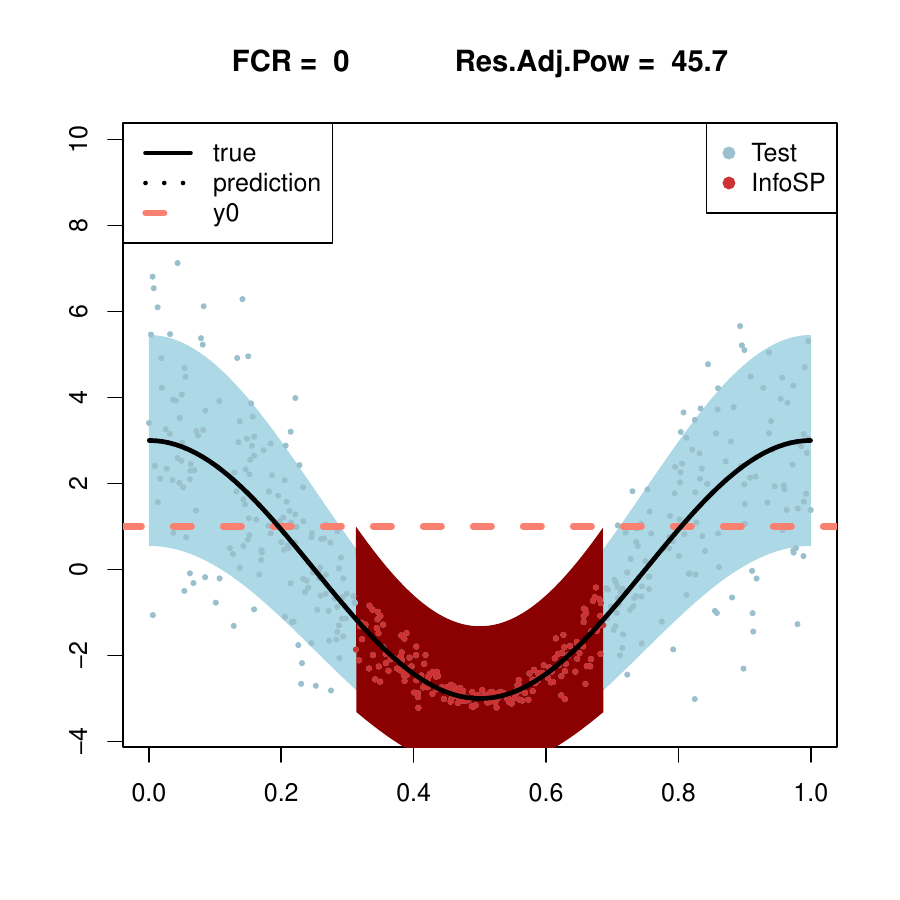}&\hspace{-11mm}
\includegraphics[width=0.38\textwidth,  height = 0.24\textheight]{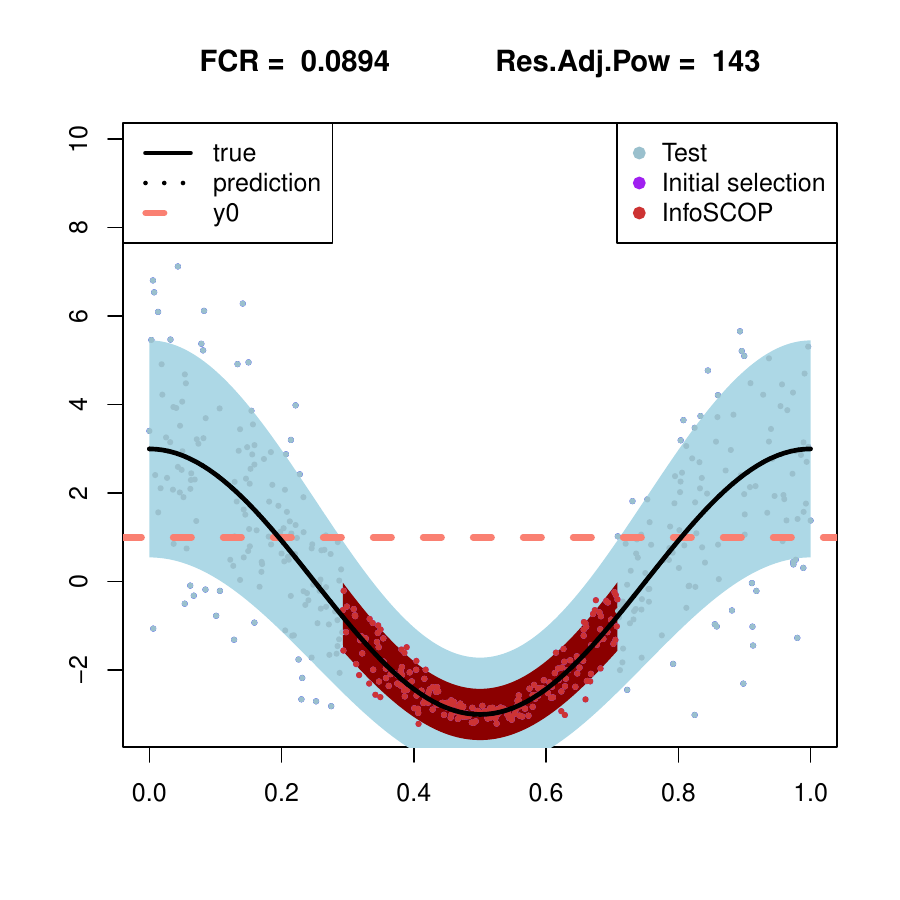}
\end{tabular}
\vspace{-6mm}
\caption{
Informative selection by excluding the null value $y_0$ in the (homoscedastic or heteroscedastic) regression case, see \S~\ref{sec:y0regression}.
\label{fig-regressexnonull}} 
\end{figure}

    First, as expected, the naive classical conformal selection does not provide FCR control in all cases, while \texttt{InfoSP} and \texttt{InfoSCOP} do control the FCR in any case. 
    Second, \texttt{InfoSCOP} always improves \texttt{InfoSP} in terms of power, and the range of improvement depends on where non-covering errors are likely to happen: 
    when errors are more likely to arise on the selection (rows 1,3), the behavior of \texttt{InfoSP} and \texttt{InfoSCOP} are similar; when errors are less likely to arise on the selection (rows 2,4), \texttt{InfoSCOP} improves over \texttt{InfoSP} in a striking manner (and is even better than classical conformal). This is both because of the reduction of the selection effect and because the pre-processed $p$-values are calibrated with much lower residuals and thus are much more efficient than the original $p$-values.

\subsection{Prediction intervals for \cite{Jin2023selection} selection}\label{sec:JCselect}

 We focus here on the case where the user wants to build prediction sets only for outcomes $Y_{n+i}> y_0$, which corresponds to excluding the set $\mathcal{Y}_0=(-\infty,y_0]$ and is related to the selection proposed in \cite{Jin2023selection}. 

We consider here the aim of finding one-sided prediction intervals on the selected. 
Let us assume that the score function is monotone in the following sense \citep{Jin2023selection}: for all $x\in \R^d$, for $y\leq y'$, $S_y(x)\geq S_{y'}(x)$. A classical example of monotonic score function is  given by $S_y(x)=(\mu(x)-y)/\sigma(x)$. The following result summarizes our finding in this case.

\begin{corollary}\label{cor:JConesided}
Consider the iid model in the regression case and assume that the score function is monotone (see above) and such that Assumption~\ref{assI} (ii) (iii) and Assumption~\ref{as:noties} hold. Then the following holds for \texttt{InfoSP} with informative collection $\mathcal{I}=\{I\mbox{ interval of } \R\::\: I\cap (-\infty,y_0]=\emptyset \}$ and $p$-value collection $\pfullbf$ \eqref{standardpvalue}: 
\begin{itemize}
    \item[(i)] \texttt{InfoSP} selects $\mathcal{S}=\BH(\mathbf{q})$ the rejected set of BH procedure at level $\alpha$ applied with the $p$-values
    $$
    q_i=\pfull{y_0}{i}= \frac{1}{n+1}\Big(1+\sum_{j=1}^n \ind{S_{Y_j}(X_j)\geq S_{y_0}(X_{n+i})} \Big),
    $$
    which coincides with the rejection set of   the procedure proposed in \cite{Jin2023selection}.
    \item[(ii)] The selection $\mathcal{S}=\BH(\mathbf{q})$ of \texttt{InfoSP}  controls the FDR at level $\alpha$ in the following sense:
    $$
    \sup_{P_{XY}} \E_{(X,Y)\sim P_{XY}}\left[\frac{\sum_{i\in \mathcal{S}} \ind{Y_{n+i} \leq y_0}}{1\vee |\mathcal{S}|}\right]\leq \alpha.
    $$
    \item[(iii)] The prediction intervals are of the form
    $
    \mathcal{C}_{n+i}= \{y>y_0\::\: S_{y}(X_{n+i}) \leq  S_{(n_\alpha)}\}$, $i\in \mathcal{S},$
    where $S_{(1)}\leq \dots\leq S_{(n)}$ are the ordered calibration scores $S_{Y_j}(X_j)$, $1\leq j\leq n$ (with $S_{(n+1)}=+\infty$), and $n_\alpha=\lceil (1- \alpha|\mathcal S(\mathbf{p})|/m)(n+1)\rceil$.
    \item[(iv)] These prediction intervals control the FCR at level $\alpha$ in the sense of \eqref{iidcontrol}.
\end{itemize}
\end{corollary}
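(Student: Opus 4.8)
The plan is to mirror the proof of Corollary~\ref{cor:abexcluded}, viewing $\mathcal{Y}_0=(-\infty,y_0]$ as the limiting case $a=-\infty$, $b=y_0$ of an excluded range and then using the monotonicity of the score to collapse a supremum to a single endpoint. First I would record that Assumption~\ref{assI} holds for $\mathcal{I}=\{I\text{ interval of }\R\::\:I\cap(-\infty,y_0]=\emptyset\}$: item (i) is immediate, since any subinterval of an interval disjoint from $(-\infty,y_0]$ is again disjoint from it, while items (ii) and (iii) are assumed in the statement. Consequently \texttt{InfoSP} is well defined and Theorem~\ref{thm-gen-basic} applies directly, which already yields point (iv).

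For point (i) I would start from the general expression $q_i=\sup_{y\in\mathcal{Y}_0}p^{(y)}_i=\sup_{y\leq y_0}p^{(y)}_i$ derived in Section~\ref{sec:qicomput}. Monotonicity of the score gives, for every $x$ and every $y\leq y_0$, $S_y(x)\geq S_{y_0}(x)$, whence $\ind{S_{Y_j}(X_j)\geq S_y(X_{n+i})}\leq\ind{S_{Y_j}(X_j)\geq S_{y_0}(X_{n+i})}$ for each $j\in\range{n}$, so that $p^{(y)}_i\leq p^{(y_0)}_i$ and the supremum is attained at $y_0$. This gives $q_i=p^{(y_0)}_i=\pfull{y_0}{i}$, which is precisely the conformal $p$-value used by \cite{Jin2023selection} to test $Y_{n+i}\leq y_0$; since \texttt{InfoSP} runs $\BH$ at level $\alpha$ on $\mathbf{q}$, the selected set $\mathcal{S}=\BH(\mathbf{q})$ coincides with their rejection set.

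For point (iii) I would use Remark~\ref{rem:caldepth} to write $\mathcal{C}^{\alpha|\mathcal{S}(\mathbf{p})|/m}_{n+i}(\mathbf{p})=\{y\in\R\::\:S_y(X_{n+i})\leq S_{(n_\alpha)}\}$ with $n_\alpha=\lceil(1-\alpha|\mathcal{S}(\mathbf{p})|/m)(n+1)\rceil$; then, for every selected $i$ one has $q_i\leq\alpha|\mathcal{S}(\mathbf{p})|/m$, so by definition of $q_i$ the set $\mathcal{C}^{\alpha|\mathcal{S}(\mathbf{p})|/m}_{n+i}(\mathbf{p})$ lies in $\mathcal{I}$, i.e.\ is disjoint from $(-\infty,y_0]$, and it therefore equals $\{y>y_0\::\:S_y(X_{n+i})\leq S_{(n_\alpha)}\}$, which is a one-sided interval because $y\mapsto S_y(X_{n+i})$ is nonincreasing. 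Finally, point (ii) follows from Lemma~\ref{lem:FDRsmallerFCR} applied to the $\mathcal{I}$-informative procedure of (iii)--(iv): since $\cup_{C\in\mathcal{I}}C=(y_0,\infty)$, the event $Y_{n+i}\notin\cup_{C\in\mathcal{I}}C$ equals $\{Y_{n+i}\leq y_0\}$, so $\FDP(\mathcal{S},Y)\leq\FCP(\mathcal{R},Y)$ gives the displayed bound after taking expectations and a supremum over $P_{XY}$. The only step requiring a little care is the monotonicity reduction in (i): a priori $q_i=\sup_{y\leq y_0}p^{(y)}_i\geq p^{(y_0)}_i$, so without monotonicity one obtains only an inclusion of the \texttt{InfoSP} selection in the rejection set of \cite{Jin2023selection}, and monotonicity of the score is exactly what upgrades this to an equality; everything else is bookkeeping already carried out for Corollary~\ref{cor:abexcluded}.
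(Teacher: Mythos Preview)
Your proof is correct and follows exactly the same route as the paper's own proof, which simply cites Example~\ref{ex:regression2} and monotonicity for (i), Lemma~\ref{lem:FDRsmallerFCR} for (ii), Remark~\ref{rem:caldepth} for (iii), and Theorem~\ref{thm-gen-basic} for (iv). You have merely unpacked these citations with more detail, including a helpful remark on why score monotonicity is the step that turns the inclusion $\BH(\mathbf{q})\subset$ Jin--Cand\`es selection into an equality.
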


In other words, the above result complements the multiple testing procedure of \cite{Jin2023selection}, by providing in addition FCR controlling informative prediction sets on the selected outcomes.

\begin{proof}
The proof is direct with Example~\ref{ex:regression2} and monotonicity (for (i)), Lemma~\ref{lem:FDRsmallerFCR} (for (ii)), Remark~\ref{rem:caldepth} (for (iii)), Theorem~\ref{thm-gen-basic} (for (iv)).
\end{proof}

Obviously, a similar result holds for \texttt{InfoSCOP}, for any initial selection step $\mathcal{S}^{(0)}\subset \range{r+1,n+m}$ that satisfies the permutation preserving Assumption~\ref{as:S0}. 
Corollary~\ref{cor:JConesided} is illustrated on Figure~\ref{fig-regressexnonullJinCandes} when $\mathcal{S}^{(0)}$ is taken here has $\BH(\mathbf{q})$ at level $2\alpha$ with the score $S_y(x)=(\mu(x)-y)/\sigma(x)$. 
The comments are qualitatively similar to those of Figure~\ref{fig-regressexnonull}: when covering errors are less likely on the selection, the improvement of \texttt{InfoSCOP} over \texttt{InfoSP} is substantial.

\begin{figure}[h!]
\begin{tabular}{ccc}
Classical conformal & \texttt{InfoSP}  & \texttt{InfoSCOP}
\\\hspace{-9mm}\includegraphics[width=0.38\textwidth, height = 0.24\textheight]{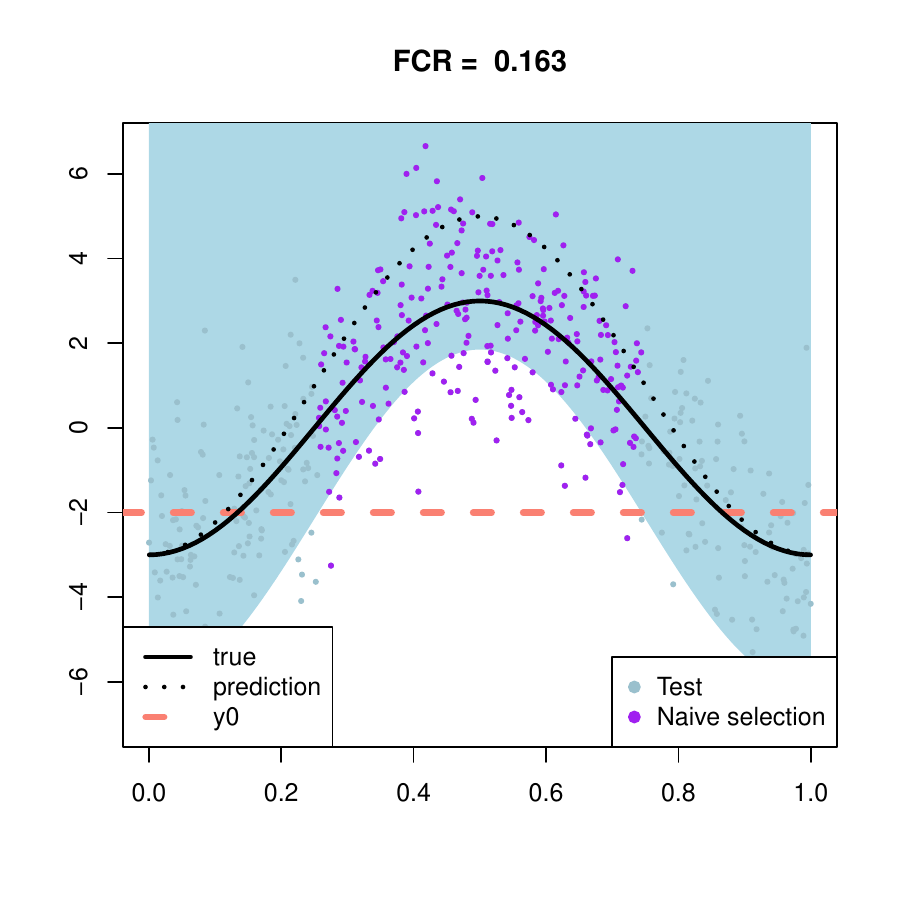}&\hspace{-9mm}
\includegraphics[width=0.38\textwidth,  height = 0.24\textheight]{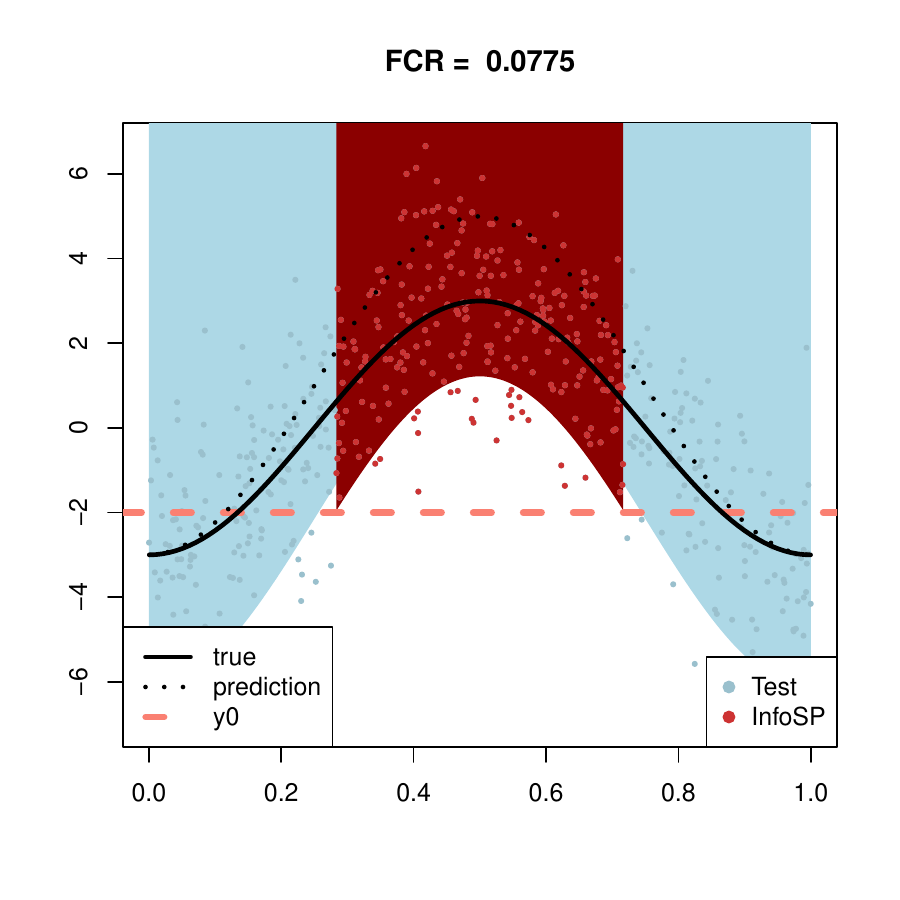}&\hspace{-9mm}
\includegraphics[width=0.38\textwidth,  height = 0.24\textheight]{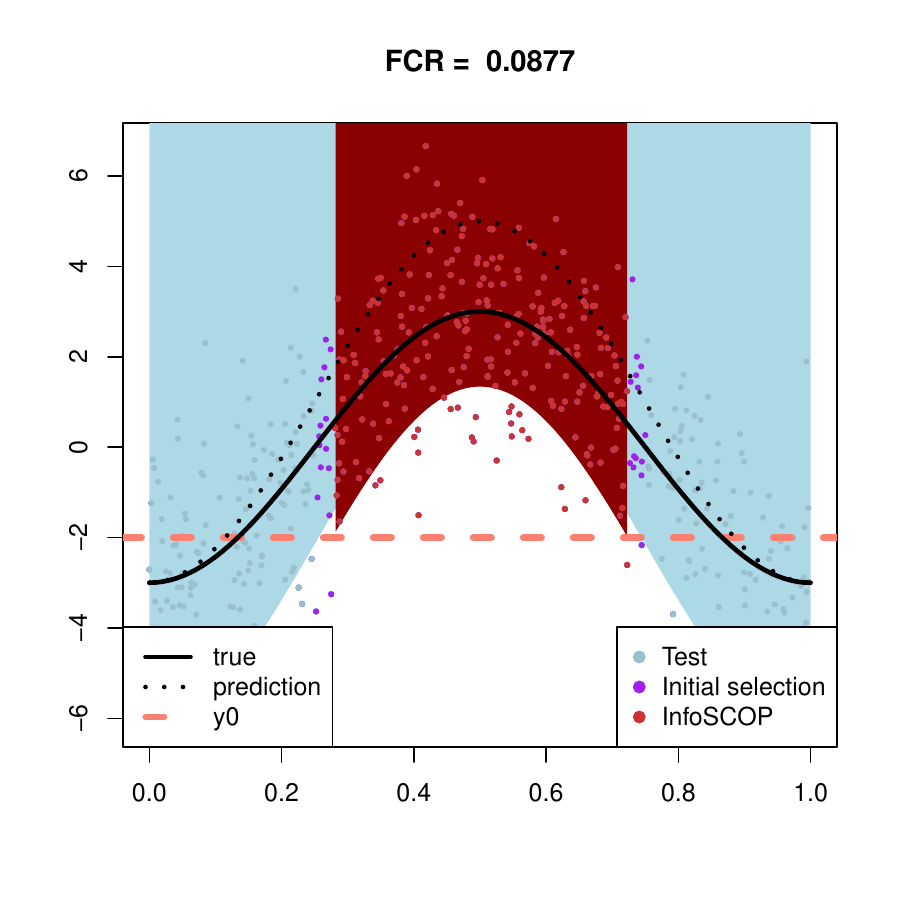}\vspace{-6mm}\\
\hspace{-9mm}\includegraphics[width=0.38\textwidth, height = 0.24\textheight]{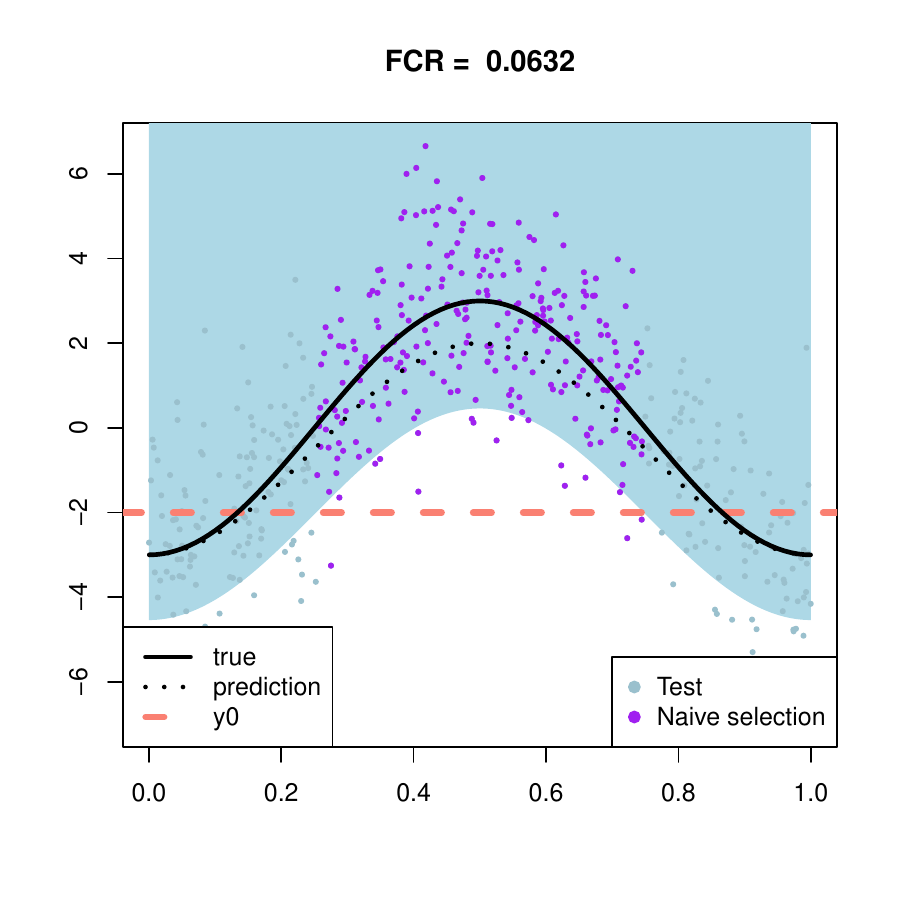}&\hspace{-9mm}
\includegraphics[width=0.38\textwidth,  height = 0.24\textheight]{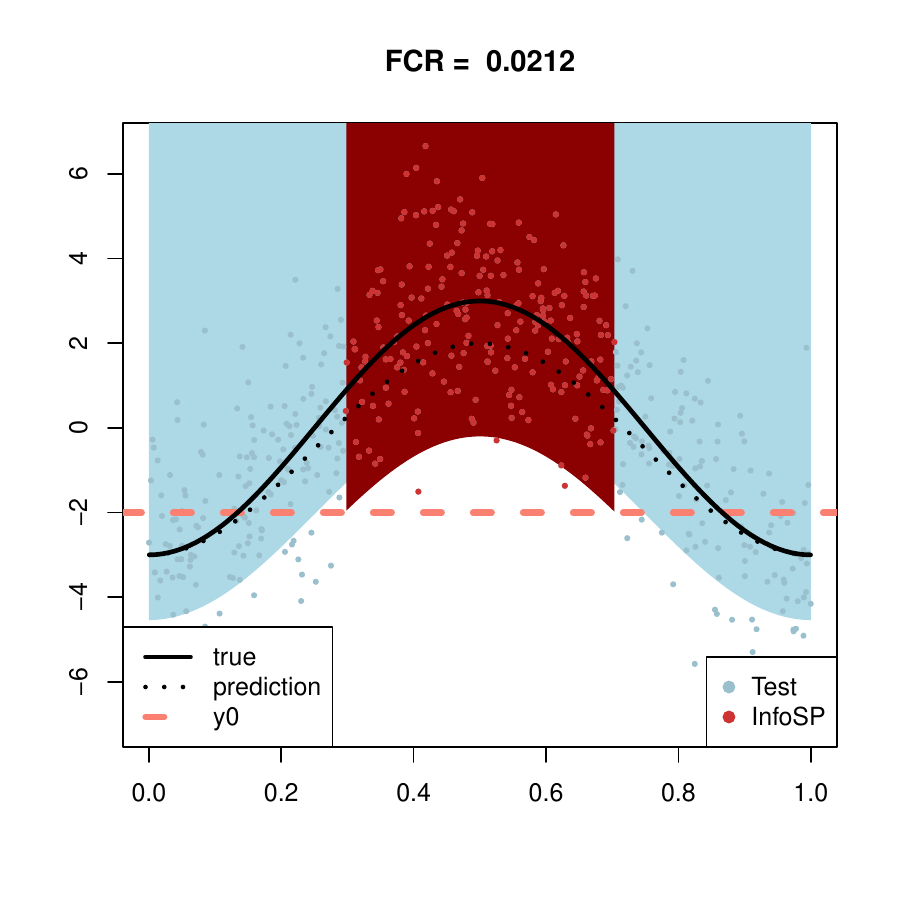}&\hspace{-9mm}
\includegraphics[width=0.38\textwidth,  height = 0.24\textheight]{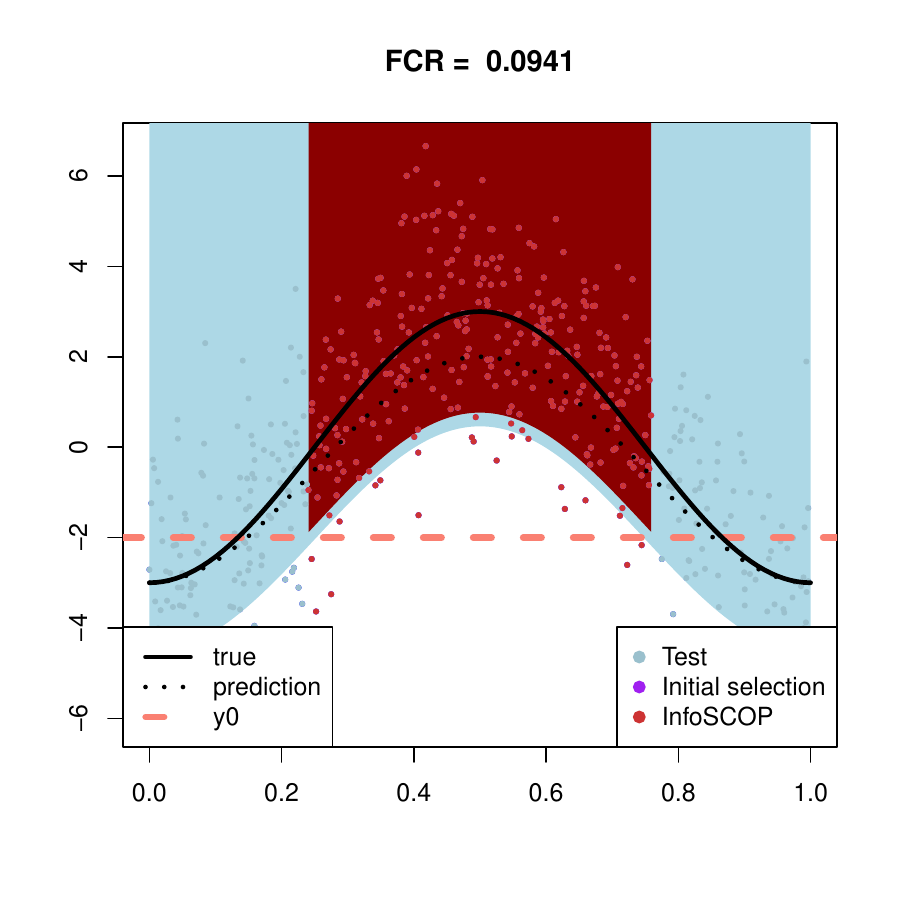}
\end{tabular}
\vspace{-9mm}
 \caption{Same as Figure~\ref{fig-regressexnonull} with \cite{Jin2023selection} type selection and one-sided prediction intervals, see \S~\ref{sec:JCselect}.
\label{fig-regressexnonullJinCandes}} 
\end{figure}

\subsection{{    
 Predicting gene expression from promoter sequences
}}\label{subsec-yeast}

\add{
\cite{Vaishnav2022} fit a convolutional neural network model on tens of millions of random promoter sequences with the goal of predicting the expression
level of a particular gene induced by a promoter sequence. They then used the model's
predictions to study the effects of promoters.  They evaluated the performance on 61150 labeled sequences that  were not used in training the model. The 61150 pairs of predicted and measured expressions will be used as a basis for a simulation to evaluate the performance of classic conformal, \texttt{InfoSP}, and \texttt{InfoSCOP}.  
}

\add{
Suppose an analyst considers $m=50$ new random promoter sequences. From these, the analyst is interested in the prediction intervals for promoter sequences that are under-expressed.  Importantly, the range of plausible values (i.e., a two-sided prediction interval) are of interest to the analyst, since it is not enough to establish with confidence that the expression is below a certain value,  but also to know that the expression is above zero  and evaluate the plausible extent to which it exceeds zero. 
}

\add{
Suppose the analyst has access to the expression and predicted expression of $n=200$ random promoter sequences. These examples will serve as the calibration sample.  
}

\add{
We consider the score function $S_y(x) = |\mu(x)-y|$, where $\mu(x)$ is the predicted expression (more sophisticated score functions, such as the locally weighted or quantile based functions discussed in \S~\ref{sec:appliRegression}, cannot be used since we only have access to $\mu(x)$ from the prediction machine). We define an example as informative if its prediction interval is entirely below the threshold $a=9$. Therefore, the $p$-value for testing the (random) null hypothesis that the example is not informative is $$q_i = \max_{y\geq a} \pfull{y}{i} = \pfull{a}{i}\times \ind{\mu(X_{n+i})<a}+
\ind{\mu(X_{n+i})\geq a},$$
see Corollary~\ref{cor:abexcluded} with $b=+\infty$.
}

\add{
In order to assess our methods using the  61150 labeled native yeast promoter sequences, we randomly sample  labeled and
unlabeled data sets as follows. For each of $B=10000$ trials, we randomly sample $n+m$ promoters, out of which  $n$ points serve as the labeled calibration data set and $m$  points as the unlabeled test data set. 
The results are summarized in Table \ref{tab:yeast}. Due to selection of informative examples, the classic conformal procedure that constructs each prediction interval at level $1-\alpha = 0.9$ fails to control the FCR at the $\alpha = 0.1$ nominal level. \texttt{InfoSP} provides the required control, but at the cost of some non-negligible power loss. \texttt{InfoSCOP} applies an initial selection step in which examples from the test sample and the second half the calibration sample are forwarded to \texttt{InfoSP}, only if their conformal $p$-value, computed using the first half of the calibration sample,  for testing the (random) null hypothesis that the example is not informative, is at most $\alpha$. \texttt{InfoSCOP} controls the FCR at level $\alpha$ and has only slightly reduced power compared to classic conformal. 
}

%\end{document}
\begin{table}[t!]

\centering
\begin{tabular}{|c|c|c|c|}
\hline
& \textbf{CC} & \textbf{\texttt{InfoSP}} & \textbf{\texttt{InfoSCOP}} \\ \hline
\textbf{FCR} & 0.150 & 0.030 & 0.081 \\ \hline
\textbf{Power} & 16.9 & 12.6 & 15.8 \\ \hline
\textbf{Res-adjusted Power} & 2.81 & 1.73 & 2.37 \\ \hline
\end{tabular}
\caption{\label{tab:yeast}\add{The FCR, power, and resolution-adjusted power, for the three procedures considered for the yeast application (see text). The resolution adjusted power is the expected number of discoveries divided by the length of the prediction interval (since the length is fixed in this analysis). Based on $B=10000$ repetitions.}}
\end{table}

\add{
Figure \ref{fig:yeast} shows the prediction intervals for a single data generation. \texttt{InfoSCOP} selects almost the same number of informative examples as classic conformal, but all prediction intervals constructed with \texttt{InfoSCOP} cover the true value. Moreover, the lower bounds are all above zero. In practice, following such an analysis, the analyst can use the list of prediction intervals from \texttt{InfoSCOP} to the next stage: knowing with confidence the prediction intervals for informative promoters for this set can help the future design of $n+m$ examples with even lower expressions (see \cite{Vaishnav2022} for uses for the design of promoters for low gene expression).  There will be a  lab cost of culturing the yeast of $n$ examples  to actually measure the expression level for $n$ promoters to serve as the calibration data, but $m$ can be quite large. 
}

\begin{figure}[h!]
\begin{center}
    \includegraphics[width=0.5\textwidth, page=19]{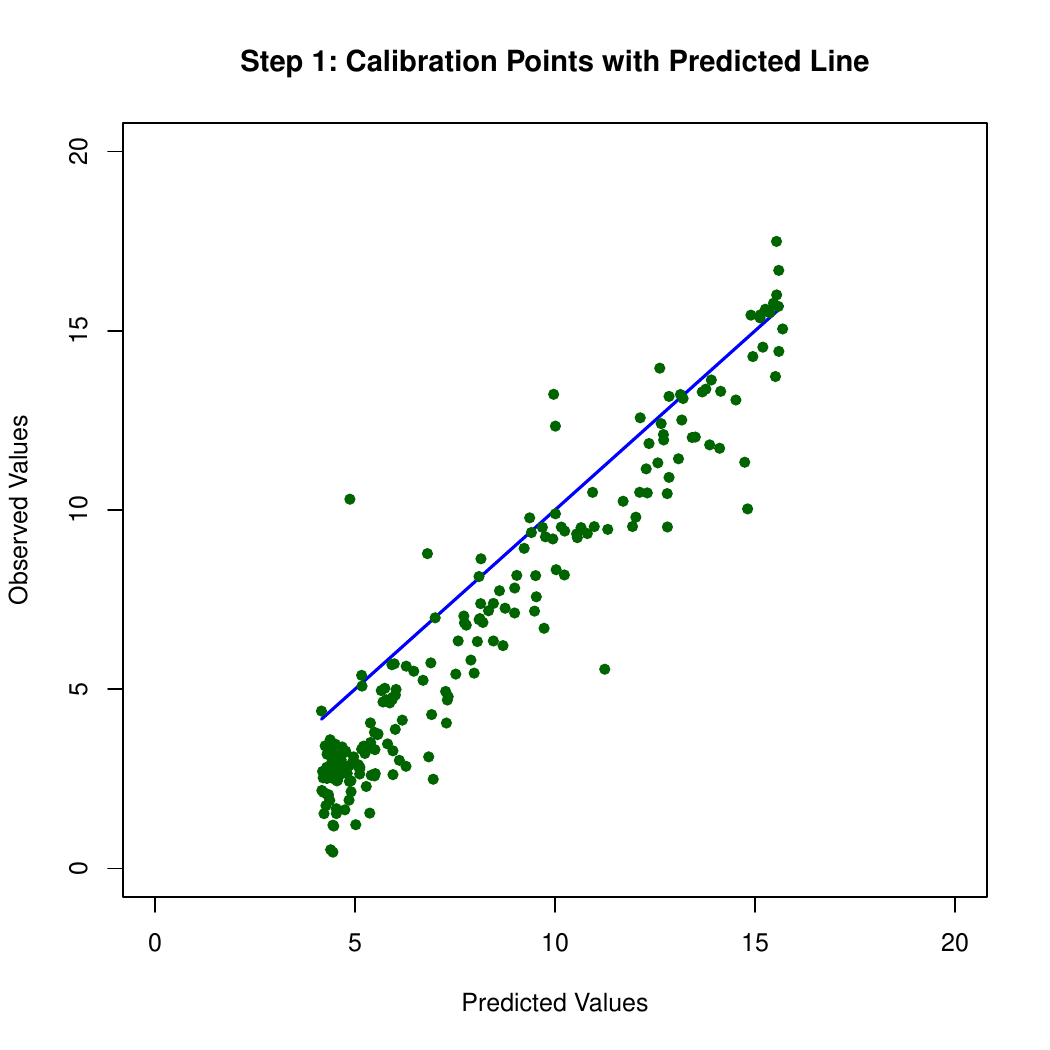}
\end{center}
\vspace{-1cm}
\caption{\label{fig:yeast} \add{The observed versus predicted values for yeast expression, as well as the classic conformal prediction intervals upper and lower bounds (red lines), and the \texttt{InfoSCOP} prediction intervals (purple vertical lines), for a single data generation (see text in \S~\ref{subsec-yeast} for details). The false coverage proportion for classic conformal is 0.176 and 0 for \texttt{InfoSCOP}. Classic conformal reports 17 informative examples, but three examples are not covered by their prediction interval. \texttt{InfoSCOP} reports 16 informative examples (a subset of the 17 reported by classic conformal, including the three examples with non-covering classic conformal prediction intervals).  }
} 
\end{figure}

\add{
\begin{remark}
The selection in the above analysis fits within the \cite{Jin2023selection} selection framework outlined in the previous section, but with a different score function than the scores suggested  \cite{Jin2023selection}, since our score function is selected to provide two-sided prediction intervals.      
\end{remark}
}

\subsection{Comparison with existing selective prediction sets} \label{sec:comparison}

\begin{figure}[h!]
\begin{center}
    \includegraphics[width=0.5\textwidth]{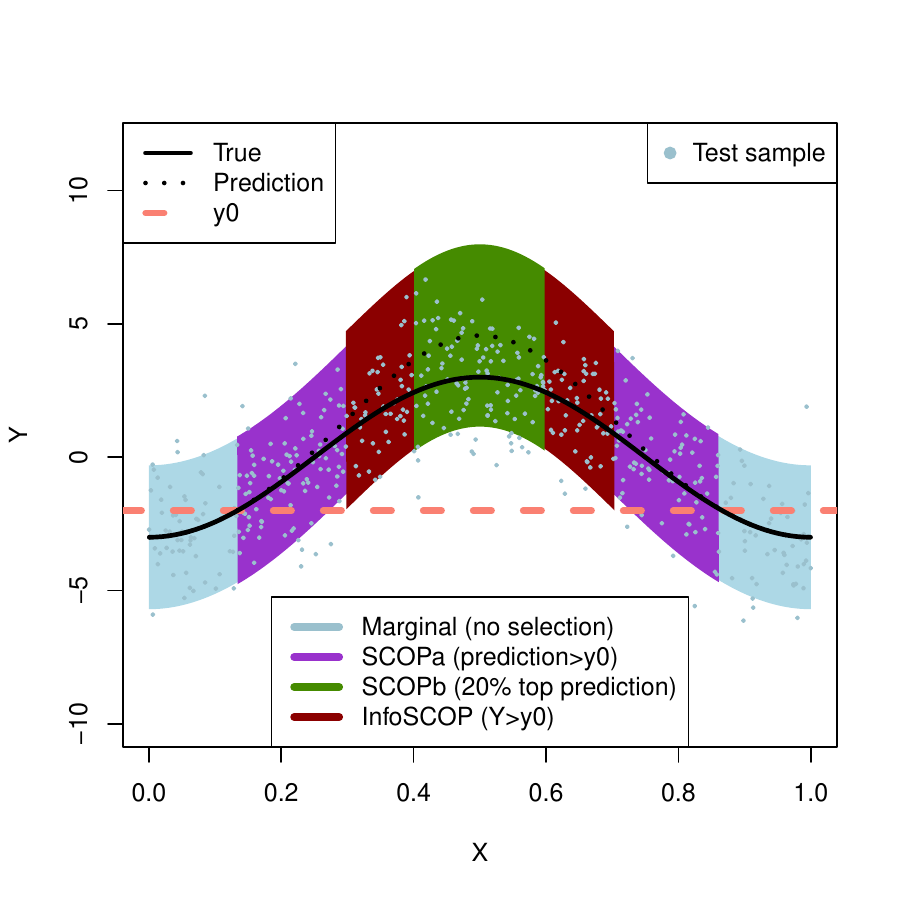} \hspace{-1cm}\includegraphics[width=0.5\textwidth]{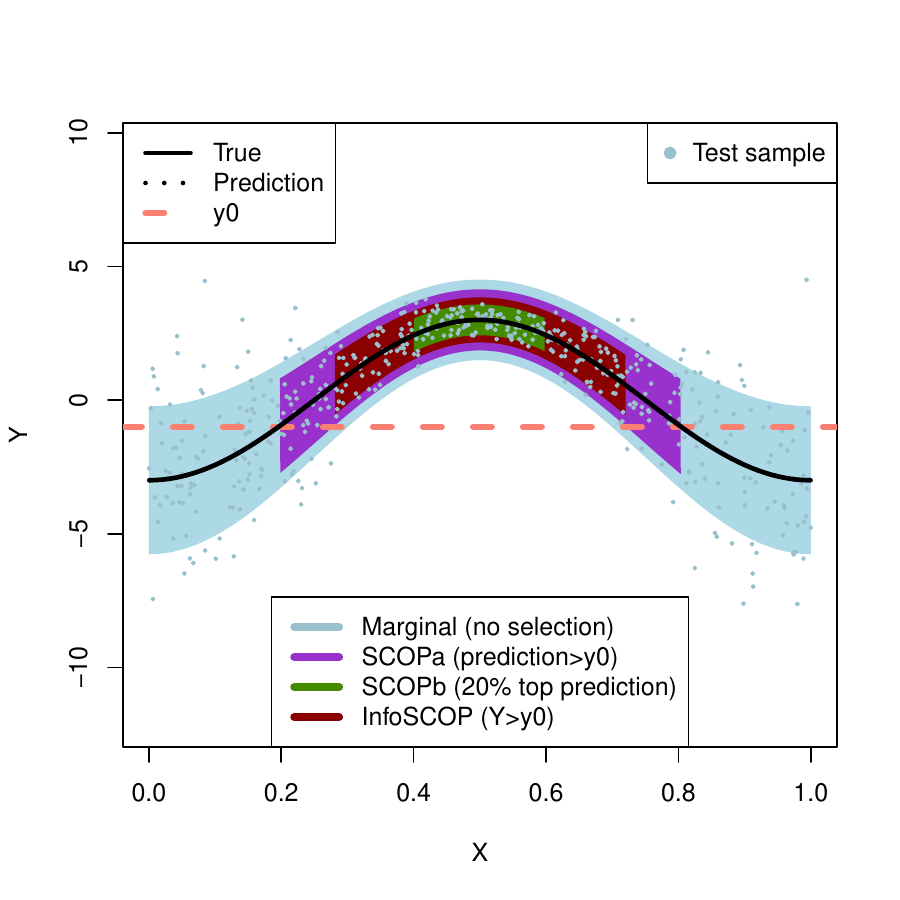}
\end{center}
\vspace{-1cm}
\caption{\label{fig:intro} Comparison to previous conformal selective inferences in the regression case: new informative prediction intervals (\texttt{InfoSCOP}) versus non-informative prediction intervals (SCOPab) (left: homoscedastic Gaussian regression with perfect variance prediction and errors in the mean prediction; right: heteroscedastic Gaussian regression with perfect mean prediction and errors in the variance prediction). 
Informative means here prediction intervals that does not contain $y_0$ (dashed line), see \S~\ref{sec:comparison}.
\label{fig:comparison}
} 
\end{figure}

Figure~\ref{fig:comparison} is useful to visualize the difference between the selection proposed by \cite{bao2024selective} and our informative selection. We display selective prediction intervals for two FCR controlling selections proposed in \cite{bao2024selective}: \texttt{SCOPa} uses a thresholding rule $\mathcal{S}=\{i\in \range{m}\::\: \mu(X_{n+i})\geq y_0\}$, while \texttt{SCOPb} selects the largest $\mu(X_{n+i})$'s. Each time, a suitable conditional conformal prediction set is built in \cite{bao2024selective} and reported in Figure~\ref{fig:comparison} in green and purple. As one can see, either the selection is too conservative, or the prediction interval could include the nominal $y_0$. This is not the case of \texttt{InfoSCOP} which always exclude $y_0$ by essence.

\section{Additional illustrations in the classification case}\label{sm_simulations}

\subsection{Three classes, each bivariate normal}
For the iid model,  as described in \S~\ref{subsec-simul-bivariatenormal}, we demonstrate the initial selection step in \texttt{infoSCOP} for excluding a null class   in one realization of the data generation in Figure \ref{fig-exchang-snapshot}.

\begin{figure}[h!]
\begin{center}
   \includegraphics[width=0.28\textwidth, height = 0.28\textheight, page=2]{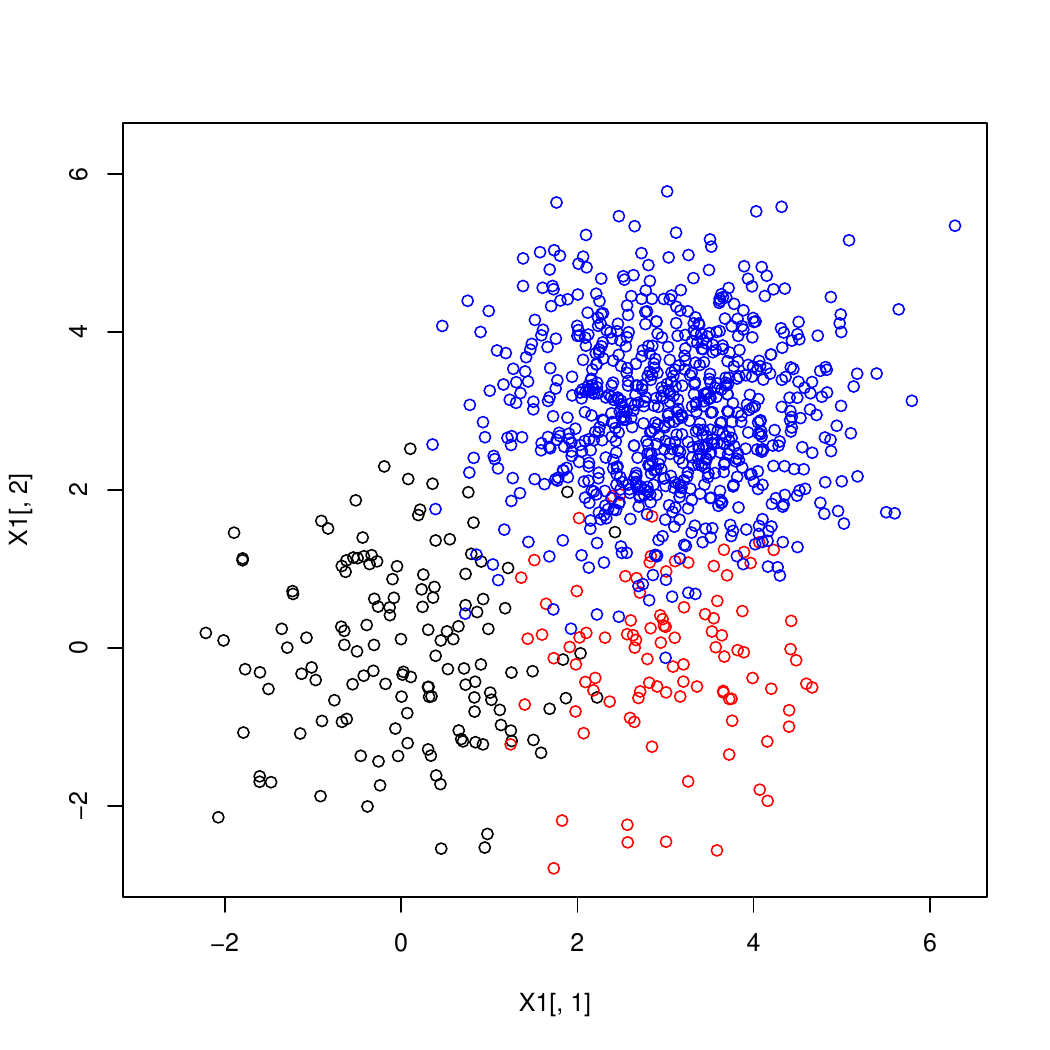}
    \includegraphics[width=0.28\textwidth, height = 0.28\textheight, page=3]{FigsUnequalProbsNonnullSelection.pdf} 
\end{center}
    \vspace{-10mm}
    \caption{\label{fig-exchang-snapshot} The test sample in a single data generation for selecting informative prediction sets that exclude a null class. The setting is that of unbalanced classes,  and the SNR is 3. The data points from the test sample of each of the three classes, where the null group is in blue:  left panel for the entire test sample, right panel the remaining examples from the test set after the initial selection step.   }
\end{figure}

For non-trivial classification, as described in \S~\ref{subsec:min-iid}, we provide numerical results in Figure \ref{fig-nontrivial-classification}. 

\begin{figure}[h!]
\begin{center}
 {\includegraphics[width=0.36\textwidth, height = 0.27\textheight, page=2]{legends.pdf}}

\vspace{-3.5cm}
 \begin{tabular}{ccc}
    \hspace{-5mm} \includegraphics[width=0.34\textwidth, height = 0.2\textheight, page=3]{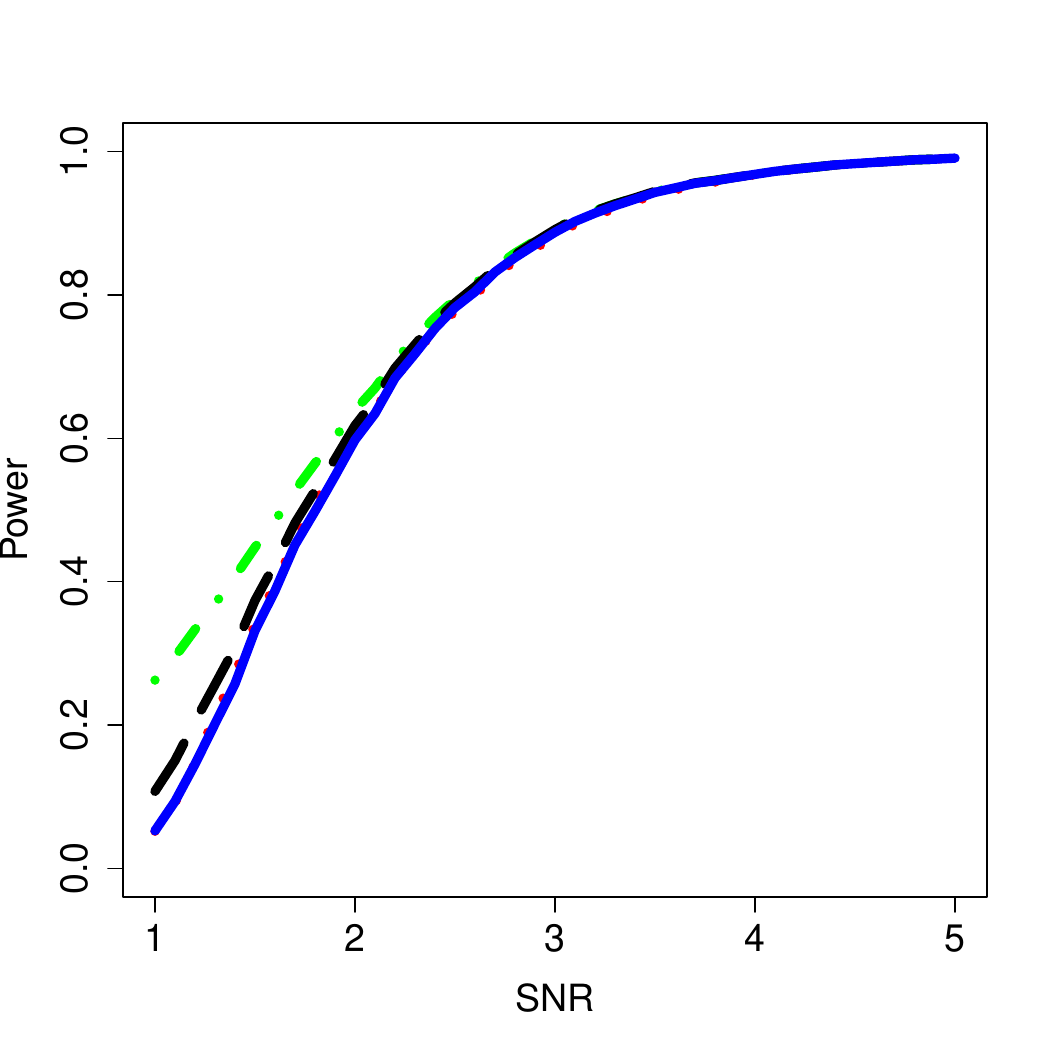} & 
\hspace{-5mm}\includegraphics[width=0.34\textwidth,  height = 0.2\textheight,page=1]{subscriptMinP1.pdf} &\hspace{-5mm}\includegraphics[width=0.34\textwidth,  height = 0.2\textheight,page=2]{subscriptMinP1.pdf}\vspace{-5mm}\\
\hspace{-5mm}\includegraphics[width=0.34\textwidth, height = 0.2\textheight, page=3]{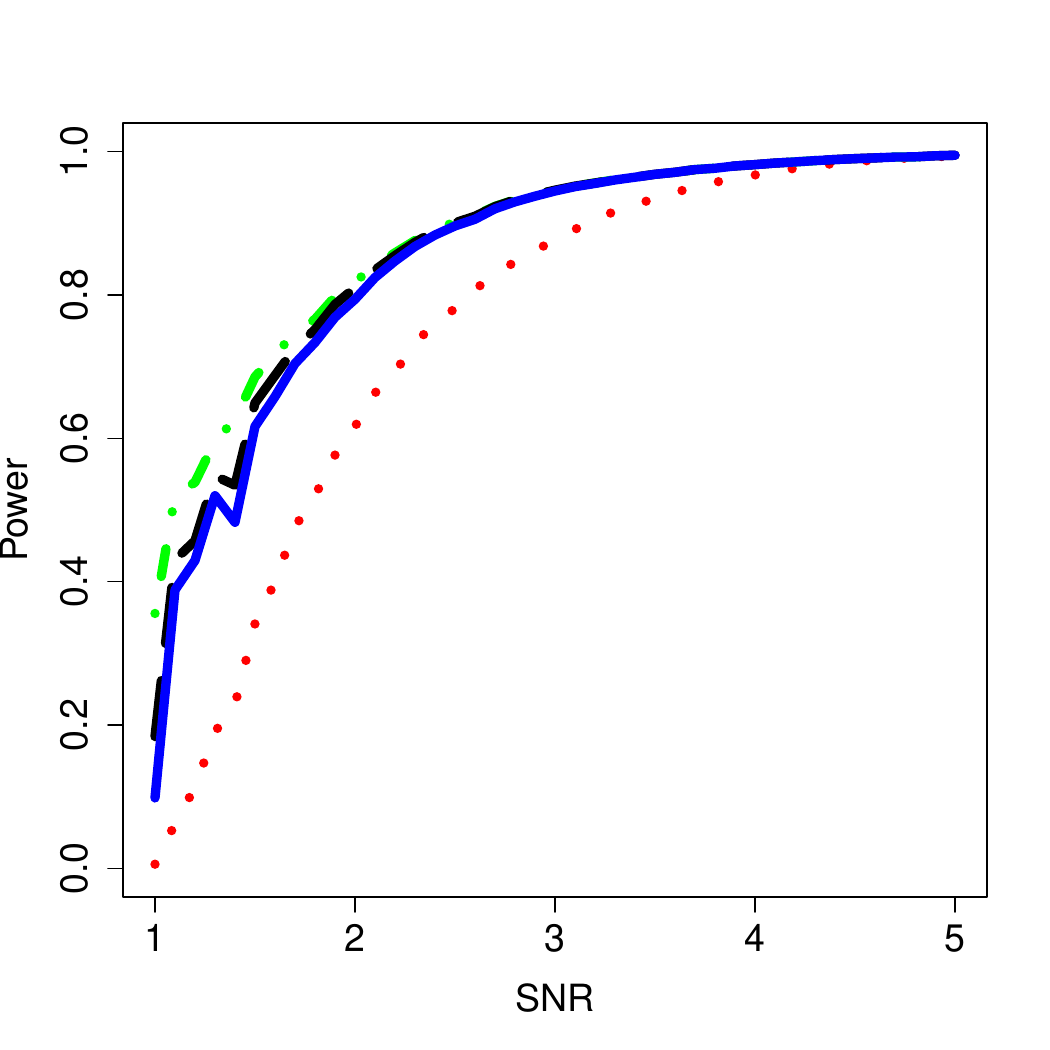}&
\hspace{-5mm}\includegraphics[width=0.34\textwidth,  height = 0.2\textheight,page=1]{subscriptMinP2.pdf}&
\hspace{-5mm}\includegraphics[width=0.34\textwidth,  height = 0.2\textheight,page=2]{subscriptMinP2.pdf}\\
 \vspace{-9mm}\end{tabular}
 \end{center}
\caption{\label{fig-nontrivial-classification} 
Selecting non-trivial prediction sets in the classification iid model, in the case of balanced classes (top row) and unbalanced classes   (bottom row), for the Gaussian mixture model with $K=3$ classes described in \S~\ref{subsec-simul-bivariatenormal}.  FCR (first column), resolution-adjusted power (second column) and the expected fraction of covering prediction sets (third column) versus SNR. 
 The number of data generations was 2000, 1000 data points were used for training, and $n=m=500$. }
\end{figure}

\subsection{Three classes, each a mixture of bivariate normals}\label{sm-large-overlap}
For the iid model, as in \S~\ref{subsec-simul-bivariatenormal}, we consider the balanced and unbalanced cases of 3 classes. However, here we consider that each class comes from a mixture of two bivariate normals, where one component of the mixture is identical in all three classes,  and given the class there is probability half of coming from that common mixture component. Thus the overlap between the classes is much greater than in the settings considered in \S~\ref{subsec-simul-bivariatenormal}. 

In Figure \ref{SM-fig-exchang-hard} we show that the qualitative conclusions with regard to the respective procedures are as in \S~\ref{subsec-simul-bivariatenormal}. Interestingly, in the hardest setting at the bottom row we see that the power of \texttt{InfoSCOP} is even larger than classic conformal. This is because the initial selection step not only prevents paying too much for selection, but it can also improve the calibration set for computing the conformal $p$-values after selection.  In this hard setting, there is a large improvement of the tail probability that matters for inference. We demonstrate this in one realization of the data generation in Figure \ref{SM-fig-exchang-snapshot}, where we can see that the 0.95 quantile of the original calibration set is much larger than the 0.95 quantile of examples that remain in the calibration set after the initial selection step. This is because the data generation is such that examples in the central cloud tend to have larger nonconformity scores $S_{Y_i}(X_i)$ than the examples that are not in the central cloud, and most of the examples from the central cloud  were eliminated from the calibration sample with the initial selection step. So the remaining nonconformity scores after the initial selection step tend to be smaller. 
Since the nonconformity scores of the examples from the test sample are unchanged, the improved calibration set after the initial selection step  results in $p$-values that tend to be smaller when testing the null group, and in particular there are many more $p$-values that are below the 0.05 level.

\begin{figure}[h!]
\begin{center}
\vspace{-2cm}
 {\includegraphics[width=0.36\textwidth, height = 0.27\textheight, page=2]{legends.pdf}}

\vspace{-3cm}
  \begin{tabular}{ccc}

\hspace{-5mm}\includegraphics[width=0.34\textwidth, height = 0.2\textheight, page=3]{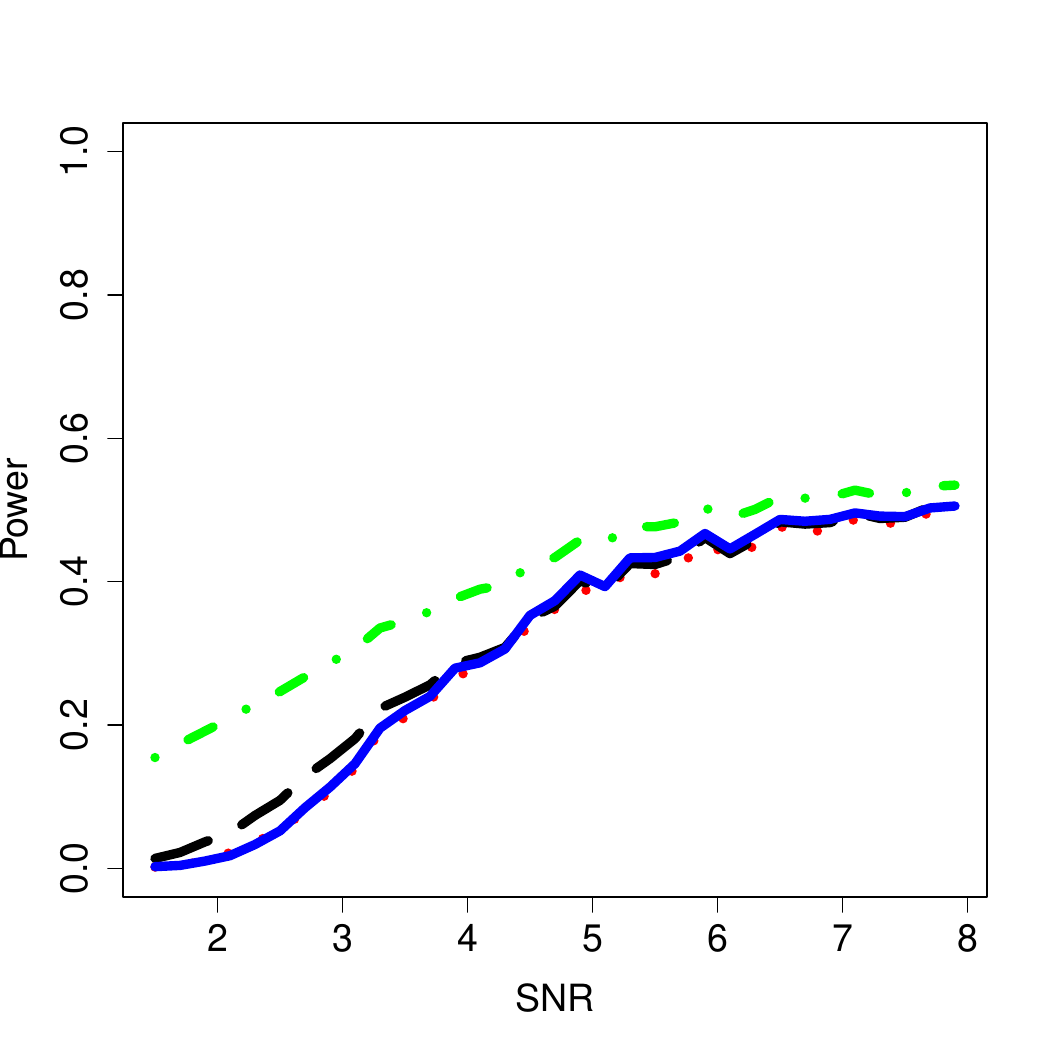} &
\hspace{-5mm}\includegraphics[width=0.34\textwidth,  height = 0.2\textheight,page=1]{subscriptMinP1b.pdf} &
\hspace{-5mm}\includegraphics[width=0.34\textwidth,  height = 0.2\textheight,page=2]{subscriptMinP1b.pdf}\vspace{-5mm}\\

\hspace{-5mm}\includegraphics[width=0.34\textwidth, height = 0.2\textheight, page=3]{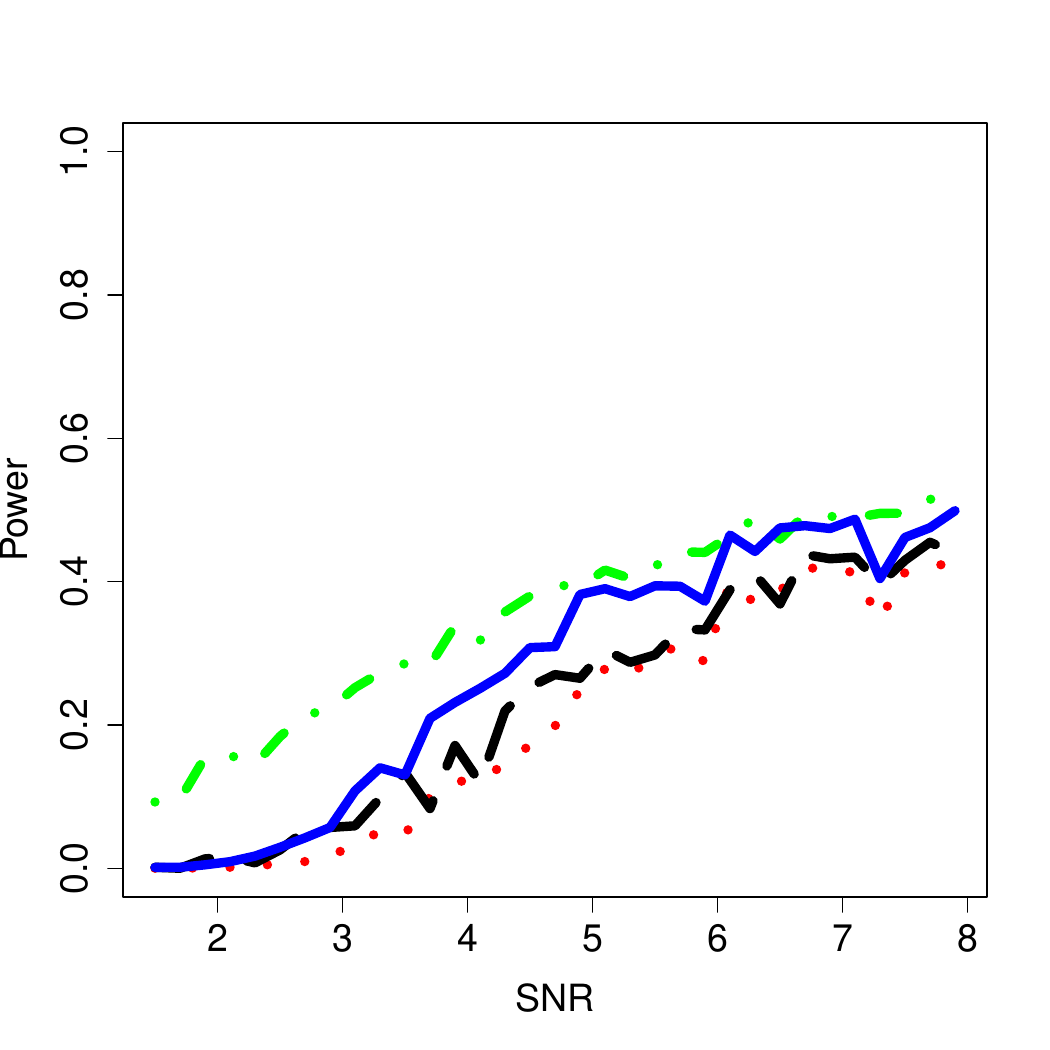} &
\hspace{-5mm}\includegraphics[width=0.34\textwidth,  height = 0.2\textheight,page=1]{subscriptnonnullSelection1b.pdf}& 
\hspace{-5mm}\includegraphics[width=0.34\textwidth,  height = 0.2\textheight,page=2]{subscriptnonnullSelection1b.pdf}\vspace{-5mm}\\

\hspace{-5mm}\includegraphics[width=0.34\textwidth, height = 0.2\textheight, page=3]{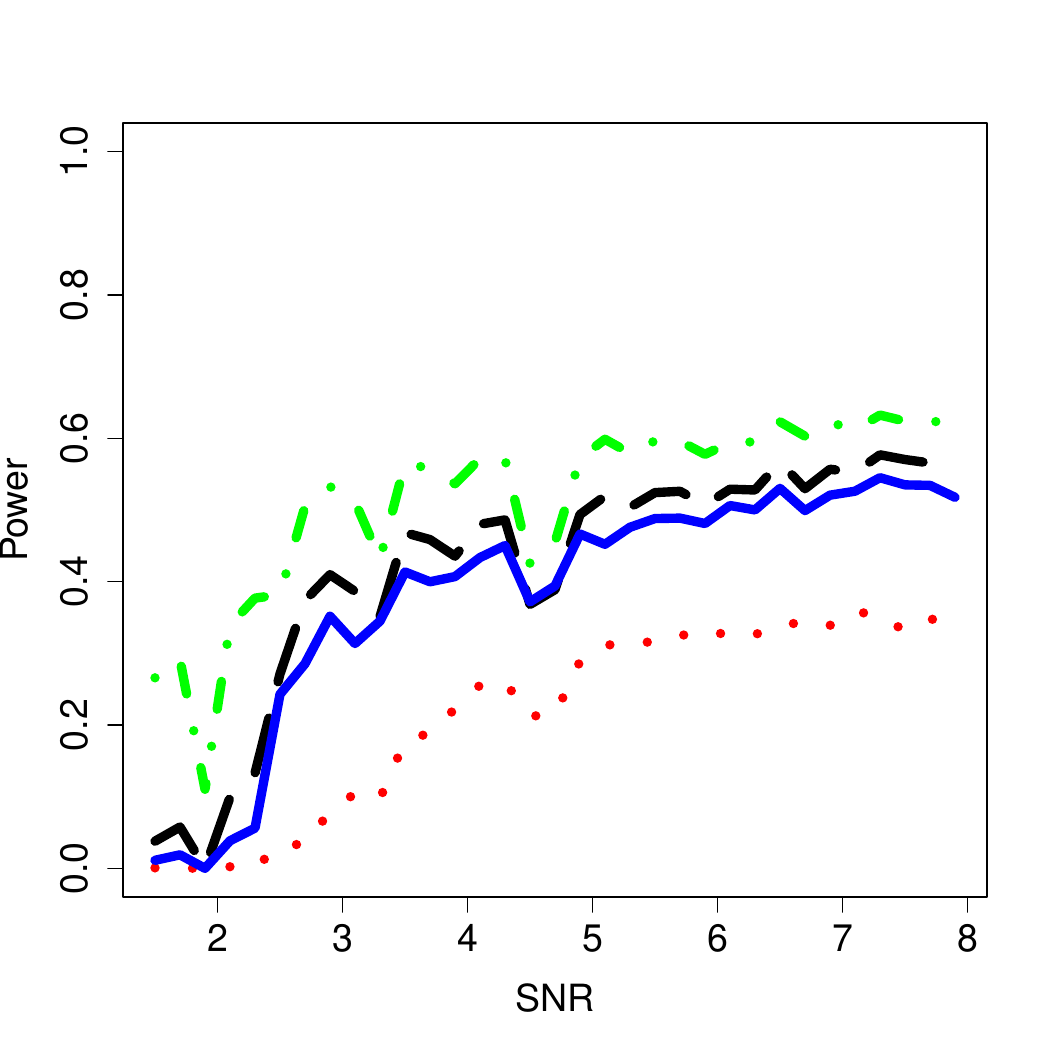}& 
\hspace{-5mm}\includegraphics[width=0.34\textwidth,  height = 0.2\textheight,page=1]{subscriptMinP2b.pdf}&
\hspace{-5mm}\includegraphics[width=0.34\textwidth,  height = 0.2\textheight,page=2]{subscriptMinP2b.pdf}\vspace{-5mm}\\

\hspace{-5mm}\includegraphics[width=0.34\textwidth, height = 0.2\textheight, page=3]{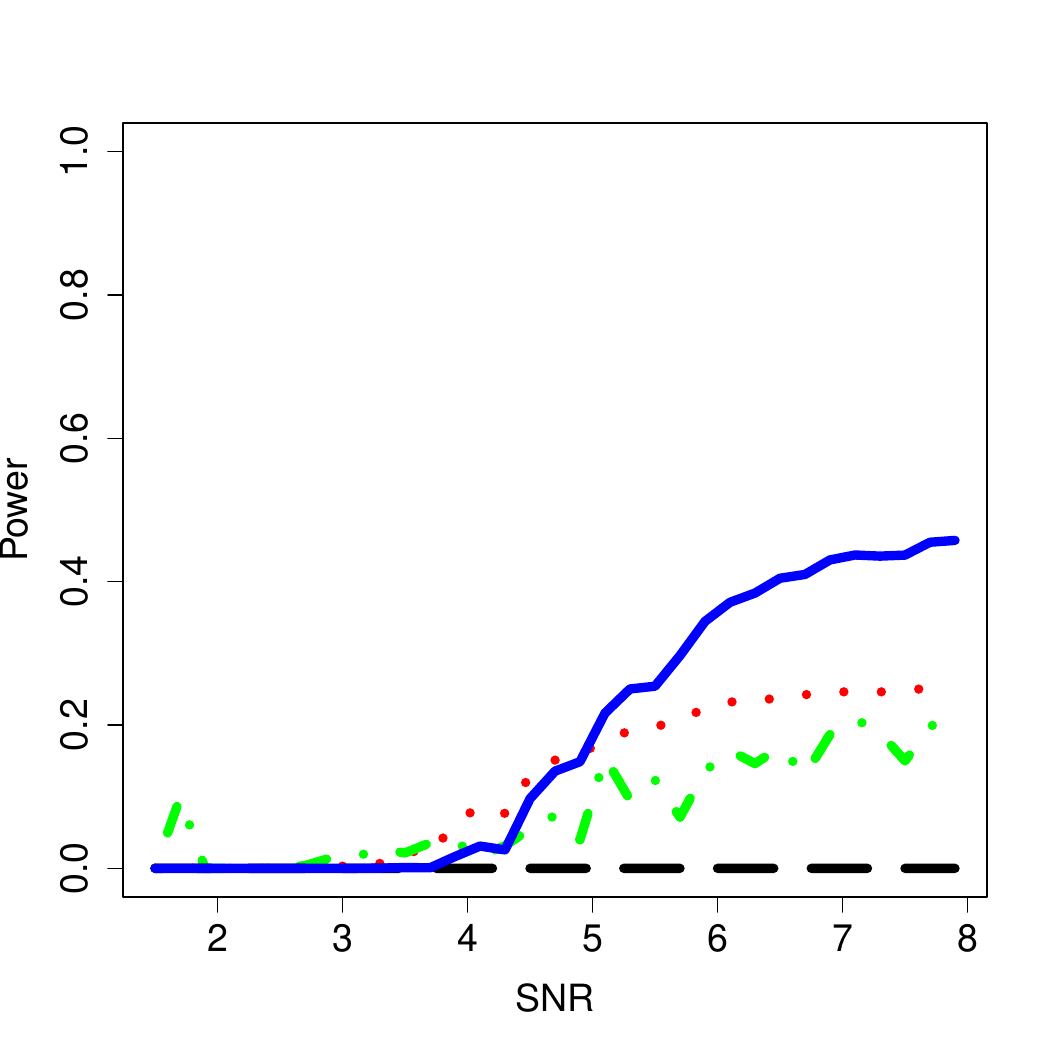}&
\hspace{-5mm}\includegraphics[width=0.34\textwidth,  height = 0.2\textheight,page=1]{subscriptnonnullSelection2b.pdf}&
\hspace{-5mm}\includegraphics[width=0.34\textwidth,  height = 0.2\textheight,page=2]{subscriptnonnullSelection2b.pdf}\vspace{-5mm}\\
\end{tabular}
\end{center}
\caption{\label{SM-fig-exchang-hard} Selecting informative prediction sets in the iid setting with a large overlap between three classes. FCR (left column), resolution-adjusted power (middle column) and the expected fraction of covering prediction sets (right column) versus SNR for: 
 balanced classes  for minimally informative prediction sets (first row) and for prediction sets excluding a null class (second row); unbalanced classes  for minimally informative prediction sets (third row) and for prediction sets excluding the largest class (fourth row). Each class is a mixture of a common component and a unique component, see \S~\ref{sm-large-overlap} for details.  Based on 2000 data generations, 1000 data points were used for training, and  $n = m = 500$.  } 
\end{figure}

\begin{figure}[h!]
    \centering
    \includegraphics[width=0.38\textwidth, height = 0.38\textheight, page=2]{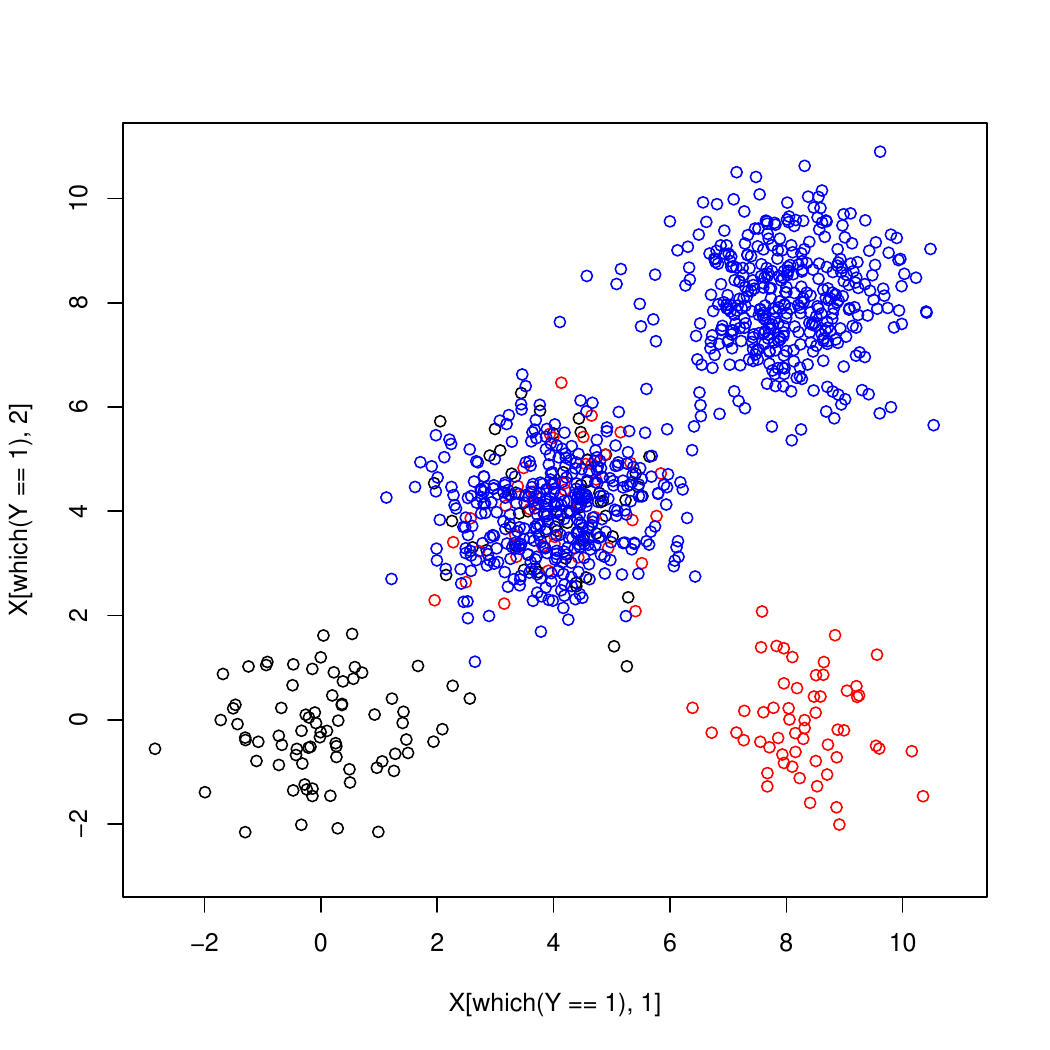}
    \includegraphics[width=0.38\textwidth, height = 0.38\textheight, page=3]{FigswithCommonGroupUnequalProbsNonnullSelection.pdf}\vspace{-5mm}\\
    \includegraphics[width=0.38\textwidth, height = 0.38\textheight, page=5]{FigswithCommonGroupUnequalProbsNonnullSelection.pdf}
    \includegraphics[width=0.38\textwidth, height = 0.38\textheight, page=6]{FigswithCommonGroupUnequalProbsNonnullSelection.pdf}
    \vspace{-5mm}
    \caption{\label{SM-fig-exchang-snapshot}The setting is that of unbalanced classes, with each class having probability half of being in the same mixture component, and the SNR is 8. In the first row, the data points from each of the three classes, where the null group is in blue:  top left panel for the entire test sample, top right panel the remaining examples after the initial selection step. In the second row, left panel, their $p$-values using the entire calibration set (black circles) and using the examples from the calibration set remaining after the initial selection step (blue triangles). In the second row, right panel, the empirical CDF of the nonconformity scores for true classes in the entire calibration set (gray) and using the examples from the calibration set remaining after the initial selection step for the of the   (blue).  }
    \label{fig:enter-label}
\end{figure}

\subsection{{Three classes of animals}}

For the class-conditional model with label shift described in \S~\ref{sec:appli}, Figure \ref{fig:app2} provides the resulting FCR and power measures. 

\begin{figure}
    c) Non-trivial classification with label shift
    \begin{center}
    \includegraphics[width=0.9\linewidth]{"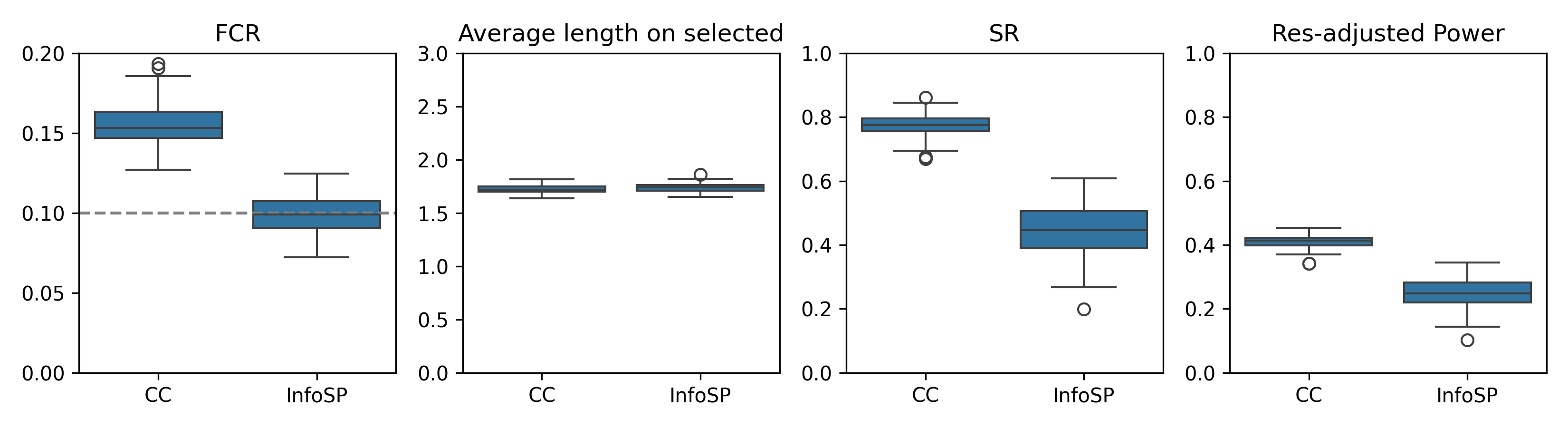"}\\
    \end{center}
    d) Non-null classification with label shift
    \begin{center}
    \includegraphics[width=0.9\linewidth]{"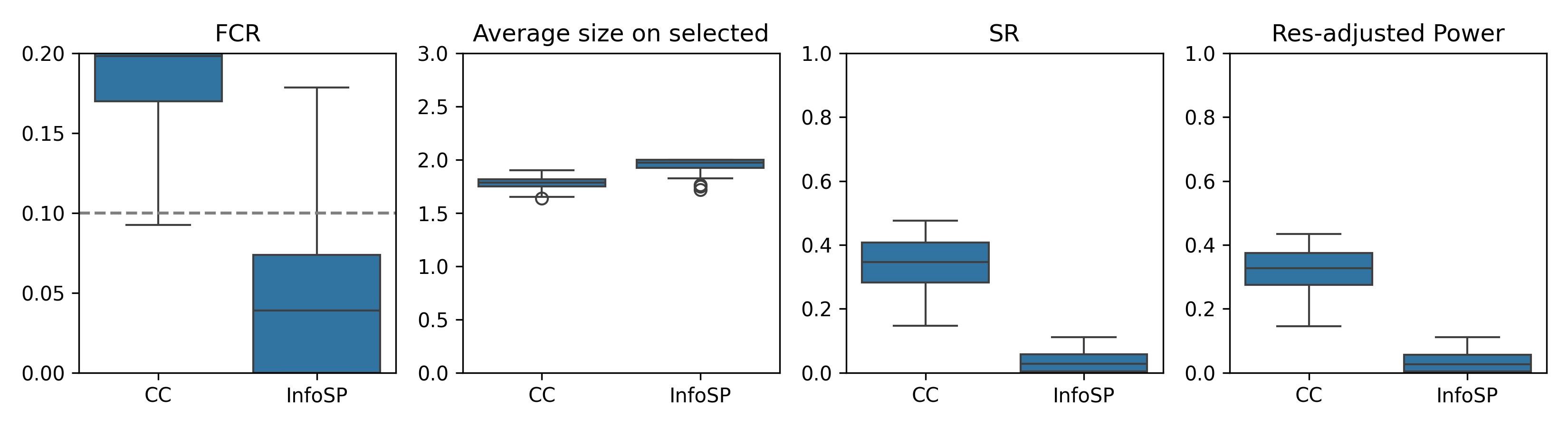"}\\
    \end{center}
    \caption{For the class-conditional model, the FCR, average size of the selected, SR, and resolution-adjusted power for the methods, for $\alpha=0.1$. }
    \label{fig:app2}
\end{figure}

\end{document}